\title{The equifibered approach to $\infty$-properads}
\author{Shaul Barkan and Jan Steinebrunner}
\date{\today}
\begin{document}

\maketitle

\begin{abstract}
    We define a notion of $\infty$-properads that generalizes $\infty$-operads by allowing operations with multiple outputs.
    Specializing to the case where each operation has a single output provides a simple new perspective on $\infty$-operads,
    but at the same time the extra generality allows for examples such as bordism categories.
    We also give an interpretation of our $\infty$-properads as Segal presheaves on a category of graphs by comparing them to the Segal $\infty$-properads of Hackney--Robertson--Yau.
    Combining these two approaches yields a flexible tool for doing higher algebra with operations that have multiple inputs and outputs.
    Crucially, this allows for a definition of algebras over an $\infty$-properad such that, for example, topological field theories are algebras over the bordism $\infty$-properad.
    
    The key ingredient to this paper is the notion of an \emph{equifibered} map between $\bbE_\infty$-monoids, which is a well-behaved generalization of free maps.
    We also use this to prove facts about free $\bbE_\infty$-monoids,
    for example that free $\bbE_\infty$-monoids are closed under pullbacks along arbitrary maps.
\end{abstract}

\tableofcontents

\section{Introduction}

\subsubsection{Historical context}
Properads are a generalization of operads in which operations can have multiple outputs as well as inputs, as illustrated in \cref{figure:gluing-operations} below.

They were introduced by Vallette \cite{Val07} to study Koszul duality for PROPs over a field of characteristic $0$
and subsequently Merkulov--Vallette \cite{MV09, MV09b} studied their deformation theory.
The associativity of composition in a properad $\calP$ ensures that there is a unique way to form the composite of any collection of operations in $\calP$ which label the vertices of a connected directed graph with no directed cycles.
Using this insight, Markl and Johnson--Yau define properads in terms of the combinatorics of such graphs \cites{Mar08, JohnsonYau}.
Batanin--Berger also give a definition of properads as algebras for a certain polynomial monad built from graphs \cite[\S 10.4]{BB17},
and Kaufmann--Ward (using the language of Feynman categories) note that properads may be described as algebras for a certain coloured operad \cite[\S 2.2.4]{KW17}.

An $\infty$-properad is a generalization of this concept, 
where the sets of operations are replaced by spaces of operations 
and the gluing maps are associative up to specified higher coherence data.
The first model for such homotopy coherent properads are the Segal $\infty$-properads defined by Hackney--Robertson--Yau \cite{HRY15} as certain presheaves on a category of graphs.
In this paper we introduce a simple, equivalent theory of $\infty$-properads,
which has the advantage of admitting a good notion of algebras and not relying on the combinatorics of graphs.

The approach we take is somewhat unusual:
rather than defining $\infty$-properads in terms of colours and operations, we will define $\infty$-properads in terms of the free PROPs they generate, i.e.~as symmetric monoidal $\infty$-categories satisfying certain freeness conditions. We then derive an interpretation of such symmetric monoidal $\infty$-categories in terms of spaces of operations with multiple inputs and outputs, equipped with a coherently defined composition operation.
To justify this approach, we shall also prove that our $\infty$-properads are equivalent to Segal $\infty$-properads \cite{HRY15}.

A benefit of this approach is that the reader is not assumed to be familiar with the definition of $1$-properads.
We will begin the introduction by explaining how \operads{} are viewed from the perspective of this paper. 
This will naturally lead to the definition of $\infty$-properads.
(The curious reader may jump to \cref{defn:intro-properad} on page \pageref{defn:intro-properad}.)

\subsubsection{Envelopes and PROPs}
For a coloured operad $\calO$ we let \hldef{$\Env(\calO)$} denote the PROP generated by $\calO$, which we also refer to as the \hldef{envelope of $\calO$}.
This is the symmetric monoidal category
whose objects are tuples of colours $(c_1, \dots, c_n)$ and 
where morphisms $(f,\{\alpha_i\}_{i=1}^m)\colon (c_1, \dots, c_n) \to (d_1, \dots, d_m)$ consist of a map $f:\{1,\dots,n\} \to \{1,\dots,m\}$ 
and an operation $\alpha_i$ of arity $f^{-1}(i)$ for each $i \in \{1,\dots,m\}$ (with suitable input and output colours).
This is illustrated in \cref{figure:morphism-in-Env}.
In \cite[\S 2.2.4]{HA} Lurie generalizes this and constructs an envelope functor
$
    \Env\colon \Op \longrightarrow \SM
$,
from the $\infty$-category of \operads{} to the $\infty$-category of symmetric monoidal $\infty$-categories.%
\footnote{We define symmetric monoidal $\infty$-categories as functors $\Fin_\ast \to \Cat$ satisfying the Segal condition.}
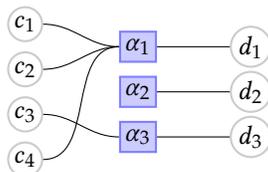
\begin{figure}[ht]
\centering
\begin{tikzpicture}[%
    col/.style={circle,thick, draw=black!20,
                 inner sep=1pt,minimum size=3mm},
   operation/.style={rectangle,draw=blue!50,fill=blue!20,thick,
                      inner sep=2pt,minimum size=4mm}]
    \def\xspace{1.5}
    \def\yspace{.6}
    \foreach \i in {1,...,4}
    {
        \node[col] (source-col\i) at (0,{(-\i)*\yspace})  {$c_\i$};
    }
    \foreach \i in {1,...,3}
    {
        \node[operation] (op\i) at (\xspace, {(-\i-.5)*\yspace}) {$\alpha_\i$};
        \node[col] (target-col\i) at (2*\xspace, {(-\i-.5)*\yspace})  {$d_\i$};
        \draw [-] (op\i) to [out = 0, in = 180] (target-col\i);
    }
    \draw [-] (source-col1) to [out = 0, in = 180] (op1);
    \draw [-] (source-col2) to [out = 0, in = 180] (op1);
    \draw [-] (source-col3) to [out = 0, in = 180] (op3);
    \draw [-] (source-col4) to [out = 0, in = 180] (op1);
\end{tikzpicture}
\caption{A morphism in the monoidal envelope of an operad.}
\label{figure:morphism-in-Env}
\end{figure}

The maximal subgroupoid of the envelope $\Env(\calO)^\simeq \subseteq \Env(\calO)$ is equivalent to the free symmetric monoidal $\infty$-groupoid on the $\infty$-groupoid whose objects are colours of $\calO$ and whose morphisms are invertible $1$-ary operations.
In the language of higher category theory a symmetric monoidal $\infty$-groupoid is the same as an $\bbE_\infty$-monoid in the $\infty$-category of spaces.
The \hldef{free $\bbE_\infty$-monoid} on a space $X$ is given by the formula $\hldef{\xF(X)} = \coprod_{n\ge 0} X^n_{h\Sigma_n}$,
and we say that an $\bbE_\infty$-monoid $M$ is free if there is a subspace $X \subset M$ such that the induced map $\xF(X) \to M$ of $\bbE_\infty$-monoids is an equivalence.
We call a map of free $\bbE_\infty$-monoids $f \colon \xF(X) \to \xF(Y)$ a free ($\bbE_\infty$-)map if $f \simeq \xF(g)$ for some map of spaces $g \colon X \to Y$.%
\footnote{Note that this is a sensible condition because $\xF \colon \Map_\calS(X,Y) \to \Map_\CMon(\xF(X),\xF(Y))$ induces an equivalence on the connected components it hits (see \cref{lem:free-is-a-property} and \cref{obs:elementary-subspace}).}
Motivated by this we propose the following reformulation of a definition of Haugseng--Kock \cite[Definition 2.4.9]{HK21}:
\begin{defnA}
    An \hldef{$\infty$-PROP} is a symmetric monoidal $\infty$-category $\calP$ such that $\calP^\simeq$ is a free $\bbE_\infty$-monoid.
    A morphism of $\infty$-PROPs is a symmetric monoidal functor $F:\calP \to \calQ$ such that $F:\calP^\simeq \to \calQ^\simeq$ is a free $\bbE_\infty$-monoid map.
    We let $\hldef{\PROP}\subset \SM$ denote the resulting (non-full) subcategory.
\end{defnA}

Haugseng-Kock show in \cite{HK21} that the envelope induces a fully faithful embedding:
\[
    \Env \colon \Op \hookrightarrow \PROP.
\]
In particular, this means that $\Op$ is equivalent to a (non-full) subcategory of $\SM$ and therefore the theory of \operads{} can (at least in principle) be developed entirely within the $\infty$-category $\SM$.
We offer two possible motivations for such a pursuit.
Firstly, \operads{} are often viewed through their algebras in symmetric monoidal $\infty$-categories,
hence it makes sense to put 
them on the same footing.
Secondly, the pleasant properties of equifibered maps established in \cref{sec:eqf} indicate that such a theory might be more elementary.
With this in mind we move on to describe the essential image of $\Env$.

\subsubsection{Characterizing the image of $\Env$}
As an example, consider the terminal \operad{} $\calO = \bbE_\infty$.
Its envelope is $\Env(\bbE_\infty) = \Fin$, the category of finite sets with disjoint union as its monoidal structure.
This symmetric monoidal category has the property that its tensor product $\amalg$ is ``disjoint'' in the following precise sense:
for any two finite sets $A, B \in \Fin$ the map
\[
    \amalg\colon \Fin^\simeq_{/A} \times \Fin^\simeq_{/B} \too \Fin^\simeq_{/A \amalg B}
\]
is an equivalence. 
Generalizing this we obtain a characterization of \operads{} within $\infty$-PROPs:
\begin{corA}[\ref{lem:monic-properad-tensor-disjunctive}]\label{corA:characterization-operads}
    An $\infty$-$\mrm{PROP}$ $\calP$ lies in the essential image of $\Env \colon \Op \hookrightarrow \PROP$ if and only if:
    \begin{itemize}
        \item[$(\star)$]\label{eq:tensor-disj}
        For every $x, y \in \calP$ the natural map $\otimes \colon \calP^\simeq_{/x} \times \calP_{/y}^\simeq \too \calP_{/x\otimes y}^\simeq$ is an equivalence.
    \end{itemize}
\end{corA}

In the $1$-categorical setting condition $(\star)$ resembles the hereditary condition that has frequently appeared in connection with operads in the literature \cite{BM08, getzler, KW17, BKW18}.
To see \cref{corA:characterization-operads} in action, consider the symmetric monoidal $\infty$-category $\Disk_d$, whose objects are $d$-manifolds of the form $J \times \bbR^d$ for some finite set $J$ and where the mapping spaces are spaces of smooth embeddings $J \times \bbR^d \hookrightarrow K \times \bbR^d$. 
The maximal subgroupoid $\Disk_d^\simeq$ is equivalent to $\xF(\BO_d)$ and hence $\Disk_d$ is an $\infty$-PROP.
To check $(\star)$ we rewrite
$(\Disk_d)^\simeq_{\!/J \times \bbR^d} \simeq \Conf_\bullet(J \times \bbR^d)$ as the space of unordered configurations in $J \times \bbR^d$ and observe that the map
\[
    \amalg\colon \Conf_\bullet(J \times \bbR^d) \times \Conf_\bullet(K \times \bbR^d) 
    \iso \Conf_\bullet((J \amalg K) \times \bbR^d)
\]
is indeed an equivalence.
We thus conclude that $\Disk_d$ is the envelope of an $\infty$-operad.
Indeed, it is the envelope of the \textit{framed} little $d$-disc operad $fD_d$. (See for example \cite{salvatore2003framed}.)

\subsubsection{Equifibered morphisms}
To better understand condition $(\star)$ consider the square of $\bbE_\infty$-monoids
\[
    \begin{tikzcd}
    	{\Ar(\calP)^\simeq \times \Ar(\calP)^\simeq} & \Ar(\calP)^\simeq \\
    	\calP^\simeq \times \calP^\simeq & \calP^\simeq.
    	\arrow["{\ev_1 \times \ev_1}"', from=1-1, to=2-1]
    	\arrow["{\otimes}", from=1-1, to=1-2]
    	\arrow["\ev_1", from=1-2, to=2-2]
    	\arrow["{\otimes}", from=2-1, to=2-2]
    	\arrow["\lrcorner"{anchor=center, pos=0.125}, draw=none, from=1-1, to=2-2]
    \end{tikzcd}
\]
Here $\Ar(\calP) \coloneq \Fun([1], \calP)$ is the arrow category with its pointwise symmetric monoidal structure and $\ev_1\colon \Ar(\calP) \to \calP$ is the functor $(f\colon x \to y) \mapsto y$.
Passing to vertical fibers at $(x,y) \in \calP^\simeq \times \calP^\simeq$ and $x \otimes y \in \calP^\simeq$ recovers the map $\otimes \colon \calP_{/x}^\simeq \times \calP_{/y}^\simeq \to \calP_{/x \otimes y}^\simeq$ from $(\star)$.
Hence, $\calP$ satisfies condition $(\star)$ if and only if the above square is cartesian.
We encapsulate this in the following definition, which is the driving force behind most of the results in this paper.
\begin{defnA}\label{defnA:equifibered}
    A morphism of $\bbE_\infty$-monoids $f\colon M \to N$ is called \hldef{equifibered} if the natural square
        \[\begin{tikzcd}
    	{M \times M} & M \\
    	N \times N & N
    	\arrow["{f \times f}"', from=1-1, to=2-1]
    	\arrow["{+}", from=1-1, to=1-2]
    	\arrow["f", from=1-2, to=2-2]
    	\arrow["{+}", from=2-1, to=2-2]
    	\arrow["\lrcorner"{anchor=center, pos=0.125}, draw=none, from=1-1, to=2-2]
        \end{tikzcd}\]
        is cartesian.
\end{defnA}

We think of equifibered maps as 
a generalization of free maps.
Indeed, we show a morphism of free $\bbE_\infty$-monoids $g \colon \xF(X) \to \xF(Y)$ is equifibered if and only if it is free.
Curiously, the assumption that the source is free can be removed:~an equifibered map $M \to \xF(Y)$ necessarily gives rise to an equivalence $\xF(Y \times_{\xF(Y)} M) \simeq M$.
In contrast to free maps, equifibered maps have excellent categorical properties, for example they form the right class of a factorization system.
One can also use the theory of equifibered maps to study free $\bbE_\infty$-monoids.
For instance, we prove the following surprising fact:
\begin{propA}[\ref{cor:retract-of-free} and \ref{cor:finite-limits}]
    Let $\Mon_{\bbE_\infty}(\calS)^{\rm free} \subset \Mon_{\bbE_\infty}(\calS)$ 
    denote the \textbf{full} subcategory on those $\bbE_\infty$-monoids that are free.
    Then $\Mon_{\bbE_\infty}(\calS)^{\rm free}$ is closed under finite limits and retracts.
\end{propA}

Returning to envelopes,  suppose $\calP \in \SM$ lies in the essential image of $\Env$. 
Using our newly acquired terminology, we may interpret \cref{corA:characterization-operads} as telling us that $\calP^\simeq$ is free and that the map $\ev_1\colon \Ar(\calP)^\simeq \to \calP^\simeq$ is equifibered.
In particular, it follows that $\Ar(\calP)^\simeq$ is also free as an $\bbE_\infty$-monoid.
Indeed, writing $\calP \simeq \Env(\calO)$ for $\calO \in \Op$ one checks that the $\infty$-groupoid of arrows $\Ar(\Env(\calO))^\simeq$ is freely generated by the space of operations of $\calO$
and $\ev_1\colon \Ar(\Env(\calO))^\simeq \to \Env(\calO)^\simeq$ is free on the map which assigns to each operation its target colour.
Note however that $\ev_0\colon \Ar(\Env(\calO))^\simeq \to \Env(\calO)^\simeq$ is \textit{not} free:
it sends an operation $\alpha \colon (c_1,\dots,c_k) \to d$ to the sum of its input-colours $\sum_{i=1}^k c_i \in \Env(\calO)^\simeq$.

\subsubsection{The nerve}
The $\bbE_\infty$-monoids $\calP^\simeq$ and $\Ar(\calP)^\simeq$ considered above
are the first two levels of the nerve $\xN_\bullet(\calP)$.
Recall that for $\calC \in \SM$ the $n$-th level of the nerve $\hldef{\xN_n(\calC)} \coloneq \Fun([n],\calC)^\simeq$ is naturally an $\bbE_\infty$-monoid
with respect to the point-wise tensor product.
We will think of the \hldef{nerve} as a functor
\[
    \hldef{\xN_\bullet}\colon \SM \hookrightarrow \Fun(\Dop, \Mon_{\bbE_\infty}(\calS)).
\]
In terms of this we can now say that $\calC$ is an $\infty$-PROP if $\xN_0\calC = \calC^\simeq$ is free,
and $\calC$ is in the image of $\Env$ if moreover $d_0\colon \xN_1\calC \to \xN_0\calC$ is equifibered.
In the latter case, the basic properties of equifibered maps imply that $\xN_n\calC$ is free for all $n$
and $d_i\colon \xN_n\calC \to \xN_{n-1}\calC$ is equifibered for all $0 \le i < n$.

\subsubsection{\texorpdfstring{$\infty$-properads}{ Infinity properads}}
Condition $(\star)$ in \cref{corA:characterization-operads} in particular enforces that each operation has a single output colour.
In order to generalize from single output to multiple outputs we must find a replacement for $(\star)$.
Our guiding example will be the bordism category $\Bord_d$.
This is the symmetric monoidal $(\infty,1)$-category where objects are closed $(d-1)$-manifolds and the morphism spaces are disjoint unions of $\BDiff_\partial(W)$ where $W\colon M \to N$ is a compact $d$-dimensional bordism.

The nerve $\xN_n(\Bord_d)$ has a geometric interpretation as a certain space of $d$-manifolds in $\bbR \times \bbR^\infty$ equipped with $n+1$ regular values for the first coordinate projection.%
\footnote{See \cref{ex:bord} for an explanation of why we do not have to worry about Rezk-completeness.}
This is an $\bbE_\infty$-monoid under disjoint union,
and as such it is freely generated by connected manifolds.
However, even though $\xN_\bullet(\Bord_d)$ is level-wise free, $\Bord_d$ is not in the essential image of $\Env$.
Indeed, the face map $d_0\colon \xN_1(\Bord_d) \to \xN_0(\Bord_d)$ is not free (as a connected bordism may have a disconnected outgoing boundary) and hence not equifibered.
Instead, $\Bord_d$ is an example of an $\infty$-properad.
\begin{defnA}\label{defn:intro-properad}
    An \hldef{$\infty$-properad} is a symmetric monoidal $\infty$-category $\calP$ such that:
    \begin{enumerate}
        \item $\xN_1(\calP) = \Ar(\calP)^\simeq$ is a free \Emon{} and 
        \item the face map $d_1\colon \xN_2(\calP) \to \xN_1(\calP)$ is equifibered.
    \end{enumerate}
    Let $\hldef{\Prpd} \subset \SM$ denote the (non-full) subcategory with objects $\infty$-properads and morphisms equifibered symmetric monoidal functors.
\end{defnA}

We will see that the first condition is equivalent to asking $\xN_n(\calP)$ to be free for all $n$ and the second condition is equivalent to asking $\lambda^*\colon \xN_m\calP \to \xN_n\calP$ to be equifibered for all active $\lambda\colon [n] \to [m] \in \simp$.
In particular, $\infty$-properads form a (non-full) subcategory of $\PROP$.
Recently, Kaufmann and Monaco \cite{KM22} defined a notion of ``hereditary unique factorization category'' (UFC), which looks like a $1$-categorical version of the above.
As discussed in \cref{rem:UFCs}, we believe that hereditary UFCs are exactly the $\infty$-properads that also happen to be $1$-categories, and are thus equivalent to $1$-properads that have no $(0,0)$-ary operations.

\subsubsection{Colours and operations}

    For an $\infty$-properad $\calP$ the $\bbE_\infty$-monoids 
    $\xN_0 \calP = \calP^\simeq$ and $\xN_1 \calP = \Ar(\calP)^\simeq$ 
    are freely generated by subspaces
    $\hldef{\mrm{col}(\calP)} \subset \calP^\simeq$ and
    $\hldef{\mrm{ops}(\calP)} \subset \Ar(\calP)^\simeq$,
    which we respectively refer to as the \hldef{space of colours} of $\calP$ and the \hldef{space of operations} of $\calP$.
    
    Given an operation $o$ in $\calP$, i.e.\ a morphism $o\colon x \to y \in \calP$ that is a generator in $\Ar(\calP)^\simeq$,
    its source and target can be written as tensor products of colours:
    \[
        o\colon x_1 \otimes \dots \otimes x_n \too y_1 \otimes \dots \otimes y_m.
    \]
    We say that such an operation is of \hldef{arity $(n,m)$}.
    We refer to the $x_i \in \mrm{col}(\calP)$ as the inputs and 
    to the $y_j \in \mrm{col}(\calP)$ as the outputs of $o$.
    These are unique up to reordering.
    The map that encodes the inputs and outputs of operations is
    \[
        \mrm{ops}(\calP) \subset \xN_1 \calP \xtoo{(s,t)} 
        \xN_0 \calP \times \xN_0 \calP \simeq \xF(\mrm{col}(\calP)) \times \xF(\mrm{col}(\calP)).
    \]
    We may sometimes write \hldef{$\calP(x_1,\dots,x_n; y_1,\dots,y_m)$}
    for the fiber of this map at the point given by the objects $(x,y) \in \xN_0\calP \times \xN_0\calP$.
    Note that this is a union of connected components of $\Map_\calP(x,y)$,
    and a general morphism in $\calP$ may be decomposed as a monoidal product of such operations as illustrated in \cref{figure:decomposition-of-morphism}.
    \begin{figure}[ht]
    \centering
    \begin{tikzpicture}[%
    col/.style={circle,thick, draw=black!20,
                 inner sep=1pt,minimum size=3mm},
   operation/.style={rectangle,draw=blue!50,fill=blue!20,thick,
                      inner sep=2pt,minimum size=4mm}]
    \def\xspace{1.5}
    \def\yspace{.6}
    \def\hoffset{-3*\xspace}
    
    \foreach \i in {1,...,4}
    {
        \node[col] (p-source-col\i) at (\hoffset,{(-\i)*\yspace})  {$c_\i$};
    }
    \foreach \i in {1,...,3}
    {
        \node[operation] (prop\i) at (\hoffset+\xspace, {(-\i-.5)*\yspace}) {$\alpha_\i$};
    }
    \foreach \i in {1,...,4}
    {
        \node[col] (p-target-col\i) at (\hoffset+2*\xspace,{(-\i)*\yspace})  {$d_\i$};
    }
    \draw [-] (p-source-col1) to [out = 0, in = 180] (prop2);
    \draw [-] (p-source-col2) to [out = 0, in = 180] (prop2);
    \draw [-] (p-source-col3) to [out = 0, in = 180] (prop2);
    \draw [-] (p-source-col4) to [out = 0, in = 180] (prop3);
    \draw [-] (prop1) to [out = 0, in = 180] (p-target-col1);
    \draw [-] (prop1) to [out = 0, in = 180] (p-target-col3);
    \draw [-] (prop3) to [out = 0, in = 180] (p-target-col2);
    \draw [-] (prop3) to [out = 0, in = 180] (p-target-col4);
    
    \def\xspace{1}
    \def\vsep{.5}
    
    \node at (-.25, -2.5*\yspace) {$\simeq$};
    \node[operation] (a1) at (\xspace, {-2.5*\yspace}) {$\alpha_1$};
    \node[col] (d1) at (2*\xspace,{-2*\yspace})  {$d_1$};
    \node[col] (d3) at (2*\xspace,{-3*\yspace})  {$d_3$};
    \draw [-] (a1) to [out = 0, in = 180] (d1);
    \draw [-] (a1) to [out = 0, in = 180] (d3);
    
    \node (ot1) at (2.5*\xspace+.5*\vsep, {-2.5*\yspace}) {$\otimes$};
    
    \node[operation] (a2) at (4*\xspace+\vsep, {-2.5*\yspace}) {$\alpha_2$};
    \foreach \i in {1,...,3}
    {
        \node[col] (c\i) at (3*\xspace+\vsep,{-(\i+.5)*\yspace})  {$c_\i$};
        \draw [-] (c\i) to [out = 0, in = 180] (a2);
    }
    
    \node (ot2) at (4.5*\xspace+1.5*\vsep, {-2.5*\yspace}) {$\otimes$};
    
    \node[operation] (a3) at (6*\xspace+2*\vsep, {-2.5*\yspace}) {$\alpha_3$};
    \node[col] (c4) at (5*\xspace+2*\vsep,{-2.5*\yspace})  {$c_4$};
    \node[col] (d2) at (7*\xspace+2*\vsep,{-2*\yspace})  {$d_2$};
    \node[col] (d4) at (7*\xspace+2*\vsep,{-3*\yspace})  {$d_4$};
    \draw [-] (c4) to [out = 0, in = 180] (a3);
    \draw [-] (a3) to [out = 0, in = 180] (d2);
    \draw [-] (a3) to [out = 0, in = 180] (d4);
    
\end{tikzpicture}
    \caption{A morphism in a properad decomposes into a product of operations.}
    \label{figure:decomposition-of-morphism}
    \end{figure}
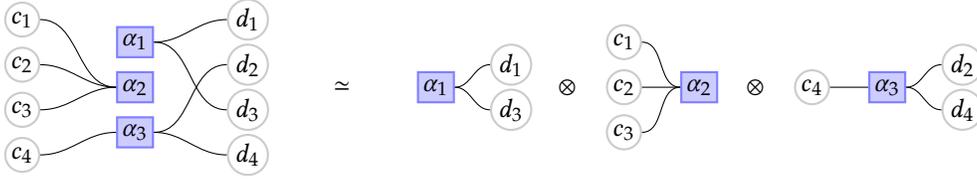
    Given an operation $o$ as above, another operation $p \in \calP(z_1,\dots,z_l; w_1,\dots, w_k)$, 
    and equivalences $\{\alpha_i\colon y_i \simeq z_i\}_{i=1}^a$,
    one can form a \hldef{composite $o \circ_{(\alpha)_i} p$} by using the monoidal product and the composition structure of the symmetric monoidal $\infty$-category $\calP$ as indicated in \cref{figure:gluing-operations}.
\begin{figure}[ht]
\centering
\begin{tikzpicture}[%
    auto,
    col/.style={circle,thick, draw=black!20,
                 inner sep=1pt,minimum size=3mm},
   operation/.style={rectangle,draw=blue!50,fill=blue!20,thick,
                      inner sep=2pt,minimum size=4mm}]
    \def\xspace{1.2}
    \def\yspace{.6}
    
    \node[operation] (op-pq) at (-3*\xspace, -2*\yspace) {$p\circ_{\alpha} q$};
    \node[col] (in-col1) at (-4*\xspace,{-1*\yspace})  {$x_1$};
    \node[col] (in-col2) at (-4*\xspace,{-2*\yspace})  {$x_2$};
    \node[col] (in-col3) at (-4*\xspace,{-3*\yspace})  {$z_3$};
    \node[col] (out-col1) at (-2*\xspace,{-1.5*\yspace})  {$w_1$};
    \node[col] (out-col2) at (-2*\xspace,{-2.5*\yspace})  {$y_3$};
    
    \foreach \i in {1,...,3}{
    \draw [-] (in-col\i) to [out = 0, in = 180] (op-pq);}
    \foreach \i in {1,...,2}{
    \draw [-] (op-pq) to [out = 0, in = 180] (out-col\i) ;}
    
    \node at (-1*\xspace,{-2*\yspace})  {$:=$};
    
    \node[operation] (op-p) at (\xspace, -2*\yspace) {$p$};
    \node[operation] (op-q) at (4*\xspace, -2*\yspace) {$q$};
    \node[operation] (id-z) at (1*\xspace, -4*\yspace) {$\mathrm{id}_{z_3}$};
    \node[operation] (id-y) at (4*\xspace, -4*\yspace) {$\mathrm{id}_{y_3}$};
    
    \foreach \i in {1,...,2}
    {
        \node[col] (xcol\i) at (0,{(-\i)*\yspace})  {$x_\i$};
        \draw [-] (xcol\i) to [out = 0, in = 180] (op-p);
    }
    \foreach \i in {1,...,3}
    {
        \node[col] (ycol\i) at (2*\xspace,{(-\i)*\yspace})  {$y_\i$};
        \draw [-] (op-p) to [out = 0, in = 180] (ycol\i);
    }
    \foreach \i in {1,...,3}
    {
        \node[col] (zcol\i) at (3*\xspace,{(-\i)*\yspace})  {$z_\i$};
        \draw [-] (zcol\i) to [out = 0, in = 180] (op-q);
    }
    \foreach \i in {1,...,1}
    {
        \node[col] (wcol\i) at (5*\xspace,{(-\i)*\yspace})  {$w_\i$};
        \draw [-] (op-q) to [out = 0, in = 180] (wcol\i);
    }
    
    \draw[dashed] (ycol1) to node {$\alpha_1$} (zcol1);
    \draw[dashed] (ycol2) to node {$\alpha_2$} (zcol2);
    
    \node[col] (xcol4) at (0,{(-4)*\yspace})  {$z_3$};
    \node[col] (ycol4) at (2*\xspace,{(-4)*\yspace})  {$z_3$};
    \node[col] (zcol4) at (3*\xspace,{(-4)*\yspace})  {$y_3$};
    \node[col] (wcol4) at (5*\xspace,{(-4)*\yspace})  {$y_3$};
    \draw[-] (ycol3) to [out = 0, in = 180] (zcol4);
    \draw[-] (ycol4) to [out = 0, in = 180] (zcol3);
    \draw[-] (xcol4) to (id-z);
    \draw[-] (id-z) to (ycol4);
    \draw[-] (zcol4) to (id-y);
    \draw[-] (id-y) to (wcol4);
\end{tikzpicture}
\caption{Gluing operations $o$ and $p$ along two colours.}
\label{figure:gluing-operations}
\end{figure}
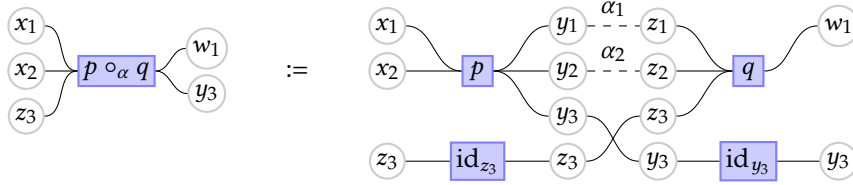

\subsubsection{Comparison to Segal $\infty$-properads}
    The composition operations described above are associative up to suitable higher coherence because they are obtained as certain compositions in a symmetric monoidal $\infty$-category.
    An informal way of summarizing this coherence is to say that in an $\infty$-properad there is a unique (i.e.~contractible) way to form a composite, given a connected directed acyclic graph whose vertices are suitably labelled by operations of the $\infty$-properad.
    This is made precise in the definition of Segal $\infty$-properads of \cite{HRY15}.
    We will only sketch the definition here and refer the reader to \cite{HRY15} and \cite{Koc16} for a careful elaboration of the necessary combinatorics.
\begin{defnA}
    Let $\mbf{G}$ denote the $1$-category whose objects are finite, connected, directed graphs $\Gamma$ with no directed cycles
    and where a morphism $f\colon \Gamma \to \Lambda$ consists of a subgraph $\Lambda_f \subset \Lambda$ and a map $\Gamma \leftarrow \Lambda_f$ whose fibers are connected.
    A \hldef{Segal $\infty$-properad} is a functor $\calP\colon \mbf{G}^\op \to \calS$ such that the canonical map
    \[
        \calP(\Gamma) \too \lim_{\Gamma_0 \subset \Gamma} \calP(\Gamma_0)
    \]
    is an equivalence for all graphs $\Gamma$, where the limit runs over all elementary subgraphs $\Gamma_0 \subset \Gamma$ (that is, corollas or edges).
    We let $\hldef{\Seg_{\mbf{G}^\op}(\calS)}\subset \Fun(\mbf{G}^\op,\calS)$
    denote the full subcategory of Segal $\infty$-properads.
    We say that a Segal $\infty$-properad is \hldef{complete} if its restriction to the subcategory of linear graphs $\Dop \subset \mbf{G}^\op$ is a complete Segal space in the sense of Rezk \cite{rezk},
    and we let $\hldef{\CSeg_{\mbf{G}^\op}(\calS)} \subset \Seg_{\mbf{G}^\op}(\calS)$ denote the full subcategory of these.
\end{defnA}

Using a result of \cite{CH22}
that replaces $\mbf{G}^\op$ with a certain category $\bfL^\op$ of levelled graphs, we will prove in \cref{sec:segal} the following comparison result: 
\begin{thmA}[\ref{cor:segal-G-envelope}]\label{thmA:Segal-properads}
    There is an envelope functor
    $\Env\colon \Seg_{\mbf{G}^\op}(\calS) \to \SM$ that restricts to an equivalence:
    \[
        \Env\colon \CSeg_{\mbf{G}^\op}(\calS) \simeq \Prpd \subset \SM.
    \]
\end{thmA}
This relates our notion of $\infty$-properads to the only previously existing notion of higher homotopical properads.
We wish to emphasize here that both sides of \cref{thmA:Segal-properads} can be useful for different purposes and its strength lies in allowing them to be used simultaneously.
The left side provides formulas for free $\infty$-properads, whereas the right side interfaces with symmetric monoidal $\infty$-categories.
In particular, this equivalence together with the adjunction $\Prpd \adj \SM$ allows us to define the endomorphism $\infty$-properad of an object in a symmetric monoidal $\infty$-category, and thus to define algebras over $\infty$-properads.
\cref{thmA:Segal-properads} was conjectured in the second author's thesis \cite[Conjecture 2.31]{Jan-tropical}.
The $1$-categorical part of this conjecture was recently proven by Beardsley--Hackney \cite{BH22}, who compare the ``labelled cospan categories'' (LCCs) of \cite[\S2]{Jan-tropical} to the classical definition of properads.
By expressing $\Prpd$ as part of a semi-recollement we in \cref{sec:5} are also able to show that the full subcategory of $1$-properads $\Prpdone \subset \Prpd$ is equivalent to the $(2,1)$-category of LCCs.
Combining the two results we see that $1$-properads in our sense are equivalent to the more classical definitions of properads.

\subsubsection{The terminal $\infty$-properad}
The proof of \cref{thmA:Segal-properads} will proceed by first identifying the terminal $\infty$-properad. 
The combinatorics of graphs will subsequently emerge from a careful study of that terminal case.
To find the terminal $\infty$-properad we again draw inspiration from bordism categories.
In any dimension, extracting the set of connected components defines a functor
\[
    \pi_0\colon \Bord_d \to  \hldef{\Csp} \coloneq \Cospan(\Fin) ,
    \qquad
    (W\colon M \to N) \mapsto (\pi_0M \to \pi_0W \leftarrow \pi_0N).
\]
Here $\Csp$ denotes the symmetric monoidal $(2,1)$-category whose objects are finite sets, whose morphisms are cospans of finite sets, and whose monoidal structure is the disjoint union.
In \cref{subsec:Csp-and-disjunctive} we check that $\Csp$ is an $\infty$-properad and give more general conditions under which $\Cospan(\calC)$ is an $\infty$-properad.
In \cref{subsec:pre-properads} we prove the following theorem, which constitutes the technical heart of the paper.
\begin{thmA}[\ref{cor:Csp-is-final}]\label{thmA:Csp-terminal}
    The symmetric monoidal $\infty$-category $\Csp$ is the terminal $\infty$-properad.
\end{thmA}
In particular, this implies that any $\infty$-properad is canonically a symmetric monoidal $\infty$-category equifibered over $\Csp$.
The converse of this will not be difficult to see and hence we conclude that the functor $\SMeq{\Csp} \to \SM$ restricts to an equivalence
\[
    \SMeq{\Csp} \simeq \Prpd.
\]
This generalizes the equivalence $\SMeq{\Fin} \simeq \Op$ established in \cite[Corollary D]{envelopes}, which itself is a variation on the main result of \cite{HK21}.
Indeed, restricting to $\infty$-properads where every operation has precisely one output colour recovers the $\infty$-category of \operads{} on the right and the $\infty$-category of symmetric monoidal $\infty$-categories equifibered over $\Fin \subset \Csp$ on the left.
In \cref{sec:prpd-eqf-Csp} we will use \cref{thmA:Csp-terminal} to show that $\Prpd$ is a compactly generated $\infty$-category and that various adjoints exist.

\subsubsection{The theory of $\infty$-properads}

In \cref{sect:properad-theory} we develop basic tools for working with $\infty$-properads.
For example, we will discuss how to characterize sub-$\infty$-properads, and how \emph{monic} $\infty$-properads (i.e.~those $\infty$-properads where every operation has exactly one output) are equivalent to $\infty$-operads.

Crucially, we will give a description of the free $\infty$-properad on a given space of operations,
in terms of the factorization system spanned by equifibered symmetric monoidal functors.
While this might be complicated in general, we are able to give a simple formula in the case of the \hldef{free corolla $\mfr{c}_{A,B}$}, which is defined as the free $\infty$-properad on an operation whose set of input and output colours are in bijection with finite sets $A$ and $B$.
\begin{lemA}[\ref{lem:free-corolla-pushout}]
    The free $(A,B)$-corolla fits into a pushout square of symmetric monoidal $\infty$-categories
    \[
        \begin{tikzcd}
            {\xF(* \amalg *)} \ar[d] \ar[r, "\Delta_A \oplus \Delta_B"] \ar[dr, very near end, phantom, "\ulcorner"] &
            {\xF(A \amalg B)} \ar[d] \\
            {\xF([1])} \ar[r] &
            \mfr{c}_{A,B}
        \end{tikzcd}
    \]
    where the top horizontal functor sends the two points to
    $\sum_{a \in A} a$ and $\sum_{b \in B} b$, respectively.
\end{lemA}

This description of the free corolla allows us to better understand the \hldef{morphism $\infty$-properad} functor, which we define to be the right adjoint of the inclusion functor:

\[
    \mrm{include} \colon \Prpd \adj \SM \cocolon \hldef{\calU}.
\]
By mapping $\mfr{c}_{A,B}$ into $\calU(\calC)$ we show that $\calU(\calC)$ is an $\infty$-properad whose colours are the objects of $\calC$ and whose operations from a collection of colours $(c_1,\dots, c_n)$ to another collection $(d_1, \dots, d_m)$ are precisely the morphisms $c_1 \otimes \dots \otimes c_n \to d_1 \otimes \dots \otimes d_m$ in $\calC$.
When restricting to the subproperad of $\calU(\calC)$ on a single colour $c \in \calC$ one obtains the \hldef{endomorphism $\infty$-properad} of $c$,
i.e.~the $\infty$-properad whose arity $(k,l)$ operations are $\Map_\calC(c^{\otimes k}, c^{\otimes l})$.
Restricting further to those operations with a single output colour recovers the endomorphism \operad{} of $c$.
In analogy with the situation for operads, we define the $\infty$-category of \hldef{$\calP$-algebras} in a symmetric monoidal $\infty$-category $\calC$ to be
\[
    \hldef{\Alg_\calP(\calC)} \coloneq \Fun_{\Prpd}(\calP, \calU(\calC)) \simeq \Fun_{\SM}(\calP, \calC).
\]
Previous models of $\infty$-properads such as \cite{HRY15} did not yet have a notion of a $\calP$-algebra in a symmetric monoidal $\infty$-category%
\footnote{
        Note that while Chu--Hackney in \cite[\S4]{CH22} discuss algebras over $\infty$-properads, these are $\calP$-algebras in $\calQ$ where both $\calP$ and $\calQ$ are $\infty$-properads. Thus, in our language these would be simply morphisms of $\infty$-properads and their work is to establish a $\Cat$-enrichment of $\Prpd$. (We can obtain such an enrichment by suitably restricting the one of $\SM$, but we do not attempt to compare it to theirs.)
        In order to set up $\calP$-algebras in a symmetric monoidal $\infty$-category $\calC$ in their setting, one would have to construct the morphism $\infty$-properad $\calU(\calC)$ to then take $\calP$-algebras in $\calU(\calC)$.
        This is one of the key achievements of \cref{thmA:Segal-properads}.
}
and the simplicity of the above definition of algebras is one of the key advantages of the definition of $\infty$-properads proposed here.
This will be particularly useful given the comparison result in \cref{thmA:Segal-properads}.

In the special case where all the operations in $\calP$ have a single output colour, equivalently $\calP \simeq \Env(\calO)$ for some $\calO \in \Op$, the definition above agrees with Lurie's definition of algebras (see \cite[Proposition 2.2.4.9]{HA}).
In contrast, substituting $\calP= \Bord_d$ the above lets us interpret topological field theories (TFTs) in the sense of Atiyah and Witten \cites{witten1988topological,atiyah1988topological} as algebras over the properad $\Bord_d$.
To demonstrate the difference between these two examples, recall that the $1$-dimensional cobordism-hypothesis postulates an equivalence $\Alg_{\Bord_1^\fr}(\calC) \simeq (\calC^\dbl)^\simeq$ where $(\calC^\dbl)^\simeq \subseteq \calC^\simeq$ is the space of dualizable objects and $\Bord_1^\fr$ is the $1$-dimensional framed bordism category.

In \cref{sec:reduced-projective} we study \hldef{reduced $\infty$-properads}, i.e.~those $\calP$ for which the space of $(0,0)$-ary operations $\calP(\emptyset; \emptyset)$ is contractible.
We show that the $\infty$-category of these is equivalent to the $\infty$-category of \hldef{projective $\infty$-properads} which are obtained by passing to the cofiber 
$\overline{\calP} \coloneq \calP / \calP_0$
where $\calP_0 \subset \calP$ is the full subcategory on the monoidal unit.
These correspond to the ``reduced labelled cospan categories'' introduced by the second author in \cite{Jan-tropical}.
When $\pi_0|\calP|$ is a group, we in \cref{obs:proj-fiber-sequence} obtain a fiber sequence on classifying spaces, generalizing \cite[Proposition 3.4]{Jan-tropical}:
\[  \Omega^\infty \Sigma^{\infty+1} \calP(\emptyset;\emptyset)_+ \too |\calP| \too |\overline{\calP}|.\]
We also prove that $\Prpd$ sits in a semi-recollement between the $\infty$-category of spaces and the $\infty$-category of reduced/projective $\infty$-properads.
From this it follows that $\Prpd$ can be written as a pullback $\Prpd^\proj \times_\calS \Ar(\calS)$ and that any $\infty$-properad can be recovered from its projective $\infty$-properad, its space of $(0,0)$-ary operations, and a certain gluing map.

There is a further left-adjoint $(-)^\ext$ that freely adds $(0,0)$-ary operations to a projective $\infty$-properad.
We call an $\infty$-properad $\calP$ ``extended'' if $\calP \simeq \Red{\calP}^\ext$ and in \cref{sect-3.4} we give a concrete characterization of such $\infty$-properads in terms of the factorization category $\calF(\calP)\subset \calP_{\unit//\unit}$.
As an example, we show in \cref{cor:Bord-extended} that the bordism $\infty$-properad $\Bord_d^\theta$ is always extended for any dimension $d \ge 1$ and tangential structure $\theta$.
Since, as mentioned above, TFTs are algebras over the $\infty$-properad $\Bord_d^\theta$ this means that the value of a TFT on closed $(d+1)$-manifolds is always uniquely and coherently determined by its values on connected manifolds with boundary.

In \cref{subsec:n-properads} we define a full subcategory $\mrm{Prpd}_n \subset \Prpd$ of $n$-properads.
For $n=1$ we use the aforementioned pullback description to establish an equivalence between $\Prpdone$ and the $(2,1)$-category of labelled cospan categories (based on \cite[\S2]{Jan-tropical}).
Further using the main result of \cite{BH22} this connects our definition of $1$-properads to the original definition of (coloured) properads used e.g.~in \cite{HRY15}.

\subsubsection{Outlook}
We hope that the theory of $\infty$-properads developed here might serve as foundations for an alternative
approach to higher algebra.
In forthcoming work, we intend to follow this idea in various directions.

\begin{itemize}
    \item 
    \textbf{Equifibered higher algebra:} 
    We intend to revisit some of the foundational results on \operads{} established by Lurie in \cite{HA} such as the Boardmann--Vogt tensor product and develop them independently within $\SM$, relying on the theory of equifibered maps.
    (For this Boardmann--Vogt tensor product this has now been achieved in \cite{segalification}.)

    \item \textbf{Modular operads:}
    We intend to expand the theory of $\infty$-properads developed here to encompass other operad-like structures such as cyclic operads and modular operads.
    In particular, we hope to show that modular \operads{} embed fully faithfully in $\infty$-properads,
    which implies a version of the cobordism hypothesis ``with singularities'' in dimension $1$.
    When applied to other bordism categories, we also expect this to be useful for studying the stable homology of certain diffeomorphism groups.

    \item \textbf{Bisymmetric sequences:} 
    Using the theory of equifibered maps one should be able to show that $\infty$-properads embed fully faithfully into $\bbE_1$-algebras in an $\infty$-category of bisymmetric sequences endowed with a coherently defined composition product.
    This would restrict to an equivalence between \operads{} and algebras in symmetric sequences for the composition product in the sense of Baez--Dolan \cite[\S 2.3]{baez1997higher}.
    Such a comparison theorem was proven by Haugseng \cite{Hau22} for a possibly different choice of composition product.
\end{itemize}

\subsubsection{Acknowledgements}
We would like to thank Philip Hackney, Rune Haugseng, and Oscar Randal-Williams for several useful conversations about the topic of the paper. 
We would also like to thank Joachim Kock and Lior Yanovski for comments on an earlier version of this draft, Az\'elie Picot and Adela Zhang for pointing out an error, and the referee for their very detailed feedback and patience as we revised and extended this paper.

The first author would like to thank the Hausdorff Research Institute for Mathematics for their hospitality in the 2022 trimester program during which part of this work was written.
Part of this work was completed while the second author was at the
Mathematical Sciences Research Institute program
(NSF grant no. DMS-1928930) at the UNAM campus in Cuernavaca.
The second author is supported by the ERC grant no.~772960, and would like to thank the Copenhagen Centre for Geometry and Topology for their hospitality.

\section{Commutative monoids and equifibered maps}\label{sec:eqf}

\subsection{Equifibered theory}\label{subsec:Eqf-theory}

In this section we introduce the notion of an equifibered map between commutative monoids and investigate its properties.
We begin by briefly recalling some basic facts on commutative monoids.

\subsubsection{Recollection on commutative monoids}
    We let $\hldef{\Fin_*}$ denote the category of finite pointed sets.
    Up to isomorphism its objects are of the form $A_+ = A \sqcup \{\infty\}$ where $A$ is an unpointed finite set.
    For $n \in \bbN$ we also let $n$ denote the set $\{1,\dots,n\}$ and accordingly $n_+ = \{1,\dots,n,\infty\} \in \Fin_*$.
    For each $a \in A$ we have a canonical map $\rho_a\colon A_+ \to \{a\}_+ \cong 1_+$ that sends every element except $a$ to the base point.
    
    A \hldef{commutative monoid} (in spaces) is a functor
    $ M\colon \Fin_* \to \pcal{S} $
    satisfying that for all $A_+ \in \Fin_*$ the Segal map
    \[
        M(A_+) \xrightarrow{(\rho_a)_{a \in A}} \prod_{a \in A} M(\{a\}_+),
    \]
    is an equivalence. 
    We let $\hldef{\CMon} \subset \Fun(\Fin_*, \calS)$ denote the full 
    subcategory of commutative monoids.

\begin{rem}
    Commutative monoids in this sense are often called $\mbb{E}_{\infty}$-monoids. 
    We will work entirely in the $\infty$-categorical setting where these notions are interchangeable.
\end{rem}

    The forgetful functor $U\colon  \CMon \too \calS$ is defined by sending
    $M\colon \Fin_* \to \calS$ to $M(1_+)$. 
    By abuse of notation we will usually write $M$ to denote $M(1_+)$.
    By \cref{lem:basic-properties-for-xF}.\ref{it:xF-U} the forgetful functor has a left adjoint, which we denote: 
    \[
        \hldef{\xF}\colon  \calS \too \CMon.
    \]
    We call $\xF(X)$ the \hldef{free commutative monoid} on $X$.
    We say that a commutative monoid $M$ is \hldef{free} if it is in the essential image of $\xF$, and we let $\hldef{\CMon^\free} \subset \CMon$ denote the full subcategory of free commutative monoids.

\begin{lem}\label{lem:basic-properties-for-xF}
    The free-forgetful adjunction $\xF\colon \calS \adj \CMon :\! U$ has the following properties:
    \begin{enumerate}[(1)]
        \item\label{it:xF-presentable}
        $\CMon$ is an accessible localization of $\Fun(\Fin_*, \calS)$ and hence presentable.
        \item\label{it:xF-U}
        The forgetful functor
        $U$ is a conservative right adjoint.
        Moreover, it preserves sifted colimits.
        \item\label{it:xF-formula} 
        The free functor $\xF$ can be explicitly computed as
        \[
            \xF(X) \simeq \colim_{A \in \Fin^\simeq} \Map(A, X) 
        \simeq \coprod_{n \ge 0} X^n_{h\Sigma_n}.  \]
        \item\label{it:xF-semiadd}
        $\CMon$ is semi-additive, i.e.~the categorical coproduct and product coincide.
        We refer to both as the \hldef{direct sum}, which we denote by $\hldef{M \oplus N} \coloneq M \times N$.
        For $X, Y \in \calS$ we have $\xF(X \sqcup Y) \simeq \xF(X) \oplus \xF(Y)$.
        \item\label{it:xF-contractible-limits}
        The free functor $\xF\colon  \calS \too \CMon$ preserves weakly contractible limits, and in particular pullbacks.
    \end{enumerate}
\end{lem}
\begin{proof}
    One can also show these properties directly from the definition, but for simplicity we shall cite the literature instead.
    \ref{it:xF-presentable}
    This follows from \cite[Proposition 5.5.4.15]{HTT}, see \cite[Propositon 4.1]{gepner-universality}.
    \ref{it:xF-U}
    Inspecting the Segal condition we see that $\CMon \subset \Fun(\Fin_\ast,\calS)$ is closed under limits and sifted colimits.
    It follows that the forgetful functor $U \coloneq \mathrm{ev}_{1_+} \colon \CMon \to \calS$ preserves limits and sifted colimits and hence has a left adjoint by the adjoint functor theorem.
    $U$ is moreover conservative by the Segal condition.
    
    \ref{it:xF-formula} was shown by Lurie \cite[Example 3.1.3.14]{HA}, though for the case of $\calS$ a simpler proof can be given using algebraic patterns \cite[Example 8.13]{CH19}.
    \ref{it:xF-semiadd} Semi-additivity is shown in \cite[Corollary 2.5]{gepner-universality}. The claim about the free functor follows because, being a left adjoint, it preserves coproducts.
    \ref{it:xF-contractible-limits} follows because the formula in \ref{it:xF-formula} is a colimit indexed by an $\infty$-groupoid and in $\calS$ such colimits commute with weakly contractible limits \cite[Lemma 2.2.8]{GHK22}.%
    \footnote{
        Alternatively, one could say that the formula in \ref{it:xF-formula} also shows that $\xF(X)$ is a polynomial functor and these preserve weakly contractible limits by \cite[Theorem 2.2.3]{GHK22}.
    }
\end{proof}

Being free is a \emph{property} of a commutative monoid in the following sense:
\begin{lem}\label{lem:free-is-a-property}
    The free functor restricts to an equivalence on maximal subgroupoids:
    $\calS^\simeq \iso (\CMon^\free)^\simeq$.
\end{lem}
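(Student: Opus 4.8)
The plan is to produce an explicit inverse to the functor $\calS^\simeq \to (\CMon^\free)^\simeq$ induced by $\xF$, namely a ``space of generators'' functor $\mathrm{gen}$ that extracts the indecomposable components of a commutative monoid. Since $\CMon^\free$ is by definition the essential image of $\xF$, the functor $\xF\colon \calS \to \CMon^\free$ is essentially surjective, so the induced map of $\infty$-groupoids is surjective on $\pi_0$; to upgrade this to an equivalence it then suffices to exhibit $\mathrm{gen}$ together with natural equivalences $\mathrm{gen}\circ \xF \simeq \mathrm{id}_{\calS^\simeq}$ and $\xF\circ \mathrm{gen}\simeq \mathrm{id}_{(\CMon^\free)^\simeq}$.

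First I would define $\mathrm{gen}$ intrinsically on all of $\CMon^\simeq$. Given $M$, let $M_{>0}\subseteq M$ be the complement of the component of the unit $e\colon * \to M$, and let $D(M)\subseteq M$ be the union of those components lying in the image of the addition map $+\colon M_{>0}\times M_{>0}\to M$. Set $\mathrm{gen}(M) := M_{>0}\setminus D(M)$, the subspace of indecomposable elements. Each of these operations---the unit component, the positive part, the components hit by $+$, and the complementary union of components---is manifestly invariant under equivalences of commutative monoids, so the constructions assemble into a functor $\mathrm{gen}\colon \CMon^\simeq \to \calS^\simeq$.

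The main computation, and the crux of the argument, is to identify $\mathrm{gen}(\xF X)\simeq X$ naturally. Using the formula $\xF X \simeq \coprod_{n\ge 0} X^n_{h\Sigma_n}$ from \cref{lem:basic-properties-for-xF}.(2), the unit is the $n=0$ summand and $+$ carries $X^a_{h\Sigma_a}\times X^b_{h\Sigma_b}$ into $X^{a+b}_{h\Sigma_{a+b}}$. Hence the image components of $+\colon \xF(X)_{>0}\times \xF(X)_{>0}\to \xF X$ are exactly those of weight $\ge 2$, while the weight-$1$ summand $X^1_{h\Sigma_1}=X$ is indecomposable. Therefore $\mathrm{gen}(\xF X)$ is precisely the weight-$1$ summand $X\subseteq \xF X$, and under this identification the inclusion $\mathrm{gen}(\xF X)\hookrightarrow \xF X$ is the unit $\eta_X\colon X \to U\xF X$. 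This yields the natural equivalence $\mathrm{gen}\circ\xF\simeq \mathrm{id}$.

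For the remaining composite, note that for a free monoid $M=\xF X$ the canonical map $\xF(\mathrm{gen}(M))\to M$ adjoint to the inclusion $\mathrm{gen}(M)\hookrightarrow U M$ is, under $M\simeq \xF X$ and the identification $\mathrm{gen}(\xF X)\simeq X$ above, precisely $\xF$ applied to the unit, hence an equivalence by the triangle identity. Naturality then gives $\xF\circ \mathrm{gen}\simeq \mathrm{id}$ on $(\CMon^\free)^\simeq$, completing the proof. I expect the only genuine subtlety to be the weight-grading computation identifying indecomposables with generators---in particular checking that ``decomposable'' is genuinely a union of components, so that $\mathrm{gen}$ is both well defined and functorial; everything else is formal once $\xF$ is known to preserve the relevant structure.
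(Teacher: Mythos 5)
Your proof is correct, but it takes a different route from the paper's. The paper argues in two steps: first, that the map $\Map_{\calS}(X,Y) \to \Map_{\CMon}(\xF(X),\xF(Y)) \simeq \Map_{\calS}(X,\xF(Y))$ is post-composition with the unit $Y \hookrightarrow \xF(Y)$, which is a monomorphism, so $\xF$ is an inclusion of path components on mapping spaces; and second, that any equivalence $f\colon \xF(X) \to \xF(Y)$ preserves indecomposables on $\pi_0$ and hence satisfies $f(X)\subseteq Y$, so lies in one of those components. You instead build an explicit inverse $\mathrm{gen}$ on all of $\CMon^\simeq$ and verify both composites via the weight grading and the triangle identity. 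The two arguments share the same key insight (generators of a free monoid are exactly the nonzero indecomposables), but yours buys more: it produces the ``space of generators'' functor explicitly, essentially anticipating \cref{lem:eqf-addition} and the description of $(-)^\el$ in \cref{obs:elementary-subspace}, at the cost of being longer. Two points deserve slightly more care than you give them: (i) the claim that the decomposable components of $\xF(X)$ are \emph{exactly} those of weight $\ge 2$ requires the (easy) observation that on $\pi_0$ every multiset of size $\ge 2$ splits as a union of two nonempty multisets, so that $\mathrm{gen}(\xF X)$ does not accidentally contain higher-weight components; and (ii) to make $\mathrm{gen}$ an honest functor of $\infty$-groupoids one should define it as the pullback of $M \to \pi_0(M)$ along the subset of nonzero indecomposables, which is natural in equivalences of $M$ --- you flag this issue, and the fix is routine, so neither point is a genuine gap.
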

\begin{proof}
    The functor $\xF\colon  \calS \to \CMon$ induces the map
    \[
        \Map_{\calS}(X, Y) \too \Map_{\CMon}(\xF(X), \xF(Y)) \simeq \Map_{\calS}(X, \xF(Y))
    \]
    that is given by post-composition with the unit $Y \hookrightarrow \xF(Y)$,
    which is a monomorphism by \cref{lem:basic-properties-for-xF}.\ref{it:xF-formula}.
    It thus suffices to observe that for any equivalence $f\colon \xF(X) \iso \xF(Y)$ the induced map on components $\pi_0\xF(X) \to \pi_0 \xF(Y)$ must preserve indecomposables and thus $f(X) \subseteq Y$. 
\end{proof}

\subsubsection{The definition of equifibered maps}

\begin{defn}\label{defn:eqf}
    A morphism of commutative monoids $f\colon M \to N$ is called \hldef{equifibered} if the natural square 
    \[
        \begin{tikzcd}
            M \times M \ar[r, "+"] \ar[d, "f\times f"'] &
            M \ar[d, "f"] \\
            N \times N \ar[r, "+"] & 
            N
        \end{tikzcd}
    \]
    is a pullback in $\pcal{S}$. 
\end{defn}

\begin{rem}
    Equifibered morphisms were introduced in \cite{envelopes} under the name of ``active-equifibered morphisms'' in the context of Segal objects over arbitrary algebraic patterns. 
    In general, a natural transformation is called equifibered if all of its naturality squares are cartesian:
    a morphism of commutative monoids is equifibered if its restriction to $\Fin \simeq \Fin_*^\act \subset \Fin_*$ is an equifibered natural transformation in this sense (this follows from \cref{prop:TFAE-to-equifibered}.\ref{item:eqf-n} and the pullback pasting lemma).
    In the present paper we shall only consider the pattern $\Fin_\ast$ and drop the word ``active''.
    This notion is also closely related to the ``CULF'' maps of \cite{GCKT18}, as discussed in \cref{obs:CULF}.
\end{rem}

\begin{example}\label{ex:free-is-eqf}
    For any map of spaces $f\colon X \to Y$ the resulting map of free commutative
    monoids $\xF(f)\colon \xF(X) \to \xF(Y)$ is equifibered. 
    Indeed, by \cref{lem:basic-properties-for-xF}.\ref{it:xF-semiadd}, the relevant square is equivalent to 
    \[
        \begin{tikzcd}
            \xF(X \amalg X) \ar[r, "\xF(\nabla)"] \ar[d, "\xF(f\amalg f)"'] &     
            \xF(X) \ar[d, "\xF(f)"] \\
            \xF(Y \amalg Y) \ar[r, "\xF(\nabla)"] &     
            \xF(Y),
        \end{tikzcd}
    \]
    which is cartesian because $\xF$ preserves pullbacks by \cref{lem:basic-properties-for-xF}.\ref{it:xF-contractible-limits}.
\end{example}

\begin{rem}
    Note that \cref{ex:free-is-eqf} fails for free commutative monoids in the $1$-category $\Sets$ of sets.
    The relevant square for the map $f\colon \{a, b\} \to \{c\}$ is 
    \[
        \begin{tikzcd}
            \bbN\langle a_1, b_1, a_2, b_2 \rangle \ar[r, "\bbN(\nabla)"] \ar[d, "\bbN(f\amalg f)"'] &     
            \bbN\langle a, b \rangle  \ar[d, "\bbN(f)"] \\
            \bbN\langle c_1, c_2 \rangle  \ar[r, "\bbN(\nabla)"] &     
            \bbN\langle c \rangle,
        \end{tikzcd}
    \]
    where the horizontal maps send $a_i \mapsto a$ etc.\ 
    and the vertical maps send $a_i \mapsto c_i$ and $b_i \mapsto c_i$.
    This is not a pullback since $a_1+b_2$ and $a_2+b_1$ are sent to the same element by $\bbN\langle a_1, b_1, a_2, b_2\rangle \to \bbN\langle c_1, c_2 \rangle \times \bbN\langle a, b \rangle$.
\end{rem}

\begin{rem}
    Below we will see that a map between free commutative monoids is equifibered
    if and only if it is free. 
    Motivated by this, we will often think of equifibered maps 
    as a more well-behaved notion, generalizing free maps.
\end{rem}

\begin{obs}\label{obs:eqf-cancellation}
    Equifibered maps are closed under composition and satisfy the following cancellation property:
    for any two morphisms $f\colon M \to N$ and $g\colon N \to L$ in $\CMon$, if $g$ and $g \circ f$ are equifibered, then so is $f$.
    (This follows from pullback pasting, or alternatively from \cref{lem:formal-ctf-eqf}.)
\end{obs}

\begin{obs}\label{obs:eqf=equi-fibered}
    For a morphism $f\colon M \to N$ and $n \in N$ write $f^{-1}(n)$
    for the (homotopy) fiber of $f$ at $n$.
    For all $a, b \in N$ addition yields a well-defined map
    \[
        +\colon  f^{-1}(a) \times f^{-1}(b) \too f^{-1}(a+b).
    \]
    Since these are exactly the fibers of the square in \cref{defn:eqf},
    the morphism $f$ is equifibered if and only if 
    the above map is an equivalence for all $a,b\colon * \to N$.
    In fact, it suffices to check this for one representative in each component.
\end{obs}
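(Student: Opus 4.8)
The plan is to apply the standard fiberwise criterion for recognizing cartesian squares of spaces. Recall that a commutative square
\[
    \begin{tikzcd}
        A \ar[r] \ar[d, "p"'] & B \ar[d, "q"] \\
        C \ar[r, "h"] & D
    \end{tikzcd}
\]
in $\calS$ is cartesian if and only if for every point $c\colon * \to C$ the induced map on vertical fibers $p^{-1}(c) \to q^{-1}(h(c))$ is an equivalence. First I would instantiate this with $A = M \times M$, $p = f \times f$, $q = f$, and $h = {+}$, as in the square of \cref{defn:eqf}. Since the fiber of a product map is the product of the fibers, the fiber of $f \times f$ over $(a,b) \in N \times N$ is $f^{-1}(a) \times f^{-1}(b)$, while the fiber of $f$ over $h(a,b) = a + b$ is $f^{-1}(a+b)$. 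The criterion then reads: the square is cartesian if and only if the induced map $f^{-1}(a) \times f^{-1}(b) \to f^{-1}(a+b)$ is an equivalence for every $(a,b) \in N \times N$.

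The point that requires a small check is that this induced map on fibers coincides with the addition map $+$ named in the statement. This is immediate from commutativity of the square: a point of $f^{-1}(a) \times f^{-1}(b)$ consists of a pair $(m,m')$ equipped with paths $f(m) \simeq a$ and $f(m') \simeq b$, and the square sends it to $m + m' \in M$ together with the induced path $f(m+m') \simeq f(m) + f(m') \simeq a+b$. Thus the map on fibers is precisely the restriction of $+ \colon M \times M \to M$, i.e.\ the map $+\colon f^{-1}(a) \times f^{-1}(b) \too f^{-1}(a+b)$ of the statement; this is also exactly what makes that map well defined.

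For the final clause I would reduce testing over all points of $N \times N$ to one point per connected component. Given $a \simeq a'$ in the same component of $N$, a choice of path induces an equivalence $f^{-1}(a) \simeq f^{-1}(a')$, and likewise $f^{-1}(a+b) \simeq f^{-1}(a'+b)$, compatibly with addition; hence the comparison map at $(a,b)$ is an equivalence if and only if the one at $(a',b)$ is, and symmetrically in the second variable. Since $\pi_0(N \times N) \cong \pi_0 N \times \pi_0 N$, it therefore suffices to verify the condition for one representative in each component of $N$ in each of the two variables.

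The main obstacle is purely bookkeeping: confirming that the fiberwise comparison map really is $+$, and that the path-induced equivalences intertwine the addition maps. There is no genuine difficulty, which is why this is recorded as an observation rather than proved as a lemma.
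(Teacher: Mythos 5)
Your proposal is correct and matches the paper's (implicit) argument exactly: the paper records this as an observation precisely because it is the standard fiberwise criterion for cartesian squares in $\calS$ applied to the square of \cref{defn:eqf}, together with the identification of the induced map on fibers with the addition map and the reduction to one basepoint per component via path-induced equivalences of fibers. The bookkeeping you supply is the intended justification.
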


\begin{example}
    An example of a non-free, equifibered map can be obtained as follows.
    Let $f\colon \Fin_*^\simeq \to \Fin^\simeq$ denote the functor 
    that forgets from the groupoid of pointed finite sets 
    to the groupoid of finite sets. 
    Both groupoids are commutative monoids with respect to the cartesian
    product and $f$ is a map of commutative monoids.
    As a map of spaces $f$ may be described as:
    \[
        \coprod_{n \ge 0} B\Sigma_{n-1} \too 
        \coprod_{n \ge 0} B\Sigma_{n}
    \]
    where we interpret $B\Sigma_{-1} = *$.
    Note that the right side is not the free monoid $\xF(*)$,
    since the monoid structure is given by cartesian product,
    not disjoint union.
    In particular, it would make sense to restrict to the submonoid
    where $n$ is of the form $p^k$ for some fixed $p$.
    
    To check that $f$ is equifibered we use \cref{obs:eqf=equi-fibered}. 
    In the case at hand $a, b \in \Fin^\simeq$ are finite sets 
    and their ``sum'' is the product $a \times b$.
    The fiber of $f\colon \Fin_*^\simeq \to \Fin^\simeq$ at a finite set
    $a$ is canonically identified with the set $a$ itself.
    Hence, the map in question is 
    $\id_{a\times b}\colon  f^{-1}(a) \times f^{-1}(b) \to f^{-1}(a \times b)$,
    which is an equivalence.
\end{example}

As sifted colimits (\cref{lem:basic-properties-for-xF}.\ref{it:xF-U}) and finite coproducts of monoids (\cref{lem:basic-properties-for-xF}.\ref{it:xF-semiadd}) tend to be easier to compute than arbitrary colimits, the following lemma and corollary will be very useful for checking that certain functors preserve all colimits.
\begin{lem}[Lurie]\label{realizations+coproducts}
    In a cocomplete $\infty$-category $\calC$ any colimit can be written as a geometric realization of coproducts.
\end{lem}
\begin{proof}
    Let $F\colon J \to \calC$ be a diagram. 
    The colimit of $F$ is the left Kan extension of $F$ along the map $J \to \pt$. 
    Equivalently, $\colim_J F = F'(\pt) $ where $F' \colon \Psh(J) \to \calC$ is the unique colimit preserving extension of $F$ (\cite[Theorem 5.1.5.6]{HTT}) and $\pt$ is the terminal presheaf.
    By \cite[Lemma 5.5.8.13]{HTT} we may write $\pt$ as a geometric realization of coproducts of representables.
    Applying the colimit-preserving functor $F'$ to this gives the desired description of $\colim_J F$.
\end{proof}

Since small coproducts can be written as filtered colimits over finite coproducts (which in turn are either initial objects or iterated binary coproducts), we have:
\begin{cor}\label{sifted+binary+initial=all}
    For a functor $F\colon \calC \to \calD$ between cocomplete $\infty$-categories the following are equivalent:
    \begin{enumerate}[(1)]
        \item $F$ preserves small colimits,
        \item $F$ preserves geometric realizations and small coproducts,
        \item $F$ preserves sifted colimits, binary coproducts, and the initial object.
    \end{enumerate}
\end{cor}

We now record several equivalent characterizations of equifibered maps,
which will be useful throughout the paper:
\begin{prop}\label{prop:TFAE-to-equifibered}
    Let $f\colon  M \to N$ be morphism of commutative monoids. The following are equivalent:
    \begin{enumerate}[(1)]
        \item \label{item:eqf}
        $f$ is equifibered.
        \item \label{item:eqf-rlp}
        $f$ is right orthogonal to 
        $\Delta\colon  \xF(*) \to \xF(*) \oplus \xF(*)$.
        \item \label{item:eqf-n}
        For all $n \ge 0$ the following square is cartesian:
        \[
        \begin{tikzcd}
            M^n \ar[r] \ar[d, "f^n"'] &  M \ar[d, "f"] \\
            N^n \ar[r] & N.
        \end{tikzcd}
        \]
        \item \label{item:eqf-xF}
        The following square obtained from the counits of the adjunction
        $(\xF \dashv U)$ is cartesian:
        \[
        \begin{tikzcd}
            \xF(M) \ar[r, "+"] \ar[d, "\xF(f)"'] &  M \ar[d, "f"] \\
            \xF(N) \ar[r, "+"] &  N.
        \end{tikzcd}
        \]
        \item \label{item:eqf-rep-free}
            $f$ is representably free:
            for any space $X$ and map $X \to N$ the following square is cartesian:
            \[
                \begin{tikzcd}
                    \xF(X \times_N M) \ar[r] \ar[d] & M \ar[d] \\
                    \xF(X) \ar[r] & N.
                \end{tikzcd}
            \]
        \item \label{item:exp}
        $f$ is exponentiable, i.e.~the base change functor $f^\ast \colon \CMon_{/N} \to \CMon_{/M}$ preserves colimits.
    \end{enumerate}
\end{prop}
\begin{proof}
    \ref{item:eqf} $\Leftrightarrow$ \ref{item:eqf-rlp}:
    A morphism $f\colon M \to N$ is right orthogonal \cite[Definition 5.2.8.1]{HTT} with respect
    to the diagonal map $\Delta\colon  \xF(*) \to \xF(*) \times \xF(*)$
    if and only if the following square of spaces is cartesian:
    \[
        \begin{tikzcd}
            {\Map_\CMon(\xF(*) \times \xF(*), M)} \ar[r, "\Delta^*"] \ar[d, "f_*"'] &
            {\Map_\CMon(\xF(*), M)} \ar[d, "f_*"] \\
            {\Map_\CMon(\xF(*) \times \xF(*), N)} \ar[r, "\Delta^*"] &
            {\Map_\CMon(\xF(*), N)}
        \end{tikzcd}
    \]
    Using that $\xF(*) \times \xF(*) \simeq \xF(* \amalg *)$ 
    and using the adjunction $(\xF \dashv U)$ 
    this can be identified with the square in \cref{defn:eqf}.
    
    \ref{item:eqf} $\Leftrightarrow$ \ref{item:eqf-n}:
    We show that if the square in \ref{item:eqf-n} is cartesian for $n=2$ (this is \ref{item:eqf}), then it is also cartesian for all other $n$.
    For $n=0$ condition \ref{item:eqf-n} says that the fiber $f^{-1}(0) = \{0\} \times_N M$
    is contractible. 
    By \cref{obs:eqf=equi-fibered}
    the addition map 
    $+\colon f^{-1}(0) \times f^{-1}(0) \to f^{-1}(0)$ is an equivalence.
    This is only possible for the $0$-monoid (\cref{rem:addition-equivalence}),
    hence $f^{-1}(0) = \{0\} \times_N M$ is contractible.
    Now suppose condition \ref{item:eqf-n} is satisfied for all $k \le n$, where $n \ge 2$.
    Then the $(n+1)$-square may be written as a composite of squares:
    \[
        \begin{tikzcd}
            M^n \times M \ar[r, "+ \times \id"] \ar[d, "f^n\times f"'] &
            M \times M \ar[r, "+"] \ar[d, "f\times f"] &  M \ar[d, "f"] \\
            N^n \times N \ar[r, "+ \times \id"] & 
            N \times N \ar[r, "+"] & N.
        \end{tikzcd}
    \]
    The left square is cartesian by condition \ref{item:eqf-n} for $n$
    and the right square is cartesian by the condition for $n=2$.
    Hence, the entire rectangle is cartesian and condition \ref{item:eqf-n}
    is satisfied for $n+1$.
    The claim now follows by induction.
    
    \ref{item:eqf-n} $\Leftrightarrow$ \ref{item:eqf-xF}:
    Consider the commutative diagram
    \[
    \begin{tikzcd}
        \coprod_{n\ge 0} M^n \ar[r] \ar[d] & 
        \xF(M) \ar[r, "+"] \ar[d] & 
        M \ar[d] \\
        \coprod_{n \ge 0} N^n \ar[r] & \xF(N) \ar[r, "+"] & N.
    \end{tikzcd}
    \]
    The left square is cartesian since the horizontal maps 
    both have equivalent fibers: the fiber of $M^n \to M^n_{h\Sigma_n}$ 
    at any point is the finite set $\Sigma_n$, independently of $M$.
    Condition \ref{item:eqf-n} says that the outside square is cartesian
    and condition \ref{item:eqf-xF} says that the right square is cartesian.
    Since the map $\coprod_{n \ge 0} N^n \to \xF(N)$ is surjective
    on connected components, it follows from the pullback pasting
    lemma that the two conditions are equivalent.
    
    \ref{item:eqf-xF} $\Leftrightarrow$ \ref{item:eqf-rep-free}:
    Suppose $f\colon M \to N$ satisfies condition \ref{item:eqf-xF}.
    For all $g\colon X \to N$ the map $\xF(X) \to N$ can be factored as 
    $+ \circ \xF(g)\colon  \xF(X) \to \xF(N) \to N$ and so
    the square in \ref{item:eqf-rep-free} may be factored as
    \[
    \begin{tikzcd}
        \xF(X \times_N M) \ar[r] \ar[d] & 
        \xF(M) \ar[r, "+"] \ar[d] & 
        M \ar[d] \\
        \xF(X) \ar[r] & \xF(N) \ar[r, "+"] & N.
    \end{tikzcd}
    \]
    The right square is cartesian by assumption and the left
    square is cartesian because $\xF$ preserves pullbacks.
    Hence, the entire square is cartesian, which is exactly
    condition \ref{item:eqf-rep-free}.
    For the converse we simply set $X = N$.

    \ref{item:eqf-n} $\Leftrightarrow$ \ref{item:exp} 
    By \cref{sifted+binary+initial=all} $f^\ast$ preserves small colimits if it preserves finite coproducts and sifted colimits.
    All colimits in the slice $\CMon_{/N}$ are computed in $\CMon$.
    Sifted colimits can be computed in $\calS$ by \cref{lem:basic-properties-for-xF},
    and are thus preserved as the functor $f^*\colon \calS_{/N} \to \calS_{/M}$ preserves all colimits.
    So $f$ is exponentiable if and only if $f^*$ preserves finite coproducts.
    Since $\CMon$ is semi-additive, finite coproducts are finite direct sums, which may be computed as products in $\calS$.
    Now suppose that $f$ satisfies \ref{item:eqf-n}, then we need to show that $f^*$ preserves any finite coproduct $\bigoplus_{i \in I} A_i$.
    In the diagram
\[\begin{tikzcd}
	{\bigoplus_{i \in I}f^*(A_i)} & {\bigoplus_{i \in I}M} & M \\
	{\bigoplus_{i \in I}A_i} & {\bigoplus_{i \in I}N} & N
	\arrow[from=1-1, to=1-2]
	\arrow[from=1-1, to=2-1]
	\arrow["\lrcorner"{anchor=center, pos=0.125}, draw=none, from=1-1, to=2-2]
	\arrow["{+}", from=1-2, to=1-3]
	\arrow["{\bigoplus_{i \in I}f}"', from=1-2, to=2-2]
	\arrow["\lrcorner"{anchor=center, pos=0.125}, draw=none, from=1-2, to=2-3]
	\arrow["f", from=1-3, to=2-3]
	\arrow[from=2-1, to=2-2]
	\arrow["{+}", from=2-2, to=2-3]
\end{tikzcd}\]
    the left square is cartesian because it is the $I$-indexed product of cartesian squares and the right square is cartesian by \ref{item:eqf-n}.
    Hence, pullback pasting implies $f^*(\bigoplus_{i \in I} A_i) \simeq \bigoplus_{i \in I} f^*(A_i)$,  proving \ref{item:eqf-n}~$\Rightarrow$~\ref{item:exp}.
    Considering the special case where $A_i = N$ yields the converse implication.
\end{proof}

\begin{rem}\label{rem:addition-equivalence}
    In the above proof we used that if $M$ is a commutative monoid for which the addition map $+\colon M \times M \to M$ is an equivalence, then $M = 0$ is the $0$-monoid.
    Indeed, then $0 \times \id_M\colon M \to M \times M$ must be an equivalence because it is a section of $+$.
    But $0 \times \id_M$ contains $0\colon \pt \to M$ as a retract, so this map is also an equivalence, i.e.~$M$ is contractible.
    (Alternatively, one can argue that $\pi_k(M)$ is trivial for all $k$.)
\end{rem}

Given a commutative monoid $M \in \CMon$ we let $\hldef{\CMon^\eqf_{/M}} \subset \CMon_{/M}$ denote the full subcategory spanned by equifibered maps with target $M \in \CMon$.
(By the cancellation property from \cref{obs:eqf-cancellation} this agrees with the slice category of $\CMon^\eqf$ over $M$.)
Applying characterization \ref{item:eqf-rep-free} of \cref{prop:TFAE-to-equifibered} in the special case where $N = \xF(X)$ yields the following corollary.

\begin{cor}\label{cor: equifibered-over-free-equals-spaces}
    For a space $X \in \calS$ the functor $\xF \colon  \calS_{/X} \too \CMon_{/\xF(X)}$ induces an equivalence of $\infty$-categories $\calS_{/X} \simeq \CMon^{\eqf}_{/\xF(X)}$.
    The inverse can be described as the composite
    \[
        \CMon_{/\xF(X)}^\eqf \hookrightarrow \CMon_{/\xF(X)} 
        \xtoo{\text{forget}} \calS_{/\xF(X)} 
        \xtoo{\iota^*} \calS_{/X}
    \]
    where the last functor pulls back along $\iota\colon X \hookrightarrow \xF(X)$. 
\end{cor}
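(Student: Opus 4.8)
The plan is to exhibit the two functors $\xF\colon \calS_{/X} \to \CMon^{\eqf}_{/\xF(X)}$ and $\Phi \coloneq \iota^* \circ (\text{forget})\colon \CMon^{\eqf}_{/\xF(X)} \to \calS_{/X}$ as mutually inverse, by producing natural equivalences $\xF\Phi \simeq \id$ and $\Phi\xF \simeq \id$. First I would check that $\xF$ actually lands in equifibered maps: for $(g\colon Y \to X) \in \calS_{/X}$ the induced map $\xF(g)$ is equifibered by \cref{ex:free-is-eqf}, so $\xF$ does define a functor into $\CMon^{\eqf}_{/\xF(X)}$, and $\Phi$ is well-defined since pulling back along $\iota\colon X \to \xF(X)$ lands in $\calS_{/X}$.

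The heart of the argument is the equivalence $\xF\Phi \simeq \id$, and this is precisely what characterization (\ref{item:eqf-rep-free}) of \cref{prop:TFAE-to-equifibered} provides. Given an equifibered $f\colon M \to \xF(X)$, I would apply that characterization to the map $\iota\colon X \to \xF(X)$, yielding a cartesian square
\[
    \begin{tikzcd}
        \xF(X \times_{\xF(X)} M) \ar[r] \ar[d] & M \ar[d, "f"] \\
        \xF(X) \ar[r] & \xF(X).
    \end{tikzcd}
\]
The key observation is that the bottom map, being the $(\xF \dashv U)$-adjunct of the unit $\iota$, equals $\id_{\xF(X)}$ by the triangle identity. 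A cartesian square whose bottom edge is an equivalence has its top edge an equivalence, so the natural map $\xF(\Phi(f)) = \xF(X \times_{\xF(X)} M) \to M$ is an equivalence over $\xF(X)$. Since this top edge is the adjunct of the projection and hence natural in $f$, it assembles into the desired natural equivalence $\xF\Phi \simeq \id$.

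For the reverse composite, given $(g\colon Y \to X)$ I would use naturality of the unit, $\xF(g)\circ\iota_Y = \iota_X \circ g$, to produce a canonical map $Y \to X \times_{\xF(X)} \xF(Y) = \Phi\xF(g)$ over $X$, and then check it is an equivalence by comparing homotopy fibers over each $x \in X$. Using the formula $\xF(-) \simeq \coprod_{n} (-)^n_{h\Sigma_n}$ from \cref{lem:basic-properties-for-xF}.(2), the map $\xF(g)$ preserves the word-length grading and $\iota_X(x)$ lies in the length-one summand $X \subseteq \xF(X)$; hence the fiber of $\xF(g)$ over $\iota_X(x)$ receives contributions only from the length-one summand $Y \subseteq \xF(Y)$ and is identified with $g^{-1}(x)$, compatibly with the fiber $g^{-1}(x)$ of $Y \to X$. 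Thus the comparison map is a fiberwise equivalence over $X$, hence an equivalence, giving $\Phi\xF \simeq \id$.

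I expect the only genuinely delicate points to be the identification of the bottom edge of the square with the identity (via the triangle identity) and the verification that the resulting equivalences are natural enough to be natural transformations of functors; the rest is bookkeeping on fibers. An alternative to the explicit grading argument in the final step would be to apply characterization (\ref{item:eqf-rep-free}) once more with $M = \xF(Y)$ to obtain $\xF(\Phi\xF(Y)) \simeq \xF(Y)$ over $\xF(X)$, and then descend this to an equivalence $\Phi\xF(Y) \simeq Y$ over $X$ using the faithfulness of $\xF$ on cores recorded in \cref{lem:free-is-a-property}.
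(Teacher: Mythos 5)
Your proposal is correct and takes essentially the same route as the paper, which derives this corollary in one line by applying characterization (\ref{item:eqf-rep-free}) of \cref{prop:TFAE-to-equifibered} with $N = \xF(X)$ --- exactly your central step for $\xF\Phi \simeq \id$. The extra verifications you supply (the triangle identity identifying the bottom edge with $\id_{\xF(X)}$, and the fiberwise word-length argument for $\Phi\xF \simeq \id$) are details the paper leaves implicit, and they check out.
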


\begin{obs}\label{obs:elementary-subspace}
    The space of equifibered maps $\xF(X) \to \xF(\ast)$ is contractible.
    Indeed, it corresponds to the subspace of $\Map_\calS(X, \xF(\ast))$ where all of $X$ is mapped to the subspace $\ast \subset \xF(\ast)$.
    Therefore, $\xF(*)$ is a terminal object of the replete subcategory $\CMon^{\free, \eqf} \subset \CMon$ 
    of free commutative monoids and equifibered maps.
    So we have $\CMon^{\free, \eqf} \simeq (\CMon^{\free, \eqf})_{/\xF(\ast)} \simeq \CMon^\eqf_{/\xF(\ast)}$.
    Applying \cref{cor: equifibered-over-free-equals-spaces} in the case of $X=\ast$ shows that the free functor 
    $\xF\colon  \calS \too \CMon$ restricts to an equivalence $\calS \simeq \CMon^{\free, \eqf}$.
    We denote the inverse equivalence by:
    \[
        (-)^\el \colon \CMon^{\free, \eqf} \too \calS.
    \]
    For a free commutative monoid $M$ one can also describe $M^\el \in \calS$ as the unique subspace $M^\el \subset M$ such that $\xF(M^\el) \to M$ is an equivalence.
\end{obs}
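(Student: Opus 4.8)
The plan is to isolate the one genuinely new input — an explicit description of the equifibered maps into $\xF(\ast)$ — and then to deduce every other assertion formally from \cref{cor: equifibered-over-free-equals-spaces}, \cref{lem:free-is-a-property}, and the fact that slicing over a terminal object is an equivalence. Write $\xF(\ast) \simeq \coprod_{n \ge 0} B\Sigma_n$, let $\ast = B\Sigma_1 \subseteq \xF(\ast)$ be the generator, and let $\deg$ denote the canonical monoid map from a free monoid to $\bbN$ sending each generator to $1$; thus $\ast \subseteq \xF(\ast)$ is precisely the degree-$1$ component. Throughout I regard $X \subseteq \xF(X)$ via the unit $\iota\colon X \hookrightarrow \xF(X)$.

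The crux, and the step I expect to be the main obstacle, is to prove that under the adjunction equivalence $\Map_\CMon(\xF(X), \xF(\ast)) \simeq \Map_\calS(X, \xF(\ast))$ a map $f$ is equifibered if and only if the corresponding $g\colon X \to \xF(\ast)$ factors through $\ast$. The backward direction is immediate: if $g$ factors through $\ast$ then $f \simeq \xF(!\colon X \to \ast)$ by the triangle identity, which is equifibered by \cref{ex:free-is-eqf}. For the forward direction I would show that each generator $x \in X$ satisfies $\deg(f(x)) = 1$. It cannot be $0$: otherwise $f(x) = 0$, so $x$ lies in $f^{-1}(0)$, which is contractible and contains the unit (the $n=0$ case of condition~(\ref{item:eqf-n}) in \cref{prop:TFAE-to-equifibered}), forcing $x = 0$ and contradicting $\deg x = 1$. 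It cannot be $\ge 2$: writing $f(x) = a + b$ with $\deg a, \deg b \ge 1$, \cref{obs:eqf=equi-fibered} gives an equivalence $f^{-1}(a) \times f^{-1}(b) \xtoo{+} f^{-1}(a+b)$, so $x = u + v$ with $f(u) = a$ and $f(v) = b$; comparing degrees yields $\deg u + \deg v = 1$, whence one summand, say $u$, is the unit and $a = f(u) = 0$, contradicting $\deg a \ge 1$. Hence $\deg(f(x)) = 1$, i.e.\ $g$ lands in $\ast$.

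Granting this, the space of equifibered maps $\xF(X) \to \xF(\ast)$ is $\Map_\calS(X, \ast) \simeq \ast$, which is the contractibility claim; since every object of $\CMon^{\free,\eqf}$ has the form $\xF(X)$, it exhibits $\xF(\ast)$ as terminal there, so the forgetful functor $(\CMon^{\free,\eqf})_{/\xF(\ast)} \too \CMon^{\free,\eqf}$ is an equivalence. I would then note that $(\CMon^{\free,\eqf})_{/\xF(\ast)}$ is the full subcategory of $\CMon^\eqf_{/\xF(\ast)}$ on objects with free source, and that by \cref{cor: equifibered-over-free-equals-spaces} every object of $\CMon^\eqf_{/\xF(\ast)}$ already has free source; hence this inclusion is an equivalence. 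Composing with the equivalence $\CMon^\eqf_{/\xF(\ast)} \simeq \calS_{/\ast} \simeq \calS$ from \cref{cor: equifibered-over-free-equals-spaces} (the case $X = \ast$) and tracking the functors shows that $\xF\colon \calS \to \CMon^{\free,\eqf}$ is an equivalence, and I define $(-)^\el$ to be its inverse.

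Finally, for the subspace description I would use that the inverse in \cref{cor: equifibered-over-free-equals-spaces} is pullback along $\iota\colon \ast \hookrightarrow \xF(\ast)$: applied to a free $M$ equipped with its unique equifibered map $M \to \xF(\ast)$, it gives $M^\el \simeq \ast \times_{\xF(\ast)} M \subseteq M$, the fiber over the generator, which for $M = \xF(Y)$ recovers $Y \subseteq \xF(Y)$. Uniqueness among subspaces $Z \subseteq M$ with $\xF(Z) \iso M$ then follows from \cref{lem:free-is-a-property}: any such equivalence carries indecomposables to indecomposables, so $Z$ coincides with the subspace of indecomposables of $M$, namely $M^\el$.
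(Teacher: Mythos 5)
Your proposal is correct, and the formal consequences (terminality of $\xF(\ast)$ in $\CMon^{\free,\eqf}$, the chain of slice equivalences, the pullback description of $M^\el$ and its uniqueness via \cref{lem:free-is-a-property}) are handled exactly as the paper intends. Where you genuinely diverge is in the crux you correctly single out: identifying the equifibered maps $\xF(X) \to \xF(\ast)$ with the maps $X \to \ast \subset \xF(\ast)$. The paper gets this in one line from \cref{cor: equifibered-over-free-equals-spaces} applied with $X = \ast$ together with \cref{lem:free-is-a-property}: every equifibered $N \to \xF(\ast)$ is free on its fiber $Y = \ast \times_{\xF(\ast)} N$ over the generator, and for $N = \xF(X)$ the resulting equivalence $\xF(Y) \simeq \xF(X)$ forces $Y \simeq X$ as subspaces of $\xF(X)$, i.e.\ $f(X) \subseteq \ast$. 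You instead argue by hand that each generator must land in degree $1$, using contractibility of $f^{-1}(0)$ (the $n=0$ case of \cref{prop:TFAE-to-equifibered}) and the multiplicativity of fibers from \cref{obs:eqf=equi-fibered}. Your route is more elementary and self-contained for this step -- it never invokes \cref{cor: equifibered-over-free-equals-spaces} there, only the fiber characterization of equifiberedness -- at the cost of length; the paper's is essentially formal given the corollary it has just proved. Both are sound, and your later reliance on the corollary (for $\CMon^\eqf_{/\xF(\ast)} \simeq \calS$ and for $M^\el \simeq \ast \times_{\xF(\ast)} M$) matches the paper.
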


We now record some formal properties of equifibered maps.
Some of these properties can also be seen as formal consequences of the fact that equifibered maps are the right class of a factorization system, which we prove in \cref{lem:formal-ctf-eqf}.

\begin{lem}\label{lem:eqf-maps-closed-under-limits}
    The full subcategory of $\Ar(\CMon)$ on the equifibered morphisms
    is closed under all limits and filtered colimits.
\end{lem}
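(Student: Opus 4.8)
The plan is to reduce the statement to the left-exactness properties of $\calS$ by testing the equifibered condition on underlying spaces. First I would record that both limits and filtered colimits in $\Ar(\CMon) = \Fun([1],\CMon)$ are computed pointwise, so that a (co)limit of a diagram $\{f_i \colon M_i \to N_i\}_{i \in I}$ is the morphism $(\mathrm{co})\!\lim_i M_i \to (\mathrm{co})\!\lim_i N_i$ with both (co)limits formed in $\CMon$. Since the forgetful functor $U$ is conservative and preserves products (being a right adjoint by \cref{lem:basic-properties-for-xF}.(1)), the square of \cref{defn:eqf} may be tested after applying $U$: a morphism $f$ is equifibered if and only if the square in $\calS$
\[
\begin{tikzcd}
U(M) \times U(M) \ar[r, "+"] \ar[d] & U(M) \ar[d] \\
U(N) \times U(N) \ar[r, "+"] & U(N)
\end{tikzcd}
\]
is cartesian. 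As everything here is natural in $f$, it suffices to show that the class of cartesian squares of this shape is closed under the relevant (co)limits, computed after applying $U$.

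For limits, I would argue as follows. Since $U$ is a right adjoint it preserves limits, so $U(\lim_i M_i) \simeq \lim_i U(M_i)$ and likewise for $N$; as products commute with limits and the addition maps are natural, the square associated to $\lim_i f_i$ is the limit over $i$ of the squares associated to the $f_i$. A limit of cartesian squares is cartesian (the pullback is itself a limit, and limits commute with limits), so $\lim_i f_i$ is equifibered whenever each $f_i$ is.

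For filtered colimits, the key input is the left-exactness of filtered colimits in $\calS$. By \cref{lem:basic-properties-for-xF}.(1) the forgetful functor $U$ preserves sifted, hence filtered, colimits, so $U(\colim_i M_i) \simeq \colim_i U(M_i)$ and similarly for $N$. Because filtered colimits in $\calS$ commute with finite products, the two product corners also identify with the corresponding colimits, so the square associated to $\colim_i f_i$ is the filtered colimit of the cartesian squares associated to the $f_i$. Since filtered colimits in $\calS$ commute with finite limits, they preserve cartesian squares, and hence $\colim_i f_i$ is equifibered.

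I expect the filtered-colimit case to be the only real content. Closure under limits is in fact formal: by \cref{prop:TFAE-to-equifibered} the equifibered maps are exactly those right orthogonal to $\Delta\colon \xF(*) \to \xF(*)\times\xF(*)$, and a right-orthogonal class is automatically closed under limits in the arrow category. By contrast, closure under filtered colimits genuinely uses both that $U$ commutes with filtered colimits and that filtered colimits are left exact in $\calS$; without these two facts the statement would fail, just as \cref{ex:free-is-eqf} already fails for free monoids in ordinary sets.
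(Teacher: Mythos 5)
Your proof is correct and is essentially the paper's argument: the paper disposes of this lemma in one line by noting that the product and the pullback appearing in \cref{defn:eqf} are preserved under all limits and under filtered colimits, which is exactly the content you spell out (via $U$ being conservative, limit-preserving, and sifted-colimit-preserving, plus left-exactness of filtered colimits in $\calS$). Your added remark that closure under limits also follows formally from right-orthogonality to $\Delta\colon \xF(*)\to\xF(*)\times\xF(*)$ is a valid alternative for that half, and is indeed the mechanism the paper uses elsewhere (e.g.\ in \cref{lem:colim-of-eqf-slice}).
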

\begin{proof}
    This holds because both the product and the pullback used in the definition are preserved under all limits and filtered colimits. 
\end{proof}

\begin{lem}\label{lem:eqf-base-change}
    Suppose we have a cartesian square of commutative monoids
    \[
        \begin{tikzcd}
            M_1 \ar[r] \ar[d, "f_1"'] \ar[dr, very near start, phantom, "\lrcorner"] & M_2 \ar[d, "f_2"] \\
            N_1 \ar[r, "g"] & N_2.
        \end{tikzcd}
    \]
    If $f_2$ is equifibered, then so is $f_1$.
    Conversely, if $f_1$ is equifibered and 
    $\pi_0(g)\colon  \pi_0 N_1 \to \pi_0 N_2$
    is surjective, then $f_2$ is equifibered.
\end{lem}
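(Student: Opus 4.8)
The plan is to reduce everything to the fiberwise criterion of \cref{obs:eqf=equi-fibered}, which says that a map is equifibered precisely when all the addition maps $+\colon f^{-1}(a) \times f^{-1}(b) \to f^{-1}(a+b)$ are equivalences, and that it suffices to check this for one representative $a,b$ in each connected component of the target. The key starting observation is that in a cartesian square the fibers of the two vertical maps agree. Writing $h\colon M_1 \to M_2$ for the top map and $g\colon N_1 \to N_2$ for the bottom map, for each $a \in N_1$ the pullback identity $M_1 \simeq N_1 \times_{N_2} M_2$ gives a natural equivalence $f_1^{-1}(a) \simeq \{a\} \times_{N_1} M_1 \simeq \{a\} \times_{N_2} M_2 \simeq f_2^{-1}(g(a))$, induced by restricting $h$ to fibers.

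The second step is to check that these fiber equivalences intertwine the two addition maps. Since $h$ and $g$ are maps of commutative monoids, $h$ carries $f_1^{-1}(a) \times f_1^{-1}(b)$ into $f_2^{-1}(g(a)) \times f_2^{-1}(g(b))$ and $f_1^{-1}(a+b)$ into $f_2^{-1}(g(a)+g(b)) = f_2^{-1}(g(a+b))$, compatibly with $+$. Concretely, I would record the commuting square
\[
\begin{tikzcd}
    f_1^{-1}(a) \times f_1^{-1}(b) \ar[r, "\sim"] \ar[d, "+"'] & f_2^{-1}(g(a)) \times f_2^{-1}(g(b)) \ar[d, "+"] \\
    f_1^{-1}(a+b) \ar[r, "\sim"] & f_2^{-1}(g(a+b))
\end{tikzcd}
\]
whose horizontal maps are the equivalences from the first step. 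Thus for fixed $a,b \in N_1$ the top addition map is an equivalence if and only if the bottom one is.

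From here both directions would follow formally. For the first claim, suppose $f_2$ is equifibered. Given any $a, b \in N_1$ the bottom map of the square is an equivalence, hence so is the top; as $a, b$ were arbitrary, $f_1$ is equifibered, and no hypothesis on $g$ is needed. For the converse, suppose $f_1$ is equifibered and $\pi_0(g)\colon \pi_0 N_1 \to \pi_0 N_2$ is surjective. By \cref{obs:eqf=equi-fibered} it suffices to verify the addition map for $f_2$ over one pair of representatives in each pair of components of $N_2$. Surjectivity of $\pi_0(g)$ lets me choose these representatives of the form $g(a), g(b)$ with $a, b \in N_1$; the square above then identifies the relevant $f_2$-addition map with the $f_1$-addition map over $a, b$, which is an equivalence by assumption.

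The only genuinely delicate point is the role of the surjectivity hypothesis in the converse: it is exactly what guarantees that the square has access to every component of $N_2$, so that the reduction to component representatives in \cref{obs:eqf=equi-fibered} can be fed by elements coming from $N_1$. Establishing the compatibility square is routine naturality of the monoid structure under $h$, and I expect it to pose no real difficulty; if one prefers to avoid fibers, the same conclusion can be packaged as a pasting argument applied to the commutative cube relating the defining squares of \cref{defn:eqf} for $f_1$ and $f_2$, two of whose connecting faces --- the original square and its self-product --- are cartesian.
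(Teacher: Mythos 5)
Your proposal is correct and follows essentially the same route as the paper's proof: both reduce to the fiberwise criterion of \cref{obs:eqf=equi-fibered}, use the cartesian square to identify $f_1^{-1}(a) \simeq f_2^{-1}(g(a))$, and observe that the resulting square of fibers intertwines the two addition maps, with surjectivity of $\pi_0(g)$ ensuring every component of $N_2$ is reached in the converse direction. No substantive differences to report.
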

\begin{proof}
    We will use the characterization from \cref{obs:eqf=equi-fibered}.
    For all $a, b \in N_1$ we have the following square of fibers:
    \[
    \begin{tikzcd}
        {f_1^{-1}(a) \times f_1^{-1}(b)} \ar[r, "+"] \ar[d, "\simeq"] &
        {f_1^{-1}(a+b)} \ar[d, "\simeq"] \\
        {f_2^{-1}(g(a)) \times f_2^{-1}(g(b))} \ar[r, "+"] &
        {f_2^{-1}(g(a)+g(b))}
    \end{tikzcd}
    \]
    where the vertical maps are equivalences because the square in 
    the statement of the lemma is cartesian.
    Now suppose $f_2$ is equifibered, then the bottom map is an equivalence
    for all $a, b \in N_1$ and hence so is the top map. 
    This shows that $f_1$ is equifibered.
    The other direction follows similarly:
    since we assume that $\pi_0(g)$ is surjective, it suffices
    to check \cref{obs:eqf=equi-fibered} at $g(a), g(b) \in N_2$
    for all $a,b \in N_1$.
\end{proof}

\begin{lem}\label{lem:submonoid-eqf}
    A monomorphism $i\colon M \into N$ of commutative monoids is equifibered 
    if and only if $\pi_0 M \subseteq \pi_0 N$ closed under factoring, 
    i.e.\ whenever $[x_1],[x_2] \in \pi_0 N$ satisfy 
    $[x_1] + [x_2] \in \pi_0 M$ then $[x_1]$ and $[x_2] $ are both in $\pi_0 M$.
\end{lem}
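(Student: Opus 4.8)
The plan is to reduce everything to the fiberwise criterion of \cref{obs:eqf=equi-fibered} and then exploit the fact that a monomorphism has empty-or-contractible fibers. First I would record that, since the forgetful functor $U\colon \CMon \to \calS$ is conservative and preserves limits by \cref{lem:basic-properties-for-xF}.(1), the map $i$ is a monomorphism of commutative monoids if and only if its underlying map of spaces is a monomorphism. A monomorphism of spaces is $(-1)$-truncated, so each fiber $i^{-1}(n)$ is either empty or contractible; concretely $i^{-1}(n)$ is contractible precisely when $n$ lies in a connected component hit by $i$, that is when $[n] \in \pi_0 M$ (viewing $\pi_0 M \subseteq \pi_0 N$ via the injection $\pi_0 i$), and empty otherwise. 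Moreover, since $i$ is a map of monoids, $\pi_0 M$ is a submonoid of $\pi_0 N$ and in particular is closed under addition.

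Next I would apply \cref{obs:eqf=equi-fibered}: the map $i$ is equifibered if and only if for all $a, b \in N$ (and it suffices to take one representative per component) the addition map
\[
    +\colon i^{-1}(a) \times i^{-1}(b) \too i^{-1}(a+b)
\]
is an equivalence. The key observation is that this is a map between spaces that are each empty or contractible, so it is an equivalence if and only if its source and target are simultaneously empty or simultaneously nonempty; indeed, when both are nonempty any map between contractible spaces is an equivalence. The source $i^{-1}(a) \times i^{-1}(b)$ is nonempty exactly when $[a] \in \pi_0 M$ and $[b] \in \pi_0 M$, while the target $i^{-1}(a+b)$ is nonempty exactly when $[a+b] = [a]+[b] \in \pi_0 M$.

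The equivalence of the two conditions then falls out of a short case analysis. Because $\pi_0 M$ is closed under addition, whenever the source is nonempty the target is automatically nonempty, so one implication always holds; the only content is the reverse direction, that the target being nonempty forces the source to be nonempty. Unwinding, $i$ is equifibered if and only if $[a+b] \in \pi_0 M$ implies both $[a] \in \pi_0 M$ and $[b] \in \pi_0 M$ for all $a, b \in N$, which is precisely the statement that $\pi_0 M \subseteq \pi_0 N$ is closed under factoring.

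Since each step is essentially forced, I do not anticipate a genuine obstacle here. The only points that require a little care are the identification of the fibers of a monomorphism — empty versus contractible, keyed to membership in $\pi_0 M$ — and the remark that the comparison map is automatically an equivalence as soon as both sides are inhabited. Everything else is bookkeeping with $\pi_0$.
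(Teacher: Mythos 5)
Your proof is correct and is essentially the paper's argument: the paper checks cartesianness of the defining square directly by noting both vertical maps are monomorphisms, so the square is cartesian iff a component of $N \times N$ is hit by $i \times i$ exactly when its image under $+$ is hit by $i$, which is the same $\pi_0$ reduction you obtain fiberwise via \cref{obs:eqf=equi-fibered}. Your version spells out the empty-versus-contractible fiber analysis a bit more explicitly, but the content is identical.
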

\begin{proof}
    The inclusion is equifibered if and only if the following square is cartesian:
    \[
    \begin{tikzcd}
        M \times M \ar[r, "+"] \ar[d, "i\times i"', hook] & M \ar[d, "i", hook] \\
        N \times N \ar[r, "+"] & N .
    \end{tikzcd}
    \]
    Since both vertical maps are monomorphisms, the square is cartesian exactly when it satisfies that the connected component 
    $([x_1], [x_2]) \in \pi_0(N \times N)$ is hit by $i\times i$
    if and only if its image $[x_1] + [x_2] \in \pi_0(N)$ is hit by $i$.
    This is exactly equivalent to the condition on the submonoid $\pi_0(N) \subset \pi_0(M)$ described in the lemma.
\end{proof}

\begin{example}\label{ex:unit-inclusion-eqf}
    For any commutative monoid $M$ the inclusion of the submonoid of invertible elements $M^\times \subset M$ is equifibered.
    Indeed, if $m+m' \in M^\times$ is invertible, then so are $m$ and $m'$.
\end{example}

The following lemma shows that there are no interesting equifibered
maps between grouplike commutative monoids. See \cite[\S 1]{gepner-universality} for a recollection on grouplike commutative monoids.
\begin{lem}\label{lem:eqf-grouplike}
    Suppose $f\colon M \to N$ is equifibered and $N$ is group-like,
    then $f$ is an equivalence.
\end{lem}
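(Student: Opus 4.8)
The plan is to use the fiber-wise characterization of equifibered maps from \cref{obs:eqf=equi-fibered}, which says that for all $a, b \in N$ the addition map
\[
    +\colon f^{-1}(a) \times f^{-1}(b) \too f^{-1}(a+b)
\]
is an equivalence, and to deduce from this that \emph{every} fiber of $f$ is contractible. Since $U\colon \CMon \to \calS$ is conservative (\cref{lem:basic-properties-for-xF}.(1)), a map of commutative monoids whose underlying map of spaces has all fibers contractible is an equivalence, so establishing contractibility of all fibers suffices.

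First I would treat the fiber over the unit. Setting $a=b=0$, the map $+\colon f^{-1}(0) \times f^{-1}(0) \to f^{-1}(0)$ is an equivalence. The fiber $f^{-1}(0)$ is itself a commutative monoid with some unit $e$, and the two inclusions $x \mapsto (x,e)$ and $x \mapsto (e,x)$ are both sections of this equivalence, hence homotopic to one another; post-composing with a coordinate projection then shows that $\id_{f^{-1}(0)}$ is homotopic to the constant map at $e$, so $f^{-1}(0) \simeq *$. This is exactly the $n=0$ step already appearing in the proof of \cref{prop:TFAE-to-equifibered}, so I would simply invoke it.

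The key step then uses group-likeness. For an arbitrary $a \in N$, choose an inverse $-a$ with $a + (-a) = 0$, which exists precisely because $N$ is group-like. Applying the equifibered condition to the pair $(a,-a)$ yields an equivalence
\[
    f^{-1}(a) \times f^{-1}(-a) \iso f^{-1}(0) \simeq *.
\]
A product of spaces is contractible only if each factor is (nonempty and) contractible, so in particular $f^{-1}(a) \simeq *$. As $a$ was arbitrary, every fiber of $f$ is contractible, and the conclusion follows as indicated above.

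The main obstacle here is conceptual rather than computational: the observation that group-likeness allows one to pair each element with an inverse so that the equifibered square collapses a product of fibers onto the contractible unit-fiber. Once this idea is in place the argument is immediate, the only mild point being the justification that a contractible product forces each factor to be contractible, which is checked on homotopy groups via $\pi_*(A \times B) \cong \pi_* A \times \pi_* B$.
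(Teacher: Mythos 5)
Your proof is correct, and it takes a genuinely different route from the paper's. The paper argues in two stages: it first shows that $M$ is itself group-like (by using equifiberedness to produce inverses in $\pi_0 M$), and then compares the two fiber sequences $M \xrightarrow{(\id,-\id)} M \times M \xrightarrow{+} M$ and $N \xrightarrow{(\id,-\id)} N \times N \xrightarrow{+} N$; since the equifibered square is cartesian, the induced map on fibers of the addition maps — which is $f$ itself once both monoids are group-like — is an equivalence. You instead never need to know that $M$ is group-like: you reduce to showing that every fiber of the underlying map of spaces is contractible (legitimate, since $U\colon \CMon \to \calS$ is conservative and a map of spaces with all fibers contractible is an equivalence), handle the unit fiber by the same argument as the $n=0$ step of \cref{prop:TFAE-to-equifibered}, and then use group-likeness of $N$ to pair each $a$ with an inverse so that $f^{-1}(a) \times f^{-1}(-a) \simeq f^{-1}(0) \simeq *$, whence each factor is contractible (a retract of a contractible space, or your $\pi_*$ computation). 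Your version is more elementary and entirely fiber-wise, at the cost of quantifying over all fibers; the paper's version is a one-line conclusion from cartesianness once group-likeness of $M$ has been established, and has the side benefit of exhibiting that group-likeness explicitly. One small point worth making explicit in your write-up: $a + (-a) = 0$ is an identity in $\pi_0 N$, so you are implicitly using that the fiber $f^{-1}(-)$ only depends on the connected component of the point, which is exactly the remark at the end of \cref{obs:eqf=equi-fibered}.
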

\begin{proof}
    By \cref{prop:TFAE-to-equifibered}.\ref{item:eqf-n} the kernel $f^{-1}(0)$ of an equifibered map is contractible.
    Moreover, \cref{obs:eqf=equi-fibered} tells us that for all $x \in N$ the map
    \[
        + \colon f^{-1}(x) \times f^{-1}(-x) \too f^{-1}(0) \simeq \pt
    \]
    is an equivalence because $f$ is equifibered. 
    This implies that $f^{-1}(x)$ is a retract of $\pt$ and thus is contractible.
    Since we showed this for all $x \in N$, $f$ is an equivalence.
\end{proof}

\subsubsection{The contrafibered-equifibered factorization system}
By \cref{prop:TFAE-to-equifibered}.\ref{item:eqf-rlp} equifibered morphisms are characterized by a lifting property.
We now study the resulting factorization system on $\CMon$, which will imply several pleasant properties of equifibered maps.
We refer the reader to the appendix for a brief introduction to factorization systems.

\begin{defn}
    We say that a morphism of commutative monoids $f\colon M \to N$ is \hldef{contrafibered}
    if it is left-orthogonal to all equifibered morphisms.
\end{defn}

\begin{example}\label{ex:diagonal-ctf}
    For any finite set $A$ the diagonal map
    \[
        \Delta_A\colon \xF(*) \too \prod_A \xF(*) = \xF(A)
    \]
    which sends the generator $\ast \in \xF(\ast)$ to the sum $\sum_{a \in A} a \in \xF(A)$ is contrafibered.
    Indeed, it has the left lifting property with respect to any equifibered map $f\colon M \to N$ because the relevant square
    \[\begin{tikzcd}
        \Map_{\CMon}(\xF(A), M) \ar[r, "\Delta_A^*"] \ar[d, "f_!"] &  
        \Map_{\CMon}(\xF(*), M) \ar[d, "f_!"] \\
        \Map_{\CMon}(\xF(A), N) \ar[r, "\Delta_A^*"] &  
        \Map_{\CMon}(\xF(*), N)
    \end{tikzcd}\]
    can be identified with the square from \cref{prop:TFAE-to-equifibered}.\ref{item:eqf-n}, which is cartesian.
\end{example}

\begin{lem}\label{lem:formal-ctf-eqf}
    The contrafibered and equifibered morphisms form a factorization system on $\CMon$.
\end{lem}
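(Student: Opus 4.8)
The plan is to deduce the lemma from the general existence of factorization systems generated by a small set of morphisms, exploiting that \cref{prop:TFAE-to-equifibered}.(\ref{item:eqf-rlp}) has already identified the equifibered maps with an orthogonality class cut out by a single morphism. Concretely, I would invoke the $\infty$-categorical small object argument for presentable $\infty$-categories (\cite[\S 5.2.8]{HTT}) and then match up the two resulting classes with the contrafibered and equifibered maps.

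First I would record the one global input, namely that $\CMon$ is presentable: it is the full subcategory of $\Fun(\Fin_*, \calS)$ spanned by the functors satisfying the Segal condition, hence an accessible localization of a presheaf $\infty$-category, and therefore presentable. Next I would set $S = \{\Delta\}$, where $\Delta\colon \xF(*) \to \xF(*) \times \xF(*)$ is the single morphism appearing in \cref{prop:TFAE-to-equifibered}.(\ref{item:eqf-rlp}). Applying the small object argument to the presentable $\infty$-category $\CMon$ and the one-element set $S$ yields a factorization system $(L, R)$ whose left class $L$ is the weakly saturated class generated by $\Delta$ and whose right class is $R = L^\perp$. Since $\Delta \in L$ and the saturation operations (pushout, transfinite composition, retract) preserve the property of being left orthogonal to a fixed morphism, one has $R = L^\perp = \{\Delta\}^\perp$; correspondingly $L = {}^\perp R = {}^\perp(\{\Delta\}^\perp)$.

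It then remains only to identify the two classes. By the equivalence (\ref{item:eqf}) $\Leftrightarrow$ (\ref{item:eqf-rlp}) of \cref{prop:TFAE-to-equifibered}, a morphism is equifibered precisely when it is right orthogonal to $\Delta$, so $R = \{\Delta\}^\perp$ is exactly the class of equifibered maps. By definition, a morphism is contrafibered exactly when it is left orthogonal to every equifibered map, i.e.\ when it lies in ${}^\perp R = L$. Hence $(L,R)$ is precisely the contrafibered–equifibered pair, and the lemma follows.

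I do not expect a genuine obstacle: the real content — that equifiberedness is an orthogonality condition against a single generating map — is already contained in \cref{prop:TFAE-to-equifibered}, and everything else is formal once presentability is in hand. The only point requiring care is that the notion of orthogonality feeding the small object argument is the same strong orthogonality of \cite[Definition 5.2.8.1]{HTT} used to characterise equifibered maps, so that the right class of the generated system is literally $\{\Delta\}^\perp = \mathrm{Eqf}$ rather than a class one must compare to it after the fact; since both invoke the identical definition, this matches on the nose.
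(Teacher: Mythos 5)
Your proposal is correct and follows essentially the same route as the paper: both invoke the general existence of factorization systems generated by a small set of morphisms in a presentable $\infty$-category (the paper cites \cite[Proposition 5.5.5.7]{HTT}), identify the right class with the equifibered maps via \cref{prop:TFAE-to-equifibered}.(\ref{item:eqf-rlp}), and identify the left class with the contrafibered maps by definition. The only cosmetic difference is that you generate from $S=\{\Delta\}$ while the paper takes $S=\{\Delta\}^{\bot}$, which yields the same factorization system.
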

\begin{proof}
    It follows from \cite[Proposition 5.5.5.7]{HTT} 
    (see \cite[Proposition 3.1.18]{ANEL} or \cite[04PN]{Kerodon})
    that for any small collection of morphisms $S$ in a presentable $\infty$-category there is a factorization system $({}^\bot(S^\bot), S^\bot)$.
    The claim then follows by setting $S = \{ (\Delta\colon \xF(*) \to \xF(*) \oplus \xF(*)) \}$ such that $S^\bot$ are the equifibered maps.
\end{proof}

\begin{example}\label{ex:cft/eqf-factorization}
    For a finite set $A$ let $A\cdot -\colon \xF(*) \to \xF(*)$ be the unique map that sends the generator to $A \in \Fin^\simeq = \xF(*)$.
    We can construct the contra/equifibered factorization of this by hand as
    \[  
        \xF(*) \xtoo{\Delta_A} \xF(A) \xtoo{\nabla} \xF(*)
    \]
    where the first map is the diagonal, which is contrafibered by \cref{ex:diagonal-ctf}, and the second map is the fold map, i.e.~the free map on $A \to *$.
\end{example}

There are many contrafibered maps between non-free monoids:

\begin{example}\label{ex:contrafibered-grouplike}
    Suppose we are given a group-like commutative monoid $G$ and a morphism $f\colon M \to G$.
    Since equifibered and contrafibered maps form a factorization system
    there is a factorization $f\colon M \to G' \to G$ where the first map
    is contrafibered and the second map is equifibered.
    However, \cref{lem:eqf-grouplike} implies that the second map is an equivalence.
    Consequently, any morphism into a group-like commutative monoid is contrafibered.
\end{example}

\begin{lem}\label{lem:colim-of-eqf-slice}
    For every commutative monoid $M$ the full subcategory 
    $\CMon_{/M}^\eqf \subset \CMon_{/M}$ on the equifibered maps 
    is closed under small limits and under sifted colimits.
    If $M$ is a free commutative monoid, then this subcategory 
    is in fact closed under small colimits.
\end{lem}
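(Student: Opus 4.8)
The plan is to verify all three closure statements by testing equifiberedness on fibres via \cref{obs:eqf=equi-fibered}, so that in each case the problem reduces to commuting a (co)limit past the fibrewise addition maps $+\colon f^{-1}(a)\times f^{-1}(b) \to f^{-1}(a+b)$.

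For limits, let $D\colon I \to \CMon^{\eqf}_{/M}$ be a diagram, write $D(i) = (f_i\colon N_i \to M)$, and let $(f\colon N\to M)$ be its limit in $\CMon_{/M}$. First I would identify the fibres of $f$ with the limits of the fibres of the $f_i$. For $a\in M$ the fibre $f^{-1}(a)$ is corepresented in the slice, namely $f^{-1}(a) \simeq \Map_{\CMon_{/M}}\big((\xF(\ast)\xrightarrow{a} M),\, f\big)$, since a lift of $a\colon \xF(\ast)\to M$ through $f$ is exactly a point of $N$ in the fibre over $a$. As $\Map_{\CMon_{/M}}(c,-)$ preserves limits, this gives $f^{-1}(a) \simeq \lim_I f_i^{-1}(a)$, naturally in $a$ and compatibly with the fibrewise addition. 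Because limits commute with products, $f^{-1}(a)\times f^{-1}(b) \simeq \lim_I\big(f_i^{-1}(a)\times f_i^{-1}(b)\big)$, and under these identifications the addition for $f$ is the limit of the additions for the $f_i$. Each of the latter is an equivalence since $f_i$ is equifibered, hence so is their limit, and $f$ is equifibered.

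For sifted colimits, let $I$ be sifted and $(f\colon N\to M) = \colim_I(f_i\colon N_i\to M)$, computed on total objects since the projection $\CMon_{/M}\to\CMon$ preserves colimits. Here I would instead compute fibres using that $U$ preserves sifted colimits (\cref{lem:basic-properties-for-xF}.(1)), so that $N\simeq \colim_I N_i$ as spaces, together with the universality of colimits in $\calS$: base-changing the colimit along $a\colon \ast \to M$ yields $f^{-1}(a) \simeq \colim_I f_i^{-1}(a)$. The step where siftedness is essential is that sifted colimits in $\calS$ commute with finite products, giving $f^{-1}(a)\times f^{-1}(b) \simeq \colim_I\big(f_i^{-1}(a)\times f_i^{-1}(b)\big)$; the addition is then $\colim_I(+)$, a colimit of equivalences and hence an equivalence, so $f$ is again equifibered. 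This argument breaks for general colimits precisely because products need not commute past them, which is exactly what forces the restriction to sifted shapes when $M$ is arbitrary.

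Finally, when $M = \xF(X)$ is free I would bypass the fibrewise analysis and invoke \cref{cor: equifibered-over-free-equals-spaces}, which exhibits an equivalence $\xF\colon \calS_{/X} \xrightarrow{\ \simeq\ } \CMon^{\eqf}_{/\xF(X)}$. Since colimits in $\calS_{/X}$ and in $\CMon_{/\xF(X)}$ are both computed on total objects and $\xF\colon \calS \to \CMon$ preserves all colimits, the composite $\calS_{/X}\to \CMon_{/\xF(X)}$ preserves all colimits; its essential image is $\CMon^{\eqf}_{/\xF(X)}$, which is therefore closed under arbitrary colimits. Concretely, any diagram in the subcategory is the image of a diagram in $\calS_{/X}$, and its colimit in $\CMon_{/\xF(X)}$ is the image of the corresponding colimit in $\calS_{/X}$. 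In all three cases, being closed in the ambient slice under the relevant (co)limits makes the subcategory inherit them. I expect the main obstacle to be the colimit case: pinning down the correct hypotheses (sifted, respectively free) under which fibres, products, and addition all commute past the colimit, rather than any single hard computation.
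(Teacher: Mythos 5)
Your proof is correct, and two of its three parts take a genuinely different route from the paper's. The sifted-colimit step is essentially the paper's argument (compute the colimit on underlying spaces, use universality of colimits in $\calS$, use siftedness to commute past binary products), just run fibrewise via \cref{obs:eqf=equi-fibered} rather than on the whole defining square. For limits, the paper instead notes that equifibered maps are the right class of a factorization system, so the inclusion $\CMon^{\eqf}_{/M} \subset \CMon_{/M}$ is a right adjoint and hence preserves all limits; your observation that $f^{-1}(a)$ is corepresented in $\CMon_{/M}$ by $(a\colon \xF(\ast) \to M)$ gives a more elementary, self-contained verification that does not need the factorization system. For the free case, the paper reduces general colimits to finite coproducts and checks by hand that the coproduct $N_1 \times N_2 \to \xF(X)^2 \xrightarrow{+} \xF(X)$ is a composite of equifibered maps (the addition being free on the fold map); you instead deduce closure under all colimits from the equivalence $\calS_{/X} \simeq \CMon^{\eqf}_{/\xF(X)}$ of \cref{cor: equifibered-over-free-equals-spaces} together with the fact that the sliced free functor preserves all colimits and is fully faithful onto that subcategory. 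Your version avoids the implicit appeal to generating all colimits from coproducts and sifted colimits, and there is no circularity since that corollary precedes this lemma; the paper's version has the advantage of exhibiting the coproduct explicitly, a computation that is reused later (e.g.\ in \cref{lem:colim-of-eqf-slice-categories}). The only steps you leave implicit --- that the fibrewise additions assemble into natural transformations of $I$-diagrams, and that slice projections create colimits --- are routine.
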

\begin{proof}
    First, we note that because equifibered maps are the right-class
    of a factorization system the inclusion
    $\CMon_{/M}^\eqf \subset \CMon_{/M}$
    is a right adjoint \cite[Observation 2.3.6]{envelopes} 
    and hence preserves all limits.
    
    For a sifted diagram $F\colon I \to \CMon_{/M}$ the colimit may be computed
    on underlying spaces since $\CMon_{/M} \to \CMon$ preserves colimits
    and $\CMon \to \calS$ preserves sifted colimits.
    Because colimits in $\calS$ are universal (i.e.~stable under base change) \cite[Lemma 6.1.3.14.(1)]{HTT} we can compute
    \[
        M^2 \times_M \colim_{i \in I} F(i) 
        \simeq \colim_{i \in I} (M^2 \times_M F(i))
        \simeq \colim_{i \in I} F(i)^2
        \simeq \colim_{i \in I} F(i) \times \colim_{j \in I} F(j)
        \simeq (\colim_{i \in I} F(i))^2
    \]
    where the penultimate equivalence uses that $I$ is sifted.
    This shows that $\colim_I F(i) \to M$ is equifibered.
    
    To prove the second part of the lemma it suffices by \cref{sifted+binary+initial=all} to show that 
    $\CMon_{/M}^\eqf \subset \CMon_{/M}$ contains the initial object and is closed under binary coproducts when $M$ is free.
    For the initial object we know that $0 \to \xF(X)$ is equifibered because it is free on $\emptyset \to X$.
    The coproduct of $N_1 \to M$ and $N_2 \to M$ is the composite map
    \[
        N_1 \times N_2 \too M \times M \xrightarrow{\ +\ } M.
    \]
    The first map is equifibered as a product of equifibered maps 
    and the second map is equifibered because for
    $M = \xF(X)$ the addition $+\colon  \xF(X) \times \xF(X) \to \xF(X)$
    is equivalent to the free map on the fold map 
    $\nabla\colon  X \amalg X \too X$.
\end{proof}

\begin{rem}\label{rem:spans}
    For a finite covering $p\colon Y \to X$ one can construct a transfer map $\mrm{trf}_p\colon \xF(X) \to \xF(Y)$ in $\CMon$ by summing over the fibers of $p$, by restricting the pullback functor $p^*\colon \calS_{/X} \to \calS_{/Y}$ to the groupoids of finite sets over $X$ and $Y$.
    When $X$ and $Y$ are finite sets this agrees with a sum of diagonal maps $\bigoplus_{x \in X} \Delta_{p^{-1}(x)}$ as in \cref{ex:diagonal-ctf} and is thus contrafibered.
    One can assemble these transfer maps into an equivalence of $\infty$-categories 
    \[
        \Span^{\rm fcov, all}(\calS) \simeq \CMon^\free
    \]
    that sends a space $X$ to $\xF(X)$.
    A morphism in $\Span^{\rm fcov, all}(\calS)$ is a span $X \xleftarrow{p} Y \xto{f} Z$ where $p$ is a finite covering and the equivalence sends them to $\xF(f) \circ \mrm{trf}_p$.
    Under this equivalence of $\infty$-categories the forward maps $X = X \to Z$ correspond to the equifibered (i.e.~free) maps and the backward maps $X \leftarrow Y = Y$ correspond to the contrafibered maps.
    In particular, a map $\xF(X) \to \xF(Y)$ is contrafibered if and only if it is $\mrm{trf}_p$ for some finite covering $p\colon Y \to X$.
\end{rem}

\begin{war}\label{war:ctf-pullback}
    Contrafibered maps are closed under small colimits in $\Ar(\CMon)$ (\cite[Proposition 5.2.8.6.(7)]{HTT}), which includes direct sums and thus products.
    In an earlier version of this paper, we implicitly and incorrectly assumed that contrafibered maps are also closed under pullbacks, but this is not the case.
    To illustrate this, consider the diagram
\[\begin{tikzcd}
	{\xF(*)} & {\xF(*)} & {\xF(*)} \\
	{\xF(*)} & {\xF(*)} & {\xF(\{a, b\})}
	\arrow["{\times 2}", from=2-1, to=2-2]
	\arrow["\nabla"', from=2-3, to=2-2]
	\arrow["{\times 2}", from=1-1, to=1-2]
	\arrow["{\times 2}"', from=1-3, to=1-2]
	\arrow["\Delta", from=1-3, to=2-3]
	\arrow[Rightarrow, no head, from=1-2, to=2-2]
	\arrow[Rightarrow, no head, from=1-1, to=2-1]
\end{tikzcd}\]
    which defines a cospan in $\Ar(\CMon)$ such each of the objects involved is a contrafibered morphism, namely $\id_{\xF(*)}$ and $\Delta$.
    By \cref{ex:free-pullbacks} the pullback will be some map $f\colon \xF(X) \to \xF(\{1,2,3,4\})$ where $X$ is a $1$-type with infinitely many connected components.
    For this map to be contrafibered, by \cref{rem:spans}, it would have to be $\trf_{p}$ for some finite covering $p\colon \{1,2,3,4\} \to X$.
    Such a covering can hit at most four components, so $f=\mrm{trf}_p$ would have to be $0$ on all but at most four components of $X$.
    But the kernel of $f$ is the pullback of the kernels of the vertical maps in the above diagram, which are all trivial, so $f^{-1}(0) = 0$ -- a contradiction.
\end{war}

\subsubsection{Pseudo-free monoids}

The proof of \cref{lem:colim-of-eqf-slice} did not really use that $M$ is free, but only the property that the addition map is equifibered.
Studying this in more detail we will see that this condition almost implies that $M$ is free.
In particular, we will be able to use this to show that free commutative monoids are closed under retracts and finite limits in $\CMon$.

\begin{defn}\label{def:pseudo-free}
    A commutative monoid $M$ is \hldef{pseudo-free} if the 
    addition map $+\colon M \times M \to M$ is equifibered.
\end{defn}

Every free monoid is pseudo-free since the addition map
$+\colon  \xF(X) \times \xF(X) \to \xF(X)$
is equivalent to the free map on the fold map $\nabla\colon  X \amalg X \too X$.
We have a partial converse as follows:

\begin{lem}\label{lem:eqf-addition}
    For every pseudo-free commutative monoid $M$ there is a free submonoid $\xF(X) \subset M$ where $\pi_0(X) \subset \pi_0(M)$ consists of the indecomposable elements,
    i.e.\ those non-zero $a \in \pi_0(M) \setminus \{0\}$ for which $a = b+c$ implies $b=0$ or $c=0$.
    In particular, if $\pi_0(M)$ is generated by indecomposables,
    then $M$ is free.
\end{lem}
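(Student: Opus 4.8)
The plan is to show that the subspace $X \subseteq M$ consisting of those components of $M$ lying over the nonzero indecomposable elements of $\pi_0(M)$ already does the job. The inclusion $X \hookrightarrow M$ of spaces induces a map of commutative monoids $\phi\colon \xF(X) \to M$, and by construction $\pi_0(X) \subseteq \pi_0(M)$ is exactly the set of indecomposables. It therefore suffices to prove that $\phi$ is a monomorphism: this exhibits $\xF(X)$ as a free submonoid of $M$ with the required generators, and for the final clause note that if $\pi_0(M)$ is generated by indecomposables then $\phi$ is surjective on $\pi_0$, so a monomorphism $\phi$ is then automatically an equivalence.

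First I would record a rigidity property of pseudo-free monoids that I will call conicality. Writing $\mathrm{Dec}(m)$ for the fiber of $+\colon M \times M \to M$ over $m$, the space $\mathrm{Dec}(0)$ is a commutative monoid (the fiber of a monoid map over the unit) which is moreover grouplike, since $(c,d) \mapsto (d,c)$ provides inverses. By \cref{obs:eqf=equi-fibered} pseudo-freeness says precisely that its own addition $\mathrm{Dec}(0) \times \mathrm{Dec}(0) \to \mathrm{Dec}(0)$ is an equivalence, which forces $\mathrm{Dec}(0) \simeq \ast$ (a grouplike $G$ whose addition $G \times G \xrightarrow{\sim} G$ is an equivalence has $G \simeq \ast$; compare \cref{lem:eqf-grouplike}). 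Feeding this into the fiber sequence $\mathrm{Dec}(0) \to M \times M \xrightarrow{+} M$ shows that the unit component $M_0$ is contractible. Consequently, for an indecomposable $x$ the decomposition space $\mathrm{Dec}(x)$ is the discrete two-point space $\{(0,x),(x,0)\}$, and the fiber $\phi^{-1}(x)$ is contractible (only the length-one summand of $\xF(X) = \coprod_n X^n_{h\Sigma_n}$ contributes).

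Next I would prove that $\phi$ is equifibered, which is the technical heart. I apply the representably-free characterization \cref{prop:TFAE-to-equifibered}.(\ref{item:eqf-rep-free}) to the equifibered map $+\colon M \times M \to M$ with test object $X \to M$, obtaining a natural equivalence $\xF(X) \times_M (M \times M) \simeq \xF\big(X \times_M (M \times M)\big)$. By the conicality computation $X \times_M (M \times M) \simeq X \amalg X$, so the right-hand side is $\xF(X) \times \xF(X)$; unwinding the identifications shows that this equivalence is the comparison map of the square defining equifiberedness of $\phi$, so that square is cartesian. With $\phi$ equifibered, \cref{obs:eqf=equi-fibered} makes its fibers multiplicative: for $m = x_1 + \dots + x_n$ a sum of indecomposables one gets $\phi^{-1}(m) \simeq \prod_i \phi^{-1}(x_i) \simeq \ast$, while over any point not of this form the fiber is empty. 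Hence every fiber of $\phi$ is empty or contractible, i.e.\ $\phi$ is a monomorphism, completing the proof.

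I expect the main obstacle to be the equifiberedness step, and within it the careful bookkeeping needed to identify the representably-free equivalence with the comparison map of the equifibered square; the enabling input is conicality, so getting the contractibility of $\mathrm{Dec}(0)$ and of the unit component exactly right is the load-bearing part of the argument.
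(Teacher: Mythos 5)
Your proof is correct and follows essentially the same route as the paper's: the same choice of $X$, contractibility of the unit component from pseudo-freeness, equifiberedness of $\xF(X)\to M$ via the representably-free characterization of $+\colon M\times M\to M$ tested against $X\to M$, and the monomorphism conclusion from multiplicativity of fibers. The only cosmetic difference is that you deduce $[0]\simeq \ast$ from contractibility of the fiber $\mathrm{Dec}(0)$ of the addition map, whereas the paper deduces it from contractibility of the grouplike part $M^\times$ via \cref{lem:eqf-grouplike}; both arguments are sound.
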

\begin{proof}
    Let $X \subseteq M$ denote the subspace on those connected components that are indecomposables in $\pi_0 M$.
    We will show that the induced map $f\colon  \xF(X) \to M$ is a monomorphism.
    
    We begin by showing that the $0$-component $[0] \subset M$ is contractible.
    By \cref{ex:unit-inclusion-eqf} the inclusion $M^\times \hookrightarrow M$ of the units is equifibered 
    and applying \cref{lem:eqf-base-change} to the pullback square in the definition of ``equifibered'' shows that the addition map on $M^\times$ is also equifibered.
    By \cref{lem:eqf-grouplike} and \cref{rem:addition-equivalence} $M^\times$ is contractible (as it is grouplike and pseudo-free) and hence so is $[0] \subset M$.
    
    Next we show that $f$ is equifibered. 
    Since $X \subset M$ corresponds to the indecomposables in $M$, 
    its preimage under the addition map $+\colon M^2 \too M$ is a disjoint
    union of the form $(X \times [0]) \amalg ([0] \times X)$.
    As we have shown that $[0] \subset M$ is contractible 
    this results in the cartesian square
    \[
    \begin{tikzcd}
        X \amalg X \ar[r] \ar[d, "\nabla"'] \ar[dr, phantom, very near start, "\lrcorner"] &  M \times M \ar[d, "+"] \\
        X \ar[r, ] & M.
    \end{tikzcd}
    \]
    Because $+\colon M \times M \to M$ is equifibered 
    characterization \ref{item:eqf-rep-free} in \cref{prop:TFAE-to-equifibered}
    applied to $X \to M$ yields the cartesian square
    \[
    \begin{tikzcd}
        \xF(X \amalg X) \ar[r] \ar[d, "\xF(\nabla)"']  \ar[dr, phantom , very near start, "\lrcorner"] &
        M \times M \ar[d, "+"] \\
        \xF(X) \ar[r] & 
        M.
    \end{tikzcd}
    \]
    Note that the left most map is canonically equivalent to the addition map 
    $+\colon  \xF(X) \times \xF(X) \too \xF(X)$. 
    Furthermore, under this equivalence the top and bottom horizontal composites are identified with $f \times f$ and $f$ respectively, which proves that $f\colon \xF(X) \to M$ is equifibered.
    
    Finally, we prove that $f$ is a monomorphism.
    Let $Y \subset \xF(X)$ be the subspace of those $y$ for which $f^{-1}(f(y))$ is contractible. 
    This is a union of components, and it is closed under addition because
    \[
        f^{-1}(f(y_1+y_2))  = f^{-1}(f(y_1) + f(y_2)) \simeq f^{-1}(f(y_1)) \times f^{-1}(f(y_2)) \simeq \pt
    \]
    for any $y_i \in Y$ since $f$ is equifibered (\cref{obs:eqf=equi-fibered}).
    By construction $Y$ contains $X \subseteq \xF(X)$, and we showed that it contains $0$, so it follows that $Y = \xF(X)$ and that $f$ is a mono.
\end{proof}

\begin{example}
    Not every pseudo-free monoid is free.
    Using tools from \cref{subsec:Csp-and-disjunctive} we can argue that
    the category $\Sets^{\le\omega}$ of countable sets is $\amalg$-disjunctive and hence the coproduct map
    $ \amalg \colon  (\Sets^{\le \omega})^2 \too \Sets^{\le \omega} $
    is equifibered by \cref{lem:coproduct-disjunctive-colim} 
    applied to $J = \{0,1\}$.
    In particular, passing to maximal subgroupoids we obtain 
    a commutative monoid $M : = (\Sets^{\le \omega})^\simeq$
    such that $+\colon  M^2 \to M$ is equifibered.
    Note that $\pi_0 M \cong \mbb{N} \cup \{\infty\}$ with addition 
    defined by $n + \infty = \infty$. 
    This monoid is not generated by indecomposables.
    
    For another example, let $X_i \in \calS$ be an infinite collection of non-empty spaces. 
    Then $\prod_{i\in I} \xF(X_i)$ is pseudo-free, but not free.
    Indeed, $\prod_{i \in I} \mathbb{N}\langle \pi_0 X_i \rangle$ is not generated by indecomposables.
\end{example}

\begin{cor}\label{cor:pseudo-free-with-map-to-N}
    A pseudo-free commutative monoid $M$ is free if and only if 
    there exists a morphism $h\colon  M \to \mbb{N}$ such that the kernel $h^{-1}(0)$ is connected.
\end{cor}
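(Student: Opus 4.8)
The plan is to reduce both implications to \cref{lem:eqf-addition}, which tells us that a pseudo-free monoid is free as soon as $\pi_0 M$ is generated by indecomposables. So the real content is to see how the existence of an $h$ with connected kernel is exactly what guarantees this generation property.

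For the forward direction, suppose $M \simeq \xF(Y)$ is free. Using the free--forgetful adjunction I would take $h\colon \xF(Y) \to \mbb{N}$ to be the map classified by the constant map $Y \to \{1\} \subset \mbb{N}$. Unwinding this through the formula of \cref{lem:basic-properties-for-xF}.(2), $h$ is the ``length'' map sending the summand $Y^n_{h\Sigma_n}$ to $n$, so its fiber over $0$ is precisely the summand $Y^0_{h\Sigma_0} \simeq \ast$, which is contractible and hence connected. (Note this direction does not even use pseudo-freeness, only that $M$ is free.)

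For the reverse direction, the crux---and the step I expect to be the only subtle one---is translating the topological hypothesis ``$h^{-1}(0)$ is connected'' into the purely monoidal statement ``$\pi_0(h)$ detects the zero component.'' Writing $M \simeq \coprod_{[a]\in\pi_0 M}[a]$ as the coproduct of its connected components, the homotopy fiber $h^{-1}(0)$ is the subcoproduct indexed by those $[a]$ with $\pi_0(h)([a]) = 0$, since $\mbb{N}$ is discrete. The unit component $[0]$ is always among these, so connectedness of $h^{-1}(0)$ forces it to equal $[0]$. Hence $\pi_0(h)([a]) = 0$ implies $[a] = 0$ in $\pi_0 M$.

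With this in hand I would prove $\pi_0 M$ is generated by indecomposables by strong induction on $n := \pi_0(h)([a])$. The base case $n = 0$ gives $[a] = 0$, the empty sum of indecomposables. For $n > 0$, either $[a]$ is indecomposable (a length-one sum), or $[a] = [b] + [c]$ with $[b], [c] \neq 0$; then $\pi_0(h)([b]), \pi_0(h)([c]) \ge 1$ by the previous paragraph, so each is strictly less than $n$, and the inductive hypothesis writes each of $[b], [c]$ as a sum of indecomposables. Thus $\pi_0 M$ is generated by indecomposables, and the ``in particular'' clause of \cref{lem:eqf-addition} (which uses pseudo-freeness of $M$) concludes that $M$ is free.
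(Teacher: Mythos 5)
Your proposal is correct and follows essentially the same route as the paper: the forward direction uses the same length map $\xF(Y) \to \xF(*) \to \mbb{N}$, and the reverse direction makes the same key observation that connectedness of $h^{-1}(0)$ forces it to coincide with the unit component, then runs the same induction on $h$-value (the paper phrases it as a minimal-counterexample argument) to show $\pi_0 M$ is generated by indecomposables before invoking \cref{lem:eqf-addition}.
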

\begin{proof}
    If $M$ is free, then we can use $M = \xF(X) \to \xF(*) \to \pi_0\xF(*) \cong \mbb{N}$.
    Conversely, assuming we have $h$, it suffices by \cref{lem:eqf-addition} to show that $\pi_0(M)$ is generated by indecomposables.
    To argue by contradiction, let $x \in \pi_0(M)$ be an element with minimal $h(x) \in \mbb{N}$ such that $x$ cannot be written as a sum of indecomposables.
    Since $x$ is not indecomposable we may write it as $x = a + b$, where neither $a$ nor $b$ are in $[0] = h^{-1}(0)$.
    But this means that $h(a), h(b) > 0$ and hence $h(a), h(b) < h(x)$.
    By the minimality of $x$ both $a$ and $b$ must be a sum of indecomposables -- a contradiction.
\end{proof}

\begin{obs}\label{obs:pseudo-free-limits}
    The full subcategory $\CMon^{\rm ps-free} \subseteq \CMon$ spanned by the pseudo-free commutative monoids is closed under all limits
    because equifibered maps are.
\end{obs}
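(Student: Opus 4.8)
The plan is to reduce the statement directly to \cref{lem:eqf-maps-closed-under-limits}, which already asserts that equifibered morphisms are closed under all limits inside $\Ar(\CMon)$. The only content specific to pseudo-freeness is bookkeeping: I must check that forming a limit of pseudo-free monoids produces a monoid whose \emph{addition map} is the limit of the given addition maps, and then invoke the lemma.

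Concretely, I would start with a diagram $(M_i)_{i \in I}$ of pseudo-free commutative monoids and set $M := \lim_{i \in I} M_i \in \CMon$. By \cref{def:pseudo-free} I need to show that $+\colon M \times M \to M$ is equifibered. First I would note that the addition maps assemble into a functor $I \to \Ar(\CMon)$ sending $i \mapsto (+_{M_i}\colon M_i \times M_i \to M_i)$. Since limits in the arrow category $\Ar(\CMon) = \Fun([1],\CMon)$ are computed pointwise, the limit of this functor is the arrow
\[
    \lim_{i \in I}(M_i \times M_i) \too \lim_{i \in I} M_i.
\]
Because the limit functor $\Fun(I,\CMon) \to \CMon$ is a right adjoint it preserves products, so the source is canonically $(\lim_i M_i)^2 = M \times M$; and naturality of the addition (equivalently, that the commutative monoid structure on $M$ is the pointwise limit of those on the $M_i$) identifies this arrow with the addition map $+_M$. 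Thus $+_M \simeq \lim_{i} +_{M_i}$ as objects of $\Ar(\CMon)$.

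Now each $+_{M_i}$ is equifibered precisely because $M_i$ is pseudo-free, so \cref{lem:eqf-maps-closed-under-limits} applies and shows that the limit $+_M$ is again equifibered. Hence $M$ is pseudo-free, proving that $\CMon^{\rm ps-free} \subseteq \CMon$ is closed under limits. I do not expect a genuine obstacle here; the only point that deserves care is the identification $+_M \simeq \lim_i +_{M_i}$, which hinges on limits in $\CMon$ being computed on underlying objects together with the preservation of the binary product by the limit functor.
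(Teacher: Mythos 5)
Your proof is correct and is precisely the argument the paper intends: the observation's entire justification is the appeal to \cref{lem:eqf-maps-closed-under-limits}, and your careful identification of $+_M$ with $\lim_i +_{M_i}$ in $\Ar(\CMon)$ (via pointwise limits in the arrow category and preservation of products) is exactly the bookkeeping being left implicit. No gaps.
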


\begin{cor}\label{cor:retract-of-free}
    The full subcategory $\CMon^{\free} \subseteq \CMon$ spanned by the free commutative monoids is closed under retracts.
\end{cor}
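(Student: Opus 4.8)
The plan is to reduce the statement to the two structural facts already established about pseudo-free monoids: that they are closed under all limits (\cref{obs:pseudo-free-limits}), and that a pseudo-free monoid admitting a suitable map to $\mbb{N}$ is in fact free (\cref{cor:pseudo-free-with-map-to-N}). So suppose $N$ is a retract of a free monoid $M \simeq \xF(Z)$, witnessed by maps $i\colon N \to M$ and $r\colon M \to N$ with $r \circ i \simeq \id_N$; the goal is to produce a free structure on $N$.

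First I would check that $N$ is pseudo-free. Every free monoid is pseudo-free, and a retract is the limit of the idempotent diagram it generates, so a full subcategory closed under all limits is automatically closed under retracts. Since $\CMon^{\rm ps-free} \subseteq \CMon$ is closed under limits by \cref{obs:pseudo-free-limits}, the retract $N$ of the pseudo-free $M$ is again pseudo-free. (Equivalently, one can observe that the addition map $+_N$ is a retract in $\Ar(\CMon)$ of the equifibered map $+_M$ — using that $i$ and $r$ commute with $+$ — and that equifibered maps are closed under limits, hence under retracts, by \cref{lem:eqf-maps-closed-under-limits}.)

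Next I would manufacture the map to $\mbb{N}$ demanded by \cref{cor:pseudo-free-with-map-to-N}. Writing $M \simeq \xF(Z)$, let $h_M\colon M \to \mbb{N}$ be the length map $\xF(Z) \to \xF(*) \to \pi_0\xF(*) \cong \mbb{N}$, whose kernel $h_M^{-1}(0)$ is the connected (indeed contractible) component $[0]_M$. Set $h_N := h_M \circ i\colon N \to \mbb{N}$; the crux is that $h_N^{-1}(0)$ is connected. Because $\mbb{N}$ is discrete, $h_N^{-1}(0)$ is a union of components of $N$, so it suffices to show the only such component is $[0]_N$. Given $c \in \pi_0 N$ with $h_N(c) = 0$, its image $\pi_0(i)(c) \in \pi_0 M$ has $h_M = 0$; since $h_M^{-1}(0) = [0]_M$ is connected this forces $\pi_0(i)(c) = [0]_M$, and applying $\pi_0(r)$ together with $r \circ i \simeq \id_N$ gives $c = \pi_0(r)([0]_M) = [0]_N$. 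Hence $h_N^{-1}(0) = [0]_N$ is connected, and \cref{cor:pseudo-free-with-map-to-N} concludes that $N$ is free.

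I expect the only genuine content to be the connectedness of $h_N^{-1}(0)$. The main obstacle to watch for is that $i$ need not be a monomorphism of spaces (a split injection of spaces is not $(-1)$-truncated in general), so one cannot argue fiberwise; the point is that it is enough to work on $\pi_0$, where the contractibility of $h_M^{-1}(0)$ forces the preimage of $0$ under $h_M$ to be a single component, which the retraction then collapses back onto $[0]_N$. Everything else is formal, given the preceding lemmas.
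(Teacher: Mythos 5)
Your proof is correct and follows essentially the same route as the paper: show the retract is pseudo-free because its addition map is a retract of the equifibered addition map on $\xF(X)$, then apply \cref{cor:pseudo-free-with-map-to-N} to the composite $N \to \xF(X) \to \bbN$. The only difference is that you spell out the verification that the kernel of this composite is connected (via the retraction collapsing it onto $[0]_N$), which the paper leaves implicit.
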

\begin{proof}
    Consider a retraction $i\colon  M \adj \xF(X) :\! r$.
    Then the addition map on $M$ is equifibered since it is a retract 
    of the equifibered addition map $+\colon \xF(X)^2 \to \xF(X)$,
    so $M$ is pseudo-free.
    Now we apply \cref{cor:pseudo-free-with-map-to-N} using the map $M \to \xF(X) \to \mbb{N}$.
\end{proof}

\begin{cor}\label{cor:finite-limits}
    The full subcategory $\CMon^{\free} \subseteq \CMon$ spanned by the free commutative monoids is closed under finite limits.
\end{cor}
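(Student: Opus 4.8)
The plan is to invoke the standard fact that a full subcategory of $\CMon$ which contains a terminal object and is closed under pullbacks is automatically closed under all finite limits: binary products arise as pullbacks over the terminal object, equalizers as pullbacks of a diagonal, and every finite limit is assembled from these. The terminal commutative monoid is the zero object $0 = \xF(\emptyset)$, which is free, so the only real content is to establish that $\CMon^\free$ is closed under pullbacks computed in $\CMon$.

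To that end I would fix free monoids with maps $\xF(X) \xrightarrow{f} \xF(Z) \xleftarrow{g} \xF(Y)$ and set $M := \xF(X) \times_{\xF(Z)} \xF(Y)$. The first step is to note that $M$ is pseudo-free: each of the three monoids is free, hence pseudo-free, and by \cref{obs:pseudo-free-limits} pseudo-free monoids are closed under all limits, in particular under this pullback. It then remains to upgrade ``pseudo-free'' to ``free'', for which I would apply the criterion of \cref{cor:pseudo-free-with-map-to-N}: a pseudo-free monoid is free as soon as it admits a morphism $h\colon M \to \mbb{N}$ with connected kernel $h^{-1}(0)$.

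The natural candidate is the total-degree map
\[
    h\colon M \too \xF(X) \times \xF(Y) \xrightarrow{\deg \times \deg} \mbb{N} \times \mbb{N} \xrightarrow{\ +\ } \mbb{N},
\]
where $\deg\colon \xF(W) \to \mbb{N}$ is the length map induced by $W \to *$. The heart of the argument is to check that $h^{-1}(0)$ is contractible. Since the total degree is $\mbb{N}$-valued and locally constant, a point of degree $0$ must have both coordinates in the degree-$0$ component, and for any free monoid $\xF(W) = \coprod_n W^n_{h\Sigma_n}$ this component is the single point $W^0_{h\Sigma_0} = *$. Modelling the homotopy pullback $M$ as the space of triples $(u, v, \gamma)$ with $\gamma$ a path $f(u) \simeq g(v)$ in $\xF(Z)$, I would observe that on $h^{-1}(0)$ both $u$ and $v$ are forced to $0$, so $\gamma$ becomes a based loop in the degree-$0$ component of $\xF(Z)$; as that component is again a point, $h^{-1}(0)$ is contractible, hence connected, and \cref{cor:pseudo-free-with-map-to-N} shows $M$ is free.

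The step I expect to require the most care is precisely this contractibility check, because it must be carried out at the level of the homotopy pullback: it is not enough to see that $u = v = 0$, one must also account for the mediating path $\gamma$ in $\xF(Z)$ and use that the relevant component of $\xF(Z)$ is contractible. Everything else is formal. As a consistency check, taking $Z = \emptyset$ (so that the pullback is a pullback over the terminal object $0$) recovers closure under binary products and reproduces the expected identification $\xF(X) \times \xF(Y) \simeq \xF(X \amalg Y)$.
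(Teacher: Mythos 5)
Your proposal is correct and follows essentially the same route as the paper: reduce to pullbacks, observe that $M = \xF(X)\times_{\xF(Z)}\xF(Y)$ is pseudo-free by \cref{obs:pseudo-free-limits}, and then apply \cref{cor:pseudo-free-with-map-to-N} to the total-degree map $M \to \xF(X)\times\xF(Y) \to \bbN$. The only cosmetic difference is that the paper verifies triviality of the kernel by exhibiting $M \to \xF(X)\times\xF(Y)$ as a base change of the diagonal $\Delta\colon \xF(Z)\to\xF(Z)\times\xF(Z)$, whereas you compute the fiber directly in the homotopy pullback model; both checks are valid.
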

\begin{proof}
    The terminal commutative monoid is free on the empty set, so it suffices to show that free commutative monoids are closed under pullbacks.
    Any pullback $M := \xF(X) \times_{\xF(Z)} \xF(Y)$ is automatically pseudo-free by \cref{obs:pseudo-free-limits}.
    So by \cref{cor:pseudo-free-with-map-to-N} we only need to construct a morphism $M \to \mbb{N}$ with trivial kernel.
    To do so we fit $M$ in another pullback square as follows
    \[
   \begin{tikzcd}
	M & {\xF(X) \times \xF(Y)} & {\mathbb{N}} \\
	{\xF(Z)} & {\xF(Z) \times \xF(Z).}
	\arrow[from=1-1, to=1-2]
	\arrow[from=1-1, to=2-1]
	\arrow[from=1-2, to=2-2]
	\arrow["\Delta", from=2-1, to=2-2]
	\arrow["\lrcorner"{anchor=center, pos=0.125}, draw=none, from=1-1, to=2-2]
	\arrow[from=1-2, to=1-3]
    \end{tikzcd}
    \]
    Here $\xF(X) \times \xF(Y) \to \mbb{N}$ is any morphism with trivial kernel.
    The map $M \to \xF(X) \times \xF(Y)$ also has trivial kernel because $\Delta\colon  \xF(Z) \to \xF(Z) \times \xF(Z)$ does.
    So \cref{cor:pseudo-free-with-map-to-N} applies and $M$ is free.
\end{proof}

\begin{war}\label{war:discrete-pullback}
    The analogue of \cref{cor:finite-limits} for discrete commutative monoids is \textit{false}. 
    Indeed, consider the submonoid $M \coloneq \left\{(a,b): a+b \text{ is even} \right\} \subseteq \bbN \times \bbN$,
    which may be written as a pullback of free discrete commutative monoids
    \[\begin{tikzcd}
	M & {\bbN \oplus \bbN} \\
	\bbN & \bbN.
	\arrow[from=1-1, to=2-1]
	\arrow[hook, from=1-1, to=1-2]
	\arrow["\lrcorner"{anchor=center, pos=0.125}, draw=none, from=1-1, to=2-2]
	\arrow["+", from=1-2, to=2-2]
	\arrow["\cdot 2", hook, from=2-1, to=2-2]
    \end{tikzcd}\]
    However, $M$ is not free since, for example, it has the relation $(1,1)+(1,1)=(2,0)+(0,2)$.
\end{war}

\begin{example}\label{ex:free-pullbacks}
    We now consider the homotopical analogue of the pullback in \cref{war:discrete-pullback} to see that it is indeed free.
    Concretely, we will show that there are pullback squares in $\CMon$
\[\begin{tikzcd}
	{\xF(\{(a,a), (a,b), (b,a), (b,b)\})} & {\xF(\{a, b\})} && {\xF(X)} & {\xF(*)} \\
	{\xF(*)} & {\xF(*)} && {\xF(*)} & {\xF(*).}
	\arrow[from=1-1, to=2-1]
	\arrow["{\times 2}", from=2-1, to=2-2]
	\arrow["\nabla", from=1-2, to=2-2]
	\arrow[from=1-1, to=1-2]
	\arrow[from=1-4, to=2-4]
	\arrow["{\times 2}", from=2-4, to=2-5]
	\arrow[from=1-4, to=1-5]
	\arrow["{\times 2}", from=1-5, to=2-5]
\end{tikzcd}\]
    where $X$ is a $1$-type with $\pi_0(X) \cong \bbN$.
    The forgetful functor $\CMon \to \calS$ detects limits and since all spaces involved are $1$-types it will suffice to compute the following (homotopy) pullbacks of $1$-groupoids:
\[\begin{tikzcd}
	P & {(\Fin_{/\{a,b\}})^\simeq} && Q & {\Fin^\simeq} \\
	{\Fin^\simeq} & {\Fin^\simeq} && {\Fin^\simeq} & {\Fin^\simeq}
	\arrow[from=1-1, to=2-1]
	\arrow["{\times \{0,1\}}", from=2-1, to=2-2]
	\arrow["\mrm{forget}", from=1-2, to=2-2]
	\arrow[from=1-1, to=1-2]
	\arrow[from=1-4, to=2-4]
	\arrow["{\times \{0,1\}}", from=2-4, to=2-5]
	\arrow[from=1-4, to=1-5]
	\arrow["{\times \{a,b\}}", from=1-5, to=2-5]
\end{tikzcd}\]
    Objects of $P$ can be presented as pairs $(C,\alpha)$ of a finite set $C$ and a map $\alpha\colon C \times \{0,1\} \to \{a,b\}$ with the symmetric monoidal structure given by 
    \[(C_1,\alpha_1)  + (C_2,\alpha_2) = \left(C_1\sqcup C_2, (C_1 \sqcup C_2)\times \{0,1\} \xrightarrow{\alpha_1 \sqcup \alpha_2} \{a,b\} \sqcup \{a,b\} \xrightarrow{\nabla}  \{a,b\}\right). \]
    This symmetric monoidal groupoid is freely generated by objects $(\ast,\alpha)$ where $\alpha$ runs over the four elements of $\Map(\{0,1\}, \{a,b\})$.
    It is interesting to contrast this with the pullback in \cref{war:discrete-pullback} where $M$ was (non-freely) generated by three elements:
    the difference is that in $P$ the two generators $(a, b)$ and $(b,a)$ differ.
    
    Objects of $Q$ can be presented as triples $(C, D, \alpha\colon C \times \{0,1\} \cong D \times \{0, 1\})$, with the symmetric monoidal structure given by disjoint union.
    We could compute $Q$ by hand, but for simplicity we will use that by \cref{cor:finite-limits} $Q = \xF(X)$ for $X \subset Q$ the indecomposables. 
    It formally follows that $X$ must be a $1$-type with countably many components, but we still have to argue that $\pi_0(X)$ is not finite.
    Consider the object $(\bbZ/n, \bbZ/n, \alpha)$ where $\alpha(k,0) = (k,1)$ and $\alpha(k,1) = (k+1,0)$.
    This cannot be written as a disjoint union in $Q$ because all the elements of the $\bbZ/n$ are ``interlinked''.
    Therefore, we have exhibited an infinite family on non-isomorphic indecomposable objects of $Q$.
\end{example}

\subsection{Equifibered symmetric monoidal functors}
\label{subsec:eqf-sm-functors}

In this section we generalize the notion of equifibered maps from commutative monoids, i.e.~symmetric monoidal $\infty$-groupoids, to arbitrary symmetric monoidal $\infty$-categories.

\begin{defn}\label{defn:sm-cat}
    A \hldef{symmetric monoidal $\infty$-category} is a commutative monoid in $\Cat$,
    i.e.\ a functor $\calC\colon  \Fin_* \to \Cat$ such that the map
    \[
        \calC(A_+) \longrightarrow \prod_{a \in A} \calC(\{a\}_+)
    \]
    is an equivalence for all $A_+ \in \Fin_*$.
    We let $\hldef{\SM} \subset \Fun(\Fin_*, \Cat)$ denote the full 
    subcategory of symmetric monoidal $\infty$-categories.
    We refer to morphisms in this category as \hldef{symmetric monoidal functors}.
\end{defn}

\begin{example}\label{ex:cocartesian-monoidal}
    Let $\calC$ be an $\infty$-category with finite coproducts.
    Then the coproduct defines a symmetric monoidal structure on $\calC$ (see \cite[Section 2.4.3]{HA}).
    Let $\hldef{\Cat^\amalg} \subseteq \Cat$ 
    denote the subcategory whose objects are $\infty$-categories with finite coproducts
    and whose morphisms are finite-coproduct-preserving functors.
    By \cite[Variant 2.4.3.12.]{HA} we may regard $\Cat^\amalg$ as a full subcategory of
    $\SM$.
\end{example}

\begin{rem}\label{rem:HA-apologists}
    It should be possible to set up the theory $\infty$-properads entirely independently of Lurie's book project Higher Algebra \cite{HA}.
    We believe that there would be some pedagogical value in this, as one in particular obtains a theory of $\infty$-operads as symmetric monodial $\infty$-categories equifibered over $\Fin$ (see \cref{thm:monic-prpd=operad}) without having to use Lurie's rather subtle definition in terms of inert-cocartesian lifts.
    In the present work there are some mild dependencies on \cite{HA}. 
    Crucially, we will need the cocartesian symmetric monoidal structure on $\Fun(J, \Fin)$ for certain categories $J$. 
    (Namely in defining $\Csp \in \SM$ in \cref{defn:Csp} and, relatedly, in describing $\widehat{\bmC}$ in \cref{cor:boldC-description}.)
    These dependencies could be avoided by defining $\Fin^\amalg \coloneq \Ar^\act(\Fin_*)$ and taking \cref{lem:cocart-operad-Fun} as the definition of $\Fun(J,\Fin)^\amalg$, but for clarity of exposition we shall take \cite[\S 2.4.3]{HA} as our definition instead.
\end{rem}

A substantial portion (but not all) of the theory developed in \cref{subsec:Eqf-theory} carries over to the setting of symmetric monoidal $\infty$-categories.
We begin with the definition:

\begin{defn}\label{defn:sm-eqf}
    A symmetric monoidal functor $F\colon  \calC \to \calD$ is called \hldef{equifibered} 
    if the square 
    \[
        \begin{tikzcd}
            \calC \times \calC \ar[r, "\otimes"] \ar[d, "F\times F"'] &
            \calC \ar[d, "F"] \\
            \calD \times \calD \ar[r, "\otimes"] & 
            \calD
        \end{tikzcd}
    \]
    is a pullback in $\Cat$.
\end{defn}

To fully analyse equifibered functors we will crucially rely on the fact $\SM$ embeds into the $\infty$-category $\Fun(\Dop,\CMon)$ of simplicial commutative monoids. 
With this in mind we recall some basic facts about Segal spaces.

\subsubsection{Recollection on Segal spaces}

A \hldef{Segal space} is a simplicial space $X_{\bullet}\colon  \simp^{\op} \too \calS$ 
    satisfying the Segal condition, i.e.\ the natural map $X_n \to X_1 \times_{X_0} \cdots \times_{X_0} X_1$
    is an equivalence for all $n$.
    We denote by $\hldef{\Seg_{\simp^{\op}}(\calS)} \subseteq \Fun(\simp^{\op},\calS)$ the full subcategory of Segal spaces. 
To a Segal space $X_\bullet$ one can associate a homotopy category $\mathrm{ho}(X)$ whose objects are the points of $X_0$ and whose mapping sets are the connected components of the fibers of $(d_1,d_0)\colon X_1 \to X_0 \times X_0$.
We refer the reader to \cite{rezk} for a detailed description.
We let $X_\bullet^{\rm eq} \subset X_\bullet$ denote the largest Segal subspace such that $\mathrm{ho}(X_\bullet^{\rm eq})$ is a groupoid.
A Segal space $X_\bullet$ is called \hldef{complete} if the map $s_0\colon  X_0 \to X_1^{\rm eq}$ is an equivalence, or equivalently if $X_\bullet^{\rm eq}$ is a constant simplicial space (that is, all its face and degeneracy maps are equivalences).
    We denote by $\hldef{\CSeg_{\simp^{\op}}(\calS)} \subseteq \Seg_{\simp^{\op}}(\calS)$ the full subcategory of complete Segal spaces.
\begin{defn}
    We define the \hldef{nerve} functor as the Yoneda embedding followed by restriction along the inclusion
    $\simp \hookrightarrow \Cat$.
    \[\hldef{\xN_\bullet} \colon \Cat \too \Fun(\simp^\op,\calS), \qquad \calC \longmapsto \left(\hldef{\xN_\bullet(\calC)} \colon [n] \mapsto  \Map_{\Cat}([n],\calC)\right)\]
\end{defn}

It is fully faithful, and the essential image is characterized by the Segal and completeness conditions.

\begin{thm}[Joyal--Tierney]\label{complete-Segal-spaces}
    The nerve functor
    $\xN_{\bullet}\colon  \Cat \to \Fun(\simp^{\op},\calS)$ 
    is fully faithful and 
    a simplicial space $X\colon \simp^\op \to \calS$ lies in its essential image
    if and only if it is a complete Segal space.
    Hence, the nerve induces an equivalence of $\infty$-categories:
    \[ \Cat \simeq \CSeg_{\simp^{\op}}(\calS).\]
\end{thm}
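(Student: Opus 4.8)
The plan is to exhibit $\xN_\bullet$ as the right adjoint of a localization and then identify the local objects. First I would use that $\Fun(\simp^\op,\calS)$ is the free cocompletion of $\simp$: the inclusion $j\colon \simp \hookrightarrow \Cat$ extends essentially uniquely to a colimit-preserving functor $L\colon \Fun(\simp^\op,\calS) \to \Cat$ (left Kan extension along Yoneda), and unwinding the formula for its right adjoint shows that this right adjoint sends $\calC$ to $[n] \mapsto \Map_\Cat([n],\calC)$, i.e. it is exactly $\xN_\bullet$. Full faithfulness of $\xN_\bullet$ is then equivalent to the counit $L\circ \xN_\bullet \to \id_\Cat$ being an equivalence. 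Writing any simplicial space as a colimit of representables and using that $L$ preserves colimits and sends the representable $[n]$ to $[n] \in \Cat$, this counit at $\calC$ is identified with the canonical comparison map $\colim_{([n],\sigma) \in \simp_{/\calC}} [n] \to \calC$ out of the category of simplices of $\calC$. So full faithfulness reduces to the density statement that \emph{every \category{} is the colimit of its simplices}.

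Next I would check that $\xN_\bullet$ lands in $\CSeg_{\simp^\op}(\calS)$. The Segal condition holds because the spine inclusion exhibits $[n]$ as an iterated pushout $[1]\amalg_{[0]}\cdots\amalg_{[0]}[1]$ in $\Cat$, so $\Map_\Cat([n],\calC)$ decomposes as the corresponding iterated fibre product over $\Map_\Cat([0],\calC)$. Completeness holds because equivalences in $\calC$ are detected by the degeneracy, i.e. the map $s_0\colon \xN_0(\calC) \to \xN_1(\calC)^{\mathrm{eq}}$ is an equivalence, both sides computing the space of objects of $\calC$.

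For the essential image I would use that a fully faithful right adjoint $\xN_\bullet$ exhibits $\Cat$ as a reflective localization of $\Fun(\simp^\op,\calS)$, whose image consists exactly of the objects local with respect to the class $S$ of morphisms inverted by $L$. It then remains to identify $S$ with the saturated class generated by the spine inclusions $\mathrm{Sp}^n \hookrightarrow \Delta^n$ together with the map $\Delta^0 \to E$ from the point to the walking isomorphism $E$ (the nerve of the contractible groupoid on two objects). That $L$ inverts these follows from the two computations above: $L$ carries the spine pushout to the colimit presentation of $[n]$, and carries $E$ to the terminal \category{} since the walking isomorphism is contractible in $\Cat$. By construction the $S$-local simplicial spaces are precisely the complete Segal spaces, which would finish the identification of the essential image.

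The hard part will be the density step—controlling the mapping spaces out of $\colim_{\simp_{/\calC}}[n]$—together with the claim that the spine and completeness maps \emph{generate} the full class $S$ of $L$-equivalences; these two facts are the genuine content of the Joyal--Tierney comparison. Absent a self-contained treatment I would either invoke the model-categorical Quillen equivalence between quasicategories and complete Segal spaces, or establish density directly by a cofinality argument on the category of simplices combined with a skeletal induction reducing everything to the objects $[0]$ and $[1]$.
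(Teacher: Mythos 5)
Your outline is a sound localization-theoretic framing, but be aware that the paper does not actually prove this statement: its ``proof'' is a remark observing that the result is a tautology in the complete Segal space model of $\infty$-categories, and otherwise citing the Joyal--Tierney Quillen equivalence between quasicategories and complete Segal spaces (together with Lurie's translation of that result into model-independent language). Measured against that, your proposal adds a correct reduction scaffold --- identifying $\xN_\bullet$ as the right adjoint of $L = \mathrm{Lan}_y(\simp \hookrightarrow \Cat)$, reducing full faithfulness to the density of $\simp$ in $\Cat$, and reducing the essential-image claim to the statement that spine inclusions together with $\Delta^0 \to E$ generate the class of $L$-equivalences --- but both of the facts you defer are jointly equivalent to the theorem itself, so the scaffold does not actually carry any of the load. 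You acknowledge this honestly, and your proposed fallback (invoke the Quillen equivalence of Joyal--Tierney) is precisely what the paper does. One small circularity to watch: the paper derives the colimit-generation of $\Cat$ by $[1]$ (\cref{obs:N1-conservative}) \emph{from} this theorem, so if you wanted to make your density step self-contained you could not borrow that observation; you would need an independent argument (e.g.\ via the Joyal model structure) that every $\infty$-category is the colimit of its category of simplices. Similarly, your assertion that the spine inclusion exhibits $[n]$ as the iterated pushout $[1]\amalg_{[0]}\cdots\amalg_{[0]}[1]$ in $\Cat$ is true but is itself a nontrivial input (inner-anodyne spine inclusions are categorical equivalences) rather than a definition, unless one has already adopted the Segal-space model --- at which point the whole theorem is tautological, as the paper notes. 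In short: correct strategy, but the genuine content lives exactly where you flag it, and the only complete route you offer coincides with the paper's citation of Joyal--Tierney.
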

\begin{proof}[Remarks on the theorem]
    If we choose complete Segal spaces as our model for $\infty$-categories, then this statement is a tautology.
    However, our preferred model is quasicategories, so the theorem amounts to the equivalence between quasicategories and complete Segal spaces, which was shown by Joyal and Tierney \cite{JT06}.
    See \cite[Corolary 4.3.17]{Lurie-2Cat} for a more detailed explanation of how to translate Joyal and Tierney's result to $\infty$-categories.
    Alternatively, see \cite{HebestreitSteinebrunner} for a model-independent proof.
\end{proof}

Note that the Yoneda image of $[1] \in \simp$ generates $\Seg_\Dop(\calS)$ under colimits.
Indeed, $\Seg_\Dop(\calS)$ is a localization of $\Fun(\Dop,\calS)$, and is therefore generated under colimits by the simplices $[n] \in \simp$.
Since $[0]$ is a retract of $[1]$ it remains to observe that, by definition, the inclusion $[1] \amalg_{[0]} \cdots \amalg_{[0]} [1] \hookrightarrow [n]$ is a Segal equivalence.
By \cref{complete-Segal-spaces}, $\Cat$ is a localization of $\Seg_\Dop(\calS)$ and thus $[1]$ also generates $\Cat$ under colimits. 
By \cite[Corollary 2.5]{Yan21} this is equivalent to $\xN_1\colon \Cat \to \calS$ being conservative, which one can also see directly.

\begin{obs}\label{obs:N1-conservative}
    $\Cat$ is generated under colimits by $[1]$ and $\xN_1 \colon \Cat \to \calS$ is conservative.
\end{obs}

Note that since the nerve functor
$\xN_\bullet \colon \Cat \hookrightarrow \Fun(\Dop,\calS)$ 
is fully faithful and limit preserving, the Segal condition on a commutative monoid in $\Cat$
may be checked level-wise on the nerve $\xN_\bullet(\calC)$.
Concretely, a functor
$\calC \colon \Fin_\ast \to \Cat$
defines a symmetric monoidal $\infty$-category if and only if 
$\xN_n(\calC)$
is a commutative monoid for all $n$.
See \cite[Example 5.7]{CH19} for a more general discussion of how to combine two Segal-type structures.
We record this for future use.

\begin{cor}
    The nerve functor 
    $\xN_\bullet \colon \Cat \hookrightarrow \Fun(\Dop,\calS)$ 
    gives rise to a pullback square of fully faithful functors
    \[
    \begin{tikzcd}
        \SM \ar[r, "\xN_\bullet", hook] \ar[d, hook] \ar[dr, very near start, phantom, "\lrcorner"] & \Fun(\simp^\op, \CMon) \ar[d, hook] \\
        \Fun(\Fin_*, \Cat) \ar[r, "\xN_\bullet", hook] & \Fun(\Fin_* \times \simp^\op, \calS).
    \end{tikzcd}
    \]
    We refer to the top horizontal functor as \hldef{symmetric monoidal nerve}.
    This is a pullback square in $\PrR$ and in particular the symmetric monoidal nerve has a left adjoint.
\end{cor}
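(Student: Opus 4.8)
The plan is to exhibit the square as the pullback of two fully faithful, replete inclusions of reflective subcategories inside $\calE := \Fun(\Fin_* \times \simp^\op, \calS)$, to compute that pullback as the intersection of the two essential images, and finally to identify that intersection with the image of the symmetric monoidal nerve. I would first record that the right vertical functor is postcomposition with the reflective inclusion $\CMon \hookrightarrow \Fun(\Fin_*, \calS)$, so its essential image consists of those $F \in \calE$ for which $F(-, [n])$ satisfies the Segal (commutative monoid) condition for every $[n]$; and that the bottom functor is postcomposition with the nerve $\xN_\bullet \colon \Cat \hookrightarrow \Fun(\simp^\op, \calS)$, which by \cref{complete-Segal-spaces} is the reflective inclusion whose essential image is the complete Segal spaces, so that the essential image of the bottom functor consists of those $F$ for which $F(A_+, -)$ is a complete Segal space for every $A_+$. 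Since both inclusions are fully faithful and replete, the pullback $P$ computed in $\Cat$ is the full subcategory of $\calE$ on those $F$ lying in both essential images, i.e.\ those that are levelwise commutative monoids in the $\Fin_*$-direction and complete Segal spaces in the $\simp^\op$-direction.

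Next I would check that this intersection is precisely the image of $\SM$. Given $\calC \colon \Fin_* \to \Cat$, its pointwise nerve $F \colon (A_+, [n]) \mapsto \xN_n(\calC(A_+))$ automatically lies in the essential image of the bottom functor. It additionally lies in the essential image of the right functor exactly when, for every $[n]$, the map $\xN_n(\calC(A_+)) \to \prod_{a \in A} \xN_n(\calC(\{a\}_+))$ is an equivalence. Because $\xN_\bullet$ is fully faithful and limit preserving, this holds for all $[n]$ if and only if $\calC(A_+) \to \prod_{a \in A} \calC(\{a\}_+)$ is an equivalence in $\Cat$, i.e.\ if and only if $\calC \in \SM$. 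This simultaneously produces the factorization of the bottom--left composite through $\Fun(\simp^\op, \CMon)$ (so the symmetric monoidal nerve and the commuting datum are well defined, using that $\xN_n$ preserves finite products) and shows that the induced functor $\SM \to P$ is an equivalence of full subcategories of $\calE$.

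Finally, to upgrade to $\PrR$ I would observe that all four categories are presentable and that all four functors are right adjoints: the inclusions $\CMon \hookrightarrow \Fun(\Fin_*, \calS)$, $\Cat \simeq \CSeg_{\simp^\op}(\calS) \hookrightarrow \Fun(\simp^\op, \calS)$ and $\SM \hookrightarrow \Fun(\Fin_*, \Cat)$ are reflective and hence right adjoints, and postcomposition preserves this. Since the forgetful functor $\PrR \to \Cat$ preserves limits, the pullback computed above is also the pullback in $\PrR$; in particular the top functor, as a projection out of a $\PrR$-pullback, is a morphism of $\PrR$ and therefore a right adjoint, so it admits a left adjoint.

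The main obstacle is the identification in the second step: everything hinges on $\xN_\bullet$ being limit preserving, so that the symmetric monoidal (Segal) condition, which lives in $\Cat$, can be detected levelwise after applying the nerve. The two supporting points that require care are that the pullback of full subcategory inclusions equals the intersection of essential images, which genuinely uses repleteness, and that the $\PrR$ statement requires verifying each functor is a right adjoint rather than merely limit preserving.
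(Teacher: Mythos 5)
Your proposal is correct and follows essentially the same route as the paper, which records this corollary as a direct consequence of the observation that $\xN_\bullet$ is fully faithful and limit-preserving, so that the Segal condition on a functor $\Fin_* \to \Cat$ can be checked levelwise on the nerve. Your write-up simply makes explicit the identification of the pullback with the intersection of the two replete full subcategories and the routine verification that the cospan lies in $\PrR$, both of which the paper leaves implicit.
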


\subsubsection{Equifibered functors, free functors, and the nerve} 

By \cite[Corollary 5.2.8.18]{HTT} the contrafibered-equifibered factorization lifts to a factorization system $(\Fun(\Dop,\CMon^\ctf),\Fun(\Dop,\CMon^\eqf))$ on $\Fun(\Dop,\CMon)$ whose right part is related to equifibered functors via the nerve functor: 

\begin{lem}\label{lem:eqf-detected-on-N1}
    For a symmetric monoidal functor $F\colon \calC \to \calD$ the following are equivalent:
    \begin{enumerate}[(1)]
        \item $F$ is equifibered,
        \item the map $\xN_{\bullet}(F)\colon \xN_\bullet \calC \to \xN_\bullet \calD$
            of simplicial commutative monoids is level-wise equifibered,
        \item the map $\xN_1(F)\colon \xN_1 \calC \to \xN_1 \calD$
            of commutative monoids is equifibered,
        \item $F$ has the right-lifting-property with respect to the diagonal map $\Delta\colon  \xF([1]) \to \xF([1]) \times \xF([1])$.
    \end{enumerate}
\end{lem}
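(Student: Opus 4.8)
The plan is to prove the equivalences using $(1)\Leftrightarrow(3)$ and $(4)\Leftrightarrow(5)$ as two ``anchors'', linking them through $(3)\Rightarrow(4)$ and $(1)\Rightarrow(2)\Rightarrow(4)$, and finally closing the loop with $(4)\Rightarrow(3)$, which is the only step carrying real content. First I would record two structural facts. The square in \cref{defn:sm-eqf} is in fact a square in $\SM$: all four corners are symmetric monoidal categories, and since $\calC$ is a commutative monoid object its multiplication $\otimes\colon\calC\times\calC\to\calC$ is a symmetric monoidal functor (for the product structure on $\calC\times\calC$), so the same holds for the comparison maps. Because $\SM=\CMon(\Cat)$ and the underlying-category functor $\SM\to\Cat$ creates limits, condition $(1)$ (a pullback in $\Cat$) is equivalent to this square being a pullback in $\SM$. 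I will also use that $\SM$ is semiadditive, being commutative monoids in a category with finite products (the proof of \cref{lem:basic-properties-for-xF}.(3) applies verbatim), so that coproducts and products in $\SM$ agree.

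For $(1)\Leftrightarrow(3)$ I would apply the nerve $\xN_\bullet\colon\Cat\to\Fun(\Dop,\calS)$, which by \cref{complete-Segal-spaces} is fully faithful and limit-preserving, hence detects pullbacks. Since $\xN_\bullet$ preserves products and carries $\otimes$ to the levelwise monoid addition, $\xN_n$ of the square is exactly the defining square of equifiberedness for $\xN_n(F)$. Thus $(1)$ holds iff every $\xN_n(F)$ is equifibered, which is $(3)$, and $(3)\Rightarrow(4)$ is the case $n=1$.

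For $(1)\Rightarrow(2)$ I would apply the limit-preserving functor $\Map_{\SM}(\calE,-)$ to the $\SM$-pullback identified above; using that it preserves products and sends $\otimes$ to the monoid addition on $\Map_{\SM}(\calE,\calC)$, the resulting cartesian square is precisely the equifibered square of $\Map_{\SM}(\calE,\calC)\to\Map_{\SM}(\calE,\calD)$, giving $(2)$. Conversely $(2)\Rightarrow(4)$ follows by taking $\calE=\xF([1])$, the free symmetric monoidal category on $[1]$, since $\Map_{\SM}(\xF([1]),\calC)\simeq\xN_1\calC$ as commutative monoids. The equivalence $(4)\Leftrightarrow(5)$ then mirrors \cref{prop:TFAE-to-equifibered}.(\ref{item:eqf-rlp}): using the adjunction between $\xF\colon\Cat\to\SM$ and the underlying-category functor together with semiadditivity, one has $\xF([1])\times\xF([1])\simeq\xF([1]\amalg[1])$, under which $\Delta^*$ becomes the addition map and $\Map_{\SM}(\xF([1])\times\xF([1]),\calC)\simeq(\xN_1\calC)^2$; the right-orthogonality square for $F$ against $\Delta$ thereby becomes the equifibered square of $\xN_1(F)$.

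The hard part will be closing the loop with $(4)\Rightarrow(3)$, i.e.\ bootstrapping from level one to all levels. Here I would first observe that $\xN_0(F)$ is a retract of $\xN_1(F)$ in $\Ar(\CMon)$: the degeneracy $s_0\colon\xN_0\to\xN_1$ and the face $d_1\colon\xN_1\to\xN_0$ are maps of commutative monoids, natural in $F$, and satisfy $d_1 s_0=\mathrm{id}$. Since the equifibered maps form the right class of a factorization system (\cref{lem:formal-ctf-eqf}), they are closed under retracts, so $\xN_0(F)$ is equifibered. Finally, the Segal condition presents $\xN_n\calC\simeq\xN_1\calC\times_{\xN_0\calC}\cdots\times_{\xN_0\calC}\xN_1\calC$, exhibiting $\xN_n(F)$ as a limit in $\Ar(\CMon)$ of copies of the equifibered maps $\xN_1(F)$ and $\xN_0(F)$; by \cref{lem:eqf-maps-closed-under-limits} it is therefore equifibered, which yields $(3)$ and completes the cycle.
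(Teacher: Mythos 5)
Your proposal is correct, and its overall architecture (a cycle of implications mediated by representable functors, with $(4)\Leftrightarrow(5)$ handled by corepresentability of the addition on $\xN_1$) matches the paper's. The one genuinely different step is how you close the loop. The paper proves $(1)\Leftrightarrow(2)$ via the Yoneda embedding, gets $(2)\Rightarrow(3)\Rightarrow(4)$ by specializing to $\calE=\xF([n])$, and then closes with $(4)\Rightarrow(2)$ by arguing that $[1]$ generates $\Cat$ under colimits — which implicitly also requires that $\xF([1])$ generates $\SM$ under colimits (via monadicity of $\SM$ over $\Cat$), so that $\Map_{\SM}(\calE,\calC)\to\Map_{\SM}(\calE,\calD)$ is a limit of copies of $\xN_1(F)$ in $\Ar(\CMon)$. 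You instead close with $(4)\Rightarrow(3)$: first $\xN_0(F)$ is equifibered as a retract of $\xN_1(F)$ via $d_1s_0=\id$, and then the Segal condition exhibits $\xN_n(F)$ as an iterated fiber product of $\xN_1(F)$ over $\xN_0(F)$ in $\Ar(\CMon)$, so \cref{lem:eqf-maps-closed-under-limits} applies. Both routes lean on the same closure lemma, but yours is more hands-on and avoids the implicit generation-of-$\SM$ step, at the cost of needing the small retract observation; the paper's route is slicker and delivers condition $(2)$ for arbitrary $\calE$ in one stroke, which is the form of the statement used elsewhere. Your preliminary remarks (that the defining square lives in $\SM$ and that the forgetful functor $\SM\to\Cat$ creates limits, so $(1)$ may be tested in either category) are correct and make the $(1)\Rightarrow(2)$ step cleanly rigorous.
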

\begin{proof}
    The equivalence $(1) \Leftrightarrow (2)$ holds because the nerve functor $\xN_\bullet\colon \SM \to \Fun(\Dop, \CMon)$ is conservative and commutes with limits.
    This suffices because the definition of equifibered maps only involves limits.
    Similarly, the equivalence $(1) \Leftrightarrow (3)$ holds because $\xN_1\colon \SM \to \CMon$ is conservative (\cref{obs:N1-conservative}) and commutes with limits.
    The equivalence $(3) \Leftrightarrow (4)$ holds since $\Delta \colon \xF([1]) \to \xF([1]) \times \xF([1])$ corepresents the addition map on $\xN_1$. 
\end{proof}

\begin{cor}\label{cor:full-subcat-eqf}
    The inclusion of a full symmetric monoidal subcategory $\calC \subset \calD$ is equifibered if and only if
    for all $x,y \in \calD$ with $x \otimes y \in \calC$ we have that $x, y \in \calC$.
\end{cor}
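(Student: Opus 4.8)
The plan is to reduce to the groupoid-level statement via \cref{lem:eqf-detected-on-N1} and then apply the submonoid criterion of \cref{lem:submonoid-eqf}. First I would note that since $\calC \subseteq \calD$ is full (hence replete), the induced functor $\Ar(\calC) = \Fun([1],\calC) \to \Fun([1],\calD) = \Ar(\calD)$ is again fully faithful, with essential image exactly those arrows whose source and target lie in $\calC$. Passing to maximal subgroupoids, the map $\xN_1(F)\colon \xN_1\calC \to \xN_1\calD$ is therefore a monomorphism of commutative monoids, where the monoid structure is the pointwise tensor product, so that $[f_1]+[f_2]=[f_1 \otimes f_2]$. By \cref{lem:eqf-detected-on-N1} the inclusion $F$ is equifibered if and only if this $\xN_1(F)$ is equifibered.

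Next I would apply \cref{lem:submonoid-eqf}, which says that the monomorphism $\xN_1(F)$ is equifibered if and only if $\pi_0\xN_1\calC \subseteq \pi_0\xN_1\calD$ is closed under factoring. To unwind this I record the key bookkeeping fact: the symmetric monoidal source and target functors $s,t\colon \Ar(\calD) \to \calD$ restrict to $\calC$, and by fullness an arrow class $[f]$ lies in $\pi_0\xN_1\calC$ if and only if both $s(f)$ and $t(f)$ lie in $\calC$. Consequently the factoring condition reads: whenever $s(f_1)\otimes s(f_2) \in \calC$ and $t(f_1)\otimes t(f_2)\in\calC$, then all four objects $s(f_1),s(f_2),t(f_1),t(f_2)$ lie in $\calC$.

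Finally I would check that this factoring condition is equivalent to the stated object-level condition $(\ast)$, namely that $x\otimes y \in \calC$ implies $x,y\in\calC$. The implication $(\ast)\Rightarrow$ factoring is immediate, applying $(\ast)$ separately to the pair of sources and to the pair of targets. For the converse, given $x\otimes y\in\calC$ I would take $f_1=\id_x$ and $f_2=\id_y$, so that $s(f_i)$ and $t(f_i)$ are just $x$ and $y$ and $s(f_1)\otimes s(f_2)=x\otimes y\in\calC$; the factoring condition then forces $x,y\in\calC$. The only genuinely delicate points are verifying that $\xN_1(F)$ is a monomorphism, which is where fullness and repleteness enter, and the identification of $\pi_0\xN_1\calC$ inside $\pi_0\xN_1\calD$ via the source and target maps; once these are in place, the equivalence of the two combinatorial conditions is the cute but routine identity-arrow trick.
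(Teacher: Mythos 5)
Your proof is correct, and it takes a slightly different route from the paper's. Both arguments hinge on the same two inputs, \cref{lem:eqf-detected-on-N1} (equifiberedness is detected on $\xN_1$) and \cref{lem:submonoid-eqf} (a monomorphism of monoids is equifibered iff $\pi_0$ is closed under factoring), but you apply the submonoid criterion directly to $\xN_1\calC \subset \xN_1\calD$ and then unwind the factoring condition on arrows into the stated condition on objects via the identity-arrow trick. The paper instead applies \cref{lem:submonoid-eqf} to $\xN_0\calC \subset \xN_0\calD$, where the factoring condition is verbatim the stated object-level condition, and then transports equifiberedness up to $\xN_1$ by observing that fullness exhibits $\xN_1\calC \to \xN_1\calD$ as the base change of $(\xN_0\calC)^2 \to (\xN_0\calD)^2$ along $(s,t)\colon \xN_1\calD \to (\xN_0\calD)^2$, invoking \cref{lem:eqf-base-change}. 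The two proofs use fullness in the same essential place (an arrow of $\calD$ lies in $\calC$ iff its source and target do), just packaged differently: the paper's version avoids any combinatorial unwinding at the cost of one extra base-change step, while yours is self-contained at level $1$ and makes the role of identity arrows explicit. Your attention to repleteness and to the monomorphism property of $\xN_1(F)$ is exactly the right care to take; both points are implicit in the paper's pullback square.
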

\begin{proof}
    By \cref{lem:submonoid-eqf} the condition is equivalent to $\xN_0\calC \subset \xN_0\calD$ being equifibered.
    This also implies that $\xN_1\calC \to \xN_1\calD$ is equifibered as, by fully faithfulness, it is the base-change of $\xN_0\calC^2 \to \xN_0\calD^2$ along $\xN_1\calD \to \xN_0\calD^2$.
    Therefore, the claim follows from \cref{lem:eqf-detected-on-N1}.
\end{proof}

\begin{example}\label{ex:maximal-subgpd-eqf}
    For a symmetric monoidal $\infty$-category $\calC$ the following are equivalent:
    \begin{enumerate}[(1)]
        \item The functor $\otimes\colon  \calC \times \calC \to \calC$ is conservative.
        \item The inclusion of the maximal subgroupoid $\calC^\simeq \subset \calC$ is equifibered.
        \item $s_0\colon  \xN_0\calC \to \xN_1\calC$ is equifibered.
    \end{enumerate}
\end{example}
\begin{proof}
    For $(1) \Leftrightarrow (2)$ we observe that by definition the functor $\calC^\simeq \to \calC$ is equifibered 
    if and only if $\calC^{2\simeq} \to \calC^{\simeq} \times_{\calC} \calC^2$ is an equivalence.
    This is always the inclusion of a wide subcategory.
    It is full if and only if:
    whenever $f,g \in \Ar(\calC)$ are such that $f \otimes g$ is an equivalence then $f$ and $g$ are both equivalences.
    In other words, if and only if $\otimes$ is conservative.
    
    For $(2) \Leftrightarrow (3)$ we use that by \cref{lem:eqf-detected-on-N1} the functor $\calC^\simeq \to \calC$ is equifibered 
    if and only if $\xN_1(\calC^\simeq) \to \xN_1\calC$ is.
    But $s_0\colon \xN_0(\calC^\simeq) \to \xN_1(\calC^\simeq)$ and
    $\xN_0(\calC^\simeq) \to \xN_0\calC$ are equivalences,
    so we may equivalently ask $s_0\colon \xN_0\calC \to \xN_1 \calC$ to be equifibered.
\end{proof}

For cocartesian symmetric monoidal functors we have the following generalization of \cref{obs:eqf=equi-fibered}.
\begin{lem}\label{lem:eqf-cocartesian}
    Let $p\colon  \calE \to \calC$ be a symmetric monoidal functor that is also a cocartesian fibration.
    Suppose furthermore that 
    $\otimes \colon \calE \times \calE \to \calE$ 
    sends $(p\times p)$-cocartesian edges to $p$-cocartesian edges.
    Then the following are equivalent:
    \begin{enumerate}
        \item $p$ is an equifibered symmetric monoidal functor.
        \item For any two objects $x,y \in \calC$ the functor
        \[
            \calE_x \times \calE_y \longrightarrow \calE_{x \otimes y},
        \]
        obtained by restricting the monoidal structure of $\calE$ to the fibers, is an equivalence.
    \end{enumerate}
\end{lem}
\begin{proof}
    The assumptions guarantee that the square
    \[
        \begin{tikzcd}
            \calE \times \calE \ar[r, "\otimes"] \ar[d, "p \times p"'] &
            \calE \ar[d, "p"] \\
            \calC \times \calC \ar[r, "\otimes"] &
            \calC 
        \end{tikzcd}
    \]
    induces a morphism $\calE \times \calE \to (\calC \times \calC) \times_\calC \calE$ of cocartesian fibrations over $\calC \times \calC$.
    A morphism of cocartesian fibrations is an equivalence if and only if it induces an equivalence on all fibers.
    We may therefore check whether the square is cartesian by comparing the fiber of $p \times p$ at every point $(x, y) \in \calC \times \calC$ with the fiber of $p$ at its image $x \otimes y \in \calC$.
    This is precisely condition (2).
\end{proof}

\begin{rem}\label{rem:eqf-cocartesian}
    There is a variant of the straightening/unstraightening construction that induces an equivalence \cite[Proposition A.2.1]{Hinich-rectification}
    (see also \cite[\S 2.1, \S 2.4]{HA} and \cite[\S 2]{Ram22})
    \[
        \St\colon  \Catover{\calC}^{\otimes, \rm cocart}
        \xleftrightarrow{\;\simeq\;}
        \Fun^{\rm lax-\otimes}(\calC, \Cat^\times) :\!\Un
    \]
    between symmetric monoidal cocartesian fibrations over $\calC$
    and lax symmetric monoidal functors from $\calC$ to $\Cat$, equipped with the Cartesian symmetric monoidal structure.
    \cref{lem:eqf-cocartesian} says that under this equivalence the \emph{strong} symmetric monoidal functors $F\colon \calC \to \Cat^\times$
    exactly correspond to the \emph{equifibered} cocartesian symmetric monoidal functors $\Un(F) \to \calC$.
\end{rem}

We now relate equifibered functors to free symmetric monoidal functors:
\begin{prop}\label{prop:free-nerve-commute}
    The square of $\infty$-categories 
    \[\begin{tikzcd}
	\Cat && \SM \\
	{\Fun(\Dop,\calS)} && {\Fun(\Dop,\CMon)}
	\arrow["\xF", from=1-1, to=1-3]
	\arrow[phantom, very near start, "\lrcorner", from=1-1, to=2-3]
	\arrow["{\xN_\bullet}", hook, from=1-1, to=2-1]
	\arrow["\xF", from=2-1, to=2-3]
	\arrow["{\xN_\bullet}", hook, from=1-3, to=2-3]
    \end{tikzcd}\]
    canonically commutes and is a pullback square.
\end{prop}
\begin{proof}
    To show that the square commutes we need to show that for all $\calC \in \Cat$ and all $n$ the canonical map 
    $\xF(\xN_n(\calC)) \to \xN_n(\xF(\calC))$ 
    is an equivalence of spaces.
    The formula 
    $\xF(X) \simeq \colim_{A \in \Fin^\simeq} X^A$
    from \cite[Example 8.13]{CH19} that was recalled in \cref{lem:basic-properties-for-xF} also holds when $X$ is an $\infty$-category.
    The map of interest is therefore the induced map on maximal subgroupoids of the functor
    \[
        \colim_{A \in \Fin^\simeq} \Fun([n], \calC)^A
        \simeq \colim_{A \in \Fin^\simeq} \Fun([n], \calC^A)
        \to \Fun([n], \colim_{A \in \Fin^\simeq} \calC^A).
    \]
    Since $[n] \in \simp \subseteq \Cat$ is weakly contractible, this functor is an equivalence by \cref{lem:weakly-contractible-lim-commute},
    which we prove at the end of this subsection.
    
    It remains to show that the square is cartesian.
    Because the vertical functors are fully faithful, it suffices to show that if $X_\bullet$ is a simplicial space such that $\xF(X_\bullet)$ is in the essential image of $\xN_\bullet$, then $X_\bullet$ was already in the essential image of $\xN_\bullet$.
    We have a disjoint decomposition $U(\xF(X_\bullet)) = X_\bullet \sqcup \bigsqcup_{n\ge 0, n\neq 1} (X_\bullet)^{\times n}_{h\Sigma_n}$.
    The claim follows because if $X_\bullet$ and $Y_\bullet$ are simplicial spaces such that $X_\bullet \sqcup Y_\bullet$ is in the essential image of $\xN_\bullet$, then $X_\bullet$ and $Y_\bullet$ must have both been in the essential image.
    (To see this, note that if $X_\bullet$ is non-empty then it is a retract of $X_\bullet \sqcup Y_\bullet$ and the essential image of $\xN_\bullet$ is closed under all limits, in particular retractions.)
\end{proof}

\begin{rem}
    The proof given above also shows that for any weakly contractible $\infty$-category  $I \in \Cat$ and any $\calC \in \Cat$ we have a canonical equivalence $\xF(\Fun(I,\calC)) \simeq \Fun(I,\xF(\calC))$.
\end{rem}

\begin{cor}\label{cor:free-functor-on-Cat}
    The free functor $\xF \colon \Cat \to \SM$ has the following properties:
    \begin{enumerate}[(1)]
        \item 
        For any functor $F\colon \calC \to \calD$ the symmetric monoidal functor $\xF(F)\colon \xF(\calC) \to \xF(\calD)$ is equifibered.
        \item 
        The free functor $\xF \colon \Cat \to \SM$ commutes with contractible limits.
        \item 
        The free functor $\xF \colon \Cat \to \SM$ induces an equivalence
        \(\xF \colon \Cat \iso \SMeq{\xF(\ast)}\).
    \end{enumerate}
\end{cor}
\begin{proof}
    Claim $(1)$ holds because the free functor $\xF$ is computed level-wise
    by \cref{prop:free-nerve-commute} and equifiberedness can be checked level-wise by \cref{lem:eqf-detected-on-N1}.
    Similarly, claim (2) holds because $\xF$ and limits are both computed level-wise and for $\CMon$ we know that $\xF$ commutes with contractible limits.
    Alternatively, (2) is an instance of the more general \cite[Proposition 10.6]{CH22}.
    For claim $(3)$, we pass to simplicial objects in \cref{cor: equifibered-over-free-equals-spaces} to get an equivalence 
    \[\xF \colon \Fun(\Dop,\calS) \iso \Fun(\Dop,\CMon^{\eqf}_{/\xF(\ast)}) \simeq \Fun(\Dop,\CMon)^{\eqf}_{/\xF(\ast)}\]
    which by \cref{prop:free-nerve-commute} restricts to an equivalence 
    $\xF \colon \Cat \iso \SMeq{\xF(\ast)}$.
\end{proof}

We also have a variant of \cref{cor:free-functor-on-Cat}.(3) and \cref{cor: equifibered-over-free-equals-spaces} for symmetric monoidal $\infty$-categories equifibered over the categorical delooping of $\xF(*)$.
First we recall that symmetric monoidal $\infty$-categories ``with one object'' are commutative monoids.
\begin{lem}\label{lem:con-SM=CMon}
    There is an adjunction 
    \[ 
        \frB\colon \CMon \adj \SM \cocolon \End_{(-)}(\unit)
    \] 
    such that the left adjoint is fully faithful, and its essential image are those symmetric monoidal $\infty$-categories $\calC$ for which $\calC^\simeq$ is connected.
\end{lem}
\begin{proof}
    This is well-known and can for instance be found in \cite[Remark 8.7]{gepner-universality}.
    The idea is as follows.
    The $\infty$-category $\Mon$ is defined as the full subcategory $\Mon \subset \Fun(\Dop, \calS)$ of those simplicial spaces satisfying the Segal condition $X_n \simeq X_1^{\times n}$.
    As such it is, via the Rezk nerve, equivalent to the $\infty$-category of $\infty$-categories with a \emph{pointed} connected space of objects. 
    (This can be found in the literature for example by specializing \cite[Theorem 0.26]{AF-flagged-18} to $n=1$.)
    We then get an adjunction
    \[ 
        \frB\colon \Mon \adj \mrm{Cat}_{\infty, \ast/} \cocolon \End_{(-)}(\ast)
    \] 
    where the left adjoint is fully faithful with essential image those pointed $\infty$-categories with a connected space of objects.
    Both functors preserve products and hence we can apply $\CMon(-)$ to get an adjunction
    \[ 
        \frB\colon \CMon(\Mon) \adj \CMon(\mrm{Cat}_{\infty, \ast/}) \cocolon \End_{(-)}(\ast)
    \] 
    This yields the desired adjunction because $\CMon(\Mon) \simeq \CMon$ and $\CMon(\mrm{Cat}_{\infty,\ast/}) \simeq \SM$.
    These equivalences follow from Dunn--Lurie additivity \cite[Theorem 5.1.2.2]{HA} (namely using $\bbE_\infty \otimes \bbE_1 = \bbE_\infty$ and $\bbE_\infty \otimes \bbE_0 = \bbE_\infty$), which is the argument made in \cite{gepner-universality}, but it is worth pointing out this can already be seen using more elementary means.%
    \footnote{
        We would like to thank Fabian Hebestreit for pointing out the following argument to us.
        Namely, we have that $\CMon(\Mon) \simeq \Mon(\CMon)$ and 
        for every semi-additive category $\calC$ (such as $\CMon$) the forgetful functor $\Mon(\calC) \to \calC$ is an equivalence.
        The latter can be shown by checking that if $\Dop \to \calC$ satisfies the Segal condition, then it is left Kan extended from $\simp^\op_{\le 1}$, and then showing that via left and right Kan extensions every $c \in \calC$ uniquely promotes to a functor $X\colon \Dop_{\le 1} \to \calC$ with $X_0 = 0$ and $X_1 = c$.
    }   
\end{proof}

\begin{lem}\label{lem:eqf-over-frB}
    The functor $\frB$ composed with the free commutative monoid functor induces an equivalence
    \[
        \frB \circ \xF\colon \calS \xtoo{\simeq} \CMon_{/\xF(*)}^\eqf \xtoo{\simeq} \SMeq{\frB(\xF(*))}.
    \]
\end{lem}
\begin{proof}
    The first equivalence is a special case of \cref{cor: equifibered-over-free-equals-spaces}.
    For the second equivalence we can slice \cref{lem:con-SM=CMon} over $\xF(*)$ to get a full inclusion $\CMon_{/\xF(*)} \hookrightarrow \SMover{\frB(\xF(*))}$.
    It thus suffices to argue that a for a symmetric monoidal functor $F\colon \calC \to \frB(\xF(*))$  the following are equivalent:
    \begin{enumerate}[(1)]
        \item $F$ is equifibered.
        \item $\calC^\simeq$ is connected and $\End_\calC(\unit) \to \End_{\frB(\xF(*))}(\unit) = \xF(*)$ is equifibered.
    \end{enumerate}
    If $F$ is equifibered, then so is $F^\simeq\colon \calC^\simeq \to \frB(\xF(*))^\simeq = 0$, which forces $\calC^\simeq$ to be contractible and in particular connected.
    Then $\xN_1\calC \simeq \End_\calC(\unit)$ because the space of objects is contractible, so $\xN_1(F)$ being equifibered implies that $F$ induced an equifibered map on $\End_{-}(\unit)$.
    This shows $(1) \Rightarrow (2)$.
    For the converse, suppose $(2)$.
    The fiber over $0$ of the map $\End_\calC(\unit) \to \xF(*)$ must be contractible, as this map is equifibered.
    But this fiber contains all the invertible elements, so we know that $\Aut_\calC(\unit) \simeq *$, which implies $\calC^\simeq \simeq *$.
    Now $\xN_1(\calC) \simeq \End_\calC(\unit)$ as before and we see that $\xN_1(F)$ is equifibered, which implies (1) by \cref{lem:eqf-detected-on-N1}.
\end{proof}

We can sometimes use the nerve to compute colimits of symmetric monoidal categories.
\begin{obs}\label{obs:level-wise-colimits-in-SM}
    We say that a diagram $\calC\colon I \to \SM$
    \hldef{has a level-wise colimit}
    if its colimit is preserved by $\xN_\bullet$ in the sense that the canonical map
    \[
        \colim_{i \in I} \xN_n( \calC_i ) 
        \too
        \xN_n( \colim_{i \in I} \calC_i ) 
    \]
    is an equivalence for all $[n] \in \Dop$.
    Since $\xN_\bullet$ is fully faithful,
    this is the case if and only if
    the simplicial commutative monoid $M_\bullet$ obtained as the colimit of $\xN_\bullet \circ \calC\colon I \to \SM \to \Fun(\Dop, \CMon)$ is in the essential image of $\xN_\bullet$,
    i.e.\ if and only if $M_\bullet$ is a complete Segal space.
\end{obs}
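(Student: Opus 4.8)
The plan is to exploit that the symmetric monoidal nerve is a \emph{reflective} subcategory inclusion and then to identify the canonical comparison map with a localization unit. By the corollary exhibiting $\SM$ as a pullback in $\PrR$, the symmetric monoidal nerve $\xN_\bullet\colon \SM \hookrightarrow \Fun(\Dop,\CMon)$ is fully faithful and admits a left adjoint $L$; hence $\SM$ is a reflective localization of $\Fun(\Dop,\CMon)$. First I would recall that colimits in a reflective subcategory are computed by reflecting the ambient colimit: for $\calC\colon I \to \SM$ we have $\colim_I \calC \simeq L(M_\bullet)$, where $M_\bullet = \colim_I \xN_\bullet\circ\calC$ is the colimit taken in $\Fun(\Dop,\CMon)$, equivalently level-wise in $\CMon$. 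Under this identification the canonical map $\colim_I \xN_n\calC_i \to \xN_n(\colim_I \calC_i)$ is precisely the $n$-th level of the unit $\eta\colon M_\bullet \to L M_\bullet \simeq \xN_\bullet(\colim_I \calC)$; since $U$ is conservative by \cref{lem:basic-properties-for-xF}.(1), it is immaterial whether we read this in $\CMon$ or in $\calS$.

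Next, because $\xN_\bullet$ is fully faithful, the unit $\eta$ is an equivalence if and only if $M_\bullet$ is local, i.e.\ lies in the essential image of $\xN_\bullet$. Thus the diagram $\calC$ has a level-wise colimit exactly when $M_\bullet$ lies in the essential image of the symmetric monoidal nerve, which is the first half of the asserted equivalence and follows formally from reflectivity and full faithfulness.

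It then remains to identify this essential image with the complete Segal condition. Chasing through the same pullback square, $M_\bullet \in \Fun(\Dop,\CMon)$ lies in the essential image of $\xN_\bullet$ if and only if its image in $\Fun(\Fin_*\times\Dop,\calS)$ lies in that of $\xN_\bullet\colon \Fun(\Fin_*,\Cat) \to \Fun(\Fin_*\times\Dop,\calS)$, i.e.\ if and only if the simplicial space $M_\bullet(A_+)$ is a complete Segal space for every $A_+ \in \Fin_*$ (by \cref{complete-Segal-spaces}). Since $M_\bullet$ is a commutative monoid, the Segal condition gives $M_\bullet(A_+) \simeq U(M_\bullet)^{\times A}$, and complete Segal spaces are closed under products, being a reflective and hence limit-closed subcategory of $\Fun(\Dop,\calS)$. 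Consequently the family of conditions collapses to the single factor $A = *$, namely that the underlying simplicial space $U(M_\bullet)$ be a complete Segal space. I expect this last step — pinning down the essential image of the symmetric monoidal nerve and reducing the family of level-wise conditions to the underlying simplicial space via the Segal condition — to be the only genuine content, as the preceding two paragraphs are purely formal.
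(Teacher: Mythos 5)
Your argument is correct and is exactly the reasoning the paper leaves implicit in this observation: full faithfulness plus the left adjoint from the pullback square in $\PrR$ identify the comparison map with the localization unit, and the essential image is pinned down via that same pullback square together with the Segal condition and closure of complete Segal spaces under products. Nothing to add.
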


We still have to provide the category-theoretic ingredient for the proof for \cref{prop:free-nerve-commute}.
\begin{lem}\label{lem:weakly-contractible-lim-commute}
    Let $I \in \Cat$ be a weakly contractible $\infty$-category, $X \in \calS$ an $\infty$-groupoid, and $\calC_{(-)}\colon X \to \Cat$ a functor.
    Then the canonical functor 
     $\colim_{x \in X}\Fun(I,\calC_x) \to \Fun(I,\colim_{x \in X}\calC_x)$
    is an equivalence.
\end{lem}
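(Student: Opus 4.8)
The plan is to show that the nerve $\xN_\bullet\colon\Cat\hookrightarrow\Fun(\simp^\op,\calS)$ carries colimits indexed by $\infty$-groupoids to level-wise colimits, and then to leverage the fact — already used in \cref{lem:basic-properties-for-xF}.(4) — that in $\calS$ colimits indexed by $\infty$-groupoids commute with contractible limits \cite[Lemma 2.2.8]{GHK22}. Throughout I call a limit \emph{contractible} if its indexing category is weakly contractible, and I write $E := \xN_\bullet(\{0\xrightarrow{\sim}1\})$ for the nerve of the walking isomorphism.

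\textbf{Step 1: the nerve preserves $X$-colimits.} First I would form the level-wise colimit $M_\bullet := \colim_{x\in X}\xN_\bullet(\calC_x)$ in $\Fun(\simp^\op,\calS)$ and show it is a complete Segal space; since $\xN_\bullet$ is a fully faithful right adjoint (\cref{complete-Segal-spaces}), this identifies $M_\bullet\simeq\xN_\bullet(\colim_X\calC_x)$, i.e.\ $\xN_m(\colim_X\calC_x)\simeq\colim_X\xN_m(\calC_x)$ for all $m$. The Segal condition holds because each $\xN_\bullet(\calC_x)$ is Segal and the relevant iterated pullback $\xN_1\times_{\xN_0}\cdots\times_{\xN_0}\xN_1$ is a connected, hence contractible, limit, which therefore commutes with $\colim_X$. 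For completeness I would write $E = \colim_{\simp_{/E}}\Delta^\bullet$, so that $\Map(E,Y_\bullet)\simeq\lim_{(\simp_{/E})^\op}Y_\bullet$; since $|E|\simeq\ast$ the category $(\simp_{/E})^\op$ is weakly contractible, so this is a contractible limit. Completeness of each $\calC_x$ reads $\xN_0(\calC_x)\xrightarrow{\sim}\Map(E,\xN_\bullet\calC_x)$, and commuting $\colim_X$ past the contractible limit $\Map(E,-)$ gives $M_0=\colim_X\xN_0(\calC_x)\xrightarrow{\sim}\Map(E,M_\bullet)$, which is exactly completeness of $M_\bullet$. I expect this completeness step — recognizing $\Map(E,-)$ as a contractible limit — to be the main obstacle.

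\textbf{Step 2: mapping out of weakly contractible $K$.} Next I would show that for any weakly contractible $K\in\Cat$ the functor $\Map_\Cat(K,-)$ commutes with $\colim_X$. Writing $\xN_\bullet K=\colim_{\simp_{/K}}\Delta^\bullet$ gives $\Map_\Cat(K,\calE)\simeq\lim_{(\simp_{/K})^\op}\xN_m(\calE)$, and since $|K|\simeq\ast$ the indexing $(\simp_{/K})^\op$ is weakly contractible, so this is a contractible limit. Hence
\[\colim_X\Map(K,\calC_x)\simeq\colim_X\lim_{(\simp_{/K})^\op}\xN_m(\calC_x)\simeq\lim_{(\simp_{/K})^\op}\colim_X\xN_m(\calC_x)\simeq\lim_{(\simp_{/K})^\op}\xN_m(\colim_X\calC_x)\simeq\Map(K,\colim_X\calC_x),\]
where the middle equivalence is \cite[Lemma 2.2.8]{GHK22} and the following one is Step 1.

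\textbf{Step 3: conclusion.} Finally, since $\xN_1$ is conservative (\cref{obs:N1-conservative}) it suffices to check that the comparison functor $\Phi$ becomes an equivalence after $\xN_1=\Map([1],-)$. Both $[1]$ and $[1]\times I$ are weakly contractible (a product of weakly contractible categories is weakly contractible, as classifying spaces preserve finite products), and $\Map([1],\Fun(I,-))\simeq\Map([1]\times I,-)$ by the $(-\times I)\dashv\Fun(I,-)$ adjunction. Applying Step 1 to the diagram $\Fun(I,\calC_\bullet)$ and Step 2 with $K=[1]\times I$ then yields
\[\xN_1\big(\colim_X\Fun(I,\calC_x)\big)\simeq\colim_X\Map([1]\times I,\calC_x)\simeq\Map\big([1]\times I,\colim_X\calC_x\big)\simeq\xN_1\big(\Fun(I,\colim_X\calC_x)\big),\]
and by naturality of all comparison maps this composite is $\xN_1(\Phi)$. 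Therefore $\xN_1(\Phi)$ is an equivalence, and hence so is $\Phi$.
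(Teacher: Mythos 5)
Your argument is correct, but it proceeds quite differently from the paper's proof. The paper unstraightens $\calC_{(-)}$ to a cocartesian fibration $\Un_X(\calC_{(-)}) \to X$ whose total space computes $\colim_X \calC_x$ \cite[Corollary 3.3.4.3]{HTT}, observes that $\Un_X(\Fun(I,\calC_{(-)}))$ is the pullback of $\Fun(I,\Un_X(\calC_{(-)})) \to \Fun(I,X)$ along the constant-diagram functor $X \to \Fun(I,X)$, and then wins immediately because $X \to \Fun(I,X)$ is an equivalence when $I$ is weakly contractible and $X$ is an $\infty$-groupoid. That is essentially a two-line argument concentrating all the content in the single equivalence $\Fun(I,X)\simeq X$. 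Your route instead pushes everything through the nerve: you reduce the statement to the space-level fact that $\infty$-groupoid-indexed colimits in $\calS$ commute with weakly contractible limits (the same \cite[Lemma 2.2.8]{GHK22} the paper invokes for \cref{lem:basic-properties-for-xF}.(4)), using it to show that the level-wise colimit of nerves is again a complete Segal space, then that $\Map_{\Cat}(K,-)$ commutes with $\colim_X$ for weakly contractible $K$, and finally concluding via conservativity of $\xN_1$ with $K=[1]\times I$. This is longer and requires the completeness check that the paper's proof avoids entirely, but it is self-contained relative to facts the paper already uses elsewhere, and it yields reusable byproducts (notably that $\xN_\bullet$ preserves $\infty$-groupoid-indexed colimits, which is exactly the kind of statement underlying \cref{obs:level-wise-colimits-in-SM}). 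Two small points to tighten: in Step 1 the phrase ``connected, hence contractible'' is not a valid implication in general --- what you need (and what is true) is that the zigzag poset indexing the iterated fiber product $\xN_1\times_{\xN_0}\cdots\times_{\xN_0}\xN_1$ has contractible nerve; and in Step 2 the category you denote $\simp_{/K}$ is really the category of simplices of the simplicial \emph{space} $\xN_\bullet K$, i.e.\ its unstraightening over $\simp$, whose weak homotopy type is $|K|$. Neither issue affects the validity of the argument.
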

\begin{proof}
    The colimit over an $\infty$-groupoid may be computed by unstraightening \cite[Corollary 3.3.4.3]{HTT},
    so $\colim_{x \in X} \calC_x \to X$ is the unstraightening of $\calC_{(-)}\colon X \to \Cat$.
    By powering/cotensoring this with $I$ (see e.g.~\cite[Proposition 5.3.2]{envelopes}) we get that the unstraightening (and thus the colimit) of the functor
    \(
        \Fun(I, \calC_{-}) \colon X \too \Cat
    \) 
    is given by the pullback
    \[\begin{tikzcd}
	{\Un_X\left(\Fun\left(I,\calC_{(-)}\right)\colon  X \to \Cat\right)} & {\Fun\left(I,\colim_{x\in X} \calC_x\right)} \\
	X & {\Fun\left(I,X\right).}
	\arrow[""{name=0, anchor=center, inner sep=0}, "{\mrm{const}}", from=2-1, to=2-2]
	\arrow[from=1-1, to=2-1]
	\arrow[from=1-1, to=1-2]
	\arrow[from=1-2, to=2-2]
	\arrow["\lrcorner"{anchor=center, pos=0.125}, draw=none, from=1-1, to=0]
    \end{tikzcd}\]
    Since $I$ is weakly contractible the bottom horizontal map is an equivalence and thus so is the top horizontal map, proving the claim.
\end{proof}

\subsubsection{CULF maps}
The equifibered maps we have studied so far are those natural transformations $\alpha\colon  M \to N$ of $\Fin_*$-Segal objects $M, N\colon  \Fin_* \to \calS$ that are ``active-equifibered'' in the sense of \cite{envelopes}.
It also makes sense to consider such equifibered maps in other circumstances.
In the example of $\simp^\op$ these maps have been studied under the name of ``CULF'' maps \cite{GCKT18} -- an acronym for ``Conservative and Unique Lifting of Factorizations''.
We briefly recall this definition here and recall some elementary properties that will be useful later.
In particular, we prove \cref{cor:tensor-conservative-flat},
which relates the conditions appearing in \cref{lem:colim-of-eqf-slice-categories} to the upcoming definition of $\infty$-properads.

\begin{defn}
    A map of simplicial spaces $f\colon X_\bullet \to Y_\bullet$ is called \hldef{CULF} if the square 
    \[
        \begin{tikzcd}
            X_n \ar[r, "\lambda^*"] \ar[d, "f_n"'] &
            X_m \ar[d, "f_m"] \\
            Y_n \ar[r, "\lambda^*"] &
            Y_m
        \end{tikzcd}
    \]
    is cartesian for every active $[n] \leftarrow [m] :\! \lambda \in \simp^\op$.
    Here active means that $\lambda(0) = 0$ and $\lambda(m) = n$.
\end{defn}

Just like for equifibered maps, this reduces to a simpler condition when the simplicial spaces involved satisfy a Segal condition:
\begin{lem}[{\cite[Lemma 4.3]{GCKT18}}]\label{lem:check-CULF-on-d1}
    For a map of Segal spaces $f\colon X_\bullet \to Y_\bullet$
    it suffices to check the CULF condition for the active map $\lambda\colon [1] \to [2]$,
    i.e.\ it suffices to check that the diagram
    \[
        \begin{tikzcd}
            X_2 \ar[r, "d_1"] \ar[d, "f_2"'] &         
            X_1 \ar[d, "f_1"]\\
            Y_2 \ar[r, "d_1"] &         
            Y_1
        \end{tikzcd}
    \]
    is cartesian.
\end{lem}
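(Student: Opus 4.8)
The plan is to show that the class $\mathcal{A}$ of active maps $\lambda\colon [m]\to[n]$ in $\simp$ for which the square with horizontal maps $\lambda^*$ and vertical maps $f$ is cartesian is \emph{all} of the active maps, by reducing to generators. Since pullback squares paste, $\mathcal{A}$ is closed under composition and contains the identities. Every active map factors as an active surjection followed by an active injection; every surjection in $\simp$ is automatically active and is a composite of codegeneracies (inducing the $s_i$), while every active injection is a composite of inner cofaces (inducing the $d_i$ with $0<i<m$). Hence it suffices to show that the inner cofaces and all the codegeneracies lie in $\mathcal{A}$, given only the hypothesis that the active map $\lambda\colon [1]\to[2]$, i.e.\ $d_1\colon X_2\to X_1$, does.

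For the inner cofaces I would use the Segal condition. Naturally in $f$, the Segal equivalences identify $X_m\simeq X_{i-1}\times_{X_0}X_2\times_{X_0}X_{m-i-1}$, and likewise for $X_{m-1}$ and for $Y$, in such a way that $d_i\colon X_m\to X_{m-1}$ becomes $\id\times_{X_0}d_1\times_{X_0}\id$. The naturality square for $d_i$ is then the fibre product, over the vertex squares $X_0\to Y_0$, of the naturality square for $d_1$ with two identity squares. As pullbacks are computed factorwise and identity squares are cartesian, this reduces every inner coface to the single hypothesis on $d_1$.

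For the codegeneracies I would argue by cancellation of pullbacks. Each $s_i\colon X_n\to X_{n+1}$ with $n\ge 1$ is split by an inner face, namely $d_i s_i=\id$ with $d_i$ inner when $i\ge 1$, and $d_1 s_0=\id$ with $d_1$ inner when $i=0$. The $d$-square is cartesian by the previous paragraph and the pasted rectangle is the identity square, so cancellation gives that the $s_i$-square is cartesian. This covers every codegeneracy except $s_0\colon X_0\to X_1$, which admits no active section; this is precisely the conservativity (``C'') part of CULF and is the crux of the proof.

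To handle $s_0\colon X_0\to X_1$ I would extract the uniqueness of factorizations encoded in the cartesian $d_1$-square. Given $c\in X_1$ with $f_1(c)$ degenerate, say $f_1(c)=s_0(y)$, the two $2$-simplices $s_0 c,\, s_1 c\in X_2$ have equal image $s_0 s_0 y$ in $Y_2$ and equal first face $d_1(s_0 c)=c=d_1(s_1 c)$; since $(f_2,d_1)\colon X_2\iso Y_2\times_{Y_1}X_1$ is an equivalence, $s_0 c\simeq s_1 c$, and applying $d_2$ (with $d_2 s_0=s_0 d_1$ and $d_2 s_1=\id$) gives $c\simeq s_0(d_1 c)$, so $c$ is degenerate. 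This says exactly that the $s_0$-square is cartesian. Together with closure of $\mathcal{A}$ under composition this proves the lemma, recovering \cite[Lemma 4.3]{GCKT18}; the conservativity step for $s_0$ is the one genuinely non-formal ingredient and is where I expect the main difficulty to lie.
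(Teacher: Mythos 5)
Your reduction to generators is correct and is essentially the proof of the cited \cite[Lemma 4.3]{GCKT18} (the paper gives no argument of its own, only the citation): closure of the class of cartesian squares under pasting, the Segal condition exhibiting each inner face square as a limit of the $d_1$-square with identity squares glued along the vertex square $X_0 \to Y_0$ (a limit of cartesian squares is cartesian), and the splittings $d_j s_i = \mathrm{id}$ by \emph{inner} faces to dispose of every codegeneracy except $s_0\colon X_0 \to X_1$. All of that, including the bookkeeping of which faces are inner, is fine, and you correctly isolate the conservativity step as the crux.

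The gap is in that last step. What your argument actually establishes is that for every point $(y,c,\gamma)$ of $Y_0\times_{Y_1}X_1$ one has $c\simeq s_0(d_1c)$; this only says that the comparison map $u\colon X_0\to Y_0\times_{Y_1}X_1$ hits every component, whereas cartesianness of the $s_0$-square is the assertion that $u$ is an equivalence of spaces. These are not the same: for a non-complete Segal space $s_0\colon Y_0\to Y_1$ need not be a monomorphism, so $Y_0\times_{Y_1}X_1$ is not a union of components of $X_1$, and a map admitting a retraction (here $r=d_1\circ\mathrm{pr}_{X_1}$, using $d_1s_0=\mathrm{id}$) that is surjective on $\pi_0$ need not be an equivalence (e.g.\ $\ast\to S^2$). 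To close the gap, run the argument on the universal element: the paths $\gamma\colon s_0y\simeq f_1c$ together with $s_1s_0=s_0s_0$ identify $s_0\circ\mathrm{pr}_{X_1}$ and $s_1\circ\mathrm{pr}_{X_1}$ as maps $Y_0\times_{Y_1}X_1\to X_2\simeq Y_2\times_{Y_1}X_1$, whence a homotopy $\mathrm{pr}_{X_1}\simeq s_0d_1\mathrm{pr}_{X_1}$; one must then also produce the homotopy on the $Y_0$-component (apply $d_1$ to $\gamma$) and verify the two are compatible over $Y_1$, so that they assemble into a homotopy $u\circ r\simeq\mathrm{id}$ of maps into the fiber product. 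None of this is difficult, but it is precisely the content separating ``$f$ reflects degenerate edges'' from ``the degeneracy square is cartesian'', and it is absent as written.
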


\begin{obs}\label{obs:CULF}
    Write $B_\bullet\colon \CMon \to \Fun(\simp^\op, \calS)$ for the functor induced by restriction along the functor $|-|\colon\simp^\op \to \Fin_*$ \cite[Example 4.9]{CH19}.
    This sends a commutative monoid $M$ to its bar construction $B_n M = M^{\times n}$.
    A morphism $f\colon M \to N$ of commutative monoids is equifibered if and only if the simplicial map $B_\bullet f\colon  B_\bullet M \to B_\bullet N$ is CULF.
\end{obs}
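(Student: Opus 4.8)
The plan is to reduce the CULF condition to a single cartesian square and then recognize that square as the defining square of equifiberedness.

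First I would check that $B_\bullet M$ is a Segal space for every commutative monoid $M$, so that \cref{lem:check-CULF-on-d1} applies. Since $B_0 M = M^{\times 0} \simeq \ast$ is terminal, the Segal map $B_n M \to B_1 M \times_{B_0 M} \cdots \times_{B_0 M} B_1 M$ is just the identity of $M^{\times n}$, and in particular an equivalence. Hence $B_\bullet M$ and $B_\bullet N$ are Segal spaces and $B_\bullet f$ is a map of Segal spaces.

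Next, by \cref{lem:check-CULF-on-d1} the map $B_\bullet f$ is CULF if and only if the single square associated to the active map $d_1 \colon [1] \to [2]$ is cartesian:
\[
\begin{tikzcd}
B_2 M \ar[r, "d_1"] \ar[d, "B_2 f"'] & B_1 M \ar[d, "B_1 f"] \\
B_2 N \ar[r, "d_1"] & B_1 N.
\end{tikzcd}
\]
Here I would unwind the definition of $B_\bullet$ as restriction along $|-|\colon \simp^\op \to \Fin_*$: we have $B_2 M = M \times M$ and $B_1 M = M$, the inner face map $d_1$ is the addition map $+\colon M \times M \to M$, and $B_2 f = f \times f$ while $B_1 f = f$. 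Under these identifications the square above is literally the square
\[
\begin{tikzcd}
M \times M \ar[r, "+"] \ar[d, "f \times f"'] & M \ar[d, "f"] \\
N \times N \ar[r, "+"] & N
\end{tikzcd}
\]
appearing in \cref{defn:eqf}, which is cartesian precisely when $f$ is equifibered. This establishes both implications at once.

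I do not expect any genuine obstacle: the content is entirely bookkeeping. The only points requiring care are verifying that $B_0 M$ is terminal (so that the Segal condition holds automatically and \cref{lem:check-CULF-on-d1} is available) and correctly matching the inner face map $d_1$ of the bar construction with the addition map $+$. Once this identification is made, the statement becomes a tautology.
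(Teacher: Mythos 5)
Your proof is correct and is exactly the bookkeeping the paper has in mind: the observation is stated without proof precisely because, after noting that $B_\bullet M$ is Segal (since $B_0M \simeq \ast$) and invoking \cref{lem:check-CULF-on-d1}, the CULF condition reduces to the single $d_1$-square, which is literally the square in \cref{defn:eqf}. No gaps; the identification of $d_1$ with the addition map via the Segal condition is handled correctly.
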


CULF maps between complete Segal spaces are exactly conservative flat fibrations (or conservative exponentiable functors) as was already observed in \cite[Remark 3.3]{HK22}.
\begin{lem}\label{lem:conservative-flat-fibration}
    A functor $F\colon \calC \to \calD$ is a conservative flat fibration
    if and only if its nerve $\xN_\bullet F\colon \xN_\bullet\calC \to \xN_\bullet\calD$ is CULF.
\end{lem}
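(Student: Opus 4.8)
The plan is to reduce to the single active square $\lambda\colon[1]\to[2]$, reinterpret its cartesianness as an equivalence of factorization spaces, and then separate out the contributions of flatness and of conservativity.

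First, since $\xN_\bullet\calC$ and $\xN_\bullet\calD$ are complete Segal spaces (\cref{complete-Segal-spaces}), and in particular Segal spaces, \cref{lem:check-CULF-on-d1} reduces the CULF condition to cartesianness of the single square with horizontal maps $d_1\colon \Fun([2],-)^\simeq \to \Fun([1],-)^\simeq$ and vertical maps induced by $F$. Passing to vertical fibers over a morphism $h\colon c_0\to c_2$ of $\calC$, cartesianness is equivalent to the statement that $F$ induces an equivalence
\[
    \rho_h\colon \mathrm{Fact}_\calC(h) \too \mathrm{Fact}_\calD(Fh)
\]
for every $h$, where $\mathrm{Fact}_\calC(h) = \Fun([2],\calC)^\simeq \times_{\Fun([1],\calC)^\simeq}\{h\}$ is the space of $2$-simplices of $\calC$ with long edge $h$, i.e.\ the space of factorizations of $h$. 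I will show this family of equivalences holds if and only if $F$ is a conservative flat fibration.

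For the conservativity, I would note that the full CULF condition also contains the square for the active degeneracy $[1]\to[0]$, i.e.\ for $s_0\colon \xN_0\to\xN_1$; by completeness of the nerves an edge is degenerate precisely when it is an equivalence, so cartesianness of this square says exactly that $h$ is an equivalence whenever $Fh$ is, i.e.\ that $F$ is conservative. For the flatness, I would invoke the characterization of flat inner fibrations from \cite[Appendix B]{HA}: $F$ is flat if and only if, for every $2$-simplex $\sigma$ of $\calD$ and every lift $h$ of its long edge to $\calC$, the category $\mathcal{L}(\sigma,h)$ of lifts of $\sigma$ to a factorization of $h$ is weakly contractible. This category $\mathcal{L}(\sigma,h)$ is precisely the fiber over $\sigma$ of the functor $\Fun([2],\calC)\times_{\Fun([1],\calC)}\{h\} \to \Fun([2],\calD)\times_{\Fun([1],\calD)}\{Fh\}$ induced by $F$, whose core recovers $\rho_h$.

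The key mechanism, and the main obstacle, is to reconcile the \emph{weak contractibility} of the lift categories $\mathcal{L}(\sigma,h)$ (all that flatness provides) with the \emph{equivalence of cores} demanded by $\rho_h$. The crucial observation is that conservativity bridges this gap: any morphism of $\mathcal{L}(\sigma,h)$ lies over the identity of $\sigma$ in $\calD$, hence over an equivalence, so if $F$ is conservative then $\mathcal{L}(\sigma,h)$ is a groupoid, for which weak contractibility coincides with contractibility of its core. Thus, assuming $F$ is conservative and flat, each $\mathcal{L}(\sigma,h)$ is a contractible groupoid, and I would deduce from flatness (which makes the comparison functor on factorization categories a suitable fibration) that $\rho_h$ has contractible homotopy fibers and so is an equivalence, yielding the CULF condition. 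Conversely, given that all $\rho_h$ are equivalences, conservativity follows as above, and then—$\mathcal{L}(\sigma,h)$ again being a groupoid—the contractibility of the homotopy fibers of $\rho_h$ forces each $\mathcal{L}(\sigma,h)$ to be weakly contractible, so $F$ is flat. The delicate point throughout, which I would treat carefully, is the passage between the homotopy fiber of the core map $\rho_h$ and the core of the strict fiber $\mathcal{L}(\sigma,h)$; this is exactly where both flatness (to control the fibration) and conservativity (to collapse categories to groupoids) are genuinely used, and where I would lean on the flat-fibration theory of \cite[Appendix B]{HA} together with the observation in \cite[Remark 3.3]{HK22}.
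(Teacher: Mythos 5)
Your proposal is correct and follows essentially the same route as the paper's proof: reduce to the single $d_1$-square via \cref{lem:check-CULF-on-d1}, extract conservativity from the $s_0$-square using completeness, and invoke the pointwise characterization of flatness as weak contractibility of the lift categories, which conservativity upgrades to $\infty$-groupoids so that weak contractibility becomes precisely the equivalence of the induced map on cores of factorization spaces. The only differences are cosmetic — your $\mathrm{Fact}_\calC(h)$ is the paper's $((\calC_{x/})_{/\alpha})^\simeq$, and the core-of-fiber versus fiber-of-cores comparison you flag as delicate is handled directly by noting that $(-)^\simeq$ preserves pullbacks.
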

\begin{proof}
    The functor $F$ is conservative if and only if the square
    \[
        \begin{tikzcd}
            \xN_0\calC \ar[r, hook, "s_0"] \ar[d, "\xN_0 F"'] &         
            \xN_1\calC \ar[d, "\xN_1 F"] \\
            \xN_0\calD \ar[r, hook, "s_0"] &         
            \xN_1\calD 
        \end{tikzcd}
    \]
    is cartesian. 
    This is the case for CULF maps by \cref{lem:check-CULF-on-d1} because $([0] \leftarrow [1] \cocolon s^0)$ is active.
    We may therefore assume that $F$ is conservative.
    
    To complete the proof we show that a conservative functor $F$ is a flat fibration if and only if the square
    \[
        \begin{tikzcd}
            \xN_2\calC \ar[r, "d_1"] \ar[d, "\xN_2 F"'] &         
            \xN_1\calC \ar[d, "\xN_1 F"]\\
            \xN_2\calD \ar[r, "d_1"] &         
            \xN_1\calD
        \end{tikzcd}
    \]
    is cartesian.
    For a fixed $(\alpha\colon x \to y) \in \xN_1\calC$
    the map on horizontal fibers of this square is
    $((\calC_{x/})_{/\alpha})^\simeq \to  ((\calD_{F(x)/})_{/F(\alpha)})^\simeq$,
    and the square is cartesian if this map is an equivalence for all $\alpha$.
    By \cite[Remark 3.3.8]{Lurie-2Cat} $F\colon \calC \to \calD$ is a flat fibration
    if and only if for any $\alpha\colon x \to y \in \calC$ and factorization 
    $\sigma=(F(x) \to \sigma_0 \to F(y)) \in (\calD_{F(x)/})_{/F(\alpha)}$ the $\infty$-category
        $(\calC_{x/})_{/\alpha} \times_{(\calD_{F(x)/})_{/F(\alpha)}} \{\sigma\}$
    is weakly contractible. 
    Since $F$ was assumed to be conservative, 
    this is an $\infty$-groupoid and hence weakly contractible
    if and only if the functor 
    $((\calC_{x/})_{/\alpha})^\simeq \to ((\calD_{F(x)/})_{/F(\alpha)})^\simeq$
    is an equivalence.
\end{proof}

\cref{lem:check-CULF-on-d1} and \cref{lem:conservative-flat-fibration} have the following consequence. 
\begin{cor}\label{cor:tensor-conservative-flat}
    For a symmetric monoidal $\infty$-category $\calC$ the following are equivalent:
    \begin{enumerate}
        \item the functor $\otimes\colon \calC \times \calC \to \calC$ is a conservative flat fibration,
        \item the simplicial map $\otimes\colon  \xN_\bullet \calC \times \xN_\bullet \calC \to \xN_\bullet \calC$ is a CULF map,
        \item the map of commutative monoids $d_1\colon \xN_2\calC \to \xN_1\calC$ is equifibered,
        \item for all active $[n] \leftarrow [m]\colon  \lambda \in \simp^\op$
        the map of commutative monoids $\lambda^*\colon \xN_n\calC \to \xN_m\calC$ is equifibered.
    \end{enumerate}
\end{cor}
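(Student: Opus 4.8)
The corollary packages together three facts about symmetric monoidal \categories{} via the monoidal nerve, and the strategy is to reduce everything to the commutative-monoid statements already proved in \cref{subsec:Eqf-theory}, using \cref{lem:check-CULF-on-d1} and \cref{lem:conservative-flat-fibration} as the bridge. I would prove the cycle of implications in the order $(1) \Leftrightarrow (2) \Leftrightarrow (3) \Leftrightarrow (4)$, with $(4) \Rightarrow (3)$ being the only trivial step (specialize to $\lambda\colon [1] \to [2]$).

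\medskip

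\noindent\textbf{Setting up the dictionary.} First I would observe that the functor $\otimes\colon \calC \times \calC \to \calC$ is the multiplication map of the commutative monoid object $\calC \in \SM = \CMon(\Cat)$, and that under the symmetric monoidal nerve $\xN_\bullet\colon \SM \hookrightarrow \Fun(\Dop, \CMon)$ the levelwise simplicial commutative monoid $\xN_\bullet\calC$ carries the pointwise tensor product. The key compatibility is that the nerve of the multiplication functor $\otimes\colon \calC \times \calC \to \calC$ is precisely the addition map $\xN_\bullet\calC \times \xN_\bullet\calC \to \xN_\bullet\calC$, since $\xN_\bullet$ preserves products. This identifies the CULF condition on $\otimes\colon \xN_\bullet\calC \times \xN_\bullet\calC \to \xN_\bullet\calC$ of statement $(2)$ with a condition on a map of Segal spaces.

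\medskip

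\noindent\textbf{The two main equivalences.} For $(1) \Leftrightarrow (2)$ I would apply \cref{lem:conservative-flat-fibration} directly to the functor $F = \otimes\colon \calC \times \calC \to \calC$: that lemma says $F$ is a conservative flat fibration if and only if $\xN_\bullet F$ is CULF, and $\xN_\bullet(\otimes)$ is exactly the simplicial map in $(2)$. For $(2) \Leftrightarrow (3)$ I would invoke \cref{lem:check-CULF-on-d1}: since both $\xN_\bullet\calC \times \xN_\bullet\calC$ and $\xN_\bullet\calC$ are Segal spaces, the CULF condition on the map between them may be checked on the single active map $\lambda\colon [1] \to [2]$, i.e.\ on the square with $d_1$. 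Identifying that square with the equifiberedness square for $d_1\colon \xN_2\calC \to \xN_1\calC$ — using that $B_\bullet$ of the addition map is CULF exactly when the map is equifibered (the \cref{obs} following \cref{lem:check-CULF-on-d1}) — yields $(3)$. Finally $(3) \Leftrightarrow (4)$: the forward direction is specialization, and the reverse follows because the CULF condition $(2)$ (equivalent to $(3)$ once established) already encodes equifiberedness of $\lambda^*$ for \emph{all} active $\lambda$, this being the definition of CULF unwound through the identification of active maps in $\Dop$ with the relevant pullback squares.

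\medskip

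\noindent\textbf{Expected obstacle.} The main subtlety is bookkeeping rather than mathematical depth: one must carefully match the \emph{levelwise} equifiberedness of a map of simplicial commutative monoids (the natural output of \cref{lem:check-CULF-on-d1} read through \cref{obs:eqf=equi-fibered}) with the CULF condition, which is phrased pullback-by-pullback over active simplicial operators. The cleanest route is to never leave the simplicial-space world: treat $\otimes\colon \xN_\bullet\calC \times \xN_\bullet\calC \to \xN_\bullet\calC$ as a map of Segal spaces throughout, apply \cref{lem:check-CULF-on-d1} to reduce all of $(4)$ to the $d_1$-square of $(3)$, and only at the very end translate the $d_1$-square back into the equifiberedness of $d_1\colon \xN_2\calC \to \xN_1\calC$. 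I expect no genuine difficulty beyond confirming that the active map $\lambda\colon [1] \to [2]$ really does correspond to the addition/composition square $d_1$, and that conservativity of $\otimes$ (the $s_0$-square) is automatically subsumed — a point already handled inside the proof of \cref{lem:conservative-flat-fibration}.
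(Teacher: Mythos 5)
Your proposal is correct and matches the paper's intended argument: the paper itself gives no written proof beyond citing \cref{lem:check-CULF-on-d1} and \cref{lem:conservative-flat-fibration}, and your write-up supplies exactly the details those citations presuppose — applying \cref{lem:conservative-flat-fibration} to $F=\otimes$ for $(1)\Leftrightarrow(2)$, \cref{lem:check-CULF-on-d1} for $(2)\Leftrightarrow(3)$, and the observation that the CULF square at a general active $\lambda$ is the transpose of the equifiberedness square for $\lambda^*$ for $(2)\Leftrightarrow(4)$. The only very minor quibble is that the identification of the $d_1$-square with the equifiberedness square is just a transposition of a cartesian square and does not really need the $B_\bullet$ observation you cite, but this does not affect correctness.
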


\begin{rem}
    G\'alves-Carrillo--Kock--Tonks define a notion of \textit{monoidal decomposition spaces} \cite[\S 9]{GCKT18}.
    The symmetric monoidal categories satisfying the equivalent conditions of \cref{cor:tensor-conservative-flat} are precisely the symmetric monoidal decomposition spaces which are also complete Segal spaces.
\end{rem}

\begin{rem}
    In \cref{sect:properad-theory} we will define an $\infty$-properad as a symmetric monoidal $\infty$-category $\calP$ that satisfies the equivalent conditions of \cref{cor:tensor-conservative-flat}
    and moreover that $\xN_1 \calP$ is free.
\end{rem}

\subsubsection{Equifibered factorization for functors}
Just as we did for commutative monoids, we can define contrafibered morphisms of symmetric monoidal $\infty$-categories.

\begin{defn}
    A symmetric monoidal functor $F\colon \calC \to \calD$ is called \hldef{contrafibered} if it is left-orthogonal to all equifibered functors. 
\end{defn}

In \cref{lem:eqf-detected-on-N1} we saw that a symmetric monoidal functor $F\colon  \calC \to \calD$ is equifibered if and only if $\xN_1\calC \to \xN_1\calD$ is equifibered.
For contrafibered functors we only have a weaker statement.
\begin{cor}\label{cor:detect-ctf}
    The functor $\xN_\bullet \colon \SM \to \Fun(\Dop,\CMon)$ detects contrafibered functors:
    If $F \colon \calC \to \calD$ is a symmetric monoidal functor such that $\xN_n(F) \colon \xN_n(\calC) \to \xN_n(\calD)$ is contrafibered for all $n$, then $F$ is contrafibered.
\end{cor}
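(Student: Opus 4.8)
The plan is to verify the defining lifting property of contrafibered functors directly, transported across the nerve. By definition $F$ is contrafibered exactly when it is left orthogonal to every equifibered symmetric monoidal functor $G\colon \calX \to \calY$. The key observation is that since $\xN_\bullet\colon \SM \hookrightarrow \Fun(\Dop,\CMon)$ is fully faithful, it induces equivalences on all mapping spaces; hence the square of mapping spaces that computes $F \perp G$ is identified with the corresponding square for $\xN_\bullet(F)$ and $\xN_\bullet(G)$. So it suffices to prove $\xN_\bullet(F) \perp \xN_\bullet(G)$ in $\Fun(\Dop,\CMon)$ for every equifibered $G$.

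I would then feed in the two hypotheses. On one side, \cref{lem:eqf-detected-on-N1} (the equivalence of its conditions (1) and (3)) gives that $\xN_\bullet(G)$ is level-wise equifibered whenever $G$ is equifibered. On the other side, the assumption of the corollary is precisely that $\xN_\bullet(F)$ is level-wise contrafibered. Thus the problem reduces to the purely level-wise statement that a level-wise contrafibered map is left orthogonal to a level-wise equifibered map in $\Fun(\Dop,\CMon)$.

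This last point I would deduce from the general principle that a factorization system is inherited by functor categories level-wise: since $(\text{contrafibered},\text{equifibered})$ is a factorization system on $\CMon$ by \cref{lem:formal-ctf-eqf}, the category $\Fun(\Dop,\CMon)$ carries a factorization system whose right class is the level-wise equifibered maps and which contains the level-wise contrafibered maps in its left class, so orthogonality between the two is automatic. Concretely, for each $[n]$ the component lifting problem has a contractible space of solutions (as $\xN_n(F)$ is contrafibered and $\xN_n(G)$ equifibered), and by the essential uniqueness of these lifts they assemble into a natural transformation; this is the half of the functor-category factorization system I actually need, and it follows from the functoriality of factorizations (cf.\ the generation of factorization systems via \cite[Proposition 5.5.5.7]{HTT}). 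The main obstacle is precisely this coherence step — ensuring the level-wise lifts cohere into a map of simplicial commutative monoids — which I would handle by citing the inheritance of factorization systems by functor categories rather than constructing the lifts by hand.

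Finally, I would stress why the argument is routed this way rather than through the factorization system on $\SM$ directly, since that is the ``delicate'' point alluded to before the statement. A naive approach would factor $F \colon \calC \xrightarrow{F^\ctf} \calD^\ctf \xrightarrow{F^\eqf} \calD$ in $\SM$ and try to show $F^\eqf$ is an equivalence; but applying $\xN_\bullet$ only shows that each $\xN_n(F^\eqf)$ is an equifibered map which, when composed with the equifibered part of $\xN_n(F^\ctf)$, becomes an equivalence, and this does \emph{not} force $\xN_n(F^\eqf)$ itself to be an equivalence (for instance $+\colon \xF(\ast)\times\xF(\ast)\to\xF(\ast)$ is equifibered and admits a section yet is not an equivalence). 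The orthogonality argument above sidesteps this entirely by never needing to know that $\xN_\bullet$ preserves the contrafibered part of a factorization.
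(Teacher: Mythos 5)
Your proposal is correct and follows essentially the same route as the paper: reduce the orthogonality $F \perp G$ to $\xN_\bullet(F) \perp \xN_\bullet(G)$ via full faithfulness of the nerve, note that $\xN_\bullet(G)$ is level-wise equifibered by \cref{lem:eqf-detected-on-N1}, and conclude from the hypothesis that $\xN_\bullet(F)$ is level-wise contrafibered. The paper leaves the level-wise orthogonality in $\Fun(\Dop,\CMon)$ implicit where you spell it out via the inherited factorization system, but this is elaboration rather than a different argument.
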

\begin{proof}
    Suppose that $\xN_\bullet(F)$ is contrafibered.
    By definition $F$ is contrafibered if and only if it is left orthogonal to every equifibered symmetric monoidal functor $G\colon  \calE \to \calF$.
    Since $\xN_\bullet$ is fully faithful if suffices to show that $\xN_\bullet(F)$ is left orthogonal to $\xN_\bullet(G)$.
    But this is indeed the case since $\xN_\bullet(G)$ is equifibered by \cref{lem:eqf-detected-on-N1}.
\end{proof}

The same arguments as in \cref{lem:formal-ctf-eqf}, show that every symmetric monoidal functor $F \colon \calC \to \calD$ admits a unique contrafibered-equifibered factorization:
\[F \colon \calC \xrightarrow{F^\ctf} \calE \xrightarrow{F^\eqf} \calD\]
If $\calC$ and $\calD$ happen to be symmetric monoidal $\infty$-groupoids, i.e.~commutative monoids, this factorization agrees with previously discussed contrafibered-equifibered factorization in $\CMon$.
(This follows by \cref{lem:eqf-detected-on-N1} and \cref{cor:detect-ctf} applied to groupoids.)

\begin{cor}\label{cor:eqf-slice-is-accessible}
    For any $\calC \in \SM$ we have an (accessible) adjunction
    \[\begin{tikzcd}
	{\bbL^\eqf \colon \SMover{\calC}} && {\SMeq{\calC} \colon \inc.}
	\arrow[""{name=0, anchor=center, inner sep=0}, shift left=2, from=1-1, to=1-3]
	\arrow[""{name=1, anchor=center, inner sep=0}, shift left=2, hook', from=1-3, to=1-1]
	\arrow["\dashv"{anchor=center, rotate=-90}, draw=none, from=0, to=1]
    \end{tikzcd}\]
\end{cor}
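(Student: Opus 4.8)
We must establish an accessible adjunction
\[
    \bbL^\eqf \colon \SMover{\calC} \adj \SMeq{\calC} \,:\, \inc,
\]
where $\SMeq{\calC} \subset \SMover{\calC}$ is the full subcategory of equifibered symmetric monoidal functors into $\calC$. The strategy is to produce the left adjoint by applying the contrafibered–equifibered factorization objectwise and then to upgrade this to an accessible adjunction by an adjoint-functor-theorem argument.

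**Plan.** The first step is to construct a candidate left adjoint. Given an object $(F \colon \calD \to \calC) \in \SMover{\calC}$, factor $F$ as $\calD \xrightarrow{F^\ctf} \calD^{\ctf} \xrightarrow{F^\eqf} \calC$ using the contrafibered–equifibered factorization system discussed in the paragraph preceding this corollary. Set $\bbL^\eqf(F) := (F^\eqf \colon \calD^\ctf \to \calC) \in \SMeq{\calC}$. By the general theory of orthogonal factorization systems in a presentable $\infty$-category, this assignment is functorial and the unit $(F \to F^\eqf)$ (given by $F^\ctf$ in the slice over $\calC$) exhibits $\bbL^\eqf$ as left adjoint to the inclusion $\inc$; this is exactly the statement that the right class of a factorization system gives a reflective subcategory of the arrow category, specialized to the slice over $\calC$. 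So the bare adjunction follows formally once we know that contrafibered and equifibered functors form a factorization system on $\SM$, which was asserted in the preceding discussion (by the same argument as \cref{lem:formal-ctf-eqf}, since $\SM$ is presentable).

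**Accessibility.** The substantive point is accessibility, i.e.\ that $\SMeq{\calC} \subset \SMover{\calC}$ is an accessible localization, equivalently that $\SMeq{\calC}$ is an accessible subcategory. For this I would use \cref{lem:SM-ctf-eqf}: it guarantees that the level-wise contrafibered–equifibered factorization of a map of Segal objects stays within Segal objects, so the factorization on $\SM$ can be computed level-wise through the fully faithful nerve $\xN_\bullet \colon \SM \hookrightarrow \Fun(\Dop,\CMon)$. The key input is that the subcategory $\CMon^{\eqf}_{/M} \subset \CMon_{/M}$ is accessible (the class of equifibered maps is characterized by a single lifting condition against $\Delta\colon \xF(*) \to \xF(*)\times\xF(*)$ via \cref{prop:TFAE-to-equifibered}.(\ref{item:eqf-rlp}), and such orthogonal subcategories of a presentable category are accessible). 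Transporting this level-wise and intersecting with the accessible subcategory $\SM \subset \Fun(\Dop,\CMon)$ shows that $\SMeq{\calC}$ is accessible and the reflection $\bbL^\eqf$ is an accessible functor.

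**Main obstacle.** The formal adjunction is immediate from the factorization system, so the real work — and the step I expect to be delicate — is verifying accessibility, because the naive objectwise factorization lives in $\Fun(\Dop,\CMon)$ rather than in $\SM$. The content of \cref{lem:SM-ctf-eqf} is precisely what makes the level-wise factorization descend to $\SM$, so the proof should invoke it to identify $\bbL^\eqf$ with the restriction of the level-wise reflection; accessibility then follows from the presentability of $\CMon$ together with \cite[Proposition 5.5.5.7]{HTT}, which produces the factorization system (and the accompanying accessible reflection) from the small set $\{\Delta\}$.
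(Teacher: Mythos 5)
Your construction of the left adjoint is exactly the intended one, and it matches the paper's proof in spirit: the paper disposes of the corollary in one line by citing the general fact that the right class of a factorization system on a presentable $\infty$-category, generated by a small set of maps, gives an accessible reflective subcategory of each slice (the reflection on $\Ar(\SM)$ preserves targets, hence restricts to the fibers $\SMover{\calC}$). Your final sentence — apply \cite[Proposition 5.5.5.7]{HTT} to the presentable category $\SM$ with the single generating map $\Delta\colon \xF([1]) \to \xF([1]) \times \xF([1])$ from \cref{lem:eqf-detected-on-N1}.(5) — is precisely this argument and is all that is needed.

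The one claim in your accessibility paragraph that does not hold as stated is that \cref{lem:SM-ctf-eqf} lets the factorization on $\SM$ be ``computed level-wise through the nerve.'' That lemma only guarantees that the level-wise contrafibered--equifibered factorization of a map of Segal objects is again a \emph{Segal} object; it says nothing about completeness, so the level-wise factorization need not lie in the essential image of $\xN_\bullet\colon \SM \hookrightarrow \Fun(\Dop,\CMon)$. (This is exactly why the paper must verify completeness by hand when it does want to compute such a factorization level-wise, e.g.\ step $(b)$ in the proof of \cref{lem:free-corolla-pushout}, and why \cref{cor:detect-ctf} is stated as a one-way implication.) Since this detour is not needed for accessibility, the gap is harmless to the corollary, but you should not rely on the level-wise identification of $\bbL^\eqf$ without an additional completion step.
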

\begin{proof}
    This is a general fact about factorization systems generated by a set of morphisms, see for instance \cite[Observation 2.3.6]{envelopes}.
\end{proof}

We establish an analogue of \cref{lem:colim-of-eqf-slice} giving sufficient conditions for equifibered functors to be closed under colimits in the slice category.
\begin{lem}\label{lem:colim-of-eqf-slice-categories}
    Let $\calC$ be a symmetric monoidal category and let 
    $\SMeq{\calC} \subset \SMover{\calC}$ denote the full subcategory
    on the equifibered symmetric monoidal functors:
    \begin{enumerate}
        \item 
            If $\calC$ satisfies the equivalent conditions of \cref{cor:tensor-conservative-flat},
            then $\SMeq{\calC} \subset \SMover{\calC}$ is closed under sifted colimits.
        \item
            If $N_n\calC$ is a free commutative monoid for all $n$, 
            then $\SMeq{\calC} \subset \SMover{\calC}$ is closed under finite coproducts.
    \end{enumerate}
    In particular, when both $(1)$ and $(2)$ hold 
    (i.e.\ when $\calC$ is an $\infty$-properad in the sense of \cref{defn:properad})
    $\SMeq{\calC} \subset \SMover{\calC}$ is closed under small colimits.
\end{lem}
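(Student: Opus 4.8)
The plan is to reduce both statements to the already-established commutative-monoid version \cref{lem:colim-of-eqf-slice} via the fully faithful nerve $\xN_\bullet\colon\SM\hookrightarrow\Fun(\Dop,\CMon)$. For a small diagram $\calD_{(-)}\colon I\to\SMeq{\calC}$ with structure functors $F_i\colon\calD_i\to\calC$, I would first show that the colimit $\calD:=\colim_{i\in I}\calD_i$ is computed level-wise on nerves, i.e.\ that the level-wise colimit $M_\bullet:=\colim_{i\in I}\xN_\bullet(\calD_i)$ formed in $\Fun(\Dop,\CMon)$ is a complete Segal space (\cref{obs:level-wise-colimits-in-SM}). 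Granting this, $\xN_n(\calD)\simeq\colim_{i\in I}\xN_n(\calD_i)$ for every $n$, and each $\xN_n(F_i)$ is equifibered by \cref{lem:eqf-detected-on-N1}; hence \cref{lem:colim-of-eqf-slice} applied degreewise (in the slice $\CMon_{/\xN_n(\calC)}$) shows $\xN_n(\calD)\to\xN_n(\calC)$ is equifibered, whence $\calD\to\calC$ is equifibered by \cref{lem:eqf-detected-on-N1} again. The final ``in particular'' then follows since every colimit is generated by sifted colimits and finite coproducts.

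For the finite coproducts in $(2)$ the level-wise claim is immediate: finite coproducts in $\SM=\CMon(\Cat)$ agree with the corresponding finite products (semiadditivity), which the limit-preserving nerve computes level-wise and which preserve the complete Segal condition. Thus $M_\bullet\simeq\prod_i\xN_\bullet(\calD_i)$ is automatically complete Segal, and $\xN_n(\calD)$ is a finite coproduct in $\CMon_{/\xN_n(\calC)}$ of equifibered maps over the \emph{free} monoid $\xN_n(\calC)$; \cref{lem:colim-of-eqf-slice} in the free case then gives that it is equifibered. This part is formal.

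For the sifted colimits in $(1)$ the level-wise claim is the real content, since sifted colimits do not preserve the Segal condition for formal reasons, and this is exactly where the flatness hypothesis \cref{cor:tensor-conservative-flat} must enter. I would first record that $M_\bullet\to\xN_\bullet(\calC)$ is level-wise equifibered (by \cref{lem:colim-of-eqf-slice}, as a sifted colimit of equifibered maps over $\xN_n(\calC)$). To verify the Segal condition I would examine the Segal map $M_n\to M_1\times_{M_0}\cdots\times_{M_0}M_1$, which is a map over $\xN_n(\calC)$; using the Segal decomposition of $\xN_\bullet(\calC)$ together with the closure of equifibered maps under limits in $\Ar(\CMon)$ (\cref{lem:eqf-maps-closed-under-limits}), its target is also equifibered over $\xN_n(\calC)$, so it suffices to check it is an equivalence on each fiber over $\xN_n(\calC)$. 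Since colimits in $\calS$ are universal, these fibers are themselves sifted colimits of the corresponding fibers of the $\xN_\bullet(\calD_i)$, and the Segal condition for $M_\bullet$ reduces to the statement that the sifted colimit commutes with the iterated fiber products appearing in the Segal condition of the $\calD_i$.

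This commutation is the main obstacle, and it is precisely what flatness of $\calC$ should buy us. A useful first step is that each $\calD_i$ is itself flat: the defining pullback square of the equifibered functor $F_i$ exhibits $\otimes_{\calD_i}$ as the base change of $\otimes_\calC$ along $F_i$, and conservative flat fibrations are stable under base change, so $\otimes_{\calD_i}$ is again one and $\calD_i$ satisfies \cref{cor:tensor-conservative-flat}. The heart of the matter is then to use flatness to control the fiber products taken over $\xN_0(\calD_i)$ along the source/target maps (which are themselves \emph{not} equifibered), turning the relevant fiberwise diagrams into finite products, with which sifted colimits commute by definition. I expect this fiberwise analysis — identifying the fibers of the Segal map of $M_\bullet$ with data governed by the flat structure of $\calC$, and checking the resulting colimit interchange — to be the technically demanding point; everything outside it is a formal consequence of \cref{lem:colim-of-eqf-slice} and the full faithfulness of the nerve.
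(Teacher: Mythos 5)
Your part (2) is correct and is essentially the paper's argument: coproducts in $\SM$ are products, these are computed level-wise on the nerve, and the free case of \cref{lem:colim-of-eqf-slice} finishes the job.

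Part (1), however, has a genuine gap. You correctly locate the difficulty --- a sifted colimit must be commuted past the fiber products in the Segal condition, and flatness must be what makes this work --- but you then defer exactly that step as ``the technically demanding point'' without carrying it out. As written, nothing in the proposal actually proves that the level-wise colimit $M_\bullet=\colim_i\xN_\bullet(\calD_i)$ is a (complete) Segal space, and this intermediate claim is not routine: $\xN_1\colon\Cat\to\calS$ does not preserve sifted colimits of $\infty$-categories in general, and the Segal fiber products are taken along the face maps $d_0,d_1\colon\xN_1\to\xN_0$, which are \emph{not} equifibered, so the closure properties of equifibered maps give you no direct handle on them. Your observation that each $\calD_i$ inherits flatness from $\calC$ by base change is correct but does not by itself produce the interchange.

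The paper sidesteps the level-wise-nerve detour entirely and works in $\Cat$. The forgetful functor $\SMover{\calC}\to\SM\to\Cat$ creates and preserves sifted colimits, and --- this is the single point where flatness enters --- base change along the flat fibration $\otimes\colon\calC\times\calC\to\calC$ admits a right adjoint (exponentiability of flat fibrations, \cite[Proposition 3.4.9]{Lurie-2Cat}) and hence preserves colimits. One then computes
\[
  \calC^2\times_\calC\colim_{i\in I}\calD(i)
  \;\simeq\;\colim_{i\in I}\bigl(\calC^2\times_\calC\calD(i)\bigr)
  \;\simeq\;\colim_{i\in I}\calD(i)^2
  \;\simeq\;\Bigl(\colim_{i\in I}\calD(i)\Bigr)^2,
\]
using equifiberedness of each $\calD(i)\to\calC$ in the middle step and siftedness of $I$ in the last, which says precisely that the colimit is equifibered over $\calC$. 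If you want to salvage your route, you would need to supply the colimit-interchange argument you postponed; the cleaner fix is to replace the fiberwise analysis by the exponentiability of flat fibrations, which packages it into one citation.
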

\begin{proof}
    In $\Cat$ base change along a flat fibration admits a right adjoint \cite[Proposition 3.4.9]{Lurie-2Cat}, and in particular preserves colimits.
    The forgetful functor $\SMover{\calC} \to \SM \to \Cat$ 
    creates and preserves 
    sifted colimits.
    (The first functor by \cite[Proposition 4.4.2.9]{HTT}, the second by \cite[Corollary 3.2.3.2]{HA}.)
    For a sifted diagram $\calD\colon I \to \SMeq{\calC}$ 
    we may therefore compute (just as in \cref{lem:colim-of-eqf-slice}):
    \[
        \calC^2 \times_\calC \colim_{i \in I} \calD(i) 
        \simeq \colim_{i \in I} (\calC^2 \times_\calC \calD(i))
        \simeq \colim_{i \in I} \calD(i)^2
        \simeq \colim_{i \in I} \calD(i) \times \colim_{j \in I} \calD(j).
    \]
    For $(2)$ we use that $\SMover{\calC} \to \SM$ preserves coproducts
    and the coproducts in $\SM$ are given by the cartesian product.
    This cartesian product is computed level-wise on the nerve,
    so it will suffice to check that $\CMon_{/N_n\calC}^\eqf \subset \CMon_{/N_n\calC}$
    is closed under coproducts, which follows from \cref{lem:colim-of-eqf-slice}
    since we have assumed that $N_n\calC$ is free for all $n$.
\end{proof}

\subsection{Cospans and \texorpdfstring{$\amalg$-disjunctive $\infty$-}{ coproduct disjunctive infinity-}categories}
\label{subsec:Csp-and-disjunctive}

In this section we study, for $\calC$ an $\infty$-category with finite colimits, the symmetric monoidal $\infty$-category $\Cospan(\calC)$ whose objects are those of $\calC$ and whose morphisms are cospans in $\calC$.
This construction is due to Barwick \cite{Bar17}, though we will follow the modified approach of Haugseng--Hebestreit--Linskens--Nuiten \cite{HHLN20}.
Applied to $\calC = \Fin$ it will yield an important example of an $\infty$-properad, which we later prove to be terminal.
We will also study a more general class of categories $\calC$ for which $\Cospan(\calC)$ is an $\infty$-properad.

\begin{notation}\label{notation:Tw}
    We let \hldef{$\Tw[n]$} denote the twisted arrow category of the poset $[n]$.
    That is, it is the poset whose objects are pairs $(i \le j)$ with $0 \le i \le j \le n$
    and where there is a unique morphism $(i \le j) \to (i' \le j')$ if and only if $i' \le i \le j \le j'$.
\end{notation}

\begin{const}\label{const:cospans-2-cat}
    Given an $\infty$-category $\calC$ with finite colimits, we construct a functor 
    \[\hldef{\mfr{C}_\bullet(\calC)} \colon \simp^\op \too \SM,\]
    which can be thought of as the \hldef{double $\infty$-category of cospans} in $\calC$.
    More precisely, it will have the following properties:
    \begin{itemize}
        \item 
        $\mfr{C}_0(\calC) = \calC$ 
        and 
        $\mfr{C}_1(\calC) = \Fun(\Tw[1],\calC)$.
        \item 
        The natural map
        $\mfr{C}_n(\calC) \too \mfr{C}_1(\calC) \times_{\mfr{C}_0(\calC)} \cdots \times_{\mfr{C}_0(\calC)}\mfr{C}_1(\calC)$ 
        is an equivalence for all $n \ge 2$.
    \end{itemize}
    Consider the composite functor:
    \[\Fun(\Tw[\bullet],\calC)  \colon \simp^\op \xrightarrow{\Tw[\bullet]} \Cat^\op \xrightarrow{\Fun(-,\calC)} \Cat^\amalg \subseteq \SM, \qquad [n] \longmapsto \Fun(\Tw[n],\calC)\]
    where we use the cocartesian monoidal structure from \cref{ex:cocartesian-monoidal}.
    For 
    $[n] \in \simp^\op$ we let
    $\mfr{C}_n(\calC) \subseteq \Fun(\Tw[n],\calC)$
    denote the full subcategory spanned by pushout preserving functors.
    It is closed under coproducts and since all maps 
    $\lambda \colon [n] \to [m]$
    induce pushout preserving functors 
    $\Tw[\lambda] \colon \Tw[n] \to \Tw[m]$
    we may consider $\mfr{C}_\bullet(\calC)$ as a subfunctor
    \[\mfr{C}_\bullet(\calC) \subseteq \Fun(\Tw[\bullet],\calC) \colon \simp^\op \too \Cat^\amalg, \qquad [n] \longmapsto \mfr{C}_n(\calC) \subseteq \Fun(\Tw[n],\calC).\]
    Note that a functor 
    $A \colon \Tw[n] \to \calC$
    preserves pushouts if and only if it is left Kan extended from the full subcategory 
    $i \colon \Tw[n]^\el:= \Tw[1]\amalg_{[0]} \cdots \amalg_{\Tw[0]} \Tw[1] \hookrightarrow \Tw[n]$.
    Consequently, the adjunction 
    $i_! \colon \Fun(\Tw[n]^\el,\calC) \adj \Fun(\Tw[n],\calC) \colon i^\ast$
    restricts to an equivalence 
    \[i_! \colon \Fun(\Tw[n]^\el,\calC) \simeq  \mfr{C}_n(\calC) \cocolon i^\ast. \]
    From this it follows that 
    $\mfr{C}_n(\calC) \too \mfr{C}_1(\calC) \times_{\mfr{C}_0(\calC)} \cdots \times_{\mfr{C}_0(\calC)}\mfr{C}_1(\calC)$ 
    is an equivalence as claimed.
\end{const}

\begin{obs}\label{obs:Csp-complete}
    The composite 
    $\simp^\op \xrightarrow{\mfr{C}_\bullet(\calC)} \SM \xrightarrow{(-)^\simeq} \CMon \xrightarrow{U} \calS$
    defines a complete Segal space.
    We checked the Segal condition above, and we refer the reader to \cite[Lemma 2.17]{HHLN20} for the completeness.
\end{obs}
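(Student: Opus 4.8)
The plan is to treat the simplicial space $X_\bullet$ with $X_n = U\big((\mfr{C}_n(\calC))^\simeq\big)$ and verify the two defining conditions of a complete Segal space separately, reusing as much of \cref{const:cospans-2-cat} as possible. The Segal condition I would deduce formally from the work already done there, while completeness is the genuinely geometric input and, as I explain below, the step I expect to be the main obstacle.

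For the Segal condition, recall that \cref{const:cospans-2-cat} produces an equivalence
\[
    \mfr{C}_n(\calC) \iso \mfr{C}_1(\calC) \times_{\mfr{C}_0(\calC)} \cdots \times_{\mfr{C}_0(\calC)} \mfr{C}_1(\calC)
\]
of symmetric monoidal \categories{}. The two functors we postcompose with, namely the maximal subgroupoid functor $(-)^\simeq \colon \SM \to \CMon$ (right adjoint to the inclusion $\CMon \hookrightarrow \SM$) and the forgetful functor $U \colon \CMon \to \calS$ (a right adjoint by \cref{lem:basic-properties-for-xF}.(1)), both preserve pullbacks. Since they are functors, they also carry the Segal map of $\mfr{C}_\bullet(\calC)$ to the Segal map of $X_\bullet$. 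Applying them to the displayed equivalence therefore yields
\[
    X_n \iso X_1 \times_{X_0} \cdots \times_{X_0} X_1,
\]
which is exactly the Segal condition. No further computation is needed here.

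Completeness is the part I expect to require real care, and the reason the statement defers to the literature. By definition I must show that the degeneracy $s_0 \colon X_0 \to X_1^{\rm eq}$ is an equivalence. Here $X_0 \simeq \calC^\simeq$ and, because $\Tw[1]^\el = \Tw[1]$ so that $\mfr{C}_1(\calC) = \Fun(\Tw[1],\calC)$, the space $X_1 = \Fun(\Tw[1],\calC)^\simeq$ is the space of cospans $a \to c \leftarrow b$ in $\calC$; the map $s_0$ sends an object $a$ to the identity cospan $a \xrightarrow{\id} a \xleftarrow{\id} a$. The key step is to identify the components of $X_1$ that are invertible in $\mathrm{ho}(X_\bullet)$: a cospan is invertible precisely when both of its legs are equivalences in $\calC$, and the space of such cospans is equivalent to $\calC^\simeq$ via the middle object (contracting each leg), under which identification $s_0$ becomes the identity. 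Making the phrase ``invertible in the homotopy category'' precise requires the usual homotopy-coherent bookkeeping for the cospan construction, so rather than spell this out I would invoke \cite[Lemma 2.17]{HHLN20}, where exactly this completeness statement is established; this is the only nonformal ingredient in the argument.
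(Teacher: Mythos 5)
Your proposal is correct and follows essentially the same route as the paper: the Segal condition is read off from the equivalence $\mfr{C}_n(\calC) \simeq \mfr{C}_1(\calC) \times_{\mfr{C}_0(\calC)} \cdots \times_{\mfr{C}_0(\calC)} \mfr{C}_1(\calC)$ established in \cref{const:cospans-2-cat} (using that $(-)^\simeq$ and $U$ preserve limits), and completeness is delegated to \cite[Lemma 2.17]{HHLN20}, exactly as the paper does. Your extra sketch of why the invertible cospans should be those with both legs equivalences is a reasonable gloss on the cited lemma but adds nothing the paper doesn't already implicitly rely on.
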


\begin{defn}\label{defn:Csp}
    For a finitely cocomplete $\infty$-category $\calC$ we define \hldef{$\Cospan(\calC)$} as the unique symmetric monoidal $\infty$-category with
    $\xN_\bullet \Cospan(\calC) \simeq \mfr{C}_\bullet(\calC)^\simeq$.
    In the case $\calC=\Fin$ we write $\hldef{\Csp} := \Cospan(\Fin)$
    and simply refer to it as ``the'' \hldef{cospan category}. 
\end{defn}

\begin{lem}\label{lem:Csp-level-wise-free}
    The nerve of $\Csp$ is level-wise free
    and all active morphisms
    $\lambda \colon [m] \to [n] \in \Delta$ 
    induce equifibered maps 
    $\lambda^\ast\colon \xN_n\Csp \to \xN_m\Csp$.
\end{lem}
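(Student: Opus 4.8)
The plan is to work level-wise through the identification $\xN_n\Csp \simeq \mfr{C}_n(\Fin)^\simeq \simeq \Fun(\Tw[n]^\el,\Fin)^\simeq$ from \cref{const:cospans-2-cat}, and to treat both assertions by analysing the \emph{category of elements} of a diagram of finite sets. For a finite category $Q$ and a functor $F\colon Q \to \Fin$, let $\int_Q F$ denote its category of elements, whose objects are the pairs $(q,x)$ with $x \in F(q)$; since $Q$ is finite and each $F(q)$ is finite, it has finitely many objects. Disjoint union of diagrams corresponds to disjoint union of categories of elements, so $\pi_0(\int_Q(F \amalg G)) = \pi_0(\int_Q F) \sqcup \pi_0(\int_Q G)$, and I call $F$ \emph{connected} if $\int_Q F$ is non-empty and connected.

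For the first claim I would establish the general statement that $\Fun(Q,\Fin)^\simeq$ is a free commutative monoid for every finite category $Q$, and then specialise to $Q = \Tw[n]^\el$. I would deduce this from \cref{cor:pseudo-free-with-map-to-N}. Pseudo-freeness is checked via \cref{obs:eqf=equi-fibered}: an ordered decomposition $F \simeq F_1 \amalg F_2$ is the same datum as a $2$-colouring of the set $\pi_0(\int_Q F)$, and one checks that such a decomposition carries no nontrivial automorphisms, so the fibre of $+\colon \Fun(Q,\Fin)^\simeq{}^{\times 2} \to \Fun(Q,\Fin)^\simeq$ over $F$ is the discrete set $\Map(\pi_0\!\int_Q F,\{1,2\})$. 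Because $\pi_0\!\int_Q$ sends $\amalg$ to $\sqcup$, these fibres are multiplicative and the addition map is equifibered. Then $h\colon F \mapsto |\pi_0(\int_Q F)|$ is a monoid morphism to $\mathbb{N}$ whose kernel $h^{-1}(0)$ is the single point given by the empty diagram, hence connected, so \cref{cor:pseudo-free-with-map-to-N} yields freeness. Unwinding \cref{lem:eqf-addition}, the generating subspace $\Fun(Q,\Fin)^{\simeq,\el}$ is exactly the space of indecomposable diagrams, which by the discussion above are precisely the connected ones; recording this identification is what bridges to the second claim.

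For the equifiberedness of the active face maps I would first reduce to a single map: by the equivalence of conditions (3) and (4) in \cref{cor:tensor-conservative-flat} it suffices to prove that $d_1\colon \xN_2\Csp \to \xN_1\Csp$ is equifibered. Under the identifications above a point of $\xN_2\Csp$ is a composable pair of cospans, i.e. a zig-zag $A_0 \to C_1 \leftarrow A_1 \to C_2 \leftarrow A_2$, and $d_1$ sends it to its composite cospan $A_0 \to C_1 \amalg_{A_1} C_2 \leftarrow A_2$. Since the first part shows that both $\xN_2\Csp$ and $\xN_1\Csp$ are free, and a map of free commutative monoids is equifibered exactly when it carries indecomposables to indecomposables (combine \cref{ex:free-is-eqf} with \cref{cor: equifibered-over-free-equals-spaces} and \cref{lem:free-is-a-property}), it is enough to check that $d_1$ sends connected zig-zags to connected cospans. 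This holds because the pushout maps assemble into a functor $\int W \to \int (d_1 W)$ between the two categories of elements that is surjective on objects — every point of the pushout $C_1 \amalg_{A_1} C_2$ is the image of a point of $C_1$ or $C_2$. A functor that is surjective on objects carries a connected category onto a connected one, so $\int(d_1 W)$ is connected, and it is non-empty whenever $W$ is.

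I expect no conceptual difficulty here: the two substantive inputs — that coproduct decompositions of a finite-set diagram are detected by $2$-colourings of $\pi_0$ of its category of elements, and that composing cospans preserves connectedness — are elementary combinatorics. The main obstacle is purely organisational, namely to set up the category-of-elements bookkeeping cleanly enough that both the multiplicativity of the fibres of addition and the surjectivity of $\int W \to \int(d_1 W)$ become transparent, so that the appeals to \cref{cor:pseudo-free-with-map-to-N} and \cref{cor:tensor-conservative-flat} can be made without further ado.
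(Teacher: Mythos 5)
Your proof is correct, but it takes a genuinely different route from the paper's, which is shorter and leans on the general machinery of $\amalg$-disjunctive categories. For freeness, the paper applies \cref{lem:coproduct-disjunctive-colim} to the factorization $\Fun(\Tw[n]^\el,\Fin) \simeq \mfr{C}_n(\Fin) \hookrightarrow \Fun(\Tw[n],\Fin) \xrightarrow{\ev_{(0\le n)}} \Fin$ to see that $\xN_n\Csp \to \Fin^\simeq \simeq \xF(*)$ is equifibered, and then invokes \cref{cor: equifibered-over-free-equals-spaces} (``equifibered over free is free''); you instead verify pseudo-freeness of $\Fun(Q,\Fin)^\simeq$ by hand via categories of elements and conclude with \cref{cor:pseudo-free-with-map-to-N} — your counting homomorphism $h$ is exactly $\pi_0$ of the paper's colimit functor, so the two arguments are close cousins, with the paper proving the stronger space-level statement once and reusing it. For the active maps, the paper argues by cancellation: $\lambda^*$ followed by the equifibered $\ev_{(0\le m)}$ equals the equifibered $\ev_{(0\le n)}$, so $\lambda^*$ is equifibered with no case analysis; you instead reduce to $d_1$ via \cref{cor:tensor-conservative-flat} and check that composition of cospans preserves connectedness of the category of elements, using that a map of free monoids is equifibered iff it preserves generators (\cref{lem:free-is-a-property}, \cref{ex:free-is-eqf}, \cref{lem:eqf-addition}). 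Both routes are sound; the paper's buys uniformity — it transports verbatim to $\Cospan(\calC)$ for any $\amalg$-disjunctive $\calC$ in \cref{lem:disjunctive-Csp-level-wise-free}, where your explicit $\pi_0$-of-the-category-of-elements bookkeeping is specific to $\Fin$ — while yours buys a concrete identification of the generators of each $\xN_n\Csp$ as the connected level diagrams, which the paper only obtains implicitly. If you write yours up, do make explicit that the discreteness of the fibres of $+$ uses the injectivity of $\Aut(F_1)\times\Aut(F_2) \to \Aut(F_1\amalg F_2)$, and note that the reduction to $d_1$ does cover the degeneracies, since they are active.
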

\begin{proof}
    \cref{const:cospans-2-cat} provides a factorization
    \[\colim_{\!\!\Tw[n]^\el}  \colon \Fun(\Tw[n]^\el,\Fin) \simeq \mfr{C}_n(\Fin) \hookrightarrow \Fun(\Tw[n],\Fin) \xrightarrow{\ev_{(0 \le n)}} \Fin.\]
    By \cref{lem:coproduct-disjunctive-colim} below the composite is an equifibered functor.
    In particular $\mfr{C}_n(\Fin)^\simeq$ is a free commutative monoid as it is equifibered over $\Fin^\simeq \simeq \xF(*)$.
    Whenever $\lambda$ is active, 
    $\Tw[\lambda]\colon \Tw[m] \to \Tw[n]$ preserves the terminal object and thus the diagram 
    \[
    \begin{tikzcd}
        {\mfr{C}_n(\Fin)} \ar[d, "\lambda^\ast"] \ar[r] &
        {\Fun(\Tw[n], \Fin)} \ar[d, "\lambda^\ast"] \ar[r, "\ev_{(0\le n)}"] & 
        \Fin \\
        {\mfr{C}_m(\Fin)} \ar[r] & 
        {\Fun(\Tw[m], \Fin).} \ar[ru, "\ev_{(0\le m)}"'] &
    \end{tikzcd}
    \]
    commutes.
    It follows by cancellation that $\lambda^\ast \colon \mfr{C}_n(\Fin) \to \mfr{C}_m(\Fin)$ is an equifibered functor. 
    In particular, $\lambda^\ast \colon \mfr{C}_n(\Fin)^\simeq \to \mfr{C}_m(\Fin)^\simeq$ is equifibered as promised. 
\end{proof}

\subsubsection{$\amalg$-disjunctive categories}

\cref{lem:Csp-level-wise-free} works equally well when replacing the category of finite sets with any $\amalg$-disjunctive category in the following sense:

\begin{defn}\label{defn:coproduct-disjunctive}
    An $\infty$-category is called \hldef{$\amalg$-disjunctive}
    if it has finite coproducts and the functor
    \[
        \amalg \colon \calC_{/x} \times \calC_{/y} \to \calC_{/x \amalg y},  \qquad
        (f\colon a \to x, g\colon b \to y) \mapsto (f \amalg g\colon  a \amalg b \to x \amalg y)
    \]
    is an equivalence for all $x, y \in \calC$.
\end{defn}

\begin{rem}
    This is a homotopical version of the $1$-categorical notion of ``extensive category'' \cite{CLW93}, where the category is moreover required to have products, which we will not need here.
    An $\infty$-categorical variant of this notion was studied by Barwick \cite[Definition 4.2]{Bar17}, who called them ``disjunctive $\infty$-categories'' and also required them have finite limits.
    We chose the above name as it is a special case of an $\otimes$-disjunctive category, which will appear again in \cref{defn:tensor-disjunctive}.
\end{rem}

\begin{example}
    The category of sets $\Sets$, the $\infty$-category of spaces $\calS$, 
    the $\infty$-category of $\infty$-categories $\Cat$, 
    and the opposite category of discrete commutative rings $\mrm{CAlg}(\mrm{Ab})^\op$
    are all $\amalg$-disjunctive. 
\end{example}

\begin{example}
    If $\calC$ is $\amalg$-disjunctive, then so is $\calC_{/z}$ for all $z \in \calC$.
    Indeed, coproducts in the over category can be computed in $\calC$ and $(\calC_{/z})_{/\alpha\colon x \to z} \simeq \calC_{/x}$.
\end{example}

\begin{obs}[Barwick]\label{obs:barwick-disjunctive}
    Let $\calC$ be an $\infty$-category with finite coproducts and finite limits. 
    Then the functor $\amalg\colon \calC_{/x} \times \calC_{/y} \too \calC_{/x \amalg y}$ admits a right adjoint given by $(z \to x \amalg y) \longmapsto (z \times_{x \amalg y} x \to x, z \times_{x \amalg y} y \to y)$ and $\calC$ is $\amalg$-disjunctive if and only if this is an adjoint equivalence.
    Inspecting the unit and counit we see that $\calC$ is $\amalg$-disjunctive if and only if finite coproducts in $\calC$ are disjoint and universal in the sense of \cite[\S6.1.1 (ii) and (iii)]{HTT}. 
    (This is taken as the definition of disjunctive in \cite[Definition 4.2]{Bar17}.)
    As these conditions are a subset of Lurie's Giraud-axioms for $\infty$-topoi \cite[Proposition 6.1.0.1]{HTT}, we see that every $\infty$-topos is $\amalg$-disjunctive.
\end{obs}

\begin{example}\label{ex:comm-algebras-coprod-disj}
    Let $\calE \in \CAlg(\PrLst)$ be a stable presentably symmetric monoidal $\infty$-category. Then the $\infty$-category $\CAlg(\calE)^{\op}:=\Alg_{\mbb{E}_{\infty}}(\calE)^{\op}$ is $\amalg$-disjunctive
    \cite[Proposition 2.39]{Akhil}.
\end{example}

The key property of $\amalg$-disjunctive categories for us is the following:
\begin{lem}\label{lem:coproduct-disjunctive-colim}
    Let $\calC$ be a $\amalg$-disjunctive $\infty$-category 
    and let $J \in \Cat$ such that $\calC$ has colimits of shape $J$. 
    Then the functor
    \[
        \colim_J\colon \Fun(J, \calC) \too \calC,
    \]
    which is symmetric monoidal with respect to the coproduct, is equifibered.
\end{lem}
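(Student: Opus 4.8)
The plan is to unwind \cref{defn:sm-eqf} and show directly that the relevant square is cartesian in $\Cat$. Write $\calE := \Fun(J,\calC)$ for the source, equipped with the pointwise coproduct. Since $\colim_J$ is a left adjoint it preserves coproducts, so the square commutes and there is a canonical comparison functor $\Theta \colon \calE \times \calE \to \calE \times_\calC (\calC \times \calC)$, where the pullback is formed along $\colim_J \colon \calE \to \calC$ and $\amalg \colon \calC \times \calC \to \calC$. Concretely, an object of the target is a quadruple $(A, x, y, \phi)$ with $A \in \calE$, $x,y \in \calC$ and an equivalence $\phi \colon \colim_J A \simeq x \amalg y$, and $\Theta$ sends $(B,C)$ to $(B \amalg C, \colim_J B, \colim_J C)$ together with the canonical equivalence $\colim_J(B \amalg C) \simeq \colim_J B \amalg \colim_J C$. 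It suffices to prove that $\Theta$ is an equivalence. (Alternatively, one could first reduce to $\xN_1$ via \cref{lem:eqf-detected-on-N1} and then check the condition on fibers using \cref{obs:eqf=equi-fibered}; I prefer the direct route.)

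First I would construct a candidate inverse $G$. Given $(A,x,y,\phi)$, use $\phi$ to view the colimit cocone as a diagram $\bar A \colon J \to \calC_{/x \amalg y}$, $j \mapsto (A(j) \to x \amalg y)$. By \cref{defn:coproduct-disjunctive} the coproduct functor $\calC_{/x} \times \calC_{/y} \to \calC_{/x \amalg y}$ is an equivalence; let $p_x \colon \calC_{/x \amalg y} \to \calC_{/x}$ and $p_y$ denote the two components of its inverse, which are pullback along the coproduct inclusions (these exist by \cref{obs:coproduct-disjunctive-characterization}). Setting $B$ and $C$ to be the underlying $J$-diagrams in $\calC$ of $p_x \bar A$ and $p_y \bar A$, disjunctiveness gives a pointwise, hence global, equivalence $A \simeq B \amalg C$. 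The naturality of the disjunctiveness equivalence in $(x,y)$ lets these constructions assemble into a functor $G \colon \calE \times_\calC (\calC \times \calC) \to \calE \times \calE$; verifying that $G$ and $\Theta$ are mutually inverse reduces, apart from the colimit computation below, to the pointwise equivalence of \cref{defn:coproduct-disjunctive} and is routine bookkeeping.

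The hard part will be showing $\colim_J B \simeq x$ (and symmetrically $\colim_J C \simeq y$), i.e.\ that pulling the colimit cocone back along $x \to x \amalg y$ and then taking the colimit recovers the summand $x$. This is the only non-formal input, and it is exactly the form of universality of colimits that disjunctiveness encodes. The key observations are: (i) $p_x$ is the composite of the inverse equivalence $\calC_{/x \amalg y} \simeq \calC_{/x} \times \calC_{/y}$ with a projection, so it preserves all colimits; (ii) colimits in the slices $\calC_{/x \amalg y}$ and $\calC_{/x}$ are computed by the colimit in $\calC$, since the forgetful functors preserve colimits; and (iii) the diagram $\bar A$ has colimit the terminal object $\mathrm{id}_{x \amalg y} \in \calC_{/x \amalg y}$, because by construction $\colim_J A \simeq x \amalg y$ with its tautological structure map. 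Combining these, $\colim_J(p_x \bar A) \simeq p_x(\mathrm{id}_{x \amalg y}) = \mathrm{id}_x$, and forgetting to $\calC$ yields $\colim_J B \simeq x$, as needed. The same argument for $p_y$ finishes essential surjectivity, and together with the bookkeeping of the previous paragraph shows that $\Theta$ is an equivalence, so $\colim_J$ is equifibered. (That $\colim_J$ is symmetric monoidal for the coproduct is automatic from \cref{ex:cocartesian-monoidal}, since it preserves coproducts.)
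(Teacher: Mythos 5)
Your argument is correct, but it takes a different route from the paper. The paper factors $\colim_J$ as the left Kan extension $\Fun(J,\calC)\hookrightarrow\Fun(J^\cone,\calC)$ followed by $\ev_\infty$, and checks each piece separately: $\ev_\infty$ is a symmetric monoidal cocartesian fibration with fibers $\Fun(J,\calC_{/x})$, so \cref{lem:eqf-cocartesian} reduces its equifiberedness verbatim to the $\amalg$-disjunctiveness hypothesis, while the fully faithful Kan-extension functor is handled by the cancellation criterion of \cref{cor:full-subcat-eqf}. Your two main steps correspond precisely to these two pieces: the pointwise splitting $A\simeq B\amalg C$ is the fiberwise statement underlying the $\ev_\infty$ step, and the computation $\colim_J(p_x\bar A)\simeq\mathrm{id}_x$ (which is correct -- $p_x$ preserves $J$-colimits as a component of an inverse equivalence, slices inherit and create $J$-colimits from $\calC$, and $\colim_J\bar A$ is terminal in $\calC_{/x\amalg y}$) is exactly the content of the cancellation step for the Kan extension. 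What the paper's factorization buys is that you never have to construct an explicit inverse $\Theta^{-1}$: the part of your proof labelled ``routine bookkeeping'' -- assembling the fiberwise inverses $p_x,p_y$ into a single functor $G$ on the total pullback, with all the naturality data in $(x,y)$ -- is genuinely nontrivial to write down in the $\infty$-categorical setting, and making it precise essentially amounts to invoking the straightening of $\calC_{/-}$, which is what \cref{lem:eqf-cocartesian} packages for you. So your route is viable and the key non-formal input is identified correctly, but if you carried it out in full you would likely end up reconstructing the cocartesian-fibration argument; the alternative you mention in passing (reduce to $\xN_1$ and check fibers via \cref{obs:eqf=equi-fibered}) is the cleaner way to make your direct approach rigorous.
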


\begin{proof}
    Let $J^\cone$ denote the $\infty$-category obtained by freely 
    adjoining a terminal object to $J$. The colimit of a diagram
    $F\colon J \to \calC$ can be computed by first left Kan extending it
    along the full inclusion $J \subset J^\cone$ and 
    then evaluating at the terminal object:
    \[
        \colim_J\colon  \Fun(J, \calC) \into 
        \Fun(J^\cone, \calC) \xrightarrow{\ev_\infty}
        \calC.
    \]
    Both functors are symmetric monoidal with respect to the coproduct, and we will show that they are both equifibered.
    
    Evaluation at the tip $\ev_\infty \colon \Fun(J^\cone, \calC) \to \calC$ 
    is a cocartesian fibration whose fiber $x \in \calC$ is $\Fun(J, \calC_{/x})$.
    (Indeed, it is the base change of the cocartesian fibration $\ev_1\colon \Ar(\Fun(J, \calC)) \to \Fun(J, \calC)$ \cite[Corollary 2.4.7.12]{HTT} along $\Delta \colon \calC \to \Fun(J, \calC)$.)
    The cocartesian edges in $\Fun(J^\cone,\calC)$ are precisely the natural transformations $F \to G$ which restrict to an equivalence $F|_J \simeq G|_J$.
    In particular, $\sqcup \colon \Fun(J^\cone,\calC) \times \Fun(J^\cone,\calC) \to \Fun(J^\cone,\calC)$ 
    preserves cocartesian edges and thus by \cref{lem:eqf-cocartesian} the functor $\ev_\infty$ is equifibered if and only if the map
    \[
        \amalg\colon  \Fun(J, \calC_{/x}) \times \Fun(J, \calC_{/y}) 
        \too \Fun(J, \calC_{/x \amalg y})
    \]
    is an equivalence.
    Indeed, this is the case since we assumed that $\calC$ is
    $\amalg$-disjunctive.
    
    It remains to show that the fully faithful functor
    $\Fun(J, \calC) \into \Fun(J^\cone, \calC)$,
    given by left Kan extension, is equifibered.
    This can be checked by verifying that its essential image
    is closed under cancellation in the sense of \cref{cor:full-subcat-eqf}.
    A diagram $F\colon J^\cone \to \calC$ is in the essential image
    if and only if the canonical map $\colim_J F(j) \to F(\infty)$
    is an equivalence.
    Suppose $F$ is in the essential image and we have $F = G_1 \amalg G_2$.
    Then the coproduct of the two maps 
    $\alpha_1\colon \colim_J G_1(j) \to G_1(\infty)$ and 
    $\alpha_2\colon \colim_J G_2(j) \to G_2(\infty)$ is an equivalence. 
    In other words, $(\alpha_1 \amalg \alpha_2) \in \calC_{/G_1(\infty) \amalg G_2(\infty)}$ is a terminal object.
    Since $\calC$ is $\amalg$-disjunctive
    we conclude that 
    $(\alpha_1,\alpha_2) \in \calC_{/G_1(\infty)} \times \calC_{/G_2(\infty)}$
    is a terminal object and hence $\alpha_1$ and $\alpha_2$
    both are equivalences.
    Therefore, $G_1$ and $G_2$ are both in the essential image, and we are done.
\end{proof}

\begin{cor}\label{lem:disjunctive-Csp-level-wise-free}
    Let $\calC$ be a $\amalg$-disjunctive category that has pushouts.
    Then:
    \begin{enumerate}[(1)]
        \item Active morphisms
        $\lambda \colon [m] \to [n] \in \Delta$ 
        induce equifibered maps 
        $\lambda^\ast\colon \xN_n\Cospan(\calC) \to \xN_m\Cospan(\calC)$.
        \item 
        If $\pi_0(\calC^\simeq)$ is generated by indecomposables,
        then
        $\xN_\bullet(\Cospan(\calC))$ is level-wise free.
    \end{enumerate}
    In particular, when $(2)$ holds $\Cospan(\calC)$ is an $\infty$-properad in the sense of \cref{defn:properad}.
\end{cor}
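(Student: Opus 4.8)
The plan is to follow the proof of \cref{lem:Csp-level-wise-free} verbatim, treating it as the special case $\calC = \Fin$, and to isolate the single place where the freeness of $\Fin^\simeq$ was used. Since $\calC$ is $\amalg$-disjunctive it has finite coproducts, and together with the assumed pushouts this gives all finite colimits; in particular $\mfr{C}_\bullet(\calC)$ and hence $\Cospan(\calC)$ are defined via \cref{const:cospans-2-cat}. The common input to both parts is the equifiberedness of evaluation at the cone point: under the equivalence $\mfr{C}_n(\calC) \simeq \Fun(\Tw[n]^\el,\calC)$ of \cref{const:cospans-2-cat}, the functor $\ev_{(0\le n)}\colon \mfr{C}_n(\calC) \to \calC$ is identified with $\colim_{\Tw[n]^\el}\colon \Fun(\Tw[n]^\el,\calC) \to \calC$, which is equifibered by \cref{lem:coproduct-disjunctive-colim} applied with $J = \Tw[n]^\el$.

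For part $(1)$ I would argue exactly as in \cref{lem:Csp-level-wise-free}. For active $\lambda\colon [m] \to [n]$ the functor $\Tw[\lambda]$ preserves terminal objects, so $\ev_{(0\le m)} \circ \lambda^\ast = \ev_{(0\le n)}$ on $\mfr{C}_n(\calC)$; this is a commuting triangle whose two legs $\ev_{(0\le n)}$ and $\ev_{(0\le m)}$ are both equifibered. Since equifibered functors are the right class of a factorization system they are closed under right cancellation, so $\lambda^\ast\colon \mfr{C}_n(\calC) \to \mfr{C}_m(\calC)$ is equifibered. Passing to maximal subgroupoids, which preserves cartesian squares as $(-)^\simeq$ is a right adjoint, then gives the claim for $\lambda^\ast\colon \xN_n\Cospan(\calC) \to \xN_m\Cospan(\calC)$.

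Part $(2)$ is where the argument genuinely departs from the $\Fin$ case: there one had $\calC^\simeq = \Fin^\simeq \simeq \xF(\ast)$ free, so ``equifibered over a free monoid is free'' (\cref{cor: equifibered-over-free-equals-spaces}) finished immediately, whereas for general $\calC$ the core $\calC^\simeq$ need not be free. I expect the main obstacle to be resisting the temptation to decompose cospan sequences into indecomposables by hand; instead I would reduce the freeness of $\xN_n\Cospan(\calC) = \mfr{C}_n(\calC)^\simeq$ to the freeness of $\calC^\simeq$ itself. First, \cref{lem:coproduct-disjunctive-colim} with $J = \{0,1\}$ shows $\amalg\colon \calC \times \calC \to \calC$ is equifibered, so on cores $+\colon \calC^\simeq \times \calC^\simeq \to \calC^\simeq$ is equifibered, i.e.\ $\calC^\simeq$ is pseudo-free in the sense of \cref{def:pseudo-free}. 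By hypothesis $\pi_0(\calC^\simeq)$ is generated by indecomposables, so \cref{lem:eqf-addition} makes $\calC^\simeq \simeq \xF(Y)$ free. Now the equifibered functor $\ev_{(0\le n)}$ yields an equifibered map $\mfr{C}_n(\calC)^\simeq \to \calC^\simeq \simeq \xF(Y)$ of commutative monoids, and \cref{cor: equifibered-over-free-equals-spaces} identifies its source as a free commutative monoid, proving level-wise freeness. Finally, part $(1)$ shows $\Cospan(\calC)$ satisfies the equivalent conditions of \cref{cor:tensor-conservative-flat} (in particular $d_1\colon \xN_2 \to \xN_1$ is equifibered) while part $(2)$ shows $\xN_1\Cospan(\calC)$ is free, so $\Cospan(\calC)$ is an \properad{} by \cref{defn:properad}.
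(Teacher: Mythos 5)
Your proposal is correct and follows essentially the same route as the paper: part (1) by repeating the cancellation argument of \cref{lem:Csp-level-wise-free} with $\ev_{(0\le n)} \simeq \colim_{\Tw[n]^\el}$ equifibered via \cref{lem:coproduct-disjunctive-colim}, and part (2) by first deducing from \cref{lem:coproduct-disjunctive-colim} (with $J = \ast \amalg \ast$) and \cref{lem:eqf-addition} that $\calC^\simeq$ is free, then concluding via \cref{cor: equifibered-over-free-equals-spaces}. The paper's proof is just a terser version of exactly these steps.
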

\begin{proof}
    The proof of $(1)$ is as in \cref{lem:Csp-level-wise-free}.
    For $(2)$ we apply \cref{lem:coproduct-disjunctive-colim} in the case $J = * \amalg *$
    to see that $\amalg\colon \calC \times \calC \to \calC$ is an equifibered functor.
    In particular $\calC^\simeq$ is a pseudo-free monoid and by \cref{lem:eqf-addition} it is free.
    Now the rest of the proof proceeds as in \cref{lem:Csp-level-wise-free}.
\end{proof}

\section{\texorpdfstring{$\infty$-Properads}{Infinity Properads}}\label{sect:properad-theory}

In this section we introduce the notion of $\infty$-properad as a symmetric monoidal $\infty$-category satisfying certain ``freeness'' conditions, formulated in the language of equifibered maps.
After discussing some examples we move on establish some categorical properties of the $\infty$-category of $\infty$-properads $\Prpd$, which we use to study more intricate examples such as $\infty$-properads freely generated by corollas (\cref{defn:free-corolla}) and endomorphism $\infty$-properads (\cref{defn:end-properad}).
These tools and constructions all rely on the fact that the $(2,1)$-category of cospans of finite sets $\Csp$ is the terminal $\infty$-properad (\cref{thm:Csp-is-final}), which we prove at the end of this section.
The proof of \cref{thm:Csp-is-final} can be read independently of the rest of this section.

\subsection{Definition and examples}

\begin{defn}\label{defn:properad}
    An \hldef{$\infty$-properad} is a symmetric monoidal category $\calP$ such that 
    \begin{enumerate}[(1)]
        \item $\xN_1 \calP$ is free, and
        \item the composition $\circ = d_1 \colon  \xN_2\calP \too \xN_1\calP$ is equifibered.
    \end{enumerate}
    Define the $\infty$-category of $\infty$-properads $\hldef{\Prpd}$ to be the replete subcategory of $\SM$ whose objects are $\infty$-properads 
    and whose morphisms are equifibered symmetric monoidal functors.
\end{defn}

The following example is crucial, as we shall later see that it is the terminal $\infty$-properad.
\begin{example}\label{ex:Csp}
    We have shown in \cref{lem:Csp-level-wise-free} that the $(2,1)$-category
    $\Csp$ of finite sets and cospans between them satisfies 
    \cref{defn:properad} and hence is an $\infty$-properad.
\end{example}

\begin{example}
    For any $\infty$-operad $\calO$ its monoidal envelope $\Env(\calO) \in \SM$ in the sense of Lurie \cite{HA} is an $\infty$-properad,
    as we shall see in \cref{thm:monic-prpd=operad}.
\end{example}

There are many equivalent ways of characterizing $\infty$-properads.
We now list some of them:
\begin{prop}\label{prop:TFAE-properad}
    The following are equivalent for a symmetric monoidal $\infty$-category $\calP$:
    \begin{enumerate}[(1)]
        \item \label{item:prpd}
            $\calP$ is an $\infty$-properad.
        \item \label{item:prpd-op} 
            The opposite category $\calP^\op$ is an $\infty$-properad.
        \item \label{item:prpd-active}
            $\xN_n \calP$ is free for all $n$ and $\lambda^*\colon  \xN_n \calP \to \xN_m \calP$ is equifibered for all active $[n] \leftarrow [m]\colon  \lambda \in \simp^\op$.
        \item \label{item:prpd-flat} 
            $\Ar(\calP)^\simeq$ is free and the monoidal product $\otimes \colon  \calP \times \calP \to \calP$ is a conservative flat fibration.
        \item \label{item:prpd-Csp} 
            There exists an equifibered symmetric monoidal functor $\calP \to \Csp$.
    \end{enumerate}
\end{prop}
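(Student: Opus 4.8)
The plan is to prove the five conditions equivalent by first establishing the ``soft'' equivalences $(\ref{item:prpd}) \Leftrightarrow (\ref{item:prpd-active}) \Leftrightarrow (\ref{item:prpd-flat})$ directly from the equifibered calculus of the previous subsections, then deducing $(\ref{item:prpd-op})$ by an op-symmetry argument, and finally treating $(\ref{item:prpd-Csp})$, whose forward direction is the only place where genuine input is needed. Throughout I would use that $\xN_1\calP = \Ar(\calP)^\simeq$ by definition, so that the freeness clauses in $(\ref{item:prpd})$, $(\ref{item:prpd-active})$ and $(\ref{item:prpd-flat})$ all refer to the same commutative monoid. The equivalence $(\ref{item:prpd}) \Leftrightarrow (\ref{item:prpd-flat})$ is then immediate from \cref{cor:tensor-conservative-flat}, which identifies ``$d_1\colon \xN_2\calP \to \xN_1\calP$ equifibered'' with ``$\otimes$ is a conservative flat fibration'' and also with ``$\lambda^*$ equifibered for all active $\lambda$''.

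For $(\ref{item:prpd}) \Rightarrow (\ref{item:prpd-active})$ the only extra content is upgrading freeness of $\xN_1\calP$ to freeness of all $\xN_n\calP$. Here I would first note that $\xN_0\calP$ is a retract of $\xN_1\calP$ via $s_0$ and $d_1$ (a retract in $\CMon$, since these are structure maps of the simplicial commutative monoid $\xN_\bullet\calP$), so $\xN_0\calP$ is free by \cref{cor:retract-of-free}. The Segal condition then writes $\xN_n\calP$ as the iterated pullback $\xN_1\calP \times_{\xN_0\calP} \cdots \times_{\xN_0\calP} \xN_1\calP$ of free commutative monoids along maps of commutative monoids, which is free by \cref{cor:finite-limits}. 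Together with the active clause supplied by \cref{cor:tensor-conservative-flat}, this gives $(\ref{item:prpd-active})$; the reverse implication $(\ref{item:prpd-active}) \Rightarrow (\ref{item:prpd})$ is trivial, since $d^1\colon [1]\to[2]$ is active and so $d_1 = (d^1)^*$ is covered. For $(\ref{item:prpd}) \Leftrightarrow (\ref{item:prpd-op})$ I would exploit that characterization $(\ref{item:prpd-active})$ is manifestly self-dual: order reversal is an involution of $\simp$ carrying active maps to active maps, and under the induced identification $\xN_\bullet(\calP^\op) \simeq \xN_\bullet(\calP)$ of simplicial commutative monoids the collection of active structure maps of $\calP^\op$ is exactly that of $\calP$, merely relabeled. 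Hence $(\ref{item:prpd-active})$ holds for $\calP$ if and only if it holds for $\calP^\op$.

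Finally I turn to $(\ref{item:prpd}) \Leftrightarrow (\ref{item:prpd-Csp})$. Given an equifibered symmetric monoidal functor $F\colon \calP \to \Csp$, \cref{lem:eqf-detected-on-N1} makes $\xN_n(F)$ equifibered for every $n$; since $\xN_n\Csp$ is free by \cref{lem:Csp-level-wise-free}, \cref{cor: equifibered-over-free-equals-spaces} forces $\xN_n\calP$ to be free. To obtain that $d_1^\calP$ is equifibered I would compare the commuting square relating $d_1^\calP$, $d_1^\Csp$ and the $\xN_n(F)$: the composite $\xN_1(F)\circ d_1^\calP = d_1^\Csp \circ \xN_2(F)$ is equifibered, being a composite of equifibered maps, which form the right class of the factorization system of \cref{lem:formal-ctf-eqf}; since $\xN_1(F)$ is itself equifibered, the cancellation property of a right class (if both $g$ and $g\circ f$ lie in the right class, then so does $f$) yields that $d_1^\calP$ is equifibered. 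This proves $(\ref{item:prpd-Csp}) \Rightarrow (\ref{item:prpd})$.

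The converse $(\ref{item:prpd}) \Rightarrow (\ref{item:prpd-Csp})$ is where the real work lies, and I would simply invoke that $\Csp$ is the terminal \properad{} (\cref{thm:Csp-is-final}): any \properad{} then admits a unique equifibered symmetric monoidal functor to $\Csp$, giving the required map. The main obstacle is thus entirely concentrated in this last implication, which rests on \cref{thm:Csp-is-final}, proven independently later in the section; all remaining implications are formal consequences of the properties of equifibered maps and free commutative monoids established above.
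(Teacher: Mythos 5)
Your proof is correct and follows essentially the same route as the paper: the soft equivalences $(\ref{item:prpd})\Leftrightarrow(\ref{item:prpd-active})\Leftrightarrow(\ref{item:prpd-flat})$ via \cref{cor:tensor-conservative-flat}, op-symmetry for $(\ref{item:prpd-op})$, cancellation in the right class for $(\ref{item:prpd-Csp})\Rightarrow(\ref{item:prpd})$ (which is exactly the content of \cref{lem:eqf-over-properad}), and \cref{thm:Csp-is-final} for $(\ref{item:prpd})\Rightarrow(\ref{item:prpd-Csp})$. The only divergence is how you obtain freeness of $\xN_n\calP$ for $n\neq 1$: the paper deduces it from the equifibered map $\xN_n\calP\to\xN_1\calP$ induced by an active $[1]\to[n]$, whereas you use closure of free commutative monoids under retracts and finite limits (\cref{cor:retract-of-free}, \cref{cor:finite-limits}); both are valid.
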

\begin{proof}
    $\ref{item:prpd} \Leftrightarrow \ref{item:prpd-op}$ holds because the definition is symmetric.
    $\ref{item:prpd} \Leftrightarrow \ref{item:prpd-active} \Leftrightarrow \ref{item:prpd-flat}$ holds by \cref{cor:tensor-conservative-flat} and since every $[n] \in \simp$ receives an active map from $[1]$, which induces an equifibered map $\xN_n\calP \to \xN_1\calP \simeq \Ar(\calP)^\simeq$ and thus $\xN_n\calP$ is free if $\xN_1\calP$ is.
    $\ref{item:prpd-Csp} \Rightarrow \ref{item:prpd}$ holds by \cref{lem:eqf-over-properad} and \cref{ex:Csp}.
    $\ref{item:prpd} \Rightarrow \ref{item:prpd-Csp}$ follows because $\Csp$ is the terminal $\infty$-properad by \cref{thm:Csp-is-final}.
\end{proof}

Most of the above implications can be shown using only elementary facts about equifibered maps.
However, the implication $\ref{item:prpd} \Rightarrow \ref{item:prpd-Csp}$ is more complicated 
and will be the subject of \cref{subsec:pre-properads} 
where we use obstruction theory to show that $\Csp$ is the terminal $\infty$-properad.

\cref{defn:properad} is very unlike the standard definition of (1-categorical) properads. 
We now introduce the necessary language to relate our definition to the standard definition, at least conceptually.
In \cref{sec:segal} we will prove that our $\infty$-category of $\infty$-properads is indeed equivalent to previous definitions.
Recall that for a free commutative monoid $M$ we write $M^\el \subset M$ for the subspace of generators.
\begin{notation}\label{notation:properads}
    For an $\infty$-properad $\calP$ we refer to $\xN_0^\el \calP = (\calP^\simeq)^\el$ as the \hldef{space of colours} of $\calP$.
    Moreover, we refer to $\xN_1^\el \calP = (\Ar(\calP)^\simeq)^\el$ as the \hldef{space of operations} of $\calP$.
    Given an operation $o$ in $\calP$, i.e.\ a morphism $o\colon x \to y \in \calP$ that is a generator in $\Ar(\calP)^\simeq$,
    its source and target can be written as tensor products of colours.
    So every operation can be written as
    \[
        o\colon  x_1 \otimes \dots \otimes x_n \too y_1 \otimes \dots \otimes y_m.
    \]
    We say that such an operation is of \hldef{arity $(n,m)$}.
    We refer to the $x_i \in \xN_0^\el \calP$ as the inputs and 
    to $y_j \in \xN_0^\el \calP$ as the outputs of $o$.
    These are unique up to reordering.
    The map that encodes the inputs and outputs of operations is
    \[
        \xN_1^\el \calP \subset \xN_1 \calP \xtoo{(s,t)} 
        \xN_0 \calP \times \xN_0 \calP \simeq \xF(\xN_0^\el \calP) \times \xF(\xN_0^\el \calP).
    \]
    We may sometimes write \hldef{$\calP(x_1,\dots,x_n; y_1,\dots,y_m)$}
    for the fiber of this map at the point given by the objects $(x,y) \in \xN_0\calP \times \xN_0\calP$.
    Note that this is a subspace of $\Map_\calP(x,y)$.
    
    Given $o$ as above, another operation $p \in \calP(z_1,\dots,z_l; w_1,\dots, w_k)$, 
    and equivalences $\{\alpha_i\colon  y_i \simeq z_i\}_{i=1}^a$,
    we can form a \hldef{composite $o \circ_{(\alpha)_i} p$} as
    \begin{align*}
        p \circ_{(\alpha)_i} o\colon 
        x_1 \otimes \dots \otimes x_n \otimes z_{a+1} \otimes \dots \otimes z_l
        & \xto{ o \otimes \id_{z_\bullet} }
        y_1 \otimes \dots \otimes y_m \otimes z_{a+1} \otimes \dots \otimes z_l \\
        & \xtoo{\ \alpha_\bullet \ }
        y_{a+1} \otimes \dots \otimes y_m \otimes z_1 \otimes \dots \otimes z_l \\
        & \xto{ \id_{y_\bullet} \otimes p }
        y_{a+1} \otimes \dots \otimes y_m \otimes w_1 \otimes \dots \otimes w_k .
    \end{align*}
    This is illustrated in \cref{figure:gluing-operations} in the introduction.
\end{notation}

\begin{rem}
    More generally, we could also have used equivalences $\alpha_i\colon  y_{\sigma(i)} \simeq z_{\sigma'(i)}$ for any two injections $\sigma\colon  \{1,\dots,a\} \to \{1,\dots,m\}$ and $\sigma'\colon \{1,\dots,a\} \to \{1,\dots,l\}$.
    Evidently there are many such compositions one could define, and they should all come with plenty of coherence data that explains how they interact with each other.
    However, we need not worry about this as it is all encoded in the assumption that $\calP$ is a symmetric monoidal $\infty$-category.
\end{rem}

We have the following description of mapping spaces in $\infty$-properads.
This matches the $1$-categorical description of hom-sets in ``labelled cospan categories'' \cite[Lemma 2.8]{Jan-tropical}, and it generalizes the ``hereditary condition'' for $\infty$-operads as for example discussed in \cite[Remark 1.1.2/2.4.7]{HK21}.
\begin{lem}\label{lem:mapping-space-in-properad}
    If a symmetric monoidal $\infty$-category $\calP$ is an $\infty$-properad then its mapping spaces can be described in terms of the spaces of operations as
    \begin{align*}
        \Map_{\calP}(x_1 \otimes \dots \otimes x_n, y_1 \otimes \dots \otimes y_m)
        & \simeq  
        \colim_{K \in (\Fin_{I \sqcup J/})^\simeq} \prod_{k \in K} 
        \calP\left(\{x_i\}_{i \in I_k}; \{y_j\}_{j \in J_k} \right) \\
        & \simeq  
        \xF(\calP(\emptyset; \emptyset)) \times 
        \coprod_{I \amalg J \twoheadrightarrow K} \prod_{k \in K} 
        \calP\left(\{x_i\}_{i \in I_k}; \{y_j\}_{j \in J_k} \right)
    \end{align*}
    where the colimit is taken over the groupoid of finite sets under $I \sqcup J$, the coproduct is over the (discrete) groupoid of quotients of $I \sqcup J$,
    and $I_k \subset I$ and $J_k \subset J$ denote the fibers over a given $k\in K$.
\end{lem}
\begin{proof}
    The (unique) map of $\infty$-properads $\pi\colon \calP \to \Csp$ from \cref{thm:Csp-is-final} sends 
    $x = \otimes_{i \in I} x_i$ to $I \in \Csp$ and $y = \otimes_{j \in J} y_j$ to $J \in \Csp$.
    It thus induces maps of fiber sequences
    \[\begin{tikzcd}
        \Map_\calP(\otimes_{i \in I} x_i, \otimes_{j \in J} y_j) \ar[r] \ar[d, "q"'] &
        \xN_1\calP \ar[r, "{(s,t)}"] \ar[d, "{\xN_1\pi}"] & \xN_0\calP \times \xN_0\calP \ar[d, "{\xN_0\pi \times \xN_0\pi}"] \\
        \Map_{\Csp}(I, J) = (\Fin_{I \sqcup J/})^\simeq \ar[r] &
        \xN_1\Csp \ar[r, "{(s,t)}"] & \xN_0\Csp \times \xN_0\Csp .
    \end{tikzcd}\]
    We can write the source of $q$ as a colimit of its fibers \cite[Corollary 3.3.4.3]{HTT} to get
    \[
        \Map_\calP(\otimes_{i \in I} x_i, \otimes_{j \in J} y_j) 
        = \colim_{(\alpha\colon I\sqcup J \to K) \in (\Fin_{I \sqcup J/})^\simeq} \Map_\calP^\alpha(\otimes_{i \in I} x_i, \otimes_{j \in J} y_j)
    \] 
    where $\Map_\calP^\alpha(-,-)$ denotes the fiber of $q$ over $(\alpha\colon I\sqcup J \to K)$.
    In the right square of the above diagram both vertical maps are equifibered, so by applying \cref{obs:eqf=equi-fibered} to both maps we get that on fibers the maps
    \[
        \Map_\calP^\alpha(x, y) \times \Map_\calP^{\beta}(x', y') \too
        \Map_\calP^{\alpha\sqcup \beta}(x\otimes x', y \otimes y') 
    \]
    are equivalences.
    Any cospan $(\alpha\colon I\sqcup J \to K)$ canonically decomposes as 
    $\bigsqcup_{k \in K} (\alpha_k\colon I_k\sqcup J_k \to \{k\})$,
    and so we get that
    \[
        \prod_{k \in K}  \Map_\calP^{\alpha_k}(\otimes_{i \in I_k} x_i, \otimes_{j \in J_k} y_j) \too
        \Map_\calP^\alpha(\otimes_{i \in I} x_i, \otimes_{j \in J} y_j) 
    \]
    is an equivalence.
    Moreover, $\Map_\calP^{\alpha_k}(\otimes_{i \in I_k} x_i, \otimes_{j \in J_k} y_k) = \calP(\{x_i\}_{i \in I_k}; \{y_j\}_{j \in J_k})$ since the fiber over the cospan $I_k \to * \leftarrow J_k$ exactly picks out those morphisms that are indecomposable under $\otimes$.
    This proves the first claimed equivalence.
    
    The second equivalence can be obtained by rewriting the colimit to obtain the formula for $\xF(\Map_\calP(\unit, \unit))$ from \cref{lem:basic-properties-for-xF}.
    Alternatively, we have $\xN_1\calP = \xN_1(\calP_0) \oplus K_1(\calP)$ as in the proof of \cref{lem:nerve-splitting} below, which induces
    \[
        \Map_\calP(x, y) \simeq \Map_\calP(\unit, \unit) \times \Map_\calP^{\rm surj}(x, y)
    \]
    where $\Map_\calP^{\rm surj}(x,y) \subset \Map_\calP(x,y)$ is the subspace of those morphisms that are sent to a cospan $I \to K \leftarrow J$ for which $I\sqcup J \twoheadrightarrow K$ is surjective.
    We can then restrict the equivalence from the first claim to describe this subspace.
\end{proof}

\begin{example}\label{ex:bord}
    Let $\Bord_d$ be the $\infty$-category whose objects are closed unoriented
    $(d-1)$-manifolds and whose morphisms are compact unoriented $d$-bordisms.
    To give a precise definition one constructs a Segal space $\mrm{PBord}^d_\bullet$ 
    and defines $\Bord_d$ as its completion.
    We refer the reader to \cite{CS22} for the construction%
    \footnote{
    Though note that our $\Bord_d$ denotes the $(\infty,1)$-category, whereas in \cite{CS22} it denotes the fully extended $(\infty,d)$-category.
    One can recover the $(\infty,1)$-category by setting the first $d-1$ simplicial coordinates to $[0] \in \Dop$ and requiring all manifolds of dimension $\le d-2$ to be empty.
    }
    of $\mrm{PBord}^d_\bullet$ as a $1$-functor
    $\simp^\op \times \Fin_* \to \mrm{Top}$. 
    After composing with $\mrm{Top} \to \calS$ this yields a functor 
    $\simp^\op \to \Fun(\Fin_*, \calS)$, which by \cite[Proposition 7.2]{CS22} lands
    in $\CMon \subset \Fun(\Fin_*, \calS)$ and by \cite[Proposition 5.19]{CS22} satisfies the Segal condition.
    
    As a commutative monoid the space $\mrm{PBord}^d_n$ of $n$ composable bordisms is freely generated by 
    those $n$-tuples $(W_1,\dots,W_n)$ of bordisms for which the composite
    $W_1 \cup \dots \cup W_n$ is connected.
    To prove this, one checks that the map $\xN_n(\Bord_d) \to \Fin^\simeq$
    sending $(W_1,\dots,W_n)$ to the finite set $\pi_0(W_1 \cup \dots \cup W_n)$
    is equifibered.
    The face map $d_1\colon \xN_2(\Bord_d) \to \xN_1(\Bord_d)$ preserves
    this connectedness and hence is a free map.
    When $\mrm{PBord}^d_\bullet$ is complete (e.g.\ $d \le 2$)%
    \footnote{
        This completeness seems to be well-known for $d \le 2$, but we were unable to find a proof in the literature.
        However, as pointed out above, we do not actually require completeness for the purpose of this paper.
        },
    this shows that $\Bord_d$ is an $\infty$-properad.
    
    In higher dimensions the Segal space $\mrm{PBord}^d_\bullet$ 
    is not complete,
    but the above argument still shows that $\mrm{PBord}^d_\bullet$
    is a pre-properad in the sense of \cref{defn:pre-properad}.
    Hence, it follows from \cref{prop:nerves-and-completion}
    that its completion $\Bord_d$ is an $\infty$-properad.
\end{example}

\begin{example}\label{example:endomorphism-prpd}
    Let $\calC$ be a symmetric monoidal $1$-category and $x \in \calC$.
    The \textit{endomorphism properad} of $x$ is the discrete $1$-coloured properad whose set of operations at arity $(k,l)$ is given by $\mrm{Hom}_{\calC}(x^{\otimes k},x^{\otimes l})$ and whose properad structure maps are dictated by the composition in $\calC$.
    Restricting to arities of the form $(k,1)$ recovers the well known \textit{endomorphism operad} of $x$.
    In \cref{defn:end-properad} we generalize this and introduce the \textit{endomorphism $\infty$-properad} of an object in an arbitrary symmetric monoidal $\infty$-category.
    This will be related to the notion of algebra in the expected way.
\end{example}

\begin{example}\label{ex:U(discrete-monoid)}
    For a discrete commutative monoid $M \in \CMon(\Sets)$ we can define an ``$M$-weighted cospan category'' $\Csp(M)$ as follows.
    The objects of $\Csp(M)$ are finite sets and the morphisms are cospans $A \to X \leftarrow B$ together with a labelling $m_X\colon X \to M$.
    When composing cospans we add their labels in the sense that 
    $m\colon  X \amalg_B Y \to M$ is obtained from $m_X \sqcup m_Y\colon  X \amalg Y \to M$
    by summing over the fibers of $X \amalg Y \to X \amalg_B Y$.
    This can be made into a symmetric monoidal $(2,1)$-category with an equifibered forgetful symmetric monoidal functor $\Csp(M) \to \Csp$,
    but we will not construct the necessary coherence here.
    This is similar to \cite[Definition 2.13]{Jan-tropical}, and it is also a special case of the ``decorated cospan categories'' of Fong \cite{Fon15} (see also \cite{BCV22}).
    As far as we understand, this is the only connection between labelled cospan categories \cite{Jan-tropical} and decorated cospan categories \cite{Fon15}.
    
    If one ``de-loops'' $M$ into a symmetric monoidal category $\mfr{B}(M)$ with a single object $\unit$, then $\Csp(M)$ is exactly the endomorphism properad of $\unit$ in $\mfr{B}(M)$ in the sense of \cref{example:endomorphism-prpd}.
    Using the morphism $\infty$-properad of \cref{defn:underlying-properad} we have $\Csp(M) \simeq \calU(\mfr{B}(M))$.
    Note that $\calU(\mfr{B}(M))$ provides a definition of $\Csp(M)$ when $M \in \CMon$ is a not necessarily discrete commutative monoid.
\end{example}
    
In \cref{lem:disjunctive-Csp-level-wise-free} we showed the following:
\begin{lem}\label{lem:cospan-properad-coprod-disj}
    Let $\calC \in \Cat$ be a finitely cocomplete $\amalg$-disjunctive $\infty$-category and suppose that $\pi_0(\calC^\simeq)$ is generated by indecomposables.
    Then $\Cospan(\calC)$ is an $\infty$-properad.
\end{lem}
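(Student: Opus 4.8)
The plan is to check the two defining conditions of \cref{defn:properad} for $\calP = \Cospan(\calC)$ directly, observing that both are immediate specializations of \cref{lem:disjunctive-Csp-level-wise-free}. First I would line up the hypotheses: $\calC$ being finitely cocomplete implies in particular that it has pushouts, so that \cref{lem:disjunctive-Csp-level-wise-free} applies, and the standing assumption that $\pi_0(\calC^\simeq)$ is generated by indecomposables is exactly the hypothesis appearing in part $(2)$ of that corollary.

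Next, for condition $(1)$ of \cref{defn:properad} I would invoke part $(2)$ of \cref{lem:disjunctive-Csp-level-wise-free}, which yields that $\xN_\bullet \Cospan(\calC)$ is level-wise free; specializing to level $n = 1$ shows that $\xN_1 \Cospan(\calC)$ is free. For condition $(2)$ I would identify the composition map $\circ \colon \xN_2 \Cospan(\calC) \to \xN_1 \Cospan(\calC)$ with $\lambda^*$ for the active map $\lambda \colon [1] \to [2]$ given by $0 \mapsto 0$, $1 \mapsto 2$ (the coface map inducing $d_1$, which remembers the composite of a $2$-simplex). Part $(1)$ of \cref{lem:disjunctive-Csp-level-wise-free} asserts precisely that $\lambda^*$ is equifibered for every active $\lambda$, so this gives that $\circ$ is equifibered. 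Having verified both conditions, $\Cospan(\calC)$ is an \properad{}.

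There is no genuine obstacle at this level: the statement is essentially the ``in particular'' clause of \cref{lem:disjunctive-Csp-level-wise-free} repackaged under the cleanest hypotheses. All of the actual content sits upstream, in the equifiberedness of $\colim_J$ for $\amalg$-disjunctive $\calC$ (\cref{lem:coproduct-disjunctive-colim}) and in the criterion that a pseudo-free monoid whose $\pi_0$ is generated by indecomposables is free (\cref{lem:eqf-addition}), which together drive the proof of \cref{lem:disjunctive-Csp-level-wise-free}.
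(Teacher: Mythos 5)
Your proposal is correct and matches the paper exactly: the paper introduces this lemma with the phrase ``In \cref{lem:disjunctive-Csp-level-wise-free} we showed the following,'' i.e.\ it is precisely the ``in particular'' clause of that corollary, with finitely cocomplete supplying the needed pushouts. Your identification of $\circ = d_1$ with $\lambda^*$ for the active $\lambda\colon[1]\to[2]$ and the appeal to level-wise freeness at $n=1$ is the same reasoning the paper relies on.
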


Without the ``generated by indecomposables'' assumption the nerve $\xN_\bullet(\Cospan(\calC))$ is only level-wise pseudo-free in the sense of \cref{def:pseudo-free}.

\begin{example} 
    Let $\calS^{\pifin} \subseteq \calS$ denote the full subcategory spanned by spaces with finitely many connected components.
    Then $\CoSpan(\calS^{\pifin})$ is an $\infty$-properad.
\end{example}

\begin{example}
    Let $\frX$ be an $\infty$-topos and write $\frX^{\pifin} \subseteq \frX$ 
    for the full subcategory spanned by objects $X \in \frX$ 
    whose poset of sub-objects $\mrm{Sub}(X)$ is a finite boolean algebra.
    Then $\frX^{\pifin}$ satisfies the conditions of \cref{lem:cospan-properad-coprod-disj}, hence $\Cospan(\frX^{\pifin})$ is an $\infty$-properad.
\end{example}

\begin{example}\label{ex:tangential}
    For a space $B \in \calS$ the slice category $\calS_{/B}^{\pifin}$ satisfies the conditions of \cref{lem:cospan-properad-coprod-disj} and hence $\Cospan(\calS_{/B}^{\pifin})$ is an $\infty$-properad.
    There is a symmetric monoidal functor $\tau\colon \Bord_d \to \Cospan(\calS_{/BO(d)}^{\pifin})$ that sends a manifold $M$ 
    to its underlying space equipped with the map $\tau_M\colon  M \to BO(d)$ that classifies the tangent bundle. 
    This functor is equifibered as it sends connected bordisms $W\colon M \to N$ to cospans $((M,\tau_M) \to (W,\tau_W) \leftarrow (N,\tau_N))$ where the tip $(W, \tau_W)$ is connected, and so is level-wise free.
    Given some map $\theta\colon  B \to BO(d)$, post-composition with $\theta$ also defines an equifibered symmetric monoidal functor $\theta_!$, and we may form the following pullback square in $\Prpd$:
    \[\begin{tikzcd}
    	{\Bord_d^\theta} & {\Cospan(\calS_{/B}^{\pifin})} \\
    	{\Bord_d} & {\Cospan(\calS_{/BO(d)}^{\pifin}).}
    	\arrow["{\theta_!}", from=1-2, to=2-2]
    	\arrow[from=1-1, to=2-1]
    	\arrow[from=1-1, to=1-2]
    	\arrow["\tau", from=2-1, to=2-2]
    	\arrow["\lrcorner"{anchor=center, pos=0.125}, draw=none, from=1-1, to=2-2]
    \end{tikzcd}\]
    This pullback may be computed in $\SM$ (in fact in $\Cat$) as the inclusion $\Prpd \to \SM$ preserves contractible limits by \cref{cor:forgetful-lim-and-colim}.
    The symmetric monoidal $\infty$-category $\Bord_d^\theta$ is the $\theta$-structured bordism category.
    For instance, if $\theta\colon  BSO(d) \to BO(d)$ is the orientation double-cover, then $\Bord_d^\theta$ is the oriented bordism category.
\end{example}

\begin{example}
    Let $\calE \in \CAlg(\PrLst)$ be a presentably symmetric monoidal $\infty$-category and write $\Aff(\calE)^{\Zarfin} \subseteq \CAlg(\calE)^\op$
    for the full subcategory spanned by commutative algebras 
    $A \in \CAlg(\calE)$ 
    such that the ring
    $\pi_0 \Map_{\calE}(\mbf{1},A)$
    has finitely many idempotents.
    Then $\Cospan(\Aff(\calE)^{\Zarfin})$ is an $\infty$-properad.
    By \cref{ex:comm-algebras-coprod-disj} $\CAlg(\calE)^\op$ is $\amalg$-disjunctive,
    so to apply \cref{lem:cospan-properad-coprod-disj} it suffices to show that $\pi_0(\Aff(\calE)^{\Zarfin,\simeq})$ is generated by indecomposables.
    The indecomposables are those rings for which $1 \in \pi_0 \Map_\calE(\mbf{1}, A)$ is the only non-zero idempotent.
    These generate because any $A$ which contains a non-zero idempotent different from $1$ can be split as a product $A_0 \times A_1$
    and this terminates as the $A_i$ have strictly fewer idempotents than $A$.
\end{example}

\begin{example}
    The $(2,1)$-category $\Span(\Fin)$ where objects are finite sets, 
    morphisms are spans $A \leftarrow X \to B$,
    and the monodial structure is given by disjoint union,
    is not an $\infty$-properad.
    Although $\xN_n\Span(\Fin)$ is free for all $n$
    (by a similar argument as in \cref{lem:Csp-level-wise-free}),
    the composition map $d_1\colon \xN_2\Span(\Fin) \to \xN_1\Span(\Fin)$
    is \textit{not} equifibered.
    To see this note that $\xN_1\Span(\Fin)$ is free on spans
    $(A_0 \leftarrow X \to A_1)$ such that $A_0 \amalg_X A_1$ 
    has exactly one element,
    and $\xN_2\Span(\Fin)$ is free on pairs of composable spans
    $(A_0 \leftarrow X \to A_1, A_1 \leftarrow Y \to A_2)$
    such that $A_0 \amalg_X A_1 \amalg_Y A_2$ has exactly one element.
    In particular 
    $(\emptyset \leftarrow \emptyset \to *, * \leftarrow \emptyset \to \emptyset)$
    is a generator, but the composition $(\emptyset \leftarrow \emptyset \to \emptyset)$ is not.
    Therefore, $d_1\colon \xN_2\Span(\Fin) \to \xN_1\Span(\Fin)$ is not free
    and hence $\Span(\Fin)$ not an $\infty$-properad.
    
    However, the subcategory $\Span(\Fin)^{\rm f-surj} \subset \Span(\Fin)$,
    which only contains spans $(A \leftarrow X \twoheadrightarrow B)$ where the forward map is surjective,
    is an $\infty$-properad.
    Indeed, in this case the canonical map 
    \[
        A_0 \amalg_{X \times_{A_1} Y} A_2 
        \cong A_0 \amalg_X (X \amalg_{X \times_{A_1} Y} Y) \amalg_Y A_2 
        \too A_0 \amalg_X A_1 \amalg_Y A_2
    \]
    is always a bijection because $X \amalg_{X \times_{A_1} Y} Y \cong A_1$ whenever $X \twoheadrightarrow A_1$ is surjective.
    Therefore, our previous considerations about $d_1\colon \xN_2\Span(\Fin) \to \xN_1\Span(\Fin)$ 
    show that it is equifibered when restricted to $\Span(\Fin)^{\rm f-surj}$.
\end{example}

\begin{rem}\label{rem:Span-not-prpd}
    If $\calC$ is a symmetric monoidal $1$-category, then symmetric monoidal functors $\Span(\Fin) \to \calC$ correspond to commutative bialgebras in $\calC$, whereas symmetric monoidal functors $\Span(\Fin)^{\rm f-surj} \to \calC$ correspond to \emph{non-counital} bialgebras in $\calC$.%
    \footnote{
        We do not claim to prove this here, and we do not make any claim about the situation when $\calC$ is a symmetric monoidal $\infty$-category.
        This merely serves as motivation.
    }
    Therefore, the above example can be understood as saying that non-counital bialgebras are controlled by a properad, but bialgebras are not.

    To identify the issue, let 
    $A \colon \Span(\Fin) \to \calC$ be a bialgebra encoded as a symmetric monoidal functor.
    Its unit and counit maps are given respectively as follows
    \[u :=  A(\emptyset \leftarrow \emptyset \to *) \colon \unit \to A(\ast), \qquad c := A(* \leftarrow \emptyset \too \emptyset)  \colon A(\ast) \too \unit\]
    In particular $u$ and $c$ have arity $(0,1)$ and $(1,0)$ respectively.
    One of the axioms for a bialgebra postulates a non-homogenous relation
    $c \circ u = \id_\unit$
    between the $(0,0)$-ary operation $c \circ u$ and the $(1,1)$-ary operation $\id_\unit$.
    This relation is witnessed in $\Span(\Fin)$ by the composition:
    \[(\emptyset \leftarrow \emptyset \to *) \circ (* \leftarrow \emptyset \to \emptyset) \simeq (\emptyset \leftarrow \emptyset \to \emptyset).\]
    Such a non-homogenous relation is impossible to encode using a properad as it contradicts the homogeneity of the composition map with respect to the grading by arity.
    However, while $\Span(\Fin)$ is not an $\infty$-properad, it is still a projective $\infty$-properad in the sense of \cref{defn:projective-prpd},
    as we shall see in \cref{ex:Span-proj}.
\end{rem}

\subsection{Properties and constructions of \texorpdfstring{$\infty$-properads}{infinity-properads}}\label{sec:prpd-eqf-Csp}

\subsubsection{Properads are equifibered over $\Csp$}

In \cref{subsec:pre-properads} we will show that $\Csp \in \Prpd$ is the terminal $\infty$-properad.
We will now discuss some of the consequences this has for categorical properties of $\Prpd$.

\begin{lem}\label{lem:eqf-over-properad}
    Let $f\colon  \calC \to \calP$ be an equifibered symmetric monoidal functor
    such that $\calP$ is an $\infty$-properad. 
    Then $\calC$ is an $\infty$-properad.
\end{lem}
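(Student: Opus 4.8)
The plan is to verify directly the two conditions of \cref{defn:properad} for $\calC$: that $\xN_1\calC$ is free, and that $d_1\colon \xN_2\calC \to \xN_1\calC$ is equifibered. The single input that unlocks everything is \cref{lem:eqf-detected-on-N1}: since $f$ is equifibered, the map $\xN_n(f)\colon \xN_n\calC \to \xN_n\calP$ is equifibered for \emph{every} $n$, and in particular $\xN_1(f)$ and $\xN_2(f)$ are equifibered.

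First I would establish freeness of $\xN_1\calC$. Because $\calP$ is an \properad{}, the commutative monoid $\xN_1\calP$ is free, say $\xN_1\calP \simeq \xF(X)$ for a space $X$ (the space of operations $\xN_1^\el\calP$). The equifibered map $\xN_1(f)$ is then an object of $\CMon^{\eqf}_{/\xF(X)}$, and \cref{cor: equifibered-over-free-equals-spaces} identifies this \category{} with $\calS_{/X}$ via $\xF$. Consequently $\xN_1\calC$ lies in the essential image of $\xF$, i.e.\ it is free.

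Next I would treat the equifiberedness of $d_1$ using that equifibered maps form the right class of a factorization system (\cref{lem:formal-ctf-eqf}). Right classes are closed under composition and satisfy the cancellation property that if $g$ and $g\circ h$ are equifibered then so is $h$ (this follows in one line from uniqueness of the contrafibered--equifibered factorization, by factoring $h$ and comparing with the trivial factorization of $g\circ h$). Since $\xN_\bullet(f)$ is a map of simplicial commutative monoids it commutes with the face map $d_1$, giving the commuting square
\[
\begin{tikzcd}
\xN_2\calC \ar[r, "d_1"] \ar[d, "\xN_2 f"'] & \xN_1\calC \ar[d, "\xN_1 f"] \\
\xN_2\calP \ar[r, "d_1"] & \xN_1\calP.
\end{tikzcd}
\]
Here the bottom map $d_1\colon \xN_2\calP \to \xN_1\calP$ is equifibered because $\calP$ is an \properad{}, and $\xN_2(f)$ is equifibered by the observation above, so the down-then-right composite $d_1^{\calP}\circ \xN_2(f)$ is equifibered. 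This composite equals the right-then-down composite $\xN_1(f)\circ d_1^{\calC}$; since $\xN_1(f)$ is itself equifibered, the cancellation property forces $d_1\colon \xN_2\calC \to \xN_1\calC$ to be equifibered, which is the second condition.

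The argument is essentially formal once the factorization-system machinery is in place, so I do not anticipate a genuine obstacle. The one point that deserves care is the cancellation property for equifibered maps, which the excerpt does not isolate explicitly; I would either cite it as a standard feature of the right class of an orthogonal factorization system or insert the short proof via uniqueness of factorizations. Everything else is a direct application of \cref{lem:eqf-detected-on-N1} and \cref{cor: equifibered-over-free-equals-spaces}.
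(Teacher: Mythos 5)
Your proof is correct and follows essentially the same route as the paper: deduce from \cref{lem:eqf-detected-on-N1} that $\xN_\bullet(f)$ is levelwise equifibered, conclude freeness of $\xN_1\calC$ from equifiberedness over the free monoid $\xN_1\calP$, and obtain equifiberedness of $d_1$ by cancellation in the commuting square. The cancellation step you flag is indeed standard for the right class of a factorization system (and also follows directly from the pullback-pasting lemma applied to the defining squares), and the paper invokes it with the same brevity.
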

\begin{proof}
    An equifibered symmetric monoidal functor induces an equifibered
    map $\xN_\bullet(f)\colon  \xN_\bullet\calC \to \xN_\bullet\calP$ on nerves.
    Hence, the vertical maps in the commutative square
    \[
        \begin{tikzcd}
            \xN_2\calC \ar[r, "d_1"] \ar[d, "f_2"'] & 
            \xN_1\calC \ar[d, "f_1"]\\
            \xN_2\calP \ar[r, "d_1"] & 
            \xN_1\calP
        \end{tikzcd}
    \]
    are equifibered.
    Since $\xN_1\calP$ is free it follows that $\xN_1\calC$ is free
    and since $d_1\colon \xN_2\calP \to \xN_1\calP$ is equifibered
    it follows by cancellation (\cref{obs:eqf-cancellation}) that $d_1\colon \xN_2 \calC \to \xN_1\calC$
    is equifibered. 
\end{proof}

\begin{cor}\label{cor:sub-properad}
    Let $\calP$ be an $\infty$-properad and $\calQ \subset \calP$ be 
    a replete symmetric monoidal subcategory satisfying:
    \begin{itemize}
        \item for any two morphisms $(f_1\colon x_1 \to y_1), (f_2\colon x_2 \to y_2) \in \Ar(\calP)$
        we have that if $f_1 \otimes f_2 \in \Ar(\calQ)$, 
        then $f_1 \in \Ar(\calQ)$ and $f_2 \in \Ar(\calQ)$.
    \end{itemize}
    Then $\calQ$ is an $\infty$-properad.
\end{cor}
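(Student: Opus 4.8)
The plan is to show that the inclusion $\calQ \hookrightarrow \calP$ is an equifibered symmetric monoidal functor, and then to conclude by \cref{lem:eqf-over-properad} together with the hypothesis that $\calP$ is an \properad{}. By \cref{lem:eqf-detected-on-N1} it suffices to check that the induced map of commutative monoids $\xN_1\calQ \to \xN_1\calP$ is equifibered, so the entire problem reduces to the first level of the nerve. In this sense the corollary is to \cref{lem:submonoid-eqf} at level one what \cref{cor:full-subcat-eqf} is to it at level zero.

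First I would observe that $\xN_1\calQ \to \xN_1\calP$ is the inclusion $\Ar(\calQ)^\simeq \hookrightarrow \Ar(\calP)^\simeq$, and that this is a \emph{monomorphism} of commutative monoids. This is where repleteness enters: since $\calQ \subseteq \calP$ is replete and closed under composition, the subcategory $\Ar(\calQ) \subseteq \Ar(\calP)$ is again replete, so $\Ar(\calQ)^\simeq$ is a union of connected components of $\Ar(\calP)^\simeq$. Concretely, if $f \in \Ar(\calQ)$ and $f' \simeq f$ in $\Ar(\calP)$, then $f'$ is a composite of $f$ with equivalences lying in $\calQ$, hence $f' \in \Ar(\calQ)$; thus membership of a morphism in $\calQ$ depends only on its class in $\pi_0\Ar(\calP)^\simeq$, and $\pi_0\Ar(\calQ)^\simeq \subseteq \pi_0\Ar(\calP)^\simeq$ is a well-defined subset.

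Next I would apply \cref{lem:submonoid-eqf} to this monomorphism, whose monoid operation is the pointwise tensor product of arrows, so that $[f_1] + [f_2] = [f_1 \otimes f_2]$. That lemma says the inclusion is equifibered if and only if $\pi_0\Ar(\calQ)^\simeq$ is closed under factoring, i.e.\ whenever $[f_1 \otimes f_2]$ lies in $\pi_0\Ar(\calQ)^\simeq$ then both $[f_1]$ and $[f_2]$ do. By the previous paragraph this is legitimately read at the level of $\pi_0$, and it is then precisely the hypothesis of the corollary. Hence $\xN_1\calQ \to \xN_1\calP$ is equifibered, so by \cref{lem:eqf-detected-on-N1} the inclusion $\calQ \hookrightarrow \calP$ is an equifibered symmetric monoidal functor, and \cref{lem:eqf-over-properad} finishes the proof.

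I do not expect a serious obstacle, as the statement is essentially a repackaging of \cref{lem:submonoid-eqf} applied to $\xN_1$. The only point requiring genuine care is the bookkeeping in the second step: verifying from repleteness that $\Ar(\calQ)^\simeq \hookrightarrow \Ar(\calP)^\simeq$ really is the inclusion of a union of connected components, so that ``$f \in \Ar(\calQ)$'' is a property of the equivalence class of $f$ and \cref{lem:submonoid-eqf} is applicable.
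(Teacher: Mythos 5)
Your proof is correct and follows exactly the paper's route: reduce via \cref{lem:eqf-detected-on-N1} and \cref{lem:eqf-over-properad} to showing $\xN_1\calQ \hookrightarrow \xN_1\calP$ is equifibered, and then apply \cref{lem:submonoid-eqf}. The only difference is that you spell out the (correct, and worth noting) verification that repleteness makes this a monomorphism of commutative monoids so that \cref{lem:submonoid-eqf} applies, a point the paper leaves implicit.
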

\begin{proof}
    Combining \cref{lem:eqf-over-properad} and \cref{lem:eqf-detected-on-N1} the statement reduces to the claim that $N_1\calQ \hookrightarrow N_1\calP$ is equifibered,
    which follows from \cref{lem:submonoid-eqf}.
\end{proof}

\begin{defn}\label{defn:sub-properad}
    In the situation of \cref{cor:sub-properad} we say that $\calQ$ is a \hldef{sub-$\infty$-properad} of $\calP$.
    If the inclusion $\calQ \hookrightarrow \calP$ is full we say that $\calQ$ is a \hldef{full sub-$\infty$-properad} of $\calP$.
\end{defn}

Full sub-$\infty$-properads are classified as follows:
\begin{cor}\label{cor:subproperad-classification}
    Let $\calP$ be an $\infty$-properad.
    There is an inclusion-preserving bijection
    \[\{\text{full sub-$\infty$-properads of $\calP$}\} \xtoo{\cong}  \{\text{subsets of $\pi_0(\calP^\simeq)^\el$}\}\]
    defined by sending $\calQ \subset \calP$ to $\pi_0(\calQ^\simeq)^\el \subset \pi_0(\calP^\simeq)^\el$.
\end{cor}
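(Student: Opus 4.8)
The plan is to reduce this geometric classification to an elementary statement about submonoids of a free commutative monoid. First I would pass to the level of objects. A full replete symmetric monoidal subcategory $\calQ \subseteq \calP$ is determined by the subset $\pi_0(\calQ^\simeq) \subseteq \pi_0(\calP^\simeq)$ of isomorphism classes it contains, and being symmetric monoidal is equivalent to this subset being a submonoid (containing the unit and closed under $+$). By \cref{cor:full-subcat-eqf} the inclusion $\calQ \hookrightarrow \calP$ is equifibered — equivalently, by \cref{defn:sub-properad} and \cref{cor:sub-properad}, $\calQ$ is a full sub-\properad{} — precisely when $x \otimes y \in \calQ$ implies $x, y \in \calQ$; applying $\pi_0$ and \cref{lem:submonoid-eqf}, this is exactly the condition that $\pi_0(\calQ^\simeq) \subseteq \pi_0(\calP^\simeq)$ be closed under factoring. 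Thus full sub-\properads{} of $\calP$ correspond, inclusion-preservingly, to factoring-closed submonoids of $\pi_0(\calP^\simeq)$. (Note that fullness makes the morphism-level cancellation condition automatic: once the object set is factoring-closed, any $f_1 \otimes f_2 \in \Ar(\calQ)$ has source and target in $\calQ$, so $f_1, f_2$ lie between objects of $\calQ$ and hence in $\calQ$.)

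Next I would identify $\pi_0(\calP^\simeq)$ using that $\calP$ is an \properad{}: by \cref{prop:TFAE-properad} the monoid $\xN_0\calP = \calP^\simeq$ is free, so $\pi_0(\calP^\simeq)$ is the free discrete commutative monoid $\bbN\langle B \rangle$ on the set $B := \pi_0\big((\calP^\simeq)^\el\big)$ of indecomposable colors. The classification is thereby reduced to the combinatorial claim that factoring-closed submonoids $T \subseteq \bbN\langle B \rangle$ are in bijection with subsets of $B$.

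For the combinatorial part I would exhibit the mutually inverse maps $T \mapsto T \cap B$ and $B' \mapsto \bbN\langle B' \rangle$. A factoring-closed $T$ is free on $T \cap B$: every $t \in T$ is a sum $\sum b_i$ of generators $b_i \in B$, each of which lies in $T$ by repeated factoring, so $T = \bbN\langle T \cap B \rangle$; moreover the indecomposables of $T$ are precisely $T \cap B$, since each such $b$ is already indecomposable in $\bbN\langle B\rangle$, while any indecomposable $t \in T$, written as $\sum b_i$ with all $b_i \in T$, must be a single summand and so lies in $B$. Conversely $\bbN\langle B' \rangle$ is factoring-closed by freeness of $\bbN\langle B \rangle$, and $\bbN\langle B'\rangle \cap B = B'$. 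Finally I would check that under the identification of the first paragraph the map $\calQ \mapsto T \cap B$ agrees with the stated map $\calQ \mapsto \pi_0(\calQ^\simeq)^\el$: since $\calQ \hookrightarrow \calP$ is equifibered, $\calQ$ is again an \properad{} by \cref{lem:eqf-over-properad}, so $\calQ^\simeq$ is free and $\pi_0\big((\calQ^\simeq)^\el\big)$ is exactly the set of indecomposables of $T = \pi_0(\calQ^\simeq)$, which we computed to be $T \cap B$. Both maps are visibly monotone, giving the desired inclusion-preserving bijection.

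The argument is entirely formal, so there is no serious obstacle; the only point requiring care is the bookkeeping in the first paragraph, namely confirming that the notion of full sub-\properad{} unwinds to ``factoring-closed on objects'' with no residual constraint on morphisms, which is handled by \cref{cor:full-subcat-eqf} together with fullness.
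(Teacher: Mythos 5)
Your proposal is correct and follows essentially the same route as the paper's proof: identify full symmetric monoidal subcategories with submonoids of $\pi_0(\calP^\simeq) \cong \bbN\langle\pi_0(\calP^\simeq)^\el\rangle$, observe via \cref{cor:sub-properad} that the sub-\properad{} condition amounts to factoring-closedness, and note that factoring-closed submonoids of a free commutative monoid are exactly those generated by subsets of the generators. You spell out two details the paper leaves implicit (that fullness renders the morphism-level condition automatic, and that $\pi_0(\calQ^\simeq)^\el$ really is $T \cap B$), but the argument is the same.
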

\begin{proof}
    Full symmetric monoidal subcategories of $\calP$ are in bijection with submonoids $M \subset \pi_0(\calP^\simeq) \cong \bbN\langle\pi_0(\calP^\simeq)^\el\rangle$.
    By \cref{cor:sub-properad} such a submonoid corresponds to a sub-$\infty$-properad if and only if it satisfies that $a+ b \in M \Rightarrow a,b \in M$.
    Such submonoids of $\bbN\langle\pi_0(\calP^\simeq)^\el\rangle$ are precisely those generated by subsets $S \subset \pi_0(\calP^\simeq)^\el$.
\end{proof}

\begin{cor}\label{cor:Prpd-nice-properties}
    For $\calP\in \Prpd$ the $\infty$-category $(\Prpd)_{/\calP}$ is presentable. 
    Furthermore, the inclusion $(\Prpd)_{/\calP} \subset \SMover{\calP}$ admits left and right adjoints.
\end{cor}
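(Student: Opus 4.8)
The plan is to identify $(\Prpd)_{/\calP}$ with the full subcategory $\SMeq{\calP} \subseteq \SMover{\calP}$ of equifibered symmetric monoidal functors into $\calP$, and then read off all three assertions from the equifibered factorization system. The inclusion $\Prpd \subseteq \SM$ induces a functor $(\Prpd)_{/\calP} \to \SMover{\calP}$; by definition it sends an object to an equifibered functor $\calQ \to \calP$, so it lands in $\SMeq{\calP}$, and \cref{lem:eqf-over-properad} shows conversely that any equifibered $\calC \to \calP$ has \properad{} source, so every object of $\SMeq{\calP}$ is hit. For full faithfulness I would check that any symmetric monoidal functor $h \colon \calQ \to \calQ'$ lying over $\calP$, with both structure maps equifibered, is itself equifibered: this is exactly the right-cancellation property of the right class of an orthogonal factorization system (if $g$ and $g \circ h$ are equifibered, then so is $h$), which applies since equifibered functors are the right class of the contrafibered--equifibered factorization system on $\SM$. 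Hence $(\Prpd)_{/\calP} \simeq \SMeq{\calP}$ as subcategories of $\SMover{\calP}$, and the inclusion in the statement is the inclusion of this subcategory.

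Presentability and the left adjoint are then formal. Since $\SM$ is presentable (it lies in $\PrR$) and slices of presentable \categories{} are presentable, $\SMover{\calP}$ is presentable. \cref{cor:eqf-slice-is-accessible} provides an accessible reflective localization $\bbL^\eqf \colon \SMover{\calP} \adj \SMeq{\calP} \colon \inc$; an accessible reflective localization of a presentable \category{} is again presentable, so $(\Prpd)_{/\calP} \simeq \SMeq{\calP}$ is presentable, and the localization functor $\bbL^\eqf$ is the desired left adjoint to the inclusion.

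It remains to produce a right adjoint, and this is where the hypothesis that $\calP$ is an \properad{} genuinely enters. I would invoke \cref{lem:colim-of-eqf-slice-categories}, which says precisely that for an \properad{} $\calP$ the subcategory $\SMeq{\calP} \subseteq \SMover{\calP}$ is closed under all colimits. Consequently the fully faithful inclusion $\SMeq{\calP} \hookrightarrow \SMover{\calP}$ preserves all colimits, and being a colimit-preserving functor between presentable \categories{} it admits a right adjoint by the adjoint functor theorem \cite[Corollary 5.5.2.9]{HTT}. The one step requiring care is the identification of the first paragraph---in particular the cancellation argument that forces maps over $\calP$ to be equifibered---after which presentability and both adjoints follow formally from the accessibility in \cref{cor:eqf-slice-is-accessible} and the colimit-closure in \cref{lem:colim-of-eqf-slice-categories}.
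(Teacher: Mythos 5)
Your proposal is correct and follows essentially the same route as the paper: identify $(\Prpd)_{/\calP}$ with $\SMeq{\calP}$ via \cref{lem:eqf-over-properad} (plus the cancellation property of the right class, which the paper leaves implicit), get presentability and the left adjoint from \cref{cor:eqf-slice-is-accessible}, and get the right adjoint from the colimit-closure in \cref{lem:colim-of-eqf-slice-categories} together with the adjoint functor theorem.
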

\begin{proof}
    \cref{lem:eqf-over-properad} provides an identification $(\Prpd)_{/\calP} = \SMeq{\calP}$.
    Thus, by \cref{cor:eqf-slice-is-accessible} the inclusion $(\Prpd)_{/\calP} \hookrightarrow \SMover{\calP}$
    admits a left adjoint and this adjunction is accessible.
    By \cref{lem:colim-of-eqf-slice-categories}, 
    the inclusion $\SMeq{\calP} \hookrightarrow \SMover{\calP}$ 
    preserves all colimits and is therefore a left adjoint by the adjoint functor theorem.
\end{proof}

Once we show that $\Csp$ is a terminal object in $\Prpd$ in \cref{subsec:pre-properads},
we see that a symmetric monoidal $\infty$-category $\calC$ is an $\infty$-properad 
if and only if there is an equifibered symmetric monoidal functor
$\calC \to \Csp$. 
Moreover, this functor is canonical in the following sense:
\begin{thm}\label{thm:Csp-eqf=prpd}
    The forgetful functor $\SMover{\Csp} \too \SM$ restricts to an equivalence of $\infty$-categories: 
    \[ \SMeq{\Csp} \simeq \Prpd\]
\end{thm}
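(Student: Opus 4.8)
The plan is to factor the forgetful functor through the slice $(\Prpd)_{/\Csp}$ and exploit that $\Csp$ is the terminal $\infty$-properad. First I would invoke \cref{thm:Csp-is-final}, which says that $\Csp$ is a terminal object of $\Prpd$. Since the projection from the slice over a terminal object is always an equivalence, the forgetful functor
\[
    (\Prpd)_{/\Csp} \xrightarrow{\ \sim\ } \Prpd, \qquad (\calP \to \Csp) \longmapsto \calP
\]
is an equivalence. It therefore suffices to identify $(\Prpd)_{/\Csp}$ with $\SMeq{\Csp}$ compatibly with the two forgetful functors to $\SM$.

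To do this I would consider the functor on slices $(\Prpd)_{/\Csp} \to \SMover{\Csp}$ induced by the (non-full) inclusion $\Prpd \hookrightarrow \SM$, and show it is fully faithful with essential image exactly $\SMeq{\Csp}$. On objects this is immediate: an object of $\SMeq{\Csp}$ is an equifibered symmetric monoidal functor $\calP \to \Csp$, and by \cref{lem:eqf-over-properad} the domain of any equifibered functor to the properad $\Csp$ is automatically an $\infty$-properad, so such an object already lies in $(\Prpd)_{/\Csp}$. Hence both are subcategories of $\SMover{\Csp}$ with the same objects, and essential surjectivity onto $\SMeq{\Csp}$ holds.

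The crucial point is the comparison of morphisms. A morphism of $\SMeq{\Csp}$ from $(\calP \xrightarrow{p} \Csp)$ to $(\calQ \xrightarrow{q} \Csp)$ is an \emph{arbitrary} symmetric monoidal functor $F\colon \calP \to \calQ$ with $qF \simeq p$, whereas a morphism of $(\Prpd)_{/\Csp}$ demands that $F$ itself be equifibered. These agree by the right-cancellation property of equifibered maps: equifibered symmetric monoidal functors form the right class of the contrafibered--equifibered factorization system on $\SM$ (the functor analogue of \cref{lem:formal-ctf-eqf} established in \cref{subsec:eqf-sm-functors}), and the right class of any orthogonal factorization system satisfies right cancellation, so $q$ and $qF$ equifibered forces $F$ equifibered. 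Concretely, the mapping space in question is the fiber of $\Map_{\SM}(\calP,\calQ) \to \Map_{\SM}(\calP,\Csp)$ over the equifibered point $p$, and every $F$ in this fiber satisfies $qF \simeq p$, hence is equifibered; thus the fiber lies entirely in the equifibered components and coincides with the corresponding mapping space of $(\Prpd)_{/\Csp}$. If one prefers to avoid the abstract factorization-system fact, this cancellation can be checked directly after applying $\xN_1$ via \cref{lem:eqf-detected-on-N1}, where it reduces to pasting the two addition squares of \cref{defn:eqf}.

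Composing the resulting equivalences $\SMeq{\Csp} \simeq (\Prpd)_{/\Csp} \simeq \Prpd$ gives precisely the assignment $(\calP \to \Csp) \mapsto \calP$, i.e.\ the restriction of the forgetful functor $\SMover{\Csp} \to \SM$, which completes the argument. The only genuinely hard input is terminality of $\Csp$ in $\Prpd$ (\cref{thm:Csp-is-final}); once that is granted, everything here is formal, the single subtle point being the right-cancellation of equifibered functors used to match up the morphisms.
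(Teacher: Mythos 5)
Your proposal is correct and follows essentially the same route as the paper: the paper's proof likewise identifies $(\Prpd)_{/\Csp}$ with $\SMeq{\Csp}$ via \cref{lem:eqf-over-properad} and then concludes from the terminality of $\Csp$ (\cref{thm:Csp-is-final}). You merely spell out more explicitly the morphism-level comparison via right cancellation for the equifibered class, which the paper leaves implicit in that identification.
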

\begin{proof}
    \cref{lem:eqf-over-properad} provides an identification $(\Prpd)_{/\calP} = \SMeq{\calP}$.
    Note that the forgetful functor
    $(\Prpd)_{/\calP} \too \Prpd$ 
    is an equivalence if and only if $\calP$ is a terminal object in $\Prpd$.
    Hence, the theorem follows from \cref{thm:Csp-is-final}
    where we show that $\Csp \in \Prpd$ is terminal.
\end{proof}

\begin{rem}\label{rem:Prpd-presentable}
    Since $\SMeq{\Csp}$ is presentable by \cref{cor:Prpd-nice-properties}, a particular consequence of \cref{thm:Csp-eqf=prpd} is that $\Prpd$ is presentable.
    We shall see in \cref{cor:Prpd-compact-generation} that $\Prpd$ is in fact compactly generated.
    Note that, a priori, it is not at all clear that $\Prpd$ is presentable, when thought of as a replete subcategory of $\SM$.
\end{rem}

\begin{cor}\label{cor:forgetful-lim-and-colim}
    The inclusion functor $\Prpd \to \SM$ preserves all colimits and all contractible limits.
    Hence, it admits a right adjoint by the adjoint functor theorem.
\end{cor}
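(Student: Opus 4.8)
The plan is to factor the inclusion through the slice over the terminal \properad{} and reduce both claims to results already in hand. By \cref{thm:Csp-eqf=prpd} the inclusion $\Prpd \hookrightarrow \SM$ is identified with the composite
\[
    \Prpd \simeq \SMeq{\Csp} \hookrightarrow \SMover{\Csp} \xtoo{U} \SM,
\]
where the first functor is the equivalence of \cref{thm:Csp-eqf=prpd}, the middle arrow is the inclusion of the full subcategory on equifibered functors, and $U$ is the forgetful functor out of the slice. I would then establish the two claimed preservation properties for each of the two nontrivial factors separately and compose.

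For the forgetful functor $U \colon \SMover{\Csp} \to \SM$ I would invoke the standard behaviour of slice categories: colimits in $\SMover{\Csp}$ are created by $U$, so $U$ preserves all colimits; and a limit over a weakly contractible index category is likewise computed in the base (the ``over $\Csp$'' datum is a terminal cone that imposes no extra constraint once the diagram is connected), so $U$ preserves all contractible limits. For the inclusion $\SMeq{\Csp} \hookrightarrow \SMover{\Csp}$, both properties are exactly what is recorded in \cref{cor:Prpd-nice-properties}: it admits a left adjoint (via \cref{cor:eqf-slice-is-accessible}) and hence preserves all limits, and it preserves all colimits by the argument there using \cref{prop:TFAE-properad}.\ref{item:prpd-flat} together with \cref{lem:colim-of-eqf-slice-categories}, which apply because $\Csp$ is an \properad{} (\cref{ex:Csp}). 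Composing the two factors, the inclusion $\Prpd \hookrightarrow \SM$ preserves all colimits and all contractible limits.

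For the final sentence I would appeal to the adjoint functor theorem: $\SM$ is presentable, $\Prpd$ is presentable by \cref{rem:Prpd-presentable}, and a colimit-preserving functor between presentable \categories{} admits a right adjoint. There is no serious obstacle here, since all the substantive input -- that the equifibered inclusion preserves colimits, and that $\Prpd$ is presentable -- has already been supplied. The one point I would be careful to flag, and the reason the statement asserts \emph{contractible} limits rather than all limits, is that the inclusion does \emph{not} preserve the empty limit: the terminal object of $\Prpd$ is $\Csp$ (\cref{thm:Csp-is-final}), which is plainly not terminal in $\SM$. Keeping this weakly-contractible-versus-arbitrary distinction straight is the only delicate bookkeeping in the proof.
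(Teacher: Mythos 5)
Your proof is correct and follows essentially the same route as the paper: factor the inclusion as $\Prpd \simeq \SMeq{\Csp} \hookrightarrow \SMover{\Csp} \to \SM$ via \cref{thm:Csp-eqf=prpd}, use \cref{cor:Prpd-nice-properties} for the first factor and the standard behaviour of the slice projection (the paper cites \cite[Proposition 4.4.2.9]{HTT}) for the second. Your closing remark about the empty limit correctly explains why only \emph{contractible} limits can be asserted.
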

\begin{proof}
    By \cref{thm:Csp-eqf=prpd} we may consider the functors
    $\SMeq{\Csp} \hookrightarrow \SMover{\Csp} \to \SM$
    instead.
    The first one has both adjoints by \cref{cor:Prpd-nice-properties} and the second one commutes with all colimits and contractible limits \cite[Proposition 4.4.2.9.]{HTT}.
\end{proof}

\subsubsection{\texorpdfstring{$\infty$-operads as $\infty$-properads}{ Infinity-operads as infinity-properads}}

By restricting to $\infty$-properads ``where every operation has exactly one output'' one recovers the theory of $\infty$-operads.
We will be rather brief on this here, but we hope to explore it in more detail in future work.
\begin{defn}\label{defn:monic}
    An $\infty$-properad $\calP$ is called \hldef{monic} if the target map $t\colon  \xN_1 \calP \to \xN_0 \calP$ is equifibered.
    Equivalently, $\calP$ is monic if and only if $\calP(x_1,\dots,x_k; y_1,\dots,y_l) = \emptyset$ whenever $l\neq 1$.
    We let $\hldef{\Prpd^\monic} \subset \Prpd$ denote the full subcategory of monic $\infty$-properads.
\end{defn}

\begin{example}
    The key example of a monic $\infty$-properad is the category of finite sets $\Fin$ with its symmetric monoidal structure given by disjoint union.
    This in fact turns out to be the terminal monic $\infty$-properad.
    When thought of as an $\infty$-properad it is sub-terminal: indeed, we can think of it as the subproperad $\Fin \subset \Csp$ containing only those cospans whose backwards map is an equivalence.
\end{example}

\begin{rem}
    One can also call an $\infty$-properad $\calP$ \hldef{comonic} if the source map $s\colon  \xN_1 \calP \to \xN_0 \calP$ is equifibered.
    Note that the functor $\op\colon \SM \to \SM$ restricts to an equivalence
    between the $\infty$-categories of monic and comonic $\infty$-properads.
    In particular, it follows from \cref{thm:monic-prpd=operad} that the $\infty$-category of comonic $\infty$-properads is also equivalent to the $\infty$-category of $\infty$-operads.
\end{rem}

Restricting 
\cref{prop:TFAE-properad} to the monic case yields the following characterization:
\begin{cor}\label{cor:TFAE-monic-properad}
    For a symmetric monoidal $\infty$-category $\calP$ the following are equivalent:
    \begin{enumerate}[(1)]
        \item $\calP$ is a monic $\infty$-properad.
        \item $\xN_0\calP$ is free and the target map $t\colon \xN_1 \calP \to \xN_0 \calP$ is equifibered.
        \item There exists an equifibered symmetric monoidal functor $\calP \to \Fin$.
    \end{enumerate}
    Moreover, the equivalence of \cref{thm:Csp-eqf=prpd} restricts to an equivalence:
    \(
        (\SM)^{\eqf}_{/\Fin} \simeq \Prpd^{\monic} .
    \)
\end{cor}
\begin{proof}
    (1) $\Rightarrow$ (2) holds by definition. 
    (2) $\Rightarrow$ (1) holds by pullback and cancellation of equifibered maps, see \cref{lem:tensor-disjunctive-TFAE}.(\ref{item-tdj6}$\Rightarrow$\ref{item-tdj7}) below.
    (3) $\Rightarrow$ (2) follows as in \cref{lem:eqf-over-properad}.
    To see (1) $\Rightarrow$ (3), note that if $\calP$ is a monic properad, then every operation only has one output so the unique $\calP \to \Csp$ from \cref{cor:Csp-is-final} lands in the subcategory $\Fin \subset \Csp$ and $\calP \to \Fin$ is equifibered by cancellation.
\end{proof}

Monic $\infty$-properads are equivalent to $\infty$-operads in the sense of Lurie \cite{HA}.
\begin{thm}[Haugseng--Kock, Barkan--Haugseng--Steinebrunner]\label{thm:monic-prpd=operad}
    Lurie's envelope construction restricts to an equivalence of $\infty$-categories:
    \[
        \Env\colon  \Op \xrightarrow{\ \simeq\ } \Prpd^\monic.
    \]
\end{thm}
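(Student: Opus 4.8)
The plan is to realise $\Env$ as a composite of two equivalences that are already available, so that the theorem becomes a matter of identifying the composite functor rather than of proving an equivalence from scratch. First, \cref{cor:TFAE-monid-properad} identifies the \category{} of monic \properads{} with the slice $\SMeq{\Fin}$: since $\Fin$ is the terminal monic \properad{}, every monic \properad{} carries an essentially unique equifibered augmentation to $\Fin$, and forgetting this augmentation is nothing but the restriction to $\Prpd^\monic \subset \Prpd$ of the equivalence $\SMeq{\Csp} \simeq \Prpd$ from \cref{thm:Csp-eqf=prpd}. Second, \cite[Corollary D]{envelopes} (a variant of the main result of \cite{HK21}) supplies an equivalence $\Op \simeq \SMeq{\Fin}$ implemented by Lurie's envelope: an \operad{} $\calO$ is sent to the pair consisting of $\Env(\calO)$ together with its canonical augmentation $\pi\colon \Env(\calO) \to \Fin$, the symmetric monoidal functor that forgets colors and retains only the underlying finite set of a tuple $(c_1,\dots,c_n)$.

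I would then compose,
\[
    \Op \xrightarrow{\ \simeq\ } \SMeq{\Fin} \xrightarrow{\ \simeq\ } \Prpd^\monic,
\]
and observe that, because the second equivalence simply discards the augmentation, the composite sends $\calO$ to $\Env(\calO)$; that is, it is precisely Lurie's envelope, now viewed as a functor $\Op \to \Prpd^\monic$. Being a composite of equivalences, it is an equivalence. On morphisms there is nothing further to check: a symmetric monoidal functor between monic \properads{} lying over $\Fin$ is automatically equifibered, since equifibered maps are the right class of a factorization system and hence enjoy the cancellation property that if a composite and its outer factor are equifibered, so is the inner one — this is exactly what makes the first equivalence land in $\Prpd^\monic$.

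The only genuine verification, which is packaged inside \cite[Corollary D]{envelopes}, is that $\pi$ is equifibered and that $\calO \mapsto (\Env(\calO),\pi)$ is an equivalence. For equifiberedness one uses \cref{lem:eqf-detected-on-N1}: it suffices that $\xN_1(\pi)$ be equifibered, and indeed $\xN_1\Env(\calO)$ is freely generated by the operations of $\calO$, while $\pi$ sends an operation $\alpha\colon (c_1,\dots,c_k) \to d$ to the arrow $\{1,\dots,k\} \to \{\ast\}$, a generator of the free \Emon{} $\xN_1\Fin$; thus $\xN_1(\pi)$ carries generators to generators, is therefore free, and in particular equifibered by \cref{ex:free-is-eqf}. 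Conceptually this is the same phenomenon as \cref{corA:characterization-operads}: a \properad{} is monic — that is, $t\colon \xN_1\calP \to \xN_0\calP$ is equifibered — precisely when it satisfies the condition $(\star)$ cutting out the image of $\Env$ inside $\infty$-PROPs. I expect this bookkeeping identification of the composite with Lurie's envelope to be the only subtle point; the deep input, namely that $\Csp$ is the terminal \properad{} (\cref{thm:Csp-is-final}), is what underlies \cref{cor:TFAE-monid-properad} and has already been assumed.
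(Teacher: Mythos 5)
Your proposal is correct and follows essentially the same route as the paper: both reduce the statement to the external result of Haugseng--Kock / \cite[Corollary D]{envelopes} identifying $\Op$ with $\SMeq{\Fin}$ via the envelope, combined with the final claim of \cref{cor:TFAE-monid-properad} that $\SMeq{\Fin} \simeq \Prpd^\monic$. The extra remarks you add (the cancellation property of the right class of a factorization system explaining why morphisms over $\Fin$ are automatically equifibered, and the sketch of why $\pi$ is equifibered) are accurate but already packaged in the cited results.
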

\begin{proof}
    It was shown in \cite{HK21} that Lurie's envelope lifts to a fully faithful functor
    \[
        \Env\colon  \Op \hookrightarrow \SMover{\Fin}
    \]
    sending an $\infty$-operad $(p\colon \calO^\otimes \to \Fin_*)$ 
    to the symmetric monoidal functor $\Env(\calO) \to \Env(\Fin_*) \simeq \Fin$.
    Moreover, \cite{HK21} give a characterization of the essential image.
    In \cite{envelopes} it was observed that the essential image consists precisely of equifibered symmetric monoidal functors to $\Fin$.
    Therefore, the theorem follows from the final claim of \cref{cor:TFAE-monic-properad}.
\end{proof}

We also want to give one additional characterization that was already mentioned in the introduction and that resembles the ``hereditary condition'' \cite[\S 3.2]{BKW18}.
\begin{defn}\label{defn:tensor-disjunctive}
    A symmetric monoidal $\infty$-category $\calC$ is called \hldef{$\otimes$-disjunctive}
    if the natural functor
    \[
        \calC_{/x} \times \calC_{/y} \to \calC_{/x \otimes y},  \qquad
        (f\colon a \to x, g\colon b \to y) \mapsto (f \otimes g\colon  a \otimes b \to x \otimes y)
    \]
    is an equivalence for all $x, y \in \calC$.
\end{defn}

\begin{lem}\label{lem:tensor-disjunctive-TFAE}
    For a symmetric monoidal $\infty$-category $\calC$ 
    the following are equivalent:
    \begin{enumerate}
        \item\label{item-tdj1} $\calC$ is $\otimes$-disjunctive.
        \item\label{item-tdj3} The target fibration $t\colon \Ar(\calC) \to \calC$ is equifibered.
        \item\label{item-tdj6} The commutative monoid map $d_0\colon  \xN_1\calC \to \xN_0\calC$ is equifibered.
        \item\label{item-tdj7} For all $0 \le i < n$ the commutative monoid map 
        $d_i\colon  \xN_n\calC \to \xN_{n-1}\calC$ is equifibered.
        \item\label{item-tdj8} The monoidal product $\otimes\colon \calC \times \calC \to \calC$ is a right-fibration.
    \end{enumerate}
\end{lem}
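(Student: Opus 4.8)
The plan is to run the cyclic chain of implications $(1)\Rightarrow(2)\Rightarrow(3)\Rightarrow(4)\Rightarrow(5)\Rightarrow(1)$, moving back and forth between the categorical fibration picture and the levelwise-equifibered picture on the nerve. First, $(1)\Rightarrow(2)$ is immediate from \cref{lem:eqf-cocartesian}: the target fibration $t=\ev_1\colon \Ar(\calC)\to\calC$ is a symmetric monoidal cocartesian fibration whose fiber over $y$ is $\calC_{/y}$, so that lemma identifies ``$t$ equifibered'' with the requirement that $\calC_{/x}\times\calC_{/y}\to\calC_{/x\otimes y}$ be an equivalence for all $x,y$, which is exactly $\otimes$-disjunctivity. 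For $(2)\Rightarrow(3)$ I would observe that $\xN_0(t)\colon \xN_0(\Ar\calC)\to\xN_0\calC$ is precisely the face map $d_0\colon \xN_1\calC\to\xN_0\calC$; since $t$ equifibered forces $\xN_\bullet(t)$ to be levelwise equifibered by \cref{lem:eqf-detected-on-N1}, condition $(3)$ follows.

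The step $(3)\Rightarrow(4)$ is purely simplicial. Using the Segal decomposition $\xN_n\calC\simeq \xN_1\calC\times_{\xN_0\calC}\xN_{n-1}\calC$, the outer face $d_0\colon \xN_n\calC\to\xN_{n-1}\calC$ is the base change of $d_0\colon \xN_1\calC\to\xN_0\calC$, hence equifibered for all $n$ by \cref{lem:eqf-base-change}. For the inner face $d_1\colon \xN_2\calC\to\xN_1\calC$ I would invoke the simplicial identity $d_0d_1=d_0d_0$: the right-hand composite is a composite of equifibered maps, hence equifibered, so the left-hand composite is too; and since equifibered maps are the right class of a factorization system (\cref{lem:formal-ctf-eqf}) they satisfy right cancellation, which yields that $d_1$ is equifibered. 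The remaining inner faces $d_i$ with $0<i<n$ are then equifibered by \cref{cor:tensor-conservative-flat}. Together with the $d_0$'s this gives condition $(4)$, and $(4)\Rightarrow(3)$ is the case $n=1$.

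The crux is $(4)\Rightarrow(5)\Rightarrow(1)$, where the levelwise picture must be translated back into the statement that $\otimes$ is a right fibration. From $(4)$ with $i=0$, each outer face $d_0\colon \xN_n\calC\to\xN_{n-1}\calC$ is equifibered, so the last-vertex map $\{n\}^*\colon \xN_n\calC\to\xN_0\calC$, being the $n$-fold composite of these, is equifibered. Since $\xN_\bullet$ is symmetric monoidal, $\xN_n(\otimes)$ is the addition $+$ on $\xN_n\calC$; transposing the cartesian equifibered square for $\{n\}^*$ (a square is cartesian iff its transpose is) exhibits the last-vertex evaluation square for $\xN_\bullet(\otimes)$ as cartesian for every $n$. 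By the complete-Segal-space characterization of right fibrations, where an $n$-chain in the total category is determined by the base chain together with a lift of its \emph{terminal} vertex, this says exactly that $\otimes$ is a right fibration. Finally $(5)\Rightarrow(1)$ uses the slice characterization of right fibrations \cite[\S 2.1]{HTT}: for a right fibration $p$ and any object $e$ the functor $\calE_{/e}\to\calB_{/p(e)}$ is an equivalence, and applied to $p=\otimes$ and $e=(x,y)$ this reads $\calC_{/x}\times\calC_{/y}\xrightarrow{\ \simeq\ }\calC_{/x\otimes y}$, i.e.\ $\otimes$-disjunctivity.

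The main obstacle I expect is getting the fibration/nerve dictionary in $(4)\Rightarrow(5)$ exactly right: one must match the \emph{outer} face $d_0$ (rather than the active composition face $d_1$) and the \emph{terminal}-vertex evaluation with \emph{right} (rather than left) fibrations, and verify that the transposed equifibered square is precisely the cartesianness condition characterizing right fibrations. Keeping track of which vertex and which face direction corresponds to ``right'' is the delicate point; everything else is a formal consequence of base change, composition, and cancellation for the equifibered factorization system.
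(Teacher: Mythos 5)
Your proof is correct, and the first half --- $(1)\Leftrightarrow(2)$ via \cref{lem:eqf-cocartesian}, $(2)\Rightarrow(3)$ by applying $\xN_0$, and $(3)\Rightarrow(4)$ by base change for the outer faces plus right cancellation against $d_0d_1=d_0d_0$ for the inner ones --- coincides with the paper's argument almost verbatim (the paper runs the cancellation $(d_0)^{n-1}\circ d_i=(d_0)^n$ uniformly for all inner faces rather than detouring through \cref{cor:tensor-conservative-flat}, but that is cosmetic). Where you genuinely diverge is in closing the cycle. The paper returns from $(4)$ to $(2)$ by writing $\xN_1(t)$ as a composite of equifibered maps using the decomposition $\Delta^1\times\Delta^1\simeq\Delta^2\amalg_{\Delta^1}\Delta^2$, and then handles $(5)$ in a single stroke by observing that the cartesian square of \cite[Tag 00TE]{Kerodon} for $\otimes$ \emph{is} the equifibered square for $t\colon\Ar(\calC)\to\calC$. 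You instead go $(4)\Rightarrow(5)\Rightarrow(1)$: the equifibered square for $\{n\}^*=(d_0)^n$ is read as the last-vertex lifting condition for $\xN_\bullet(\otimes)$, and then the slice criterion $\calE_{/e}\xrightarrow{\simeq}\calB_{/p(e)}$ for right fibrations hands you $\otimes$-disjunctivity directly. This buys you a cleaner conceptual passage from the levelwise picture to $(5)$ and avoids the $\Delta^1\times\Delta^1$ trick entirely; the cost is that you invoke two characterizations of right fibrations (the last-vertex criterion for complete Segal spaces, and the slice criterion) that the paper neither states nor cites. Both are standard and both follow from the single Kerodon square the paper does cite --- e.g.\ the slice criterion is obtained by passing to fibers over $e\in\calE$ of the map of cartesian fibrations $\Ar(\calE)\to\Ar(\calB)\times_{\calB}\calE$ --- but in a self-contained write-up you should either supply these derivations or replace $(4)\Rightarrow(5)\Rightarrow(1)$ by the paper's $(3)\Leftrightarrow(5)$ step, which needs only the one citation.
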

\begin{proof}
    (\ref{item-tdj1}) $\Leftrightarrow$ (\ref{item-tdj3}):
    The symmetric monoidal cocartesian fibration $t\colon \Ar(\calC) \to \calC$ classifies the functor $\calC_{/-} \colon \calC \to \Cat$ given on objects by $x \mapsto \calC_{/x}$.
    The cocartesian edges in $\Ar(\calC)$ are natural transformations inducing an equivalence on the source object, \cite[Corollary 2.4.7.12]{HTT}.
    In particular, $\otimes \colon \Ar(\calC) \times \Ar(\calC) \to \Ar(\calC)$ preserves cocartesian edges so by \cref{lem:eqf-cocartesian} $t$ is equifibered if and only if the functor 
    $\calC_{/x} \times \calC_{/y} \to \calC_{/x \otimes y}$,
    obtained by restricting the monoidal product to the fibers,
    is an equivalence. 
    This is exactly saying that $\calC$ is $\otimes$-disjunctive.

    (\ref{item-tdj3}) $\Rightarrow$ (\ref{item-tdj6}):
    By \cref{lem:eqf-detected-on-N1}, if $t$ is equifibered then so is $\xN_0(t) \colon \xN_0(\Ar(\calC)) \simeq \xN_1(\calC) \xrightarrow{d_0} \xN_0(\calC)$.
    
    (\ref{item-tdj6}) $\Rightarrow$ (\ref{item-tdj7}):
    The map $d_0\colon \xN_n\calC \to \xN_{n-1}\calC$ is equifibered because it is equivalent to $(\id, d_0)\colon \xN_{n-1}\calC \times_{\xN_0\calC} \xN_1\calC \to \xN_{n-1}\calC \times_{\xN_0\calC} \xN_0\calC$
    and equifibered maps are closed under limits in the arrow category.
    For $0\le i < n$ the face map $d_i\colon \xN_n\calC \to \xN_{n-1}\calC$ satisfies $(d_0)^{n-1} \circ d_i = (d_0)^{n}$, so it follows by cancellation that $d_i$ is equifibered.
    
    (\ref{item-tdj7}) $\Rightarrow$ (\ref{item-tdj3}):
    In order to show that $t\colon \Ar(\calC) \to \calC$ is equifibered
    is suffices, by \cref{lem:eqf-detected-on-N1}, to show that
    $\xN_1(t)\colon \xN_1\Ar(\calC) \to \xN_1\calC$ is equifibered.
    Indeed, we may write this map as the composite of equifibered maps as follows:
    \[
        \xN_1(t)\colon  \xN_1\Ar(\calC) 
        \simeq \xN_2\calC \times_{\xN_1\calC} \xN_2\calC
        \xrightarrow{(d_1, \id)} \xN_1\calC \times_{\xN_1\calC} \xN_2\calC
        \simeq \xN_2\calC
        \xrightarrow{d_0} \xN_1\calC
    \]
    Here the first equivalence uses that we can write 
    $\Delta^1 \times \Delta^1 \simeq \Delta^2 \amalg_{\Delta^1} \Delta^2$
    where the two $2$-simplices are glued along their long edge.
    
    (\ref{item-tdj6}) $\Leftrightarrow$ (\ref{item-tdj8}):
    A functor $F\colon \calD \to \calE$ is a right fibration if and only if the square
    \[\begin{tikzcd}
	{\Ar(\calD)} & {\calD} \\
	{\Ar(\calE)} & {\calE}
	\arrow["\Ar(F)"', from=1-1, to=2-1]
	\arrow["t", from=1-1, to=1-2]
	\arrow["t", from=2-1, to=2-2]
	\arrow["F", from=1-2, to=2-2]
    \end{tikzcd}\]
    is cartesian \cite[Tag 00TE]{Kerodon}. 
    In the case of $\otimes\colon \calC \times \calC \to \calC$ this square precisely says that $t\colon \Ar(\calC) \to \calC$ is equifibered.
\end{proof}

\begin{cor}\label{lem:monic-properad-tensor-disjunctive}
    A symmetric monoidal $\infty$-category $\calP$ is a monic $\infty$-properad if and only if $\calP^\simeq$ is free and $\calP$ is $\otimes$-disjunctive.
\end{cor}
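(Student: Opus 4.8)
The plan is to recognise this corollary as a direct repackaging of the two immediately preceding results, \cref{cor:TFAE-monid-properad} and \cref{lem:tensor-disjunctive-TFAE}, once the two appearances of the ``target'' map are identified.

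First I would note that $\calP^\simeq = \xN_0\calP$, so the hypothesis that $\calP^\simeq$ is free is literally the freeness of $\xN_0\calP$. Next I would invoke the equivalence of conditions (\ref{item-tdj1}) and (\ref{item-tdj6}) in \cref{lem:tensor-disjunctive-TFAE} to rewrite the condition that $\calP$ is $\otimes$-disjunctive as the statement that $d_0 \colon \xN_1\calP \to \xN_0\calP$ is equifibered. The key bookkeeping point is that this face map is exactly the target map $t$ of \cref{cor:TFAE-monid-properad}: indeed $d_0 = \xN_0(t)$ for the target fibration $t \colon \Ar(\calP) \to \calP$, which sends $(f\colon x \to y) \mapsto y$.

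With these two identifications in hand, the conjunction ``$\calP^\simeq$ is free and $\calP$ is $\otimes$-disjunctive'' becomes precisely condition (2) of \cref{cor:TFAE-monid-properad}, namely that $\xN_0\calP$ is free and $t \colon \xN_1\calP \to \xN_0\calP$ is equifibered. By that corollary this is equivalent to condition (1), that $\calP$ is a monic \properad{}, which is exactly what we want; reading the chain of equivalences in either direction yields the two implications of the stated ``if and only if''.

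There is no substantial obstacle here: all the real content --- that the simpler conditions on $\xN_0$ and $t$ already force $\xN_1\calP$ to be free and the composition $d_1$ to be equifibered --- is carried by \cref{cor:TFAE-monid-properad}, while the translation between $\otimes$-disjunctiveness and equifiberedness of the target map is carried by \cref{lem:tensor-disjunctive-TFAE}. The only point requiring care is verifying that the target face map named in the two cited statements is the same morphism, so that the two characterizations genuinely line up.
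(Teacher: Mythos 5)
Your proposal is correct and matches the paper's (implicit) argument: the corollary is stated without proof precisely because it is the conjunction of \cref{cor:TFAE-monid-properad}(2) with the equivalence (\ref{item-tdj1})$\Leftrightarrow$(\ref{item-tdj6}) of \cref{lem:tensor-disjunctive-TFAE}, under the identification of $d_0\colon\xN_1\calP\to\xN_0\calP$ with the target map $t$. Your bookkeeping of that identification, and your observation that the real content (freeness of $\xN_1\calP$ and equifiberedness of $d_1$ following from the weaker conditions) is carried by \cref{cor:TFAE-monid-properad}, are both accurate.
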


\begin{example}
    Let $\mrm{Mfd}_n^{\rm or}$ denote the $\infty$-category obtained from the topologically enriched category
    where objects are compact unoriented $n$-dimensional manifolds with boundary
    and the morphisms spaces are the space of embeddings, equipped with the Whitney $C^\infty$-topology.
    This is a symmetric monoidal $\infty$-category with respect to disjoint union.

    Consider the symmetric monoidal functor $\pi_0\colon \mrm{Mfd}_n^{\rm or} \to \Fin$ that sends a manifold to its set of connected components.
    The square
    \[
        \begin{tikzcd}
            \mrm{Mfd}_n^{\rm or} \times \mrm{Mfd}_n^{\rm or} \ar[r, "\amalg"] \ar[d, "\pi_0 \times \pi_0"'] & 
            \mrm{Mfd}_n^{\rm or} \ar[d, "\pi_0"] \\
            \Fin \times \Fin \ar[r, "\amalg"] &
            \Fin
        \end{tikzcd}
    \]
    is cartesian since giving a disjoint decomposition $M = M_0 \amalg M_1$ of a manifold $M$ is equivalent to giving a disjoint decomposition $\pi_0(M) = A \amalg B$ of its set of path components.
    Therefore, $\mrm{Mfd}_n^{\rm or}$ admits an equifibered symmetric monoidal functor to $\Fin$ and is hence a monic $\infty$-properad by \cref{cor:TFAE-monic-properad}.
    
    We can also further restrict to the sub-properad $\mrm{Disk}_n \subset \mrm{Mfd}_n^{\rm or}$ where the manifolds are required to be disjoint unions of disks.
    This also is a monic $\infty$-properad and under the equivalence of \cref{thm:monic-prpd=operad} it corresponds to the ``framed'' little $n$-disks operad.
    We can also obtain the $E_n$-operad this way, if we restrict our attention to standard disks and require all inclusions to be component-wise rectilinear.
\end{example}

\begin{obs}\label{obs:monic-properad-adjoint}
    The full inclusion $\Prpd^\monic \subset \Prpd$ has a right adjoint 
    \[\Fin \times_{\Csp} (-)\colon  \SMeq{\Csp} \too \SMeq{\Fin}\]
    that discards all operations of arity $(n,m)$ with $m \neq 1$. 
    This works because both pullback along and composition with the inclusion $\Fin \to \Csp$ preserve equifibered maps.
\end{obs}

\subsubsection{Free \texorpdfstring{$\infty$-properads}{infinity-properads} and corollas}

We now construct the free $\infty$-properad on an operation of arity $(k,l)$.
This will be extremely useful later on as we can use it to compute the spaces of operations an $\infty$-properad by mapping into it.

\begin{defn}\label{def:Nel}
    We define a functor $\xN_1^\el\colon \Prpd \to \calS_{/\xN_1^\el\Csp}$ as the composite
    \[
        \xN_1^\el\colon  
        \Prpd \simeq \SMeq{\Csp} \xtoo{\xN_1} 
        \CMon^\eqf_{/\xN_1\Csp} \simeq \calS_{/\xF(*) \times \xF(*)}.
    \]
    Here the last equivalence is given by forgetting the commutative monoid structure and pulling back along the inclusion
    $\xF(*) \times \xF(*) \simeq \xN_1^\el \Csp \hookrightarrow \xN_1 \Csp$
    as in \cref{cor: equifibered-over-free-equals-spaces}.
\end{defn}

\begin{obs}\label{obs:N1el-conservative}
    As in \cref{obs:N1-conservative}, the functor $\xN_1^\el$ is conservative.
    Note, however, that unlike in \cref{obs:N1-conservative} the functor $\xN_1^\el$ is not co-represented by a single $\infty$-properad. 
    (In particular, $\Map_{\Prpd}(\xF([1]), -)$ is \emph{not} $\xN_1^\el$.)
    We therefore cannot conclude that $\Prpd$ is generated by a single compact object, but we will soon describe a countable set of compact generators given by the ``free corollas''.
\end{obs}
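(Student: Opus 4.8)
The plan is to prove conservativity by reducing, through the chain of functors that defines $\xN_1^\el$, to the conservativity of $\xN_1\colon\Cat\to\calS$ already recorded in \cref{obs:N1-conservative}. Recall from \cref{def:Nel} that $\xN_1^\el$ is the composite
\[
    \Prpd \xrightarrow{\ \simeq\ } \SMeq{\Csp} \xrightarrow{\ \xN_1\ } \CMon^\eqf_{/\xN_1\Csp} \xrightarrow{\ \simeq\ } \calS_{/\xN_1^\el\Csp},
\]
in which the first arrow is the equivalence of \cref{thm:Csp-eqf=prpd} and the last is the equivalence of \cref{cor: equifibered-over-free-equals-spaces}. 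Since equivalences are conservative, it suffices to show that the middle functor $\xN_1\colon\SMeq{\Csp}\to\CMon^\eqf_{/\xN_1\Csp}$ is conservative.

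First I would strip away the slices. A morphism of $\SMeq{\Csp}$ is an equivalence if and only if its underlying symmetric monoidal functor is, since the inclusion of the full subcategory $\SMeq{\Csp}\subset\SMover{\Csp}$ and the forgetful functor $\SMover{\Csp}\to\SM$ are both conservative; the same reasoning identifies equivalences in $\CMon^\eqf_{/\xN_1\Csp}$ with equivalences of the underlying maps in $\CMon$. Hence the middle functor is conservative as soon as the functor $\xN_1\colon\SM\to\CMon$ underlying it is conservative.

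It then remains to prove that $\xN_1\colon\SM\to\CMon$ is conservative, which I would deduce from \cref{obs:N1-conservative}. The square
\[
\begin{tikzcd}
    \SM \ar[r,"\xN_1"] \ar[d,"U"'] & \CMon \ar[d,"U"] \\
    \Cat \ar[r,"\xN_1"] & \calS
\end{tikzcd}
\]
commutes, because for $\calC\in\SM$ the underlying space of the commutative monoid $\xN_1\calC=\Ar(\calC)^\simeq$ is $\Fun([1],\calC)^\simeq$, i.e.\ $\xN_1$ of the underlying category of $\calC$. The forgetful functor $U\colon\SM\to\Cat$ is conservative by the Segal condition: a symmetric monoidal functor inducing an equivalence on underlying categories induces an equivalence on each $\calC(A_+)\simeq\calC(*_+)^{|A|}$, hence an equivalence in $\Fun(\Fin_*,\Cat)$. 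Since moreover $\xN_1\colon\Cat\to\calS$ is conservative by \cref{obs:N1-conservative}, the composite $\xN_1\circ U$ is conservative; by commutativity this equals $U\circ\xN_1$, and a functor whose postcomposition with $U\colon\CMon\to\calS$ is conservative is itself conservative, so $\xN_1\colon\SM\to\CMon$ is conservative, as required.

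Because all of the required inputs are already in place, I do not expect a genuine obstacle here. The only point demanding care is the bookkeeping with slices and full subcategories, making sure that at each stage ``conservative'' is interpreted relative to the correct ambient category; this is precisely why I reduce everything to the single statement that $\xN_1\colon\Cat\to\calS$ reflects equivalences. Note in particular that this argument, unlike \cref{obs:N1-conservative}, does \emph{not} exhibit $\xN_1^\el$ as corepresented by a single object, consistent with the remark that the generators of $\Prpd$ are instead the countable family of free corollas.
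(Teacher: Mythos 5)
Your argument is correct and is exactly the filling-in of what the paper leaves implicit: the paper's ``as in \cref{obs:N1-conservative}'' means precisely that, after unwinding \cref{def:Nel} into a composite of two equivalences with the functor $\xN_1$ in the middle, conservativity reduces (via the conservative forgetful functors out of the slice/subcategories and out of $\CMon$ and $\SM$) to the conservativity of $\xN_1\colon\Cat\to\calS$. Your bookkeeping with the slices and the commuting squares is sound, so there is nothing to add.
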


\begin{rem}
    The $\infty$-category $\calS_{/\xF(*) \times \xF(*)} \simeq \Fun(\Fin^\simeq \times \Fin^\simeq, \calS)$ 
    is the $\infty$-category of one-coloured bisymmetric sequences.
    Valette \cite{Val07} originally defined $1$-properads as algebras for a certain ``connected composition product'' on the category of bisymmetric sequences in chain complexes.
    We hope to show in future work that $\Prpd$ may be expressed as algebras in an $\infty$-category of \emph{coloured} bisymmetric sequences.
    In the present situation, the functor $\xN_1^\el$ groups together operations of all colours, we therefore expect the adjunction below to \textit{not} be monadic.
    (Though it should be monadic if one restricts to the $\infty$-category of one-coloured $\infty$-properads.)
\end{rem}

\begin{prop}\label{prop:free-properad}
    The functor $\xN_1^\el\colon  \Prpd \too \calS_{/\xF(*)\times \xF(*)}$
    commutes with filtered colimits and is right adjoint to the \hldef{free properad} functor
    \[
        \FPrpd\colon \calS_{/\xF(*) \times \xF(*)} \adj \Prpd :\! \xN_1^\el .
    \]
    Moreover, the free $\infty$-properad on $X \to \xF(*) \times \xF(*)$ is obtained as a contrafibered-equifibered factorization:
    \[
        \xF(X \times [1]) \xrightarrow{\dis}
        \FPrpd(X) \xrightarrow{\eqf}
        \Csp
    \]
\end{prop}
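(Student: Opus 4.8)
The plan is to exhibit $\xN_1^\el$ as a composite of right adjoints between slice \categories{} over $\Csp$ and to read off the left adjoint. The starting point is that $\Csp$ is the terminal \properad{} (\cref{thm:Csp-is-final}), so $\Prpd \simeq \SMeq{\Csp}$ by \cref{thm:Csp-eqf=prpd} and, by \cref{def:Nel}, the functor $\xN_1^\el$ is \emph{literally} the composite $\SMeq{\Csp}\xrightarrow{\xN_1}\calS_{/\xN_1\Csp}\xrightarrow{\iota^*}\calS_{/\xN_1^\el\Csp}$, where $\iota\colon\xN_1^\el\Csp\hookrightarrow\xN_1\Csp$, the first functor forgets the monoid structure, and $\calS_{/\xN_1^\el\Csp}=\calS_{/\xF(*)\times\xF(*)}$. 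The strategy is to produce a left adjoint to each factor and concatenate.

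First I would note that for a space $X$ and a category $\calC$ one has $\Map_\Cat(X\times[1],\calC)\simeq\Map_\calS(X,\xN_1\calC)$, since a functor out of a groupoid lands in the core; thus $(-\times[1])\colon\calS\to\Cat$ is left adjoint to $\xN_1\colon\Cat\to\calS$. Composing with the free--forgetful adjunction $\xF\colon\Cat\adj\SM$ of \cref{cor:free-functor-on-Cat} gives an adjunction $\xF(-\times[1])\colon\calS\adj\SM :\! \xN_1$. Slicing this adjunction at the object $\Csp\in\SM$ (a purely formal operation) yields
\[ L'\colon \calS_{/\xN_1\Csp} \adj \SMover{\Csp} :\! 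R', \]
where $R'$ sends $(\calP\to\Csp)$ to $(\xN_1\calP\to\xN_1\Csp)$ and $L'$ sends $(g\colon X\to\xN_1\Csp)$ to the symmetric monoidal functor $\xF(X\times[1])\to\Csp$ adjoint to $g$. Concatenating with the pushforward--pullback adjunction $\iota_!\dashv\iota^*$ in $\calS$ and with the factorization adjunction $\bbL^\eqf\colon\SMover{\Csp}\adj\SMeq{\Csp} :\! \inc$ of \cref{cor:eqf-slice-is-accessible} produces a composite adjunction $\bbL^\eqf L'\iota_! \dashv \iota^* R'\inc$.

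The key point is then to identify the right adjoint: $\iota^* R'\inc(\calP)=\iota^*(\xN_1\calP\to\xN_1\Csp)=\xN_1^\el\calP$, which is exactly \cref{def:Nel} of $\xN_1^\el$ (here $\xN_1\calP\to\xN_1\Csp$ is equifibered by \cref{lem:eqf-detected-on-N1}, so this pullback is the one described there). Hence $\FPrpd:=\bbL^\eqf L'\iota_!$ is left adjoint to $\xN_1^\el$. Unwinding, $L'\iota_!$ sends $X\to\xN_1^\el\Csp$ to the object $(\xF(X\times[1])\to\Csp)$ whose structure map is the ``corolla'' map classified by $X\xrightarrow{(s,t)}\xN_1^\el\Csp\hookrightarrow\xN_1\Csp$, and $\bbL^\eqf$ is by definition the functor extracting the equifibered factor of a morphism to $\Csp$; this gives precisely the asserted factorization $\xF(X\times[1])\xrightarrow{\ctf}\FPrpd(X)\xrightarrow{\eqf}\Csp$.

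For the filtered-colimit claim I would argue factor by factor that $\xN_1^\el=\iota^* R'\inc$ preserves them: $\inc\colon\SMeq{\Csp}\hookrightarrow\SMover{\Csp}$ preserves sifted colimits by \cref{lem:colim-of-eqf-slice-categories}(1) (as $\Csp$ satisfies \cref{cor:tensor-conservative-flat}); $R'$ preserves filtered colimits because the projection $\SMover{\Csp}\to\SM$ does and $\xN_1=\Map_\Cat([1],-)$ does, since $[1]$ is a compact object of $\Cat$; and $\iota^*$, being simultaneously a left and a right adjoint in $\calS$, preserves all colimits. The main obstacle, and the only genuinely delicate bookkeeping, is to confirm that the slicing machinery reproduces exactly the corolla map of the statement---that is, that the basepoint selected in $\SMover{\Csp}$ agrees with $X\to\xN_1^\el\Csp\hookrightarrow\xN_1\Csp$---so that $\iota^* R'\inc$ matches \cref{def:Nel} on the nose; once this is pinned down, the rest is formal, and the sanity check $\FPrpd(*_{(A,B)})\simeq\mfr{c}_{A,B}$ follows since transfers are contrafibered (\cref{ex:transfer}) and contrafibered maps are stable under cobase change.
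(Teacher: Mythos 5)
Your proposal is correct and follows essentially the same route as the paper: both factor $\xN_1^\el$ as $\SMeq{\Csp}\hookrightarrow\SMover{\Csp}\xrightarrow{\xN_1}\calS_{/\xN_1\Csp}\xrightarrow{\iota^*}\calS_{/\xF(*)\times\xF(*)}$, obtain the left adjoint by concatenating $\iota_!$, the sliced adjunction $\xF(-\times[1])\dashv\xN_1$ over $\Csp$, and the equifibered localization $\bbL^\eqf$ of \cref{cor:eqf-slice-is-accessible}, and check filtered-colimit preservation factor by factor. The identification of $\FPrpd$ with the contrafibered-equifibered factorization of $\xF(X\times[1])\to\Csp$ falls out of $\bbL^\eqf$ in both arguments.
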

\begin{proof}
    The functor $\xN_1^\el$ can be factored as
    \[
        \xN_1^\el\colon  \Prpd \simeq \SMeq{\Csp} \hookrightarrow
        \SMover{\Csp} \xtoo{\text{forget}}
        \Catover{\Csp} \xtoo{\xN_1}
        \calS_{/\xN_1\Csp} \xtoo{\iota^*}
        \calS_{/\xF(*) \times \xF(*)} 
    \]
    where the last functor is pullback along the inclusion
    $\iota\colon \xN_1^\el \Csp \hookrightarrow \xN_1 \Csp$.
    Since each of the functors involved commute with filtered colimits, so does $\xN_1^\el$.
    Passing to left adjoints gives the following factorization of $\FPrpd$
    \[
        \FPrpd\colon 
        \calS_{/\xF(*) \times \xF(*)} \xtoo{\iota_!}
        \calS_{/\xN_1\Csp} \xtoo{(- \times [1])}
        \Catover{\Csp} \xtoo{\xF}
        \SMover{\Csp} \xtoo{\bbL^\eqf}
        \SMeq{\Csp} \simeq \Prpd.
    \]
    The middle two left adjoints are obtained by slicing the adjunction
    \[
        \xF( - \times [1])\colon \calS \adj \Cat \adj \SM :\! \xN_1 \circ \text{forget}
    \]
    over $\Csp \in \SM$.
    (See \cite[Lemma 5.2.5.2]{HTT} in the case $K=\Delta^0$, $p_0 = \xN_1\Csp$, $p_1 = (\xN_1 \Csp) \times [1]$, and $h$ is the identity.)
    The last left adjoint in the factorization 
    $\SMover{\Csp} \to \SMeq{\Csp}$
    is given by sending $\calC \to \Csp$ to the equifibered part of the contrafibered-equifibered factorization $\calC \to \bbL^\eqf (\calC) \to \Csp$, 
    see \cref{cor:eqf-slice-is-accessible}.
\end{proof}

\begin{defn}\label{defn:free-corolla}
    For finite sets $A, B \in \Fin$ we define the 
    \hldef{free $(A,B)$-corolla $\mfr{c}_{A,B}$} 
    as the free $\infty$-properad on the object
    \[
        (* \xrightarrow{(A,B)} \Fin^\simeq \times \Fin^\simeq \simeq \xF(*) \times \xF(*))
        \in \calS_{/\xF(*) \times \xF(*)}.
    \]
    We also sometimes denote this by $\mfr{c}_{k,l}$ where $k$ and $l$ are the cardinalities of $A$ and $B$. 
\end{defn}

\begin{example}
    The free $(1,1)$-corolla is $\mfr{c}_{1,1} = \xF([1])$.
    Indeed, the functor $\xF([1]) \to \Csp$ that picks the cospan $(* \to * \leftarrow *)$ factors as $\xF([1]) \to \xF([0]) \to \Csp$ and is thus equifibered.
    So we do not need to perform the contrafibered-equifibered factorization in \cref{prop:free-properad}.
\end{example}

\begin{obs}\label{obs:operations-via-corollas}
    For any $\infty$-properad $\calP$ the space of morphisms $\Map_{\Prpd}(\mfr{c}_{A,B}, \calP)$ 
    is the fiber of $\xN_1^\el \calP \to \xF(*) \times \xF(*)$ at $(A,B) \in \xF(*) \times \xF(*)$.
    This can be thought of as the space of operations with 
    set of inputs $A$ and set of outputs $B$.
    We can recover the entire space of operations $\xN_1^\el \calP$ by taking 
    the colimit over $A$ and $B$.
    \begin{align*}
        \xN_1^\el \calP & \simeq \xF(*)^2 \times_{\xF(*)^2} \xN_1^\el \calP 
        \simeq \colim_{A,B \in \Fin^\simeq} \{(A,B)\} \times_{\xF(*)^2} \xN_1^\el \calP 
        \simeq \colim_{A,B \in \Fin^\simeq} \Map_{\Prpd}(\mfr{c}_{A,B}, \calP)
    \end{align*}
\end{obs}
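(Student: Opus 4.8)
The plan is to deduce both assertions from the adjunction $\FPrpd \dashv \xN_1^\el$ established in \cref{prop:free-properad}, combined with the universality of colimits in $\calS$. First I would identify the fiber. By \cref{defn:free-corolla} we have $\mfr{c}_{A,B} = \FPrpd\big(* \xrightarrow{(A,B)} \xF(*)^2\big)$, so the adjunction of \cref{prop:free-properad} yields
\[
    \Map_{\Prpd}(\mfr{c}_{A,B}, \calP) \simeq \Map_{\calS_{/\xF(*)^2}}\big(* \xrightarrow{(A,B)} \xF(*)^2,\ \xN_1^\el\calP\big).
\]
Next I would invoke the standard description of mapping spaces in an overcategory: for any space $Z$ and object $(Y \to Z) \in \calS_{/Z}$, the mapping space out of a point $* \xrightarrow{z} Z$ computes the fiber, since $\Map_{\calS_{/Z}}(z, Y) \simeq \{z\} \times_Z Y$ (using $\Map_\calS(*, Y) \simeq Y$ and $\Map_\calS(*, Z) \simeq Z$). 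Applying this with $Z = \xF(*)^2$ and $z = (A,B)$ exhibits $\Map_{\Prpd}(\mfr{c}_{A,B}, \calP)$ as $\{(A,B)\} \times_{\xF(*)^2} \xN_1^\el\calP$, the fiber of $\xN_1^\el\calP \to \xF(*)^2$ at $(A,B)$. This proves the first claim and simultaneously supplies the third equivalence in the displayed chain.

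For the colimit formula, the first equivalence $\xN_1^\el\calP \simeq \xF(*)^2 \times_{\xF(*)^2} \xN_1^\el\calP$ is the tautological pullback along the identity. For the second, I would use that $\xF(*)^2 \simeq \Fin^\simeq \times \Fin^\simeq$ is an $\infty$-groupoid, hence is canonically the colimit of its own points, $\xF(*)^2 \simeq \colim_{(A,B) \in \Fin^\simeq \times \Fin^\simeq} \{(A,B)\}$. Since colimits in $\calS$ are universal, base change along $\xN_1^\el\calP \to \xF(*)^2$ commutes with this colimit, giving
\[
    \xF(*)^2 \times_{\xF(*)^2} \xN_1^\el\calP \simeq \colim_{(A,B)} \big(\{(A,B)\} \times_{\xF(*)^2} \xN_1^\el\calP\big).
\]
Combining this with the fiber identification from the first step then rewrites each term as $\Map_{\Prpd}(\mfr{c}_{A,B}, \calP)$, completing the chain of equivalences.

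I do not expect a serious obstacle here, as the content is the assembly of standard facts rather than any new input. The two points demanding the most care are: first, unwinding the definition of mapping spaces in an overcategory to confirm that the slice mapping space out of a point is genuinely the fiber; and second, ensuring the colimit is indexed correctly by the $\infty$-groupoid $\Fin^\simeq \times \Fin^\simeq$ (so that the tautological ``colimit of points'' presentation and the universality of colimits in $\calS$ apply verbatim), rather than by some $1$-categorical approximation. Both are routine once the adjunction of \cref{prop:free-properad} is in hand.
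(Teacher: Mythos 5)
Your proposal is correct and follows exactly the argument the paper intends: the observation's displayed chain of equivalences is precisely the combination of the adjunction $\FPrpd \dashv \xN_1^\el$ from \cref{prop:free-properad} (identifying $\Map_{\Prpd}(\mfr{c}_{A,B},\calP)$ with the fiber via the slice mapping space out of a point) with the presentation of the $\infty$-groupoid $\xF(*)^2 \simeq \Fin^\simeq \times \Fin^\simeq$ as the colimit of its points and the universality of colimits in $\calS$. No discrepancy with the paper's reasoning.
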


The existence of free corollas has the following formal consequence:

\begin{cor}\label{cor:Prpd-compact-generation}
    The $\infty$-category $\Prpd$ is compactly generated by the corollas $\{\mfr{c}_{k,l}\}_{k,l \ge 0}$.
\end{cor}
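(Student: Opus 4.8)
The statement is a formal consequence of the properties of $\xN_1^\el$ collected in \cref{prop:free-properad} and \cref{obs:operations-via-corollas}, so the plan is to reduce it to the standard recognition criterion: since $\Prpd$ is presentable (\cref{rem:Prpd-presentable}), it is compactly generated by a set $S$ of objects as soon as (a) every object of $S$ is compact and (b) the corepresented functors $\{\Map_{\Prpd}(s,-)\}_{s\in S}$ are jointly conservative. I would take $S = \{\mfr{c}_{k,l}\}_{k,l\ge 0}$ and verify (a) and (b) in turn.

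For (a) I would argue that each corolla is compact. By \cref{obs:operations-via-corollas} the functor $\Map_{\Prpd}(\mfr{c}_{A,B},-)$ is naturally the fibre of the transformation $\xN_1^\el(-) \too \xF(*)\times\xF(*)$ over $(A,B)$. Now $\xN_1^\el$ preserves filtered colimits by \cref{prop:free-properad}, and taking the fibre over a fixed point is a pullback in $\calS$, which commutes with filtered colimits; hence $\Map_{\Prpd}(\mfr{c}_{A,B},-)$ preserves filtered colimits and $\mfr{c}_{A,B}$ is compact. In particular this applies to each $\mfr{c}_{k,l}$.

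For (b) I would start from a map $f\colon \calP\to\calQ$ with $\Map_{\Prpd}(\mfr{c}_{k,l},f)$ an equivalence for all $k,l\ge 0$. Because $\mfr{c}_{A,B}$ depends only on the cardinalities of $A$ and $B$, the same holds for every $A,B\in\Fin^\simeq$. Feeding this into the natural colimit formula
\[
    \xN_1^\el \calP \;\simeq\; \colim_{A,B\in\Fin^\simeq}\Map_{\Prpd}(\mfr{c}_{A,B},\calP)
\]
of \cref{obs:operations-via-corollas} identifies $\xN_1^\el(f)$ with a colimit of equivalences over $\Fin^\simeq\times\Fin^\simeq$, hence an equivalence; since $\xN_1^\el$ is conservative (\cref{obs:N1el-conservative}), $f$ is an equivalence.

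The step I expect to be most delicate is the last inference in (b). The indexing \category{} $\Fin^\simeq\times\Fin^\simeq$ is a groupoid with nontrivial automorphisms, so the colimit above is really a coproduct of homotopy orbits $\bigl(\Map_{\Prpd}(\mfr{c}_{k,l},\calP)\bigr)_{h(\Sigma_k\times\Sigma_l)}$; I would need to observe that $\Map_{\Prpd}(\mfr{c}_{k,l},f)$ is automatically $(\Sigma_k\times\Sigma_l)$-equivariant, being the evaluation of a natural transformation, so that an underlying equivalence does induce an equivalence on homotopy orbits. Granting this, properties (a) and (b) both hold and the recognition criterion for compactly generated \categories{} \cite[\S 5.5]{HTT} completes the argument.
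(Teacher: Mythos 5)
Your proposal is correct and follows essentially the same route as the paper: compactness of $\mfr{c}_{A,B}$ is extracted from the adjunction $\FPrpd \dashv \xN_1^\el$ together with the fact that $\xN_1^\el$ preserves filtered colimits (the paper phrases this as $\FPrpd$ preserving compact objects via \cite[Proposition 5.5.7.2]{HTT} applied to the compact object $e_{A,B}$, which unwinds to exactly your fibre computation), and generation is deduced from presentability plus joint conservativity of the corepresented functors via \cref{obs:operations-via-corollas} and \cref{obs:N1el-conservative}. Your extra care about the groupoid colimit is resolved correctly — a pointwise equivalence of functors out of $\Fin^\simeq\times\Fin^\simeq$ is an equivalence in the functor category and so passes to colimits — and the recognition criterion you invoke is the one the paper cites from \cite[Corollary 2.5]{Yan21}.
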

\begin{proof}
    Given a pair of finite sets $(A,B)$ we write
    $e_{A,B} := ((A,B)\colon  * \too \xF(*) \times \xF(*)) \in \calS_{/\xF(*) \times \xF(*)}$.
    We first show that the corolla 
    $\mfr{c}_{A,B} = \FPrpd(e_{A,B}) \in \Prpd$ is compact.
    By \cref{prop:free-properad}, $\xN_1^\el$ preserves filtered colimits, hence its left adjoint $\FPrpd$ preserves compact objects \cite[Proposition 5.5.7.2]{HTT} so it suffices to show that
    $e_{A,B} \in \calS_{/\xF(*) \times \xF(*)}$ 
    is compact.
    To see this, observe that its co-representing functor may be written as 
    \[
        \Map_{\calS_{/\xF(\ast) \times \xF(\ast)}}(e_{A,B},-) \colon \calS_{/\xF(*) \times \xF(*)} \simeq \Fun(\Fin^\simeq \times \Fin^\simeq, \calS) 
        \xrightarrow{\ev_{A,B}} \calS,
    \]
    which manifestly commutes with all colimits.
    
    Since $\Prpd$ is presentable (\cref{rem:Prpd-presentable}) it remains to prove that the corollas generate $\Prpd$ under colimits. 
    By \cite[Corollary 2.5]{Yan21} it suffices to show that
    the functors 
    \[\Map_{\Prpd}(\mfr{c}_{\{1,\dots,k\},\{1,\dots,l\}}, -)\colon  \Prpd \too \calS\] 
    are jointly conservative which follows from \cref{obs:operations-via-corollas}, where we write $\xN^\el_1\calP$ as a colimit of mapping spaces out of free corollas,
    and \cref{obs:N1el-conservative}, where we note that
    $\xN_1^\el\colon \Prpd \to \calS$
    is conservative.
\end{proof}

We now give a description of the free corolla $\mfr{c}_{A,B}$ as a symmetric monoidal category.
This will be useful in \cref{defn:underlying-properad} where we study the right adjoint
to the forgetful functor $\Prpd \to \SM$.
\begin{lem}\label{lem:free-corolla-pushout}
    The free $(A,B)$-corolla fits into a pushout square of symmetric monoidal categories:
    \[
        \begin{tikzcd}
            {\xF(* \amalg *)} \ar[d] \ar[r, "\Delta_A \oplus \Delta_B"] \ar[dr, very near end, phantom, "\ulcorner"]&
            {\xF(A \amalg B)} \ar[d] \\
            {\xF([1])} \ar[r] &
            \mfr{c}_{A,B}
        \end{tikzcd}
    \]
    Moreover, this is a level-wise pushout square in the sense of \cref{obs:level-wise-colimits-in-SM}.
\end{lem}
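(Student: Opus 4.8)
The plan is to verify the square by passing to nerves and computing the pushout level-wise, then invoking \cref{obs:level-wise-colimits-in-SM}. First I would set up the four maps. The bottom map $\xF([1]) \to \mfr{c}_{A,B}$ is the contrafibered map in the factorization $\xF([1]) \xrightarrow{\ctf} \mfr{c}_{A,B} \xrightarrow{\eqf} \Csp$ of \cref{prop:free-properad} (note $* \times [1] = [1]$); the right map is the inclusion of colours; and the left map is $\xF$ of the functor $* \amalg * \to [1]$ sending the two objects to $0$ and $1$. Over $\Csp$ the two composites out of $\xF(* \amalg *)$ agree, since both send the two objects to the finite sets $A$ and $B$ (along the first, via $\trf$, $s \mapsto \bigotimes_A a \mapsto \bigotimes_A * = A$ and $t \mapsto B$; along the second, via $p$, $s \mapsto 0 \mapsto A$ and $t \mapsto 1 \mapsto B$, where $p\colon \xF([1]) \to \Csp$ classifies the corolla cospan $\gamma = (A \to * \leftarrow B)$). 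This compatibility is exactly what forces $\trf$ to be the top map.

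By \cref{prop:free-nerve-commute} we have $\xN_\bullet \xF(\calC) \simeq \xF(\xN_\bullet \calC)$, so the nerve of the square is the $\xF$-image of the square of simplicial spaces $\xN_\bullet [1]$, $\xN_\bullet(* \amalg *)$, $\xN_\bullet(A \amalg B)$, the latter two being constant. Writing $(\Delta^1)_n$ for the set of monotone maps $[n] \to [1]$, the two constant maps exhibit $\{s,t\}$ as a summand, with complement $C_n$ the $n$ non-constant maps. Since $\xF \colon \calS \to \CMon$ preserves coproducts and $\CMon$ is semi-additive (\cref{lem:basic-properties-for-xF}), the level-$n$ leg $\xF(* \amalg *) \to \xF((\Delta^1)_n)$ is the inclusion of a direct summand, so using $(X \amalg Y) \amalg_X Z \simeq Y \amalg Z$ the level-$n$ pushout in $\CMon$ is free:
\[
    Q_n := \xF((\Delta^1)_n) \amalg_{\xF(* \amalg *)} \xF(A \amalg B) \simeq \xF(C_n) \oplus \xF(A \amalg B) \simeq \xF(C_n \amalg A \amalg B).
\]

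It remains to identify $Q_\bullet$ with $\xN_\bullet \mfr{c}_{A,B}$. I would show that the maps $\xF(\xN_n[1]) \to Q_n \to \xN_n \Csp$ (induced by the corners, with composite $\xN_n(p)$) form the contrafibered--equifibered factorization of $\xN_n(p)$. Under the splitting the first map is $\trf \oplus \id_{\xF(C_n)}$, hence contrafibered since transfers are (\cref{ex:transfer}) and contrafibered maps are closed under $\oplus$. The second map carries the generating set $C_n \amalg A \amalg B$ into the indecomposables of $\xN_n \Csp$ — colours go to the degenerate identity chains on the point, and each $c \in C_n$ to the chain with a single $\gamma$ (whose apex $*$ is connected) and identities elsewhere — so it is free, hence equifibered (\cref{ex:free-is-eqf}). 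By uniqueness of the factorization, together with \cref{lem:SM-ctf-eqf} (which exhibits $\xN_\bullet \mfr{c}_{A,B}$ as the level-wise factorization of the same map $\xN_\bullet(p)$), this yields a natural equivalence $Q_\bullet \simeq \xN_\bullet \mfr{c}_{A,B}$. As the latter is a complete Segal space, \cref{obs:level-wise-colimits-in-SM} shows the square has a level-wise colimit equal to $\mfr{c}_{A,B}$, proving both claims. The main obstacle is the bookkeeping in this last step: checking that the generators really land in indecomposable cospan-chains of $\xN_n \Csp$, and that the level-wise factorizations are matched naturally over $\simp^\op$, which is where the combinatorics of $\Csp$ and the uniqueness of the contrafibered--equifibered factorization carry the argument.
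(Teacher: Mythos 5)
Your setup and the level-wise computation are correct and essentially follow the paper's route: both proofs pass to nerves, form the pushout $Q_\bullet$ level-wise in $\CMon$, identify $Q_n \simeq \xF(C_n \amalg A \amalg B)$ (the paper writes this as $\xF(A) \times \xF(*)^{n-1}\times \xF(B)$, decomposing the map to $\xN_n\Csp$ into coproduct summands, whereas you check directly that the generators land in the elementary cospan-chains -- both verifications are fine), and then aim to recognize $\xF(\xN_\bullet[1]) \to Q_\bullet \to \xN_\bullet\Csp$ as a contrafibered--equifibered factorization.

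The final identification, however, is argued in the wrong direction and this is a genuine gap. You deduce $Q_\bullet \simeq \xN_\bullet\mfr{c}_{A,B}$ from the claim that $\xN_\bullet\mfr{c}_{A,B}$ is \emph{the level-wise} contrafibered--equifibered factorization of $\xN_\bullet(p)$, invoking uniqueness of level-wise factorizations. But knowing that $\xF([1]) \to \mfr{c}_{A,B}$ is contrafibered in $\SM$ does not tell you that $\xN_n$ of it is contrafibered in $\CMon$ for every $n$: \cref{cor:detect-ctf} only goes from level-wise contrafibered to contrafibered, and the paper explicitly warns that the converse can fail. Moreover this misordering hides a second omission: you never verify that $Q_\bullet$ is a \emph{complete} Segal space (completeness of $\xN_\bullet\mfr{c}_{A,B}$ is only cited after the unjustified identification), yet completeness is exactly what is needed to write $Q_\bullet = \xN_\bullet\calP$ for some $\calP \in \SM$ in the first place. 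The correct order is: show $Q_n \to \xN_n\Csp$ is equifibered (which you do), conclude $Q_\bullet$ is Segal via \cref{lem:SM-ctf-eqf}, check completeness by hand (the only candidates for invertible morphisms in $Q_1 \simeq \xF(C_1 \amalg A \amalg B)$ are the elements with trivial $C_1$-component, i.e.\ the degenerate simplices), and only then use \cref{cor:detect-ctf} and \cref{lem:eqf-detected-on-N1} to see that $\xF([1]) \to \calP \to \Csp$ is a contrafibered--equifibered factorization \emph{in} $\SM$, where uniqueness and \cref{prop:free-properad} finally give $\calP \simeq \mfr{c}_{A,B}$. All the ingredients for this repair are already in your write-up, but as stated the identification step does not go through.
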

\begin{proof}
    Consider the simplicial commutative monoid $M_\bullet$ obtained as the following pushout:
    \[
        \begin{tikzcd}
            {\xN_\bullet\xF(*\amalg*)} \ar[d] \ar[r, "\Delta_A \oplus \Delta_B"] \ar[dr, phantom, near end, "\ulcorner"]&
            {\xN_\bullet\xF(A \amalg B)} \ar[d] \ar[dr, bend left] & \\
            {\xN_\bullet\xF([1])} \ar[r] &
            M_\bullet \ar[r] &
            \xN_\bullet \Csp
        \end{tikzcd}
    \]
    Here the top map is the direct sum of the two maps $\Delta_A \colon \xF(*) \to \xF(A)$ and $\Delta_B\colon \xF(*) \to \xF(B)$, given by $\Delta_A(*) = \sum_{a \in A} a$ and $\Delta_B(*) = \sum_{b \in B} b$.
    These are contrafibered by \cref{ex:diagonal-ctf} and hence so is the pushout $\xN_\bullet \xF([1]) \too M_\bullet$.
    The curved arrow is the nerve of the (unique) equifibered
    functor $\xF(A \amalg B) \to \Fin^\simeq \subset \Csp$ and the bottom composite
    is the nerve of the functor $\xF([1]) \to \Csp$ that picks out the cospan $(A \to * \leftarrow B)$.
    It suffices now to prove the following statements:
    \begin{enumerate}[$(a)$]
        \item 
        $M_\bullet$ is a Segal space,
        \item 
        $M_\bullet$ is complete, and
        \item  
        the map $M_1 \too \xN_1 \Csp$ is equifibered.
    \end{enumerate}
    Indeed, by $(a)$ and $(b)$ the monoid $M_\bullet$ is equivalent to the nerve $\xN_\bullet \calP$ of some symmetric monoidal $\infty$-category $\calP$,
    and we have symmetric monoidal functors 
    $\xF([1]) \to \calP \to \Csp$.
    By (c) becomes a level-wise contrafibered-equifibered factorization after applying $\xN_\bullet$,
    and hence by \cref{lem:eqf-detected-on-N1} and \cref{cor:detect-ctf} it was already a contrafibered-equifibered factorization in $\SM$.
    Now it follows from \cref{prop:free-properad} that $\calP \simeq \mfr{c}_{A,B}$.
    Therefore, this concludes the proof that $\mfr{c}_{A,B}$ is a level-wise pushout in the sense of \cref{obs:level-wise-colimits-in-SM}.

    We now prove $(a)$. The left map in the pushout square
    \[\xF(\ast) \oplus \xF(\ast) \simeq \xN_n\xF(* \amalg *)  \to \xN_n \xF([1]) \simeq \xF(*)^{\oplus n+1},\]
    is the inclusion of the first and last factors and therefore
    $M_n \simeq \xF(A) \oplus \xF(*)^{\oplus n-1} \oplus \xF(B)$.

    For $n=1$ we have $M_1 \simeq \xF(A) \oplus \xF(*) \oplus \xF(B)$ and the generator $*$ of the middle term is a $1$-simplex with source $\sum_{a \in A} a$ and target $\sum_{b \in B} b$.
    Consequently, we can write the map $d_0\colon M_1 \to M_0$ as a direct sum
    \[
        (d_0\colon M_1 \to M_0) = 
        \left( \id\colon \xF(A) \to \xF(A) \right) \oplus
        \left( \Delta_B + \id \colon \xF(*) \oplus \xF(B) \to \xF(B) \right)
    \]
    and the map $d_1\colon M_1 \to M_0$ as a sum of $\Delta_A$ and identities.
    We have similar descriptions for other face maps in $M_\bullet$.
    To check (a) it suffices to show that $M_n \simeq M_{n-1} \times_{M_0} M_1$, i.e.~that the square depicted below is cartesian.
    We can decompose this square as a direct sum
\[\begin{tikzcd}
	{M_n} && {M_1} && \begin{array}{c} \substack{\xF(A) \oplus \xF(*) \\\\ \oplus \\\\ \xF(*)^{\oplus n-1} \oplus \xF(B)} \end{array} &&& \begin{array}{c} \substack{ \xF(A) \oplus \xF(*) \\\\ \oplus \\\\  \xF(B)} \end{array} \\
	&&& {=} \\
	{M_{n-1}} && {M_0} && \begin{array}{c} \substack{\xF(A) \\\\ \oplus \\\\\ \xF(*)^{n-1} \oplus \xF(B)} \end{array} &&& \begin{array}{c} \substack{\xF(A) \\\\ \oplus \\\\ \xF(B)} \end{array}
	\arrow["{d_0^{n-1}}", from=1-1, to=1-3]
	\arrow["{d_n}", from=1-1, to=3-1]
	\arrow["{d_1}", from=1-3, to=3-3]
	\arrow["\begin{array}{c} \substack{\id \\ \oplus \\ \Delta_B^{+(n-1)} + \id} \end{array}"{description}, from=1-5, to=1-8]
	\arrow["\begin{array}{c} \substack{\id + \Delta_A \\ \oplus \\\id} \end{array}", from=1-5, to=3-5]
	\arrow["\begin{array}{c} \substack{\id + \Delta_A \\ \oplus \\ \id} \end{array}", from=1-8, to=3-8]
	\arrow["{d_0^{n-1}}", from=3-1, to=3-3]
	\arrow["\begin{array}{c} \substack{\id \\ \oplus \\ \Delta_B^{+(n-1)} + \id} \end{array}"{description}, from=3-5, to=3-8]
\end{tikzcd}\]
    where $M_n$ is decomposed as $(\xF(A) \oplus \xF(*)) \oplus (\xF(*)^{\oplus n-1} \oplus \xF(B))$ such that the $\xF(*)$ in the left summand is generated by the $n$-simplex $(0\le \dots \le 0 \le 1)$ in $\xN_n([1])$.
    (Applying $d_0^{n-1}$ to this $n$-simplex yields the non-degenerate simplex $(0\le 1) \in \xN_1([1])$.)
    The first summand is cartesian because its horizontal arrows are identities and the second summand is cartesian because its vertical arrows are identities.
    It follows that $M_\bullet$ is Segal.

    To check completeness, we first show that there are no non-trivial equivalences in $M_\bullet$.
    For this, consider the map $F\colon M_\bullet \to \xN_\bullet(\bbN, \le)$ to the nerve of the poset of natural numbers, which we define on objects by the commutative monoid map $\xF(A \amalg B) \to \bbN$ that sends $A$ to $0$ and $B$ to $1$.
    Because we are mapping into a poset it suffices to check that this is well-defined on $1$-simplices, and indeed the only non-trivial generator has source and target $\sum_{a \in A} a \to \sum_{b \in B} b$, which is sent to $0 \le |B| \in \bbN$.
    Since $F$ is a map of Segal spaces it must send equivalences to equivalences, so 
    \[
        M_1^{\rm eq} \subset F^{-1}(\xN_1(\bbN,\le)^{\rm eq}) 
        = F^{-1}(\{\id_0, \id_1\})
        = \xF(A) \oplus \xF(B).
    \]
    But this is exactly the space of degenerate simplices, so we must have $M_1^{\rm eq} = \xF(A) \oplus \xF(B)$, which is indeed equivalent to $M_0$, proving completeness.

    Finally, to prove (c) we need to show that the map
    \[
        \xF(A) \oplus \xF(*) \oplus \xF(B) = M_1 \too \xN_1\Csp
    \]
    is equifibered.
    Since $\xN_1\Csp$ is free, the full subcategory
    $\CMon^\eqf_{/\xN_n\Csp} \subset \CMon_{/\xN_n\Csp}$
    is closed under direct sums (see \cref{lem:colim-of-eqf-slice-categories})
    and thus it suffices to check that the maps
    $\xF(\ast) \to \xN_1 \Csp$ and
    $\xF(A \amalg B) \to \xN_1\Csp$ are equifibered.
    For the former this is the case since it picks the elementary cospan $(A \to * \leftarrow B)$.
    The latter is equifibered because it can be obtained by applying $\xN_1(-)$ to the equifibered functor $\xF(A \amalg B) \to \Csp$.
\end{proof}

\begin{cor}
    The forgetful functor $\Prpd \too \SM$ preserves compact objects
    and its right adjoint $\calU: \SM \too \Prpd$ in \cref{defn:underlying-properad} preserves filtered colimits.
\end{cor}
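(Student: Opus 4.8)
The plan is to deduce both assertions from a single computation, exploiting the fact that they are formally equivalent. The forgetful functor $\mrm{include}\colon \Prpd \to \SM$ preserves all colimits (\cref{cor:forgetful-lim-and-colim}) and both $\Prpd$ (\cref{rem:Prpd-presentable}) and $\SM$ are presentable, so the adjunction $\mrm{include} \dashv \calU$ falls under \cite[Proposition 5.5.7.2]{HTT}: the right adjoint $\calU$ preserves filtered colimits if and only if $\mrm{include}$ preserves compact objects. It therefore suffices to prove that $\mrm{include}$ carries compact \properads{} to compact symmetric monoidal \categories{}.

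By \cref{cor:Prpd-compact-generation} the \category{} $\Prpd$ is compactly generated by the corollas $\mfr{c}_{A,B}$, so every compact \properad{} is a retract of a finite colimit of corollas. Since $\mrm{include}$ preserves all colimits and retracts, and since finite colimits and retracts of compact objects of $\SM$ are again compact, it is enough to show that each corolla $\mfr{c}_{A,B}$ is compact when regarded as an object of $\SM$.

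For this I would invoke the pushout presentation of \cref{lem:free-corolla-pushout}, which exhibits $\mfr{c}_{A,B}$ as a pushout \emph{in $\SM$} of the free symmetric monoidal \categories{} $\xF([1])$, $\xF(* \amalg *)$ and $\xF(A \amalg B)$. A pushout is a finite colimit, so it remains only to check that these three objects are compact in $\SM$. Here I would use that the free functor $\xF\colon \Cat \to \SM$ preserves compact objects: its right adjoint is the forgetful functor $\SM \to \Cat$, which preserves filtered (indeed sifted) colimits by \cite[Corollary 3.2.3.2]{HA}, so \cite[Proposition 5.5.7.2]{HTT} applies a second time. As $[1]$, $* \amalg *$ and $A \amalg B$ are finite colimits of copies of $[0]$ and $[1]$ and hence compact in $\Cat$, their images under $\xF$ are compact in $\SM$, which completes the argument. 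The only point requiring genuine care -- and the sole content beyond formal manipulation -- is that \cref{lem:free-corolla-pushout} really produces a pushout in $\SM$, rather than merely a level-wise pushout of nerves that might fail the completeness or Segal conditions; but this is exactly what that lemma guarantees, so the remaining work is just the assembly of the two instances of \cite[Proposition 5.5.7.2]{HTT} with the closure of compact objects under finite colimits and retracts.
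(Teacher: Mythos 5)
Your proposal is correct and follows essentially the same route as the paper: both reduce via \cite[Proposition 5.5.7.2]{HTT} to showing the forgetful functor preserves compact objects, then use the compact generation of $\Prpd$ by corollas together with the pushout presentation of \cref{lem:free-corolla-pushout} to see that each corolla is a finite colimit of compact objects of $\SM$. You merely spell out in more detail the points the paper leaves implicit (that compact objects are retracts of finite colimits of corollas, and that $\xF([1])$, $\xF(*\amalg *)$, $\xF(A\amalg B)$ are compact in $\SM$), all of which check out.
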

\begin{proof}
    The two statements are equivalent by \cite[Proposition 5.5.7.2]{HTT}.
    In \cref{lem:free-corolla-pushout} we wrote the free corolla $\mfr{c}_{A,B}$ as a finite colimit of compact objects in $\SM$ and hence it is compact as an object of $\SM$.
    Since the free corollas are compact generators for $\Prpd$ (\cref{cor:Prpd-compact-generation}), it follows that the forgetful functor preserves compact objects.
\end{proof}

\subsubsection{Morphism and endomorphism $\infty$-properads}
In this section we study the \textit{morphism $\infty$-properad} functor, namely the right adjoint 
$\calU\colon  \SM \to \Prpd$ to the forgetful functor.
We shall see that for $\calC \in \SM$, the colours of $\calU(\calC)$ are precisely the objects of $\calC$, and the operations of $\calU(\calC)$ with source $(x_1,\dots,x_n) \in \calC^{\times n}$ and target $(y_1,\dots,y_m) \in \calC^{\times m}$ are maps $x_1 \otimes \dots \otimes x_n \to y_1 \otimes \dots \otimes y_m $.
We shall then define the endomorphism $\infty$-properad of an object $x \in \calC$ by passing to the full sub-$\infty$-properad of $\calU(\calC)$ spanned by $x$.

\begin{defn}\label{defn:underlying-properad}
    For a symmetric monoidal $\infty$-category $\calC$ we define the \hldef{morphism $\infty$-properad} $\hldef{\calU(\calC)}$ to be the image of $\calC$ under the right adjoint
    \[
        \mrm{include}\colon  \Prpd \adj \SM :\! \hldef{\calU}
    \]
    which exists by \cref{cor:forgetful-lim-and-colim}.
\end{defn}

Note that, as explained in the introduction, 
we may hence define a $\calP$-algebra in $\calC$ to be a morphism of $\infty$-properads from $\calP$ to the morphism $\infty$-properad $\calU(\calC)$,
or equivalently as a symmetric monoidal functor from $\calP$ to $\calC$:
\[
    \Alg_\calP(\calC) =
    \Fun^{\otimes, \eqf}(\calP, \calU(\calC)) 
    \simeq 
    \Fun^\otimes(\calP, \calC).
\]

\begin{obs}
    The forgetful functor can be factored as
    \[
        \Prpd \simeq \SMeq{\Csp} \into \SMover{\Csp} \too \SM,
    \]
    and hence $\calU$ can be described as the composite of right adjoints
    \[
        \calU\colon  \SM \xrightarrow{(-) \times \Csp} \SMover{\Csp} 
        \xrightarrow{\mbm{R}^\eqf} \SMeq{\Csp} \xrightarrow{\simeq} \Prpd.
    \]
    Here $\mbb{R}^\eqf$ is the right adjoint to the fully faithful inclusion $\SMeq{\Csp} \subset \SMover{\Csp}$,
    which exists by \cref{cor:Prpd-nice-properties}
    because $\Csp$ is an $\infty$-properad.%
    \footnote{
        Note that the inclusion $\SMeq{\calC} \subset \SMover{\calC}$ always has a left adjoint given by equifibered factorization, but the right adjoint that we use here requires $\calC$ to be an $\infty$-properad.
    }
\end{obs}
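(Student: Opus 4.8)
The plan is to deduce both assertions directly from the equivalence of \cref{thm:Csp-eqf=prpd} together with routine bookkeeping of adjoints. First I would unwind how that equivalence is constructed: by \cref{thm:Csp-eqf=prpd} the equivalence $\SMeq{\Csp} \simeq \Prpd$ is the restriction of the domain functor $\SMover{\Csp} \to \SM$, which sends an object $(\calC \to \Csp)$ to $\calC$, while its inverse sends an \properad{} $\calP$ to its canonical equifibered functor $\calP \to \Csp$ (guaranteed by terminality of $\Csp$). Tracing through these identifications, the forgetful functor $\Prpd \to \SM$ sends an \properad{} $\calP$ to its underlying symmetric monoidal \category{}, which is precisely the image of $(\calP \to \Csp)$ under the composite $\SMeq{\Csp} \hookrightarrow \SMover{\Csp} \to \SM$. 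This yields the first displayed factorization essentially by construction, and the verification is just this comparison of the two descriptions on objects and morphisms.

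For the second claim I would use that $\calU$ is by definition the right adjoint to the forgetful functor (it exists by \cref{cor:forgetful-lim-and-colim}), combined with the fact that the right adjoint of a composite is the composite of the right adjoints in reverse order. The factorization above exhibits the forgetful functor, up to the equivalence $E\colon \SMeq{\Csp} \simeq \Prpd$, as the inclusion $\SMeq{\Csp} \hookrightarrow \SMover{\Csp}$ followed by the domain functor $\SMover{\Csp} \to \SM$. It therefore suffices to identify the right adjoint of each factor. The right adjoint of the inclusion $\SMeq{\Csp} \hookrightarrow \SMover{\Csp}$ is $\mbm{R}^\eqf$, which exists by \cref{cor:Prpd-nice-properties} precisely because $\Csp$ is an \properad{}. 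The right adjoint of the domain functor $\SMover{\Csp} \to \SM$ is the standard slice adjoint $(-) \times \Csp$, sending $\calD$ to the projection $\calD \times \Csp \to \Csp$; this uses only that $\SM$ admits finite products (computed levelwise in $\Cat$), and is the usual mapping-space computation $\Map_{\SMover{\Csp}}((\calC \to \Csp),(\calD \times \Csp \to \Csp)) \simeq \Map_{\SM}(\calC,\calD)$. Since an equivalence is right adjoint to its inverse, assembling these right adjoints in reverse order produces exactly the stated composite $\SM \xrightarrow{(-)\times\Csp} \SMover{\Csp} \xrightarrow{\mbm{R}^\eqf} \SMeq{\Csp} \xrightarrow{\simeq} \Prpd$ for $\calU$.

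The only genuinely delicate point, which I would flag as the step requiring care rather than as a substantive obstacle, is the first one: confirming that the equivalence of \cref{thm:Csp-eqf=prpd} really intertwines the forgetful functor $\Prpd \to \SM$ with the domain functor on the nose. Once this identification is pinned down, everything else is formal manipulation of adjunctions, and no further input about the internal structure of $\calU(\calC)$ is needed.
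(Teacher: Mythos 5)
Your proposal is correct and follows exactly the route the paper takes: factor the forgetful functor as $\Prpd \simeq \SMeq{\Csp} \hookrightarrow \SMover{\Csp} \to \SM$ via \cref{thm:Csp-eqf=prpd}, then pass to right adjoints factor by factor, using \cref{cor:Prpd-nice-properties} for the existence of $\mbb{R}^\eqf$ and the standard slice adjunction for $(-)\times\Csp$. The paper records this as an observation with no further argument, so your additional care about intertwining the equivalence with the domain functor is a reasonable (if slightly more detailed) elaboration of the same proof.
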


\begin{obs}\label{obs:underlying-properad-restricts-to-underlying-operad}
    Composing the above adjunction with the restriction adjunction from \cref{obs:monic-properad-adjoint} yields:
    \[
        \Env\colon  \Op \simeq \Prpd^\monic \adj \Prpd \adj \SM.
    \]
    The composite left adjoint is the envelope and the composite right adjoint is given 
    by sending a symmetric monodial $\infty$-category
    $\calC \colon \Fin_\ast \to \Cat$ to its unstraightening 
    $\Un_{\Fin_\ast}(\calC) \to \Fin_*$ thought of as an $\infty$-operad.%
    \footnote{In \cite{HA} symmetric monoidal $\infty$-categories are \textit{defined} as cocartesian fibrations over $\Fin_\ast$ so unstraightening is unnecessary.}
\end{obs}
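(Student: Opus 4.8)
The plan is to treat the two halves of the claim separately, identifying the composite left adjoint essentially by definition and then pinning down the composite right adjoint through a Yoneda argument built on the universal property of the envelope.

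First I would observe that the composite left adjoint is
\[
    \Op \xtoo{\ \Env\ } \Prpd^\monic \hookrightarrow \Prpd \hookrightarrow \SM,
\]
and that this is Lurie's envelope functor: by \cref{thm:monic-prpd=operad} the first arrow is Lurie's envelope corestricted to its essential image $\Prpd^\monic \simeq \SMeq{\Fin}$, so postcomposing with the two fully faithful inclusions into $\SM$ recovers it on the nose. This proves the first assertion. Since a composite of adjunctions is again an adjunction, the composite right adjoint
\[
    R \colon \SM \xtoo{\calU} \Prpd \xtoo{\Fin \times_\Csp (-)} \Prpd^\monic \xtoo{\simeq} \Op
\]
is right adjoint to $\Env$, and it remains to identify it.

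For this I would unwind the universal property. For every $\calO \in \Op$ and $\calC \in \SM$ the composite adjunction supplies a natural equivalence $\Map_\Op(\calO, R(\calC)) \simeq \Map_\SM(\Env(\calO),\calC)$. The right-hand side is the maximal subgroupoid of $\Fun^\otimes(\Env(\calO),\calC) = \Alg_{\Env(\calO)}(\calC)$, which by the comparison of algebras recalled in the introduction (Lurie's \cite[Proposition 2.2.4.9]{HA}) is naturally equivalent to $\Alg_\calO(\calC) = \Fun_\Op(\calO, \calC^\otimes)$, the category of operad maps into the cocartesian fibration $\calC^\otimes \to \Fin_\ast$ classified by $\calC \colon \Fin_\ast \to \Cat$. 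Passing to cores gives $\Map_\SM(\Env(\calO),\calC) \simeq \Map_\Op(\calO, \Un_{\Fin_\ast}(\calC))$, so that
\[
    \Map_\Op(\calO, R(\calC)) \simeq \Map_\Op(\calO, \Un_{\Fin_\ast}(\calC))
\]
naturally in $\calO$. The Yoneda lemma then forces $R(\calC) \simeq \Un_{\Fin_\ast}(\calC)$ in $\Op$, naturally in $\calC$, which is the second assertion.

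I expect the only real friction to be bookkeeping around conventions rather than any genuine difficulty: the paper models objects of $\SM$ as functors $\Fin_\ast \to \Cat$, whereas $\Op$ and Lurie's envelope live in the unstraightened world of fibrations over $\Fin_\ast$, so the inclusion ``a symmetric monoidal category is an operad'' is implemented concretely by unstraightening. One must therefore apply \cite[Proposition 2.2.4.9]{HA} in the correct variance and check that the equivalence $\Prpd^\monic \simeq \Op$ of \cref{thm:monic-prpd=operad} is compatible with these identifications; once that is done the Yoneda step is purely formal.
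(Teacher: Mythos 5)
Your proposal is correct and follows the route the paper intends: the statement is presented as an observation with no written proof, the implicit argument being exactly that the composite left adjoint is Lurie's envelope by \cref{thm:monic-prpd=operad}, so its right adjoint is forced by uniqueness of adjoints to be the right adjoint of Lurie's envelope, i.e.\ the forgetful/unstraightening functor of \cite[Proposition 2.2.4.9]{HA}. Your Yoneda argument is just a slightly more explicit phrasing of this uniqueness step, and the caveat you raise about variance and the compatibility of $\Prpd^\monic \simeq \Op$ with these identifications is precisely what \cref{thm:monic-prpd=operad} (via \cite{HK21} and \cite{envelopes}) supplies.
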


We can now define endomorphism $\infty$-properads.

\begin{defn}\label{defn:end-properad}
    Let $\calC$ be a symmetric monoidal $\infty$-category and let $S \subseteq \calC$ be a collection of objects. 
    We write $\hldef{\calU(\calC)_{S}} \subseteq \calU(\calC)$ for the full sub-$\infty$-properad spanned by $S$ (see \cref{defn:sub-properad}).
    For $x \in \calC$ we define the \hldef{endomorphism $\infty$-properad} of $x$ as $\hldef{\End_{\calC}^\otimes(x)} \coloneq  \calU(\calC)_{\{x\}}$.
\end{defn}

To justify \cref{defn:end-properad} we shall now extract an explicit description of the operations of the morphism $\infty$-properad by mapping into it from free corollas.

\begin{lem}\label{lem:underlying-properad}
    Let $\calC$ be a symmetric monoidal $\infty$-category.
    Then the colours of $\calU(\calC)$ are the objects of $\calC$:
    $\xN_0^\el \calU(\calC) \simeq \xN_0\calC$.
    The operations of $\calU(\calC)$ fit into a pullback square:
    \[
    \begin{tikzcd}
        \xN_1^\el \calU(\calC) \ar[r] \ar[d] & 
        \xN_1 \calC \ar[d, "{(s,t)}"] \\
        \xF(\xN_0\calC) \times \xF(\xN_0\calC) \ar[r, "{+ \,\times\, +}"] &
        \xN_0\calC \times \xN_0\calC.
    \end{tikzcd}
    \]
\end{lem}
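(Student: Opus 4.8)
The plan is to compute both groupoids by probing $\calU(\calC)$ with the compact generators of $\Prpd$ — the free corollas — and feeding in the defining adjunction $\mathrm{include} \dashv \calU$ from \cref{defn:underlying-properad}.

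\textbf{Operations.} For finite sets $A,B$ the adjunction gives $\Map_{\Prpd}(\mfr{c}_{A,B},\calU(\calC)) \simeq \Map_{\SM}(\mfr{c}_{A,B},\calC)$. I would apply $\Map_{\SM}(-,\calC)$ to the pushout presentation of $\mfr{c}_{A,B}$ from \cref{lem:free-corolla-pushout}, turning it into a pullback. Using the free--forgetful adjunction $\xF\colon \Cat \adj \SM$ of \cref{cor:free-functor-on-Cat}, so that $\Map_{\SM}(\xF(K),\calC) \simeq \Map_{\Cat}(K,\calC)$, the three remaining corners become $\Map_{\Cat}(A\amalg B,\calC) \simeq (\xN_0\calC)^{A}\times(\xN_0\calC)^{B}$, $\Map_{\Cat}([1],\calC) \simeq \xN_1\calC$, and $\Map_{\Cat}(*\amalg *,\calC) \simeq \xN_0\calC\times\xN_0\calC$. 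Tracing the two structure maps — the transfer becomes the multiply map $((x_a),(y_b))\mapsto(\bigotimes_a x_a,\bigotimes_b y_b)$ and the endpoint inclusion becomes $(s,t)$ — identifies $\Map_{\Prpd}(\mfr{c}_{A,B},\calU(\calC))$ with the pullback $\xN_1\calC\times_{(\xN_0\calC)^2}\big((\xN_0\calC)^{A}\times(\xN_0\calC)^{B}\big)$.

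Next I would assemble these over all arities. By \cref{obs:operations-via-corollas} one has $\xN_1^\el\calU(\calC)\simeq\colim_{A,B\in\Fin^\simeq}\Map_{\Prpd}(\mfr{c}_{A,B},\calU(\calC))$, compatibly with the inputs/outputs map to $\xF(\xN_0\calC)^2$ and the map to $\xN_1\calC$. Since $\calS$ is an $\infty$-topos, colimits are universal, so the colimit passes inside the pullback over the constant corners $\xN_1\calC$ and $(\xN_0\calC)^2$; as $\colim_{A,B}(\xN_0\calC)^{A}\times(\xN_0\calC)^{B}\simeq\xF(\xN_0\calC)\times\xF(\xN_0\calC)$ and the assembled map is $+\times +$, this is exactly the asserted pullback square.

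\textbf{Colors.} Here I would first establish that $\xF(*)\in\Prpd$ corepresents $\xN_0^\el$, i.e. $\Map_{\Prpd}(\xF(*),\calP)\simeq\xN_0^\el\calP$ for every \properad{} $\calP$. A symmetric monoidal functor $\xF(*)\to\calP$ is the choice of an object $x\in\xN_0\calP$, and by \cref{lem:eqf-detected-on-N1} it is equifibered precisely when the induced map $\xN_1\xF(*)\simeq\xF(*)\to\xN_1\calP$ (sending the generator to $\mathrm{id}_x$) is equifibered; as both sides are free this means $\mathrm{id}_x$ is indecomposable in $\xN_1\calP$ (\cref{lem:free-is-a-property}, \cref{ex:free-is-eqf}). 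One then checks that $\mathrm{id}_x$ is indecomposable iff $x$ is a color: the forward implication is immediate (if $x\simeq a\otimes b$ nontrivially then $\mathrm{id}_x\simeq\mathrm{id}_a\otimes\mathrm{id}_b$), while the reverse uses the canonical equifibered functor $\calP\to\Csp$, whose nerve $\xN_1\calP\to\xN_1\Csp$ is a free map of commutative monoids and hence reflects indecomposables (a color $x$ maps to the color $*$, so $\mathrm{id}_x$ maps to the generator $\mathrm{id}_*$). Applying this to $\calP=\calU(\calC)$ and invoking the adjunction once more yields $\xN_0^\el\calU(\calC)\simeq\Map_{\Prpd}(\xF(*),\calU(\calC))\simeq\Map_{\SM}(\xF(*),\calC)\simeq\xN_0\calC$.

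The operations part is mostly bookkeeping once \cref{lem:free-corolla-pushout} is available, so the two delicate points are the colimit/pullback interchange — handled by universality of colimits in $\calS$ — and, more subtly, the colors. The genuinely tricky step is matching indecomposability of $\mathrm{id}_x$ in $\xN_1\calU(\calC)$ with indecomposability of $x$: one must resist concluding that $\mathrm{id}_x$ decomposes whenever its image under the counit $\calU(\calC)\to\calC$ admits a factorization in $\calC$, since $\calU(\calC)$ is free enough to keep such formal decompositions distinct from $\mathrm{id}_x$. Passing through $\Csp$ and the freeness of the nerve is exactly what circumvents this.
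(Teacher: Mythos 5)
Your proposal is correct and follows essentially the same route as the paper: probe $\calU(\calC)$ by the corollas via the adjunction, convert the pushout of \cref{lem:free-corolla-pushout} into a pullback of mapping spaces, and assemble over $A,B\in\Fin^\simeq$ using \cref{obs:operations-via-corollas} and universality of colimits. The only difference is that you spell out why $\xF(*)$ corepresents $\xN_0^\el$ (matching indecomposability of $\mathrm{id}_x$ with that of $x$ via the equifibered functor to $\Csp$), a point the paper's proof asserts without comment — a worthwhile addition, and your argument for it is sound.
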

\begin{proof}
    The free symmetric monoidal category $\xF(*) \simeq \Fin^\simeq$ is a properad
    and we can compute
    \[
        \xN_0^\el \calU(\calC) 
        \simeq \Map_{\Prpd}(\xF(*), \calU(\calC))
        \simeq \Map_{\SM}(\xF(*), \calC) \simeq \xN_0 \calC.
    \]
    Further, we may use the free corolla $\mfr{c}_{A,B}$ and its description as a pushout in \cref{lem:free-corolla-pushout} to compute
    \begin{align*}
        \Map_{\Prpd}(\mfr{c}_{A,B}, \calU(\calC))
        &\simeq \Map_{\SM}(\mfr{c}_{A,B}, \calC) \\
        &\simeq \Map_{\SM}(\xF(A \amalg B), \calC) \times_{\Map_{\SM}(\xF(*\amalg*), \calC)} \Map_{\SM}(\xF(\Delta^1), \calC) \\
        & \simeq \Map(A \amalg B, \xN_0\calC) \times_{\xN_0\calC \times \xN_0\calC} \xN_1\calC.
    \end{align*}
    Taking the colimit over $A, B \in \Fin^\simeq$ as in \cref{obs:operations-via-corollas} we get
    \begin{align*}
        \xN_1^\el \calU(\calC) 
        & \simeq \colim_{A, B \in \Fin^\simeq} \Map_{\Prpd}(\mfr{c}_{A,B}, \calU(\calC)) \\
        & \simeq \colim_{A, B \in \Fin^\simeq} \Map(A \amalg B, \xN_0\calC) \times_{\xN_0\calC \times \xN_0\calC} \xN_1\calC \\
        & \simeq \xF(\xN_0\calC)^2 \times_{\xN_0\calC^2} \xN_1\calC
    \end{align*}
    as claimed.
\end{proof}

Spelling out the description in \cref{lem:underlying-properad} we see that the colours
    of $\calU(\calC)$ are objects $a \in \calC$ and the operations between two collections
    of colours $\{a_i\}_{i\in I}, \{b_i\}_{j \in J} \in \Fin_{/\xN_0\calC}^\simeq = \xF(\xN_0\calC)$
    are maps between their tensor products:
    \[
        \calU(\calC)(\{a_i\}_{i\in I}, \{b_i\}_{j \in J}) 
        \simeq \Map_{\calC}\left(\bigotimes_{i \in I} a_i, \bigotimes_{j \in J} b_j\right).
    \]
    In accordance with \cref{obs:underlying-properad-restricts-to-underlying-operad}, this matches Lurie's description of the underlying $\infty$-operad of a symmetric monoidal $\infty$-category when restricting to $|J| = 1$.
    Restricting to the full sub-$\infty$-properad $\calU(\calC)_{\{x\}} \subseteq \calU(\calC)$ the above justifies \cref{defn:end-properad} as a generalization of the classical endomorphism properad, delivering on our promise from \cref{example:endomorphism-prpd}.

\subsection{\texorpdfstring{$\Csp$}{Csp} is the terminal \texorpdfstring{$\infty$-properad}{infinity-properad}}\label{subsec:pre-properads}
In this section we prove that $\Csp \in \Prpd$ is the terminal $\infty$-properad, thereby proving \cref{thm:Csp-eqf=prpd}.
In fact, we will prove the slightly stronger assertion that $\Csp$ is terminal in a certain larger $\infty$-category of ``non-complete $\infty$-properads'' which contains $\Prpd$ as a full subcategory.

\subsubsection{Pre-properads and completion}
The following definition makes precise the notion of a ``non-complete $\infty$-properad''.

\begin{defn}\label{defn:pre-properad}
    A \hldef{pre-properad} is a functor $Q_\bullet\colon  \Dop \to \CMon$
    such that 
    \begin{enumerate}
        \item
        $Q_\bullet$ is a Segal space, i.e.\ for all $n\ge 2$ the canonical map induces an equivalence
        \[
            Q_n \iso Q_1 \times_{Q_0} \dots \times_{Q_0} Q_1.
        \]
        \item 
        The composition map $d_1\colon Q_2 \to Q_1$ is equifibered.
        \item 
        The commutative monoid $Q_1$ is free.
    \end{enumerate}
    A morphism of pre-properads $Q_\bullet \to P_\bullet$ is a natural transformation 
    such that each $Q_n \to P_n$ is equifibered.
    We let $\hldef{\pPrpd} \subset \Fun(\Dop, \CMon)$
    denote the replete subcategory of pre-properads and morphism of pre-properads.
\end{defn}

Observe that the first condition is always satisfied when $Q_\bullet$ is a nerve of a symmetric monoidal $\infty$-category.
So $\infty$-properads are precisely the symmetric monoidal $\infty$-categories whose nerve is a pre-properad.
 Furthermore, \cref{lem:eqf-detected-on-N1} a symmetric monoidal functor is equifibered if and only if it induces an equifibered morphism on nerves.
 We record this for future use.

\begin{cor}\label{cor: properads-as-preproperads}
    The natural square of inclusions
    \[ \begin{tikzcd}
	\Prpd & \pPrpd \\
	\SM & \Seg_{\Dop}(\CMon)
	\arrow[from=1-1, to=2-1]
	\arrow[hook, "{\xN_{\bullet}}", from=1-1, to=1-2]
	\arrow[hook, "{\xN_{\bullet}}", from=2-1, to=2-2]
	\arrow[from=1-2, to=2-2]
	\arrow[phantom, very near start, "\lrcorner", from=1-1, to=2-2]
    \end{tikzcd}\]
    is cartesian. In particular, the nerve induces a fully faithful inclusion $\xN_{\bullet}\colon \Prpd \hookrightarrow \pPrpd$.
\end{cor}

The axioms chosen in the \cref{defn:pre-properad} are in some sense minimal.
We could have equivalently asked 
for all $Q_n$ to be free and all inner face maps to be equifibered.
Indeed, this follows by applying \cref{lem:check-CULF-on-d1} to the map $+\colon  M_\bullet \times M_\bullet \to M_\bullet$.
We record this in a corollary for future use.

\begin{cor}\label{cor:face-and-degeneracy-are-equifibered}
    Suppose $M_\bullet$ is a simplicial commutative monoid satisfying the Segal condition. 
    If $d_1\colon M_2 \to M_1$ is equifibered, then 
    $\lambda^*\colon M_m \to M_n$ is equifibered for all active $\lambda\colon [n] \to [m]$.
\end{cor}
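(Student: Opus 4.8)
The plan is to package the infinitely many assertions ``$\lambda^*$ is equifibered'' into the single statement that the fold/addition map $+\colon M_\bullet \times M_\bullet \to M_\bullet$ is CULF, and then to reduce that to the hypothesis via \cref{lem:check-CULF-on-d1}. The key starting observation is a transposition: for a fixed active $\lambda\colon [n]\to[m]$, the defining square of \cref{defn:eqf} witnessing that $\lambda^*\colon M_m \to M_n$ is equifibered, namely
\[
\begin{tikzcd}
M_m \times M_m \ar[r, "+"] \ar[d, "\lambda^* \times \lambda^*"'] & M_m \ar[d, "\lambda^*"] \\
M_n \times M_n \ar[r, "+"] & M_n,
\end{tikzcd}
\]
is precisely the reflection across the diagonal of the naturality square of $+\colon M_\bullet \times M_\bullet \to M_\bullet$ at $\lambda$, which has $\lambda^*\times\lambda^*$ along the top and $+$ down the sides. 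Since a commutative square is a pullback if and only if its transpose is, the claim ``$\lambda^*$ is equifibered for all active $\lambda$'' is \emph{equivalent} to ``$+\colon M_\bullet \times M_\bullet \to M_\bullet$ is CULF''. (This is just the equifibered/CULF dictionary recorded in the observation preceding \cref{lem:conservative-flat-fibration}, made explicit.)

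First I would check that $+$ is a morphism of Segal spaces so that \cref{lem:check-CULF-on-d1} applies. The target $M_\bullet$ is Segal by hypothesis; the product $M_\bullet \times M_\bullet$ is then Segal too, since the Segal condition is preserved by the (levelwise) limit computing the product; and $+$ is natural in $[\bullet]$ because every structure map $\lambda^*$ is a morphism of commutative monoids, which is exactly what it means for $M_\bullet$ to be a functor $\Dop \to \CMon$. Granting this, \cref{lem:check-CULF-on-d1} reduces the CULF condition for $+$ to the single active map $[1]\to[2]$, i.e.\ to the cartesianness of
\[
\begin{tikzcd}
M_2 \times M_2 \ar[r, "d_1 \times d_1"] \ar[d, "+"'] & M_1 \times M_1 \ar[d, "+"] \\
M_2 \ar[r, "d_1"] & M_1.
\end{tikzcd}
\]
Transposing once more, this square is a pullback precisely when $d_1\colon M_2 \to M_1$ is equifibered, which is our standing assumption. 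This closes the argument.

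The only genuine content is bookkeeping: correctly matching the transpose of the equifibered square with the CULF square, and tracking the active-map indexing so that the reduction of \cref{lem:check-CULF-on-d1} lands exactly on $d_1$ rather than some other face map. I do not anticipate a real obstacle, as all three ingredients --- the equifibered/CULF correspondence via transposition, closure of Segal spaces under finite products, and \cref{lem:check-CULF-on-d1} --- are already in place; the proof is essentially a single transposition argument applied twice.
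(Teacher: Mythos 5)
Your proof is correct and is precisely the paper's argument: the paper disposes of this corollary in one line by ``applying \cref{lem:check-CULF-on-d1} to the map $+\colon M_\bullet \times M_\bullet \to M_\bullet$'', which is exactly the transposition bookkeeping you spell out (and which also underlies \cref{cor:tensor-conservative-flat}). No gaps.
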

Our next goal is to show that pre-properads complete to $\infty$-properads.
With this goal in mind we recall Rezk's completion functor.
For this we need the following Segal spaces:
\begin{example}
    Given a space $A \in \calS$ we let $\hldef{E_{\bullet}(A)}\colon \simp^\op \to \calS$ denote the
    right Kan extension of the constant functor $\{[0]\} \to \calS$ at $A$. 
    Concretely, this is given by $E_n(A) = A^{\times [n]} = A^{\times(n+1)}$.
    It is straightforward to verify that $E_{\bullet}(A)$ satisfies the Segal condition and thus gives rise to a functor $\hldef{E_{\bullet}}\colon  \calS \too \Seg_{\simp^{\op}}(\calS)$.
    We denote by $\hldef{E_{\bullet}[-]}\colon  \simp \too \Seg_{\simp^{\op}}(\calS)$ the restriction of $E_{\bullet}$ along the functor $\simp \too \calS$ which sends $[n]$ to its underlying set $\{0,\dots,n\}$. 
\end{example}

These $E_\bullet[k]$ can be used to give an equivalent characterization of complete Segal spaces as those Segal spaces that are local with respect to all maps $E_\bullet[k] \to E_\bullet[l]$.
However, we will not need this characterization, but just the following formula:

\begin{prop}[{\cite[Section 14]{rezk}}]\label{prop: completion-as-left-adjoint}
    The inclusion $\CSeg_{\Dop}(\calS) \hookrightarrow \Seg_{\Dop}(\calS)$ admits a left adjoint which sends a Segal space 
    $X_\bullet \in \Seg_{\Dop}(\calS)$ 
    to the simplicial space $L_C(X)_\bullet$ given as follows:
    \[
    \hldef{L_C(X)_{\bullet}} \colon [n] \longmapsto  
    \colim_{[k] \in \Dop} \Map_{\Fun(\Dop, \calS)}\left(\Delta^n \times E_{\bullet}[k], X_{\bullet}\right).
    \]
\end{prop}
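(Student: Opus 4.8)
The plan is to identify the stated functor $L_C$ with the reflection onto complete Segal spaces by verifying two things: that $L_C(X)$ is a complete Segal space, and that the natural unit map $X \to L_C(X)$ is an $L$-equivalence, i.e.\ inverted by the (abstract) localization $L$ onto $\CSeg_{\Dop}(\calS)$, equivalently a map $f$ with $\Map(f, Y)$ an equivalence for all complete $Y$. Both $\Seg_{\Dop}(\calS)$ and $\CSeg_{\Dop}(\calS)$ are accessible reflective localizations of the presentable \category{} $\Fun(\Dop, \calS)$ — completeness being, as noted above, locality with respect to the maps $E_\bullet[k] \to E_\bullet[l]$ — so a left adjoint $L$ to the inclusion exists by general localization theory, and the content is to compute it. First I would rewrite the formula using the cartesian closedness of $\Fun(\Dop,\calS)$: writing $X^{A}$ for the internal hom, so that $(X^{A})_n = \Map(\Delta^n \times A, X)$, the formula becomes $L_C(X)_\bullet \simeq \colim_{[k]\in\Dop} X^{E_\bullet[k]}$, the geometric realization of the simplicial object $[k] \mapsto X^{E_\bullet[k]}$. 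The augmentations $E_\bullet[k] \to E_\bullet[0] = \Delta^0$ assemble into a map of simplicial objects from the constant object at $X = X^{\Delta^0}$, whose realization is the unit $\eta_X \colon X \to L_C(X)$. The universal-property half is then clean: each augmentation $E_\bullet[k] \to \Delta^0$ is an $L$-equivalence since $E_\bullet[k] \simeq \ast$ in $\CSeg_{\Dop}(\calS)$, and because the completeness localization is compatible with the cartesian closed structure (completion-equivalences are stable under cartesian product), the induced $X \to X^{E_\bullet[k]}$ is an $L$-equivalence for every $k$; as $L$ preserves colimits, $\eta_X$ is an $L$-equivalence too.

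Next I would check that $L_C(X)$ is a Segal space. Each $X^{E_\bullet[k]}$ is Segal because $X$ is and Segal spaces form an exponential ideal in $\Fun(\Dop,\calS)$ (Segal equivalences are closed under cartesian product with any object). The realization over $\Dop$ is a sifted colimit, hence commutes with the finite products that appear in the Segal maps. Computing the bottom level, $L_C(X)_0 \simeq \colim_{[k]} \Map(E_\bullet[k], X) \simeq |X_\bullet^{\rm eq}|$, the realization of the groupoid of equivalences of $X$, makes transparent what $L_C$ does: it replaces the space of objects by the homotopy-correct moduli of objects, which is precisely the correction that completeness demands.

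The crux is completeness of $L_C(X)$, which amounts to showing $s_0 \colon L_C(X)_0 \to L_C(X)_1^{\rm eq}$ is an equivalence. The honest difficulty here is that $\Map(E_\bullet[1], -)$ does \emph{not} commute with the sifted realization $\colim_{[k]}$, so one cannot simply compute $L_C(X)_1^{\rm eq} = \Map(E_\bullet[1], L_C(X))$ levelwise; this is also why the Segal condition cannot be verified naively, since it involves fiber products over $L_C(X)_0$ and geometric realizations do not commute with pullbacks in general. The resolution is to exploit that $E_\bullet[\bullet]$ is \emph{groupoidal} in the $k$-direction: the simplicial object $[k]\mapsto X^{E_\bullet[k]}$ is Segal-coherent in a way that forces its realization to preserve the relevant pullbacks, and, more to the point, realizing against $E_\bullet[\bullet]$ has the effect of turning each equivalence of $X$ into an actual path — any equivalence in $L_C(X)_1$ is represented by a map $E_\bullet[k]\to X$ which the simplicial structure connects to a degenerate $1$-simplex — yielding completeness. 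I expect this compatibility of realization over $\Dop$ with the fiber products over $L_C(X)_0$, together with the resulting completeness statement, to be the main obstacle; it is exactly where the special groupoidal nature of the resolution $E_\bullet[\bullet]$ must be used, and it is the technical heart of Rezk's original argument.
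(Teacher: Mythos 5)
The paper gives no proof of this proposition: it is imported wholesale from Rezk's \emph{A model for the homotopy theory of homotopy theory}, Section~14 (in its $\infty$-categorical translation), so there is no internal argument to compare yours against. Judged on its own terms, your proposal correctly handles the formal half: the existence of \emph{some} left adjoint from presentability, the reformulation $L_C(X) \simeq \colim_{[k]} X^{E_\bullet[k]}$, and the argument that the unit $X \to L_C(X)$ is a completion-equivalence because each $E_\bullet[k] \to \Delta^0$ is one and the completeness localization is compatible with cartesian products. That part is sound and is essentially how Rezk argues.

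However, there are two genuine gaps, and together they constitute the entire non-formal content of the statement. First, your verification of the Segal condition does not go through as written: you assert that the realization over $\Dop$, being sifted, "commutes with the finite products that appear in the Segal maps," but the Segal maps are iterated \emph{fiber products over $L_C(X)_0$}, and $L_C(X)_0$ itself varies along the colimit; sifted colimits in $\calS$ commute with finite products but not with pullbacks. You concede exactly this point two sentences later, which leaves the Segal condition unestablished. Second, completeness of $L_C(X)$ is explicitly deferred as "the technical heart." What is missing in both cases is the same input, namely Rezk's Lemmas 14.2--14.3: one must show that the simplicial object $[k] \mapsto X^{E_\bullet[k]}$ is sufficiently close to constant — every structure map is a Dwyer--Kan equivalence of Segal spaces, which is what allows the fiber products to pass through the realization and forces $s_0 \colon L_C(X)_0 \to L_C(X)_1^{\rm eq}$ to be an equivalence. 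This requires an actual argument about mapping spaces out of the $E_\bullet[k]$ (identifying $\Map(E_\bullet[k], X)$ with the space of $k$-strings of equivalences in $X$ and analyzing the resulting diagram), not merely the observation that the $E_\bullet[k]$ are "groupoidal." Without supplying that, your proposal proves only the conditional statement that \emph{if} $L_C(X)$ is a complete Segal space, then it computes the completion.
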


We can use this completion formula to show that completion preserves equifiberedness over an already complete base:
\begin{cor}\label{cor: completion-of-equifibered}
    Let $Q_{\bullet} \to P_{\bullet}$ be an equifibered morphism of Segal commutative monoids such that $P_{\bullet}$ is complete.
    Then the canonical map from the completion $L_C Q_\bullet \to L_C P_\bullet \simeq P_\bullet$ is also equifibered.
\end{cor}
\begin{proof}
    We begin by showing that for all $U_\bullet \in \Fun(\Dop,\calS)$ the map 
    \[\Map_{\Fun(\Dop,\calS)}(U_\bullet,Q_\bullet) \too \Map_{\Fun(\Dop,\calS)}(U_\bullet,P_\bullet) \quad \in \CMon\]
    is equifibered.
    Indeed, this holds vacuously when $U_\bullet \simeq \Delta^n$ for some $n$ and since equifibered maps are closed under limits in the arrow category, the general case follows as the simplices $\Delta^n$ generate $\Fun(\Dop, \calS)$ under colimits.

    Setting $U_\bullet = \Delta^n \times E_\bullet[k]$ for varying $k$ we see that
    the completion formula from \cref{prop: completion-as-left-adjoint} expresses $(L_C Q_\bullet)_n \to (L_C P_\bullet)_n$ is a sifted colimit of equifibered maps in $\Ar(\CMon)$.
    Since $P_\bullet$ is assumed to be complete, the diagram in $\Ar(\CMon)$ has constant target and as such is a diagram in $\CMon_{/P_n}^\eqf$.
    By \cref{lem:colim-of-eqf-slice} the full subcategory $\CMon_{/P_n}^\eqf \subset \CMon_{/P_n}$ is closed under sifted colimits, so $(L_C Q_\bullet)_n \to P_n$ is equifibered as well.
\end{proof}

\begin{notation}
    By abuse of notation we will also use $L_C$ to denote the left adjoint to the nerve:
    \[
        L_C \colon \Seg_{\Dop}(\calS) \adj \Cat :\! \xN_\bullet.
    \]
\end{notation}

We now show that completion for pre-properads is compatible with completion for ordinary Segal spaces.
\begin{prop}\label{prop:nerves-and-completion}
    The nerve functors for $\infty$-properads and $\infty$-categories fit into a cartesian square
    \[\begin{tikzcd}[column sep = large, row sep = large]
	\Prpd & \pPrpd \\
	\Cat & {\Seg_{\Dop}(\calS)}
	\arrow["\text{forget}"', from=1-1, to=2-1]
	\arrow["\text{forget}",from=1-2, to=2-2]
	\arrow[hook, "{\xN_{\bullet}}"', from=1-1, to=1-2]
	\arrow[hook, "{\xN_{\bullet}}"', from=2-1, to=2-2]
	\arrow[dashed, bend right = 15, "{L_C}"', from=1-2, to=1-1]
	\arrow[dashed, bend right = 15, "{L_C}"', from=2-2, to=2-1]
	\arrow["\lrcorner"{anchor=center, pos=0.125}, draw=none, from=1-1, to=2-2]
    \end{tikzcd}\]
    of $\infty$-categories.
    Moreover, the dashed localization functors commute with the vertical functors.
\end{prop}
\begin{proof}
    We begin by noting that  $\CMon(\Seg_{\Dop}(\calS)) = \Seg_{\Dop}(\CMon)$
    as full subcategories of $\Fun(\Dop \times \Fin_*, \calS)$.
    Now consider the following diagram:
    \[\begin{tikzcd}[column sep = large, row sep = large]
	\Prpd & \SM & \CMon(\Cat) & \Cat \\
	\pPrpd & \Seg_{\Dop}(\CMon) & \CMon(\Seg_{\Dop}(\calS)) & \Seg_{\Dop}(\calS) 
	\arrow["\text{inc}", from=1-1, to=1-2]
	\arrow["\text{inc}",from=2-1, to=2-2]
	\arrow["=", from=1-2, to=1-3]
	\arrow["=",from=2-2, to=2-3]
	\arrow["\text{fgt}", from=1-3, to=1-4]
	\arrow["\text{fgt}",from=2-3, to=2-4]
	\arrow[hook, "{\xN_{\bullet}}"', from=1-1, to=2-1]
	\arrow[hook, "{\xN_{\bullet}}"', from=1-2, to=2-2]
	\arrow[hook, "{\xN_{\bullet}}"', from=1-3, to=2-3]
	\arrow[hook, "{\xN_{\bullet}}"', from=1-4, to=2-4]
	\arrow[dashed, bend right = 25, "{L_C}"', from=2-1, to=1-1]
	\arrow[dashed, bend right = 25, "{L_C}"', from=2-2, to=1-2]
	\arrow[dashed, bend right = 25, "{L_C}"', from=2-3, to=1-3]
	\arrow[dashed, bend right = 25, "{L_C}"', from=2-4, to=1-4]
	\arrow["\lrcorner"{anchor=center, pos=0.125}, draw=none, from=1-1, to=2-2]
	\arrow["\lrcorner"{anchor=center, pos=0.125}, draw=none, from=1-3, to=2-4]
    \end{tikzcd}\]
    We claim that both the left and right solid square are cartesian and vertically left-adjointable, i.e.~the dashed vertical left adjoints commute with the horizontal functors.
    
    The right-most adjunction is the one described in \cref{prop: completion-as-left-adjoint}.
    Note that the left adjoint $L_C\colon \Seg_{\Dop}(\calS) \to \Cat$ commutes with products since the formula \ref{prop: completion-as-left-adjoint} only involves sifted colimits and limits.
    Therefore, it descends to a functor on commutative monoids.
    This shows that the right square is vertically left-adjointable.
    It is cartesian since a commutative monoid $M\colon \Fin_* \to \Seg_{\Dop}(\calS)$ lifts against the nerve $\xN_\bullet$ if and only if $M(1_+)$ does.
    
    Finally, the left cartesian square was established in \cref{cor: properads-as-preproperads},
    but we still need to check that it is vertically left-adjointable.
    To do so, it suffices to show that for every pre-properad $P_\bullet \in \pPrpd$ the completion $L_C P_\bullet$ is the (nerve of) an $\infty$-properad.
    Since $\xN_\bullet\Csp$ is
    terminal in $\pPrpd{}$ (\cref{thm:Csp-is-final}) we get an equifibered map $P_\bullet \to \xN_\bullet\Csp$.
    Moreover, $\xN_\bullet \Csp$ is complete (\cref{obs:Csp-complete}), and therefore the induced map
    $L_C P_\bullet \to \xN_\bullet\Csp$ is equifibered by \cref{cor: completion-of-equifibered}.
    It follows by \cref{lem:eqf-over-properad} that $L_C P_\bullet$ is an $\infty$-properad.
    Similarly, if $f\colon Q_\bullet \to P_\bullet$ is a map of pre-properads,
    then both $L_C P_\bullet \to \xN_\bullet \Csp$ and the composite $L_CQ_\bullet \to L_C P_\bullet \to \xN_\bullet \Csp$ are equifibered,
    so it follows by cancellation that $L_C(f)$ is also equifibered.
\end{proof}

Finally, we provide a criterion for checking that a pre-properad is complete.
This will be useful later on when we compare pre-properads to Segal $\infty$-properads.
It relies on the following fact:

\begin{lem}\label{lem:checking-completeness}
    Let $f\colon A_\bullet \to B_\bullet$ be a map of Segal spaces. 
    Then $A_\bullet$ is complete if and only if the Segal space $B_\bullet^{\rm eq} \times_{B_\bullet} A_\bullet$ is complete.
\end{lem}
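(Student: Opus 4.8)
The plan is to identify the cores of $A_\bullet$ and $C_\bullet := B_\bullet^{\rm eq} \times_{B_\bullet} A_\bullet$ as the \emph{same} simplicial subspace, and then to invoke the characterisation of completeness as the condition that the core be a constant simplicial space.

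First I would observe that $C_\bullet$ is a Segal space: the Segal condition is a limit condition, so Segal spaces are closed under limits in $\Fun(\simp^\op,\calS)$, and $C_\bullet$ is a pullback. Hence $\mathrm{ho}(C_\bullet)$ and $C_\bullet^{\rm eq}$ make sense. Since $B_0^{\rm eq}=B_0$, the defining square gives $C_0 = A_0$; and since $B_1^{\rm eq}\subseteq B_1$ is the union of those path components consisting of equivalences, the projection $C_\bullet \to A_\bullet$ is a monomorphism which at level $1$ identifies $C_1 \subseteq A_1$ with the path components of $A_1$ whose image under $f_1$ consists of equivalences of $B_\bullet$.

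The key step is to prove $A_\bullet^{\rm eq} = C_\bullet^{\rm eq}$ as simplicial subspaces of $A_\bullet$, using that the core is by definition the largest simplicial subspace with groupoidal homotopy category. For $A_\bullet^{\rm eq}\subseteq C_\bullet^{\rm eq}$: every edge of $A_\bullet^{\rm eq}$ is an equivalence of $A_\bullet$, and because $f$ induces a functor $\mathrm{ho}(A_\bullet)\to\mathrm{ho}(B_\bullet)$ it sends equivalences to equivalences, so the composite $A_1^{\rm eq}\to A_1\to B_1$ lands in $B_1^{\rm eq}$; using the Segal decomposition $X_n^{\rm eq}\simeq X_1^{\rm eq}\times_{X_0}\cdots\times_{X_0}X_1^{\rm eq}$ this shows that the inclusion $A_\bullet^{\rm eq}\hookrightarrow A_\bullet$ factors through $C_\bullet$, whence maximality of $C_\bullet^{\rm eq}$ forces $A_\bullet^{\rm eq}\subseteq C_\bullet^{\rm eq}$. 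Conversely $C_\bullet^{\rm eq}\subseteq C_\bullet\subseteq A_\bullet$ is a simplicial subspace with groupoidal homotopy category, so maximality of $A_\bullet^{\rm eq}$ gives $C_\bullet^{\rm eq}\subseteq A_\bullet^{\rm eq}$. With the two cores thus identified, $A_\bullet$ is complete iff $A_\bullet^{\rm eq}$ is constant iff $C_\bullet^{\rm eq}$ is constant iff $C_\bullet$ is complete.

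The main obstacle is the one genuine verification hidden in the third paragraph: that the core of $A_\bullet$ factors through the fibre product $C_\bullet$, and not merely through $A_\bullet$. This is precisely where both inputs are needed — that $f$ preserves equivalences, and the level-wise description of the core through its space of equivalences — while everything else is a formal manipulation of the maximality property defining the core.
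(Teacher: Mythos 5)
Your proposal is correct and follows essentially the same route as the paper: both arguments factor $A_\bullet^{\rm eq}\hookrightarrow A_\bullet$ through $B_\bullet^{\rm eq}\times_{B_\bullet}A_\bullet$ using that $f$ preserves equivalences, and then identify $A_\bullet^{\rm eq}$ with $(B_\bullet^{\rm eq}\times_{B_\bullet}A_\bullet)^{\rm eq}$ before comparing constancy. The paper packages your two maximality arguments as a single step, applying $(-)^{\rm eq}$ to the factorization and noting that two composable monomorphisms whose composite is an equivalence are themselves equivalences, but this is the same argument.
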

\begin{proof}
    The inclusion $A_\bullet^{\rm eq} \hookrightarrow A_\bullet$ factors as
    $A_\bullet^{\rm eq} \hookrightarrow B_\bullet^{\rm eq} \times_{B_\bullet} A_\bullet \hookrightarrow A_\bullet$ because $f$ restricts to a map $A_\bullet^{\rm eq} \to B_\bullet^{\rm eq}$.
    Applying $(-)^{\rm eq}$ to the factorization we get
    \[
        A_\bullet^{\rm eq} \hookrightarrow 
        (B_\bullet^{\rm eq} \times_{B_\bullet} A_\bullet)^{\rm eq}
        \hookrightarrow (A_\bullet)^{\rm eq}.
    \]
    Since the composite is an equivalence we deduce that the monomorphisms are in fact equivalences.
    Thus, $A_\bullet^{\rm eq}$ is a constant simplicial space if and only if 
    $ (B_\bullet^{\rm eq} \times_{B_\bullet} A_\bullet)^{\rm eq} $ is.
\end{proof}

\begin{lem}\label{lem:completeness-of-pprd-on-11ary}
    Let $P_\bullet$ be a pre-properad and $p\colon P_\bullet \to \xN_\bullet \Csp$ a morphism of pre-properads.
    (We will prove in \cref{thm:Csp-is-final} that the space of such $p$ is contractible.)
    Define the simplicial space $P^{(1,1)}_\bullet$ as the pullback
    \[
    \begin{tikzcd}
        P^{(1,1)}_\bullet \ar[r, hook] \ar[d] &
        P_\bullet \ar[d] \\
        * \ar[r, hook] &
        \xN_\bullet \Csp
    \end{tikzcd}
    \]
    where the bottom horizontal map is the nerve of the functor $* \to \Csp$ that picks the singleton.
    Note that this is a level-wise monomorphism.
    Then $P_\bullet$ is complete if and only if $P^{(1,1)}_\bullet$ is.
\end{lem}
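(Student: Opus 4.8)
The plan is to reduce completeness of $P_\bullet$ to completeness of a level-wise free Segal space built from $P^{(1,1)}_\bullet$, and then to transfer completeness across the free functor using the pullback square of \cref{prop:free-nerve-commute}. First I would apply \cref{lem:checking-completeness} to the map of Segal spaces $p\colon P_\bullet \to \xN_\bullet\Csp$: this shows that $P_\bullet$ is complete if and only if the pullback $E_\bullet := (\xN_\bullet\Csp)^{\rm eq}\times_{\xN_\bullet\Csp} P_\bullet$ is complete. Since $\Csp$ is complete by \cref{obs:Csp-complete}, its core $(\xN_\bullet\Csp)^{\rm eq}$ is the constant simplicial commutative monoid on $\xN_0\Csp \simeq \Fin^\simeq \simeq \xF(*)$, included at level $n$ as the totally degenerate simplices via the iterated degeneracy $(s_0)^n\colon \Fin^\simeq \to \xN_n\Csp$. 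In particular $E_\bullet$ is again Segal, being a limit of Segal spaces.

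The key step is to identify $E_\bullet$ with $\xF(P^{(1,1)}_\bullet)$. For each $n$ the projection $E_n \to \Fin^\simeq$ is the base change of the equifibered map $p_n\colon P_n \to \xN_n\Csp$, and is therefore equifibered by \cref{lem:eqf-base-change}. Since $\Fin^\simeq \simeq \xF(*)$, I would run \cref{cor: equifibered-over-free-equals-spaces} (with $X = *$) not level-by-level but in the diagram category $\Fun(\Dop,-)$: viewing $E_\bullet$ as a simplicial object of $\CMon^\eqf_{/\xF(*)} \simeq \calS$, the corollary identifies it with $\xF$ applied to its level-wise space of indecomposables, i.e. to the fiber over the generator $* \subseteq \xF(*)$. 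That fiber is exactly $* \times_{\Fin^\simeq} E_\bullet = * \times_{\xN_\bullet\Csp} P_\bullet = P^{(1,1)}_\bullet$, where I use that the map $* \to \xN_\bullet\Csp$ picking the singleton factors through the generator of $\Fin^\simeq$ followed by $(s_0)^n$. This yields a natural equivalence $E_\bullet \simeq \xF(P^{(1,1)}_\bullet)$ of simplicial commutative monoids.

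It then remains to see that $\xF(P^{(1,1)}_\bullet)$ is complete if and only if $P^{(1,1)}_\bullet$ is. Both are Segal (the latter as a limit of Segal spaces), so this follows directly from \cref{prop:free-nerve-commute}: the pullback square asserts that a simplicial space $Y_\bullet$ lies in the essential image of $\xN_\bullet\colon \Cat \hookrightarrow \Fun(\Dop,\calS)$ precisely when $\xF(Y_\bullet)$ lies in the essential image of $\xN_\bullet\colon \SM \hookrightarrow \Fun(\Dop,\CMon)$; equivalently, a Segal space $Y_\bullet$ is complete if and only if the level-wise free commutative monoid $\xF(Y_\bullet)$ is complete. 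Chaining the three equivalences $P_\bullet$ complete $\iff E_\bullet$ complete $\iff \xF(P^{(1,1)}_\bullet)$ complete $\iff P^{(1,1)}_\bullet$ complete finishes the proof.

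The main obstacle I anticipate is not any single hard estimate but bookkeeping the naturality in $[n]$: one must ensure the level-wise freeness of $E_n$ assembles into a genuine equivalence of simplicial objects rather than an unorganised collection of level-wise equivalences. Handling this by applying \cref{cor: equifibered-over-free-equals-spaces} to the whole functor $E_\bullet$ valued in $\CMon^\eqf_{/\xF(*)}$ (rather than to each $E_n$ separately) is what makes the identification $E_\bullet \simeq \xF(P^{(1,1)}_\bullet)$ clean, and it is this reformulation that allows the final appeal to \cref{prop:free-nerve-commute} to go through without reproving any completion statements by hand.
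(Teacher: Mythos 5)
Your proof is correct and follows essentially the same route as the paper's: apply \cref{lem:checking-completeness} to $p$, identify the resulting pullback over the constant simplicial object $\xN_\bullet(\Csp^\simeq)\simeq\xF(*)$ with $\xF(P^{(1,1)}_\bullet)$ via \cref{cor: equifibered-over-free-equals-spaces}, and then transfer completeness across $\xF$. The only (harmless) deviation is the final step, where the paper directly observes $Q_\bullet^{\rm eq}\simeq\xF((P^{(1,1)}_\bullet)^{\rm eq})$ while you invoke the pullback square of \cref{prop:free-nerve-commute}; both justifications are valid.
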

\begin{proof}
    We may factor the square defining $P^{(1,1)}_\bullet$ as a composite of two cartesian squares:
    \[
    \begin{tikzcd}
        P^{(1,1)}_\bullet \ar[r, hook] \ar[d] &
        Q_\bullet \ar[r, hook] \ar[d] &
        P_\bullet \ar[d] \\
        * \ar[r, hook] &
        \xN_\bullet (\Csp^\simeq) \ar[r, hook] &
        \xN_\bullet \Csp
    \end{tikzcd}
    \]
    By \cref{lem:checking-completeness} $P_\bullet$ is complete if and only if the pullback $Q_\bullet$ is.
    The right square is a pullback square in $\CMon$ and thus $Q_\bullet \to \xN_\bullet(\Csp^\simeq)$ is equifibered.
    Since $\xN_\bullet(\Csp^\simeq)$ is the constant simplicial object on $\xF(*)$,
    it follows that $Q_\bullet \simeq \xF(P_\bullet^{(1,1)})$.
    The maximal subgroupoid is $Q_\bullet^{\rm eq} \simeq \xF((P^{(1,1)}_\bullet)^{\rm eq})$ and 
    therefore $P_\bullet^{\rm eq}$ is a constant simplicial space if and only if $(P^{(1,1)}_\bullet)^{\rm eq}$ is.
\end{proof}

\subsubsection{Obstruction theory for pre-properads}
In the remainder of this section we will use obstruction theory
to prove the following theorem:
\begin{thm}\label{thm:Csp-is-final}
    The nerve of the cospan category $\xN_\bullet(\Csp)$ is a terminal object in $\pPrpd$.
\end{thm}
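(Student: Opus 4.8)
The plan is to show that for every pre-properad $Q_\bullet$ the mapping space $\Map_{\pPrpd}(Q_\bullet, \xN_\bullet \Csp)$ is contractible, and to set this up as an obstruction-theoretic problem along the simplicial direction, exploiting that the target is level-wise free. First I would record the freeness of $Q_\bullet$ in every degree: for the active map $\lambda\colon [1] \to [n]$ selecting the endpoints, $\cref{cor:face-and-degeneracy-are-equifibered}$ gives that $\lambda^*\colon Q_n \to Q_1$ is equifibered, and since $Q_1$ is free, $\cref{cor: equifibered-over-free-equals-spaces}$ forces $Q_n$ to be free as well (an equifibered map into a free monoid exhibits its source as free). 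Writing $C_n := (\xN_n \Csp)^\el$, the same corollary identifies an equifibered map $Q_n \to \xN_n \Csp = \xF(C_n)$ with a mere map of spaces $Q_n^\el \to C_n$; thus a morphism of pre-properads amounts to a simplicially coherent system of such maps, and the whole mapping space is assembled from this data.

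Next I would exhibit the forced candidate. On colors the map $Q_0 \to \xN_0 \Csp = \xF(*)$ is essentially unique: by $\cref{obs:elementary-subspace}$ the space of equifibered maps from a free monoid into $\xF(*)$ is contractible, so there is an up-to-contractible-choice collapse of every color to the point. On operations the map is then already determined with \emph{no} further choice: an indecomposable $o \in Q_1^\el$ has source and target lying in $Q_0 = \xF(Q_0^\el)$, and applying the color-collapse turns these into honest finite sets $A$ and $B$; by $\cref{lem:Csp-level-wise-free}$ the indecomposables $C_1$ are exactly the connected cospans, so the canonical assignment $o \mapsto (A \to * \leftarrow B)$ is the only option and is manifestly natural in $(s,t)$. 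Since composition of cospans in $\Csp$ is computed by pushout and $d_1\colon Q_2 \to Q_1$ is equifibered, this assignment is compatible with the face, degeneracy, and composition maps, so it extends to a morphism of pre-properads.

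The remaining and main task is to upgrade this $\pi_0$-rigidity to full contractibility of the mapping space, and here I would run the obstruction theory. The plan is to filter by simplicial degree and show that the space of extensions of a partially defined coherent system from $\Dop_{\le n}$ to $\Dop_{\le n+1}$ is contractible at each stage; concretely, using the Segal condition on both sides one reduces the coherence data to the spaces $C_n$ of connected diagrams $\Tw[n]^\el \to \Fin$ whose total pushout is a point, and shows that the relevant fibers controlling each extension are contractible. These $C_n$ are precisely the combinatorial objects (connected directed graphs) that will later reappear in the comparison with Segal $\infty$-properads, so the heart of the argument is a rigidity statement about this simplicial space of connected cospan diagrams.

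The hard part, I expect, will be controlling the symmetries coherently at all levels without recourse to completeness. The elementary cospan of arity $(n,m)$ carries the automorphism group $\Sigma_n \times \Sigma_m$, and one must verify that in every simplicial degree these are matched exactly by the intrinsic symmetries of the operations — the unordered nature of the inputs and outputs — so that neither nontrivial choices nor nontrivial higher automorphisms survive in the tower of obstruction spaces. Because we deliberately work in $\pPrpd$ rather than $\Prpd$, this matching cannot be imported from a universal property of $\Csp$ as an $\infty$-category and must instead be established directly from the graph combinatorics of the $C_n$, which is where the obstruction-theoretic bookkeeping is genuinely needed.
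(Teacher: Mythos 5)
Your proposal follows essentially the same route as the paper: an induction over simplicial degree via a latching/matching obstruction theory, keyed on the fact that $(d_0,d_1)\colon (\xN_1\Csp)^\el \to \xN_0\Csp \times \xN_0\Csp$ is an equivalence, so that an elementary operation of $\Csp$ is uniquely determined by its source and target. The only caveat is that the step you defer --- contractibility of each extension fiber --- is where all the work lies (the paper proves inductively that $P_n^\el \to M_nP$ is a monomorphism and that the required lift exists, using the Segal condition via inner horns), whereas the symmetry-matching you single out as the hard part is already built into the space-level equivalence $(\xN_1\Csp)^\el \simeq \Fin^\simeq \times \Fin^\simeq$ and requires no separate combinatorial bookkeeping.
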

Using the fully faithfulness of $\xN_\bullet \colon \Prpd \hookrightarrow \pPrpd$ 
from \cref{cor: properads-as-preproperads} this in particular implies the following:
\begin{cor}\label{cor:Csp-is-final}
    The cospan category $\Csp$ is a terminal object in $\Prpd$.
\end{cor}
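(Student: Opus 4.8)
The final statement is an immediate consequence of \cref{thm:Csp-is-final}: since $\xN_\bullet\Csp$ lies in the full subcategory $\Prpd \subset \pPrpd$ (it is the nerve of an \properad{} by \cref{lem:Csp-level-wise-free} and \cref{ex:Csp}) and the embedding $\xN_\bullet\colon \Prpd \hookrightarrow \pPrpd$ of \cref{cor: properads-as-preproperads} is fully faithful, a terminal object of $\pPrpd$ that happens to lie in $\Prpd$ is automatically terminal in $\Prpd$. So the plan is to prove the genuinely substantial claim, \cref{thm:Csp-is-final}, that $\xN_\bullet\Csp$ is terminal among pre-properads; that is, that for every pre-properad $Q_\bullet$ the space $\Map_{\pPrpd}(Q_\bullet, \xN_\bullet\Csp)$ of level-wise equifibered natural transformations is contractible.

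First I would record that every level $Q_n$ is free. Indeed $s_0\colon Q_0 \to Q_1$ is a split monomorphism (split by a face map) and is equifibered by \cref{cor:face-and-degeneracy-are-equifibered}, so $Q_0$ is a retract of the free monoid $Q_1$ and hence free by \cref{cor:retract-of-free}; the Segal condition then writes each $Q_n \simeq Q_1 \times_{Q_0} \cdots \times_{Q_0} Q_1$ as a finite limit of free monoids, which is free by \cref{cor:finite-limits}. Since each $\xN_n\Csp$ is likewise free (\cref{lem:Csp-level-wise-free}), \cref{cor: equifibered-over-free-equals-spaces} lets me reinterpret a level-wise equifibered transformation as a compatible system of maps of generating spaces $\xN_n^\el Q \to \xN_n^\el \Csp$. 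On the bottom levels this data is forced: the map $Q_0 \to \xN_0\Csp = \xF(*)$ is the unique equifibered map by \cref{obs:elementary-subspace}, and since $\xN_1^\el\Csp \simeq \xF(*) \times \xF(*)$ classifies arities, the source--target map of $Q_1$ forces each operation to be sent to its arity cospan. Thus the components $\eta_0$ and $\eta_1$ exist and are essentially unique.

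The core of the argument is then an obstruction-theoretic induction producing the remaining coherence. Because $\xN_\bullet\Csp$ is a Segal object, $\eta_n$ is determined on objects by $\eta_0,\eta_1$ through the equivalence $\xN_n\Csp \simeq \xN_1\Csp\times_{\xN_0\Csp}\cdots\times_{\xN_0\Csp}\xN_1\Csp$; the content of being a natural transformation is compatibility with the non-inert operators --- above all the composition face map $d_1\colon Q_2 \to Q_1$, under which gluing of arity cospans in $\Csp$ must match composition of operations in $Q$. I would organize this as an induction over a skeletal (active--inert) filtration of $\Delta$, showing at each stage that the space of extensions of the forced low-level data is contractible. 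The inputs here are the equifiberedness of $d_1\colon Q_2\to Q_1$ (the pre-properad axiom, yielding equifiberedness of all active maps via \cref{cor:face-and-degeneracy-are-equifibered}) together with the rigidity of $\Csp$ --- every level free, with arities pinned down --- which leaves no room for choices beyond those already made.

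The main obstacle is exactly this inductive coherence step: although $\eta_0,\eta_1$ are forced and $\eta_n$ is object-level determined by the Segal condition, verifying that the arity assignment respects composition coherently to all higher order --- so that the obstruction spaces are not merely connected but contractible --- is the delicate technical heart, and is where the combinatorics underlying graphs and the obstruction theory for pre-properads genuinely enter. Once \cref{thm:Csp-is-final} is established, \cref{cor:Csp-is-final} follows from the fully faithful embedding as noted above.
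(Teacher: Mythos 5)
Your reduction of the corollary to \cref{thm:Csp-is-final} is exactly the paper's: since $\xN_\bullet\colon \Prpd \hookrightarrow \pPrpd$ is fully faithful (\cref{cor: properads-as-preproperads}) and $\xN_\bullet\Csp$ lies in its image, terminality in $\pPrpd$ implies terminality in $\Prpd$. Your preliminary observations are also sound and match the paper's setup: each $Q_n$ is free, the level-$0$ component of any map to $\xN_\bullet\Csp$ is forced by \cref{obs:elementary-subspace}, and the level-$1$ component is forced because $(d_0,d_1)\colon \xN_1^\el\Csp \to \xN_0\Csp \times \xN_0\Csp$ is an equivalence.

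However, your argument stops exactly where the real work begins. You write that showing the extension spaces are ``not merely connected but contractible'' is the delicate technical heart, but you do not supply an argument for it, so as written this is a gap rather than a proof. The paper's resolution (\cref{prop:map-into-sub-terminal}) is a simultaneous induction on $n$ proving two statements: (i) the restriction of the matching map to elementaries, $\delta_n\colon P_n^\el \to M_nP = P(\partial \Delta^n)$, is a \emph{monomorphism}, and (ii) the forced map $Q_n^\el \to M_nP$ lifts through $\delta_n$. Point (i) is what makes contractibility automatic: once $\delta_n$ is a monomorphism, \cref{cor:obstruction-for-mono} exhibits the space of equifibered extensions from level $n-1$ to level $n$ as a pullback along a monomorphism, so any lift is unique and no separate higher coherence needs to be checked. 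Point (i) is propagated up the induction using the Segal identification $P(\Delta^n) \simeq P(\Lambda_1^n)$ together with the decomposition $M_nP \simeq P(\Lambda_1^n) \times_{M_{n-1}P} P_{n-1}$ and the inductive hypothesis that $\delta_{n-1}$ is a monomorphism; point (ii) uses that the inner face map $d_1$ is equifibered, hence carries $Q_n^\el$ into $P_{n-1}^\el$. (Also, contrary to your closing remark, no graph combinatorics enter here; the paper's argument is pure simplicial obstruction theory, with graphs appearing only in the later comparison to Segal properads.) Without identifying this monomorphism mechanism, the claim that the obstruction spaces are contractible remains unjustified.
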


In order to show that $\xN_\bullet(\Csp) \in \pPrpd$ is terminal we will develop
a general theory of how to construct maps of pre-properads
inductively over the simplicial level.
We begin by recalling the definition of latching and matching objects for
simplicial objects.
For this, we let $\simp^{\op}_{<n} \subset \simp^\op_{\le n} \subset \Dop$ denote the full subcategories on all objects $[k]$ with $k<n$ or $k\le n$, respectively.
\begin{defn}
    For a simplicial commutative monoid $Q_\bullet \colon \Dop \to \CMon$
    the \hldef{$n$th latching map} $L_n Q \to Q_n$ and the 
    \hldef{$n$th matching map} $Q_n \to M_n Q$ are defined respectively as follows
    \[
        \left(L_n Q \too Q_n\right) := 
        \colim \left( (\simp^{\op}_{<n})_{/[n]} \xrightarrow{Q_\bullet} \CMon_{/Q_n} \right), \quad \left( Q_n \too M_n Q\right) :=
        \lim \left( (\simp^{\op}_{<n})_{[n]/} \xrightarrow{Q_\bullet} \CMon_{Q_n/} \right).
    \]
\end{defn}

In the standard definition, the colimit in the latching object only runs over surjections and the limit in the matching object only over injections. 
However, this is equivalent to the above definition, as can be seen using a finality argument as in the proof of \cref{lem:L-and-M-eqf} and the fact that surjections and injections form a factorization system on $\Dop$.

\begin{lem}\label{lem:L-and-M-eqf}
    Let $f \colon Q_\bullet \to P_\bullet$ be a morphism of pre-properads.
    Then for every $n$ the commutative square
    \[\begin{tikzcd}
	{L_n Q} & {L_nP} \\
	{Q_n} & {P_n}
	\arrow[from=1-1, to=2-1]
	\arrow[from=1-2, to=2-2]
	\arrow[from=1-1, to=1-2]
	\arrow[from=2-1, to=2-2]
    \end{tikzcd}\]
    consists of equifibered maps.
\end{lem}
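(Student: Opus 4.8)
The plan is to handle the four arrows of the square one at a time. The bottom arrow $Q_n \to P_n$ is equifibered by the very definition of a morphism of pre-properads (\cref{defn:pre-properad}); I would establish the two vertical latching maps directly, and then deduce the top arrow $L_nQ \to L_nP$ by a cancellation argument for the right class of the contrafibered--equifibered factorization system of \cref{lem:formal-ctf-eqf}. Throughout, the square commutes by the functoriality of the latching construction in $Q_\bullet$.

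First I would record that every $Q_n$ (and likewise $P_n$) is a free commutative monoid. Since $Q_\bullet$ is a pre-properad, $Q_1$ is free; the degeneracy $s_0\colon Q_0 \to Q_1$ exhibits $Q_0$ as a retract of $Q_1$, a one-sided inverse being the face map $d_1\colon Q_1 \to Q_0$, so $Q_0$ is free by \cref{cor:retract-of-free}. For $n \ge 2$ the Segal condition presents $Q_n \simeq Q_1 \times_{Q_0} \cdots \times_{Q_0} Q_1$ as a finite limit of free monoids, hence $Q_n$ is free by \cref{cor:finite-limits}.

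Next I would show the latching map $L_nQ \to Q_n$ is equifibered. Although $L_nQ$ is defined as a colimit over all of $\simp^{\op,<n}_{/[n]}$, this colimit is computed on the cofinal subcategory of surjective degeneracy operators $\sigma\colon [n] \twoheadrightarrow [m]$ with $m < n$. Each such $\sigma$ is active in $\simp$ (an order-preserving surjection automatically sends $0 \mapsto 0$ and $n \mapsto m$), so the induced map $\sigma^*\colon Q_m \to Q_n$ is equifibered by \cref{cor:face-and-degeneracy-are-equifibered}. Thus $L_nQ \to Q_n$ is a colimit, formed in $\CMon_{/Q_n}$, of objects lying in the full subcategory $\CMon^{\eqf}_{/Q_n}$. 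Because $Q_n$ is free, this subcategory is closed under \emph{arbitrary} colimits by \cref{lem:colim-of-eqf-slice}, so $L_nQ \to Q_n$ is equifibered; the identical argument gives that $L_nP \to P_n$ is equifibered.

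Finally, the composite $L_nQ \to Q_n \to P_n$ is equifibered, being a composite of two maps in the right class of a factorization system, and it agrees with the composite $L_nQ \to L_nP \to P_n$. Since the factor $L_nP \to P_n$ is equifibered, right cancellation for the right class of a factorization system (if $g$ and $g\circ f$ lie in the right class, so does $f$) yields that the top arrow $L_nQ \to L_nP$ is equifibered, completing the square. The main obstacle I anticipate is the third step: one must both justify the reduction of the latching colimit to the cofinal subcategory of surjective operators (so that every term is visibly active, hence equifibered) and crucially invoke the freeness of $Q_n$ to upgrade the closure of $\CMon^{\eqf}_{/Q_n}$ from sifted colimits to all colimits, since the latching index category is not sifted.
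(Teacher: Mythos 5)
Your proof is correct and follows essentially the same route as the paper: reduce the latching colimit to a cofinal subcategory of active (for you, surjective) operators, note each term is equifibered by \cref{cor:face-and-degeneracy-are-equifibered}, invoke the closure of $\CMon^{\eqf}_{/Q_n}$ under all colimits from \cref{lem:colim-of-eqf-slice}, and finish the top arrow by right cancellation. Your explicit verification that every $Q_n$ is free (retract for $n=0$, Segal condition plus \cref{cor:finite-limits} for $n\ge 2$) is a detail the paper leaves implicit but which is indeed needed to apply the "arbitrary colimits" clause of \cref{lem:colim-of-eqf-slice}.
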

\begin{proof}
    The inclusion
    $(\simp^{\op,\act}_{<n})_{/[n]} \hookrightarrow (\simp^{\op}_{<n})_{/[n]}$ 
    is final (as it is a right adjoint, e.g.~\cite[Observation 2.3.6]{envelopes}) and so $L_n Q$ is the colimit of the restricted diagram
    $(\simp^{\op,\act}_{<n})_{/[n]} \to \CMon_{/Q_n}$.
    Then, by \cref{cor:face-and-degeneracy-are-equifibered}, 
    the colimit diagram factors through the full subcategory $\CMon_{/Q_n}^\eqf \subset \CMon_{/Q_n}$
    and this subcategory is closed under all colimits by \cref{lem:colim-of-eqf-slice}.
    Therefore, the latching maps $L_n Q \to Q_n$ and $L_n P \to P_n$ are equifibered.
    The bottom horizontal map is equifibered since $f$ is a morphism of pre-properads.
    Finally, the top map is also equifibered by cancellation.
\end{proof}

\begin{obs}
    As a consequence of \cref{lem:L-and-M-eqf}, if $Q$ is a pre-properad, the latching object $L_n Q$ is free for all $n$. 
    In fact, using the notation of \cref{obs:elementary-subspace}, we have by  \cref{cor:face-and-degeneracy-are-equifibered} an equivalence
    \[(L_nQ)^\el \simeq \colim_{[k] \to [n] \in (\simp^{\op,\act}_{< n})_{/[n]}} Q_k^\el .\]
    We shall henceforth write $\hldef{L_n^\el Q} := (L_n Q)^\el$.
\end{obs}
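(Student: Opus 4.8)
We prove the two assertions in turn, leaning on the analysis of latching objects carried out in the proof of \cref{lem:L-and-M-eqf}.

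First I would record that each level $Q_k$ is free. Since $Q_\bullet$ satisfies the Segal condition and $d_1\colon Q_2 \to Q_1$ is equifibered, \cref{cor:face-and-degeneracy-are-equifibered} applies, so the active map $[1] \to [k]$ induces an equifibered morphism $\lambda^*\colon Q_k \to Q_1$. As $Q_1$ is free by the pre-properad axioms, \cref{cor: equifibered-over-free-equals-spaces} shows that the source of an equifibered map into a free monoid is itself free, whence $Q_k$ is free. The same reasoning now yields freeness of the latching object: the proof of \cref{lem:L-and-M-eqf} shows that the latching map $L_n Q \to Q_n$ is equifibered, and we have just seen that $Q_n$ is free, so $L_n Q$ is free by another application of \cref{cor: equifibered-over-free-equals-spaces}.

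For the displayed formula, the plan is to compute the defining colimit of $L_n Q$ inside the subcategory of equifibered maps, where it is controlled by spaces. By the cofinality statement in the proof of \cref{lem:L-and-M-eqf}, I may restrict the latching diagram to the active slice $I := \simp^{\op,\act,<n}_{/[n]}$ without changing the colimit. On $I$ every transition map is induced by an active map and hence is equifibered (\cref{cor:face-and-degeneracy-are-equifibered}), and every object $Q_k$ is free; thus $Q_\bullet|_I$ factors through $\CMon^\eqf_{/Q_n}$, and under the equivalence $\CMon^\eqf_{/Q_n} \simeq \calS_{/Q_n^\el}$ of \cref{cor: equifibered-over-free-equals-spaces} it corresponds to the diagram $[k] \mapsto Q_k^\el$. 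Since $Q_n$ is free, \cref{lem:colim-of-eqf-slice} guarantees that $\CMon^\eqf_{/Q_n} \subseteq \CMon_{/Q_n}$ is closed under all colimits, so the colimit $L_n Q \to Q_n$ computed in $\CMon_{/Q_n}$ agrees with the one computed in $\CMon^\eqf_{/Q_n}$. Transporting this colimit across the equivalence with $\calS_{/Q_n^\el}$ and applying $\xF$ gives $L_n Q \simeq \xF\bigl(\colim_{[k] \in I} Q_k^\el\bigr)$; passing to elementary parts yields $(L_n Q)^\el \simeq \colim_{[k] \in I} Q_k^\el$, which is the displayed colimit since $I$ is cofinal in $\simp^{\op,<n}_{/[n]}$.

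The main obstacle is the interchange of $(-)^\el$ with the latching colimit. Because the indexing category is not sifted, the general closure of equifibered maps under sifted colimits from \cref{lem:colim-of-eqf-slice} does not suffice; one genuinely needs the stronger conclusion valid when the base is free, which is exactly why establishing freeness of $Q_n$ first is essential. A secondary point to treat with care is that $Q_\bullet^\el$ is only functorial along equifibered (hence active) maps, so the displayed colimit must be read via the cofinal active subcategory $I$, as above.
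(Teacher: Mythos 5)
Your argument is correct and is essentially the reconstruction the paper intends: freeness of $L_nQ$ follows from the equifibered latching map of \cref{lem:L-and-M-eqf} landing in the free monoid $Q_n$ (itself free because it is equifibered over $Q_1$ by \cref{cor:face-and-degeneracy-are-equifibered}), and the colimit formula follows by computing the latching colimit in $\CMon^{\eqf}_{/Q_n}\simeq\calS_{/Q_n^\el}$ using closure under all colimits over a free base (\cref{lem:colim-of-eqf-slice}). Your remarks on why siftedness alone would not suffice and on reading the colimit via the cofinal active subcategory are exactly the right points of care.
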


We recall a basic fact about monomorphisms that we need in the proof of \cref{cor:obstruction-for-mono}.

\begin{rem}\label{rem:mono-reminder}
    Recall that if $j \colon A \hookrightarrow B$ is a monomorphism in an $\infty$-category $\calC$ with finite limits, the functor induced by composition $(-) \circ j\colon \calC_{/A} \to \calC_{/B}$ preserves products. 
    To see this, consider
    \[\begin{tikzcd}
	{X \times_A Y} & Y & Y \\
	X & A & A \\
	X & A & B
	\arrow[from=1-3, to=2-3] \arrow[from=2-3, to=3-3] \arrow[from=3-2, to=3-3] \arrow[from=3-1, to=3-2] \arrow[equal, from=2-2, to=3-2] \arrow[equal, from=2-2, to=2-3] \arrow[equal, from=2-1, to=3-1] \arrow[from=1-2, to=2-2] \arrow[from=1-1, to=2-1] \arrow[from=1-1, to=1-2] \arrow[equal, from=1-2, to=1-3] \arrow[from=2-1, to=2-2]
    \ar[from=1-1, to=2-2, phantom, very near start, "\lrcorner"]
    \ar[from=2-1, to=3-2, phantom, very near start, "\lrcorner"]
    \ar[from=1-2, to=2-3, phantom, very near start, "\lrcorner"]
    \ar[from=2-2, to=3-3, phantom, very near start, "\lrcorner"]
    \end{tikzcd}\]
    where the bottom right square is cartesian because $j$ is a monomorphism and the top right and bottom left square are trivially cartesian.
    The pullback pasting lemma implies that $X \times_A Y \to X \times_B Y$ is an equivalence, which was the claim.
\end{rem}

\begin{prop}\label{prop:obstruction-for-preproperads}
    For any two pre-properads%
    \footnote{
        In fact, the proof does not use the Segal condition for $ Q_\bullet$
        or $ P_\bullet$.
    }
    $ Q_\bullet,  P_\bullet \in \pPrpd$ and
    $n \ge 1$ there is a cartesian square:
    \[
    \begin{tikzcd}
        {\Map_{\Fun(\simp^\op_{\le n}, \CMon)}^\eqf( Q_{|\le n}, P_{|\le n})} \ar[d] \ar[r] & 
        {\Map_\calS(Q_n^\el, P_n^\el)} \ar[d] \\
        {\Map_{\Fun(\simp^\op_{\le n-1}, \CMon)}^\eqf(Q_{|\le n-1}, P_{|\le n-1})} \ar[r] &
        {\Map_\calS(L_nQ^\el, P_n^\el)} 
        \underset{\Map_\calS(L_nQ^\el, M_nP)}{\times}
        {\Map_\calS(Q_n^\el, M_nP)} 
    \end{tikzcd}
    \]
\end{prop}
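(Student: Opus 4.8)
The plan is to obtain the square as the equifibered shadow of the classical Reedy obstruction-theoretic pullback square, after translating all mapping spaces of commutative monoids into mapping spaces of spaces.

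First I would record, for arbitrary $n$-truncated simplicial commutative monoids $Q_{|\le n}$ and $P_{|\le n}$ (the Segal condition plays no role here), the Reedy pullback square whose top-left vertex is $\Map_{\Fun(\simp^\op_{\le n},\CMon)}(Q_{|\le n},P_{|\le n})$, whose bottom-left vertex is $\Map_{\Fun(\simp^\op_{\le n-1},\CMon)}(Q_{|\le n-1},P_{|\le n-1})$, whose top-right vertex is $\Map_\CMon(Q_n,P_n)$, and whose bottom-right vertex is $R := \Map(L_nQ,P_n)\times_{\Map(L_nQ,M_nP)}\Map(Q_n,M_nP)$. The content is that extending a map from level $\le n-1$ to level $n$ amounts to choosing $f_n\colon Q_n\to P_n$ compatible with the latching map $L_nQ\to Q_n$ (the degeneracies into level $n$) and the matching map $Q_n\to M_nQ$ (the faces out of level $n$); the map $\Map_\CMon(Q_n,P_n)\to R$ sends $f_n$ to $(f_n\circ(L_nQ\to Q_n),\ (P_n\to M_nP)\circ f_n)$, and the left vertical map to $R$ records the corresponding data of the lower map. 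I expect this to be the main obstacle, as it is the only non-formal input: it can be proved by induction on $n$ from the universal properties of $L_n$ and $M_n$ as relative left/right Kan extensions along $\simp^\op_{\le n-1}\hookrightarrow\simp^\op_{\le n}$, or imported from the theory of Reedy $\infty$-categories.

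Next I would cut this square down to equifibered maps. A level-wise equifibered map on $\simp^\op_{\le n}$ is the same as a level-wise equifibered map on $\simp^\op_{\le n-1}$ together with an equifibered $f_n$, and equifiberedness is detected on connected components (\cref{obs:eqf=equi-fibered}), so each $\Map^\eqf$ is a union of components of the ambient mapping space. Pulling the Reedy square back along the component inclusions at the bottom-left and top-right vertices therefore yields a cartesian square with these two vertices replaced by their equifibered versions. By \cref{lem:L-and-M-eqf} the latching maps and the maps $L_nf,\,L_nf'$ are equifibered, so both of the remaining vertices map into the subspace $R^\eqf := \Map^\eqf(L_nQ,P_n)\times_{\Map(L_nQ,M_nP)}\Map(Q_n,M_nP)\subseteq R$ in which the first factor is restricted to equifibered maps; as this is again a union of components, I may replace the base $R$ by $R^\eqf$ without affecting cartesianness.

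Finally I would translate $R^\eqf$ and the top-right vertex into spaces. For $n\ge 1$ every monoid occurring is free: $P_n$ and $Q_n$ are equifibered over the free monoids $P_1,Q_1$ via an active map $[1]\to[n]$ (using \cref{cor:face-and-degeneracy-are-equifibered} and \cref{cor: equifibered-over-free-equals-spaces}), and $L_nQ$ is free with $(L_nQ)^\el=L_n^\el Q$ by the observation following \cref{lem:L-and-M-eqf}. By \cref{obs:elementary-subspace}, equifibered maps between free monoids are computed on elementary subspaces, giving $\Map^\eqf(Q_n,P_n)\simeq\Map_\calS(Q_n^\el,P_n^\el)$ and $\Map^\eqf(L_nQ,P_n)\simeq\Map_\calS(L_n^\el Q,P_n^\el)$. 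For the two matching vertices, which involve arbitrary maps into $M_nP$, I would instead use the free/forgetful adjunction $\xF\dashv U$ of \cref{lem:basic-properties-for-xF}: since the sources are free this gives $\Map_\CMon(Q_n,M_nP)\simeq\Map_\calS(Q_n^\el,M_nP)$ and $\Map_\CMon(L_nQ,M_nP)\simeq\Map_\calS(L_n^\el Q,M_nP)$, with $M_nP$ now denoting the underlying space. A brief naturality check—comparing the unit of $\xF\dashv U$ with the counit identifying equifibered maps, along the composite $P_n^\el\hookrightarrow P_n\to M_nP$ and along the equifibered latching map—shows that the structure maps of $R^\eqf$ become exactly those of the fiber product in the statement. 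Substituting these identifications into the cartesian square of the previous paragraph produces precisely the asserted square.
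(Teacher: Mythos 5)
Your proposal is correct and follows essentially the same route as the paper: start from the non-equifibered Reedy extension square (which the paper simply cites from \cite[Remark A.2.9.16]{HTT} rather than re-deriving), use \cref{lem:L-and-M-eqf} together with the free--forgetful adjunction and \cref{obs:elementary-subspace} to rewrite the relevant vertices in terms of elementary subspaces, and restrict to the (unions of components of) equifibered maps. The only difference is the order in which you interleave the restriction-to-equifibered step and the translation-to-spaces step, which does not change the argument.
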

\begin{proof}
    If we drop the equifiberedness condition, then \cite[Remark A.2.9.16]{HTT}
    gives us a cartesian square:
    \[
    \begin{tikzcd}
        {\Map_{\Fun(\simp^\op_{\le n}, \CMon)}(Q_{|\le n}, P_{|\le n})} \ar[d] \ar[r] & 
        {\Map_\CMon(Q_n, P_n)} \ar[d] \\
        {\Map_{\Fun(\simp^\op_{\le n-1}, \CMon)}(Q_{|\le n-1}, P_{|\le n-1})} \ar[r] &
        {\Map_\CMon(L_nQ, P_n)} 
        \underset{\Map_\CMon(L_nQ, M_nP)}{\times}
        {\Map_\CMon(Q_n, M_nP)} 
    \end{tikzcd}
    \]
    By \cref{lem:L-and-M-eqf} the map $L_nQ \to Q_n$ is equifibered,
    and hence restricts to a well-defined map $L_n^\el Q \to Q_n^\el$.
    We can therefore use the free-forgetful adjunction to rewrite
    the right vertical map as
    \[
        {\Map_\calS(Q_n^\el, P_n)} \too
        {\Map_\calS(L_n^\el Q, P_n)} 
        \times_{\Map_\calS(L_n^\el Q, M_nP)} 
        {\Map_\calS(Q_n^\el, M_nP)} .
    \]
    Now suppose that the original map $f\colon Q_{|\le n-1} \to P_{|\le n-1}$ we started with was equifibered. 
    Then its extension to $f'\colon Q_{|\le n} \to P_{|\le n}$ 
    is equifibered if and only if the lift $Q_n \to P_n$ is equifibered.
    So to obtain the space of equifibered extensions of $f$
    we need to restrict to the subspace 
    $\Map_\calS(Q_n^\el, P_n^\el) \subset \Map_\calS(Q_n^\el, P_n)$.
    The map $L_nQ \to P_n$ is also equifibered
    by \cref{lem:L-and-M-eqf}, hence
    we can restrict to the subspace 
    $\Map_\calS(L_n^\el Q, P_n^\el) \subset \Map_\calS(L_n^\el Q, P_n)$,
    which yields the desired square.
\end{proof}

The obstruction theory of \cref{prop:obstruction-for-preproperads}
becomes particularly easy when the matching map restricted
to elementaries is a monomorphism:
\begin{cor}\label{cor:obstruction-for-mono}
    In the situation \cref{prop:obstruction-for-preproperads}, 
    suppose further that the composite 
    $P_n^\el \subset P_n \to M_nP$ is a monomorphism in $\calS$.
    Then there is a cartesian square:
    \[
    \begin{tikzcd}
        {\Map_{\Fun(\simp^\op_{\le n}, \CMon)}^\eqf(Q_{|\le n}, P_{|\le n})} \ar[d, hook] \ar[r] \ar[dr, phantom, very near start, "\lrcorner"] & 
        {\Map_\calS(Q_n^\el, P_n^\el)} \ar[d, hook] \\
        {\Map_{\Fun(\simp^\op_{\le n-1}, \CMon)}^\eqf(Q_{|\le n-1}, P_{|\le n-1})} \ar[r] &
        {\Map_\calS(Q_n^\el, M_nP).} 
    \end{tikzcd}
    \]
    where the vertical maps are monomorphisms. 
    Here the bottom map sends $f\colon  Q_{|\le n-1} \to P_{|\le n-1}$
    to the composite of $Q_n^{\el} \subset Q_n \too M_nQ$
    with $M_n(f)\colon  M_nQ \to M_nP$.
\end{cor}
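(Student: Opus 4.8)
The statement is a direct specialization of \cref{prop:obstruction-for-preproperads}, so the strategy is to start from the cartesian square provided there and simplify the lower-right corner under the additional hypothesis that $P_n^\el \subset M_nP$ is a monomorphism. First I would recall the square from \cref{prop:obstruction-for-preproperads}, whose lower-right corner is the pullback
\[
    \Map_\calS(L_nQ^\el, P_n^\el)
    \underset{\Map_\calS(L_nQ^\el, M_nP)}{\times}
    \Map_\calS(Q_n^\el, M_nP).
\]
The goal is to show that under the monomorphism hypothesis this pullback simplifies to $\Map_\calS(Q_n^\el, M_nP)$, i.e.\ that the projection to the second factor is an equivalence. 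Equivalently, the first factor $\Map_\calS(L_nQ^\el, P_n^\el)$ should be equivalent to the base-change $\Map_\calS(L_nQ^\el, M_nP)$ of it, which in turn forces the top-right corner to become a subspace of $\Map_\calS(Q_n^\el, P_n^\el)$ and the vertical maps to become monomorphisms.

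\textbf{Key steps.} The central observation is that $\Map_\calS(-, -)$ preserves monomorphisms in its second variable: since $P_n^\el \hookrightarrow M_nP$ is a monomorphism, post-composition induces a monomorphism $\Map_\calS(L_nQ^\el, P_n^\el) \hookrightarrow \Map_\calS(L_nQ^\el, M_nP)$, and likewise with $Q_n^\el$ in place of $L_nQ^\el$. Using \cref{rem:mono-reminder}, the first map exhibits $\Map_\calS(L_nQ^\el, P_n^\el)$ as a reflective subobject of $\Map_\calS(L_nQ^\el, M_nP)$ inside the slice over the latter. I would then identify the fiber product in the lower-right corner of \cref{prop:obstruction-for-preproperads} as the base-change of the monomorphism $\Map_\calS(L_nQ^\el, P_n^\el) \hookrightarrow \Map_\calS(L_nQ^\el, M_nP)$ along the restriction map
\[
    \Map_\calS(Q_n^\el, M_nP) \xrightarrow{\ (L_n^\el Q \hookrightarrow Q_n^\el)^*\ } \Map_\calS(L_nQ^\el, M_nP).
\]
Base-change of a monomorphism is again a monomorphism, so the projection from the pullback onto $\Map_\calS(Q_n^\el, M_nP)$ is a monomorphism; concretely, its image consists of those maps $Q_n^\el \to M_nP$ whose restriction to $L_nQ^\el$ factors through $P_n^\el$. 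This is exactly the condition needed to replace the lower-right corner of the square by $\Map_\calS(Q_n^\el, M_nP)$ and the top-right corner by $\Map_\calS(Q_n^\el, P_n^\el)$, with both vertical maps now monomorphisms.

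\textbf{Conclusion and the main obstacle.} Once the lower-right corner is simplified, the cartesian square of \cref{prop:obstruction-for-preproperads} becomes exactly the asserted square, and the fact that the right vertical map is a monomorphism (again by the monomorphism hypothesis on $P_n^\el \subset M_nP$) propagates to the left vertical map since monomorphisms are stable under base-change. The bottom horizontal map is, by construction of the square in \cref{prop:obstruction-for-preproperads}, the composite sending $f$ to $M_n(f)$ applied to $Q_n^\el \subset Q_n \to M_nQ$, which matches the stated description. The main subtlety, rather than any hard calculation, is the careful bookkeeping to verify that the replacement of the fiber product by $\Map_\calS(Q_n^\el, M_nP)$ is compatible with the existing square — that is, that the newly projected lower-right corner still receives the top-right corner $\Map_\calS(Q_n^\el, P_n^\el)$ correctly as a base-change, so that the resulting square remains cartesian. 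This amounts to a routine pasting-of-pullbacks argument, but one must take care that the reflective-subcategory description from \cref{rem:mono-reminder} is applied in the slice over the correct object, so that the inclusion $\Map_\calS(Q_n^\el, P_n^\el) \hookrightarrow \Map_\calS(Q_n^\el, M_nP)$ really is the base-change and not merely a comparison map.
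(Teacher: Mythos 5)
Your proposal is correct and follows essentially the same route as the paper: both identify the projection $p$ from the fiber product onto $\Map_\calS(Q_n^\el, M_nP)$ as a monomorphism (being the base-change of the monomorphism induced by post-composition with $P_n^\el \hookrightarrow M_nP$), invoke \cref{rem:mono-reminder} to conclude that composing with $p$ preserves the cartesian square of \cref{prop:obstruction-for-preproperads}, and then deduce that the left vertical map is a monomorphism by base-change from the right one. The only cosmetic slip is your phrasing about the top-right corner "becoming" a subspace of $\Map_\calS(Q_n^\el, P_n^\el)$ — it is unchanged from the proposition — but this does not affect the argument.
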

\begin{proof}
    The square in question can be obtained from the square 
    in \cref{prop:obstruction-for-preproperads}
    by composing the right vertical and bottom horizontal maps with the projection:
    \[
        p: {\Map_\calS(L_nQ^\el, P_n^\el)} 
        \underset{\Map_\calS(L_nQ^\el, M_nP)}{\times}
        {\Map_\calS(Q_n^\el, M_nP)} \too
        {\Map_\calS(Q_n^\el, M_nP)} 
    \]
    By assumption we have that
    ${\Map_\calS(L_nQ^\el, P_n^\el)} \hookrightarrow  {\Map_\calS(L_nQ^\el, M_nP)}$ is a monomorphism and thus so is $p$.
    It follows from \cref{rem:mono-reminder} that the square remains cartesian after post-composing with $p$.
    Finally, note that in the resulting cartesian square the right vertical map
    is a monomorphism since it is given by post-composing with 
    $P_n^\el \into M_nP$. Since the square is cartesian it also
    follows that the left vertical map is a monomorphism.
\end{proof}

\begin{notation}
    For $Q_\bullet \in \Fun(\Dop,\CMon)$ let $Q(-)\colon  \Fun(\Dop, \calS)^\op \to \CMon$ denote the 
    right Kan extension of $\calQ_\bullet$ along the opposite Yoneda embedding.
\end{notation}

\begin{rem}\label{rem:matching}
    The definition of $Q(-)$ recovers the space of $n$-simplices 
    as $Q_n = Q(\Delta^n)$ and the $n$th matching object as
    $M_nQ = Q(\partial \Delta^n)$.
    Here we write $\Delta^n$ for the simplicial set
    $\Map_{\simp}(-, [n])\colon  \Dop \to \Sets$
    and use $\Lambda_k^n \subset \partial \Delta^n \subset \Delta^n$ to denote the $k$th horn and the boundary.
    By construction, the functor $Q(-)$ sends colimits of simplicial spaces to limits.
    In particular, by writing 
    $\partial \Delta^n = \Lambda_k^n \amalg_{\partial \Delta^{n-1}} \Delta^{n-1}$ 
    we get an equivalence:
    \[
        M_nQ = Q(\partial \Delta^n) 
        \xrightarrow{\ \simeq\ }
        Q(\Lambda_k^n) \times_{Q(\partial \Delta^{n-1})} Q(\Delta^{n-1})
        = Q(\Lambda_k^n) \times_{M_{n-1}Q} Q_{n-1}.
    \]
    If $Q_\bullet\colon \Dop \to \CMon$ satisfies the Segal condition
    then for all inner horns $\Lambda_k^n$ with $0<k<n$ the restriction
    $ Q(\Delta^n) \too Q(\Lambda_k^n) $
    is an equivalence.%
    \footnote{
        To see this, note that because it is Segal $Q$ inverts the spine inclusion 
        $\Delta^1 \cup_{\Delta^0} \dots \cup_{\Delta^0} \Delta^1 \hookrightarrow \Delta^n$,
        so it suffices to show that it inverts
        $\Delta^1 \cup_{\Delta^0} \dots \cup_{\Delta^0} \Delta^1 \hookrightarrow \Lambda^n_k$.
        For $n=1$ (and thus $k=1$) this is the identity, and for $n>1$ it can be written as an iterated pushout along inner horn inclusions of lower dimension, so the claim follows by induction.
    }
\end{rem}

\begin{prop}\label{prop:map-into-sub-terminal}
    Let $P$ and $Q$ be pre-properads such that
    $(d_0, d_1)\colon  P^{\el}_1 \too P_0 \times P_0$ is an equivalence. 
    Then restriction to the $0$-skeleton defines an equivalence:
        \[
	        \Map_{\pPrpd}(Q_{\bullet},P_{\bullet})
	        \xrightarrow{\simeq}
            \Map_{\CMon}^\eqf(Q_0, P_0) 
            \simeq
            \Map_{\calS}(Q_0^\el, P_0^\el). 
        \]
\end{prop}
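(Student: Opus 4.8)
The plan is to use the obstruction-theoretic setup developed in \cref{prop:obstruction-for-preproperads} and \cref{cor:obstruction-for-mono} to build the map level by level, showing at each stage that the space of equifibered extensions is contractible once the $0$-skeleton is fixed. First I would observe that a morphism of pre-properads is a natural transformation $Q_\bullet \to P_\bullet$ that is level-wise equifibered, and that such a transformation is determined by compatible extensions over the truncations $\simp^{\op}_{\le n}$. So it suffices to show that the restriction map
\[
    \Map_{\Fun(\simp^\op_{\le n}, \CMon)}^\eqf(Q_{|\le n}, P_{|\le n})
    \too
    \Map_{\Fun(\simp^\op_{\le n-1}, \CMon)}^\eqf(Q_{|\le n-1}, P_{|\le n-1})
\]
is an equivalence for every $n \ge 1$, and that at level $0$ the space of equifibered maps $Q_0 \to P_0$ agrees with $\Map_\calS(Q_0^\el, P_0^\el)$ (which holds by \cref{cor: equifibered-over-free-equals-spaces} and \cref{obs:elementary-subspace}, since $Q_0$ and $P_0$ are free).

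**Verifying the monomorphism hypothesis and reducing to contractible fibers**

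The key input is the hypothesis that $(d_0,d_1)\colon P_1^\el \to P_0 \times P_0$ is an equivalence. I would first check that this lets us apply \cref{cor:obstruction-for-mono} at each level $n \ge 1$. For this I need that the composite $P_n^\el \subset P_n \to M_nP$ is a monomorphism. Using \cref{rem:matching} and the Segal condition on $P_\bullet$, the matching object $M_n P$ can be computed via the boundary $\partial\Delta^n$, and the Segal condition identifies $P_n \simeq P_1 \times_{P_0} \cdots \times_{P_0} P_1$; combined with the assumption that $P_1^\el \to P_0 \times P_0$ is an equivalence, the restriction of each $P_n$ to its elementaries injects into the appropriate matching data. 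The hard part will be checking this monomorphism condition cleanly: I expect one argues that under the hypothesis, an operation $P_1^\el$ is rigidly determined by its source and target in $P_0$, so the ``free composite'' structure of $P_n^\el$ embeds into $M_n P$ with no automorphisms to account for.

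**Showing the extension spaces are contractible**

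Granting the monomorphism hypothesis, \cref{cor:obstruction-for-mono} gives a cartesian square in which the vertical maps are monomorphisms, so the fiber of the restriction map over a fixed $f\colon Q_{|\le n-1}\to P_{|\le n-1}$ is the fiber of $\Map_\calS(Q_n^\el, P_n^\el) \hookrightarrow \Map_\calS(Q_n^\el, M_nP)$ over the image of $f$. I would then argue that this fiber is contractible: the point is that once the lower-dimensional data is fixed, the hypothesis $(d_0,d_1)\colon P_1^\el \simeq P_0\times P_0$ forces a \emph{unique} compatible lift, because $P_n^\el$ maps isomorphically onto the sub-object of $M_n P$ cut out by the lower boundary data. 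Concretely, the map $P_n^\el \to M_n P$ being a monomorphism whose image is exactly the admissible matching tuples means the square identifies the extension space with a singleton over each $f$. Iterating over all $n$ and assembling the tower of equivalences, the full restriction map to the $0$-skeleton is an equivalence, giving the claimed
\[
    \Map_{\pPrpd}(Q_\bullet, P_\bullet)
    \simeq
    \Map_{\CMon}^\eqf(Q_0, P_0)
    \simeq
    \Map_\calS(Q_0^\el, P_0^\el).
\]
I expect the main obstacle to be the careful bookkeeping that $P_n^\el \to M_n P$ is not merely a monomorphism but has precisely the right image, so that each successive extension space is genuinely contractible rather than just truncated; this is where the full strength of the hypothesis on $P_1^\el$ must be leveraged, likely by an inductive argument tracking how the Segal condition propagates the rigidity from level $1$ to all higher levels.
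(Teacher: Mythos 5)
Your overall framework --- induction over skeleta via \cref{prop:obstruction-for-preproperads} and \cref{cor:obstruction-for-mono}, reducing to showing each extension space is contractible --- is exactly the paper's, and your identification of the two things to prove (the monomorphism property of $P_n^\el \to M_nP$ for all $n$, and the existence of a lift over each fixed $f$ on the $(n-1)$-skeleton) is correct. But both of these are left as gaps, and the way you propose to close the second one is not the right idea. You assert that $P_n^\el \to M_nP$ is a monomorphism ``whose image is exactly the admissible matching tuples,'' so that the extension space over each $f$ is a singleton. No such image characterization is established or needed; what must actually be shown is that the \emph{particular} boundary datum $Q_n^\el \to Q(\partial\Delta^n) \to P(\partial\Delta^n)$ arising from an equifibered $f$ lifts through $P_n^\el$. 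You also suggest the hypothesis $(d_0,d_1)\colon P_1^\el \simeq P_0 \times P_0$ must be ``leveraged'' at every level to propagate rigidity upward; in fact it serves only as the base case $n=1$ of the induction, and the inductive step runs on entirely different mechanisms.

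Those mechanisms, which are the actual content of the proof, are: (a) the inner horn equivalence $P(\Delta^n) \xrightarrow{\simeq} P(\Lambda_1^n)$ for $n \ge 2$ coming from the Segal condition, which lets one factor $P_n^\el \subset P_n \simeq P(\Lambda_1^n)$ and deduce by cancellation (together with the inductive hypothesis that $\delta_{n-1}$ is a monomorphism, pulled back along $P(\partial\Delta^n) \to P(\Delta^{\{0,2,\dots,n\}})$) that $\delta_n$ is a monomorphism; and (b) equifiberedness of the inner face $d_1$ (\cref{cor:face-and-degeneracy-are-equifibered}) and of the restriction $f_{|\Lambda_1^n}$, which guarantees that $Q_n^\el$ is carried into $P_{n-1}^\el$ along the long edge and into $P(\Lambda_1^n)^\el \simeq P_n^\el$ along the horn, producing the required lift. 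Without (a) and (b) your argument does not go through: the monomorphism claim for $n \ge 2$ is not a consequence of the level-$1$ hypothesis alone, and the contractibility of the fibers genuinely requires constructing the lift from the horn rather than characterizing an image inside $M_nP$.
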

\begin{proof}
    We will prove inductively for all $n \ge 1$ that:
    \begin{enumerate}[(i)]
        \item the map $\delta_n\colon  P_n^\el \to M_nP = P(\partial \Delta^n)$ is a monomorphism, and 
        \item for every equifibered 
            $f\colon Q_{|\le n-1} \to P_{|\le n-1}$ 
        the following diagram admits a (necessarily unique) dashed lift:
            \[\begin{tikzcd}
                Q(\Delta^n)^\el \ar[r, dashed] \ar[d] & 
                P(\Delta^n)^\el \ar[d, hook, "\delta_n"]\\
                Q(\partial\Delta^n) \ar[r, "f_{|\partial \Delta^n}"] &
                P(\partial \Delta^n)
.            \end{tikzcd}\]
    \end{enumerate}
    Before we begin the induction, let us argue why this implies the proposition.
    By (i) we may use \cref{cor:obstruction-for-mono} to obtain a cartesian square: 
    \[
    \begin{tikzcd}
        {\Map_{\Fun(\simp^\op_{\le n}, \CMon)}^\eqf(Q_{|\le n}, P_{|\le n})} \ar[d, hook] \ar[r] & 
        {\Map_\calS(Q_n^\el, P_n^\el)} \ar[d, hook] \\
        {\Map_{\Fun(\simp^\op_{\le n-1}, \CMon)}^\eqf(Q_{|\le n-1}, P_{|\le n-1})} \ar[r] &
        {\Map_\calS(Q_n^\el, M_nP)} 
    \end{tikzcd}
    \]
    and (ii) guarantees that the left vertical map is not only a monomorphism, but also an equivalence.
    This implies the proposition since $\Map_{\pPrpd}(Q_{\bullet},P_{\bullet}) \simeq \lim_n \Map^{\eqf}_{\Fun(\Delta^{\op \le n},\CMon)}(Q_{|\le n},P_{|\le n})$.
    
    We begin the induction by noting that (i) and (ii) hold for $n=1$ as $\delta_1\colon P_1^\el \to M_1 P = P_0 \times P_0$ was assumed to be an equivalence.
    
    For the inductive step we argue using the diagram below.
    The map $\varphi$ exists because the inner face map $d_1\colon P_n \to P_{n-1}$ is equifibered and hence restricts to elementaries.
    The bottom right square is cartesian by \cref{rem:matching} and $X$ is defined to make the top right square cartesian:
\[\begin{tikzcd}
	{Q(\Delta^n)^\el} & {P(\Delta^n)^\el} \\
	&& X && {P(\Delta^{\{0,2,\dots,n\}})^\el} \\
	{Q(\partial\Delta^n)} && {P(\partial\Delta^n)} && {P(\Delta^{\{0,2,\dots,n\}})} \\
	{Q(\Lambda^n_1)} && {P(\Lambda^n_1)} && {P(\partial\Delta^{\{0,2,\dots,n\}})}
	\arrow["\beta", hook, from=2-3, to=3-3]
	\arrow[from=3-3, to=3-5]
	\arrow[hook, from=2-5, to=3-5]
	\arrow[from=3-3, to=4-3]
	\arrow[from=4-3, to=4-5]
	\arrow["\lrcorner"{anchor=center, pos=0.05}, draw=none, from=3-3, to=4-5]
	\arrow["\lrcorner"{anchor=center, pos=0.05}, draw=none, from=2-3, to=3-5]
	\arrow["\alpha"{description}, hook, from=1-2, to=2-3]
	\arrow["\varphi", from=1-2, to=2-5]
	\arrow[dashed, from=1-1, to=1-2]
	\arrow[from=1-1, to=3-1]
	\arrow["{f_{|\partial \Delta^n}}", from=3-1, to=3-3]
	\arrow[from=3-1, to=4-1]
	\arrow["{f_{|\Lambda_1^n}}", from=4-1, to=4-3]
	\arrow["\gamma"{pos=0.3}, shift left=3, curve={height=-18pt}, hook, from=2-3, to=4-3]
	\arrow["{\delta_{n-1}}", shift left=3, curve={height=-18pt}, hook, from=2-5, to=4-5]
	\arrow["{\delta_n}"'{pos=0.6}, shift right=1, hook', from=1-2, to=3-3]
	\arrow[from=2-3, to=2-5]
	\arrow[from=3-5, to=4-5]
	\arrow[curve={height=12pt}, dotted, from=1-1, to=2-3]
\end{tikzcd}\]
    To prove (i) we first note that $\delta_{n-1}$ is a monomorphism by hypothesis (even when $n=2$) and hence its pullback $\gamma$ is also a monomorphism.
    Similarly, $\beta$ is a monomorphism because $P_{n-1}^\el \subset P_{n-1}$ is.
    The composite $\gamma \circ \alpha$ is a monomorphism since it can be factored as $P_n^\el \subset P_n \to P(\Lambda_1^n)$ where the second map is an equivalence because $P_\bullet$ is a Segal space and $n \ge 2$.
    By cancellation, we conclude that $\alpha$ is a monomorphism and thus so is $\delta_n = \beta \circ \alpha$, proving (i).
    
    For (ii) we need to show that for any equifibered 
       $f\colon Q_{|\le n-1} \to P_{|\le n-1}$ 
    the dashed lift in the diagram exists, making the square with $\delta_n$ commute.
    The map $Q_n^\el \to P(\Delta^{\{0,2,\dots,n\}})$ can be factored as 
    \[
        Q_n^\el \subset Q_n \xto{d_1} Q_{n-1} \xto{f_{n-1}} P_{n-1}
    \]
    where the latter two maps are equifibered, and so it lands in $P_{n-1}^\el$.
    This provides us with the dotted lift in the diagram.
    To lift the dotted map against $\alpha$ it suffices to do so after composing with $\gamma$, since $\gamma$ is a monomorphism.
    It remains to observe that the map $Q_n^\el \simeq Q(\Lambda_1^n)^\el \subset Q(\Lambda_1^n) \to P(\Lambda_1^n)$ factors through $P(\Lambda_1^n)^\el \simeq P_n^\el$ since $f_{|\Lambda_1^n}\colon Q(\Lambda_1^n) \to P(\Lambda_1^n)$ is equifibered.
\end{proof}

We are now ready to show that $\xN_\bullet\Csp$ is final in $\pPrpd$.
\begin{proof}[Proof of \cref{thm:Csp-is-final}] 
    The commutative monoid $\xN_1\Csp = \Fun(\Tw[1], \Fin)^\simeq$
    is free on cospans of the form $A \to \ast \leftarrow B$. 
    In particular the composite
    \[
        (\xN_1\Csp)^\el \hookrightarrow 
        \xN_1\Csp = \Fun(\Tw[1], \Fin)^\simeq 
        \xrightarrow{(d_0,d_1)} \Fin^\simeq \times \Fin^\simeq \simeq \xN_0\Csp \times \xN_0\Csp
    \]
    is an equivalence. 
    Hence, we may apply \cref{prop:map-into-sub-terminal} to
    conclude that for any pre-properad $Q_\bullet$ restriction yields an equivalence
    \[
	    \Map_{\pPrpd}(Q_{\bullet},\xN_\bullet\Csp)
	    \xrightarrow{\simeq}
        \Map_{\calS}(Q_{0}^\el, \xN_0^\el\Csp) \simeq *
    \]
    because $(\xN_0\Csp)^\el = *$.
    This shows that $\xN_\bullet\Csp$ is a terminal object in $\pPrpd$, as promised.
\end{proof}
\section{\texorpdfstring{$\infty$}{Infinity}-Properads as \texorpdfstring{$\L$}{L}-Segal spaces}\label{sec:segal}
In this section we compare our notion of $\infty$-properads to the (complete) Segal $\infty$-properads of Hackney--Robertson--Yau.
The main result of this section is \cref{thm:segal-envelope}, where we construct an envelope functor
\[
    \Env_{\L} \colon \Seg_{\L}(\calS) \too \Prpd \subseteq \SM    
\]
for a certain algebraic pattern $\L$ of ``level graphs'' introduced by Chu--Hackney \cite{CH22} (though they consider opposite category $\bfL_{\rm CH} = \L^\op$), and show that its right adjoint defines a fully faithful embedding $\Prpd \hookrightarrow \Seg_{\L}(\calS)$ whose essential image is characterized by a completeness condition.

This section generalizes work of Haugseng--Kock \cite[\S 3, \S 4]{HK21}, who prove the result in the case of monic $\infty$-properads (i.e.~$\infty$-operads).
While working with $\infty$-properads does add several complications, we owe many ideas to them.

More precisely, we will construct the envelope functor as the composition
\[
        \Env_{\L}\colon 
        \Seg_{\L}(\calS) \xhookrightarrow{\ \varphi^*\ }
        \Seg_{\bmC}(\calS) 
        \xrightarrow{\ q_!\ } 
        \Seg_{\Dop \times \Fin_*}(\calS) \simeq \Seg_{\Dop}(\CMon) \xrightarrow{L_C} \SM
\]
where $q \colon \bmC \to \Dop \times \Fin_\ast$ is the left fibration classifying $\xN_\bullet \Csp \in \Seg_{\Dop}(\CMon) \subseteq \Fun(\Dop \times \Fin_\ast,\calS)$ and where $\varphi \colon \bmC \to \L$ identifies $\L$ with the localization of $\bmC$ by $q^{-1}\left(\simp^{\op,\simeq} \times \Fin_*^\act\right)$.
This definition makes $\Env$ quite computable, and concretely we will show in \cref{cor:act-Env-formula} that the spaces of objects and morphisms in $\Env(X)$ can be computed as colimits over certain groupoids
\begin{align*}
    \xN_0\Env(X) &\simeq \colim_{A\in \Fin^\simeq} X([0],A_+) &
    &\text{and} &
    \xN_1\Env(X) &\simeq \colim_{A\in \Fun(\Tw[1], \Fin)^\simeq} X([1],A_+).
\end{align*}
We begin with a quick review on algebraic patterns as developed in \cite{CH19}.

\begin{defn}[{\cite[Definition 2.1]{CH19}}]
	An \hldef{algebraic pattern} is an $\infty$-category $\calO$ equipped with the following structure
	\begin{enumerate}
		\item Subcategories $\hldef{\calO^\xint}, \hldef{\calO^\act} \subset \calO$ of ``inert'' and ``active'' morphisms, which form a factorization system $(\hldef{\calO^{\xint}},\hldef{\calO^{\act}})$ on $\calO$. 
		\item A full subcategory $\hldef{\calO^{\el}} \subseteq \calO^{\xint}$ of \hldef{elementary objects}.
	\end{enumerate}
	A morphism $f\colon \calO \to \calP$ of algebraic patterns is a functor preserving all of the above, i.e.~it sends inert (respectively active) morphisms to inert (respectively active) morphisms and elementary objects to elementary objects.
\end{defn}

\begin{defn}[{\cite[Definition 2.7]{CH19}}]
	Let $\calO$ be an algebraic pattern and $\calC$ an $\infty$-category.
	An \hldef{$\calO$-Segal object in $\calC$} is a functor $F\colon  \calO \to \calC$ satisfying the \hldef{Segal condition}: for every $x \in \calO$ the comparison map $F(x) \to \lim_{e \in \calO^{\el}_{x/}} F(e)$ is an equivalence.%
    \footnote{
        If $\calC$ is not assumed to have limits, the Segal condition says that the diagram 
        $(\calO^\el_{x/})^\lhd \to \calO \to \calC$ is a limit diagram.
    }
	Here $\calO^{\el}_{x/} \subset (\calO^\xint)_{x/}$ denotes the full subcategory on the elementary objects under $x$.
	We denote by $\hldef{\Seg_{\calO}(\calC)} \subseteq \Fun(\calO,\calC)$ the full subcategory of $\calO$-Segal objects.
\end{defn}

\begin{example}
    The category $\Dop$ has a factorization system where the inert maps are the $([n] \leftarrow [m]:\!\lambda)$ such that $\lambda(i) - \lambda(j) = i-j$ for all $i$ and $j$,
    and the active maps are those $\lambda$ satisfying $\lambda(0)=0$ and $\lambda(m) = n$.
    Picking $[0]$ and $[1]$ as the elementary objects we get an algebraic pattern which we denote by $\Dop$.
    Note that $\Dop$-Segal objects in $\calS$ are precisely the Segal spaces in the sense of
    Rezk, see \cref{subsec:eqf-sm-functors}.
\end{example}

\begin{example}
    The category $\Fin_\ast$ has a factorization system where we declare $f\colon  A_+ \to B_+$ to be inert if its restriction to $A \setminus f^{-1}(\ast) \to B$ is bijective and active if the preimage of the base point contains only the base point.
    Recall that for $n \in \bbN$ we denote
    \(
        n_+ \coloneq \{1,\dots,n\}_+ = \{1, \dots, n, \infty\}.
    \)
    Picking $1_+$ as the only elementary object gives an algebraic pattern that we denote by $\Fin_\ast$.
    By definition, we have $\Seg_{\Fin_\ast}(\calS) = \CMon \subseteq \Fun(\Fin_\ast,\calS)$.
\end{example}

\subsection{A pattern for equifibered symmetric monoidal \texorpdfstring{$\infty$-categories}{infinity categories} over \texorpdfstring{$\Csp$}{cospan}}

In this subsection we construct an algebraic pattern $\bmC$ such that $\bmC$-Segal spaces are (up to completion) symmetric monoidal $\infty$-categories equipped with a symmetric monoidal functor to $\Csp$.
We then give criteria for what a localization $\bmC \to \L$ needs to satisfy such that complete $\L$-Segal spaces are symmetric monoidal $\infty$-categories \emph{equifibered} over $\Csp$.

\subsubsection{A pattern for Segal objects over a fixed base}
For each algebraic pattern $\calP$ and $\calP$-Segal space $X\colon \calP \to \calS$ there is a pattern structure on the unstraightening $\calP_X := \Un_\calP(X)$,
and Haugseng--Kock \cite{HK21} show that $\calP_X$-Segal spaces are $\calP$-Segal spaces equipped with a map to $X$.
We recall this construction here, as we shall need it later.

\begin{const}\label{constr:P_X-pattern}
    Let $\calP$ be an algebraic pattern and let $X\colon  \calP \to \calS$ be a functor with unstraightening $\pi\colon  \calP_X \to \calP$. 
    We consider $\calP_X$ as an algebraic pattern where a morphism is active or inert if and only if its image in $\calP$ is active or inert, respectively,
    and where an object is elementary if and only if its image in $\calP$ is elementary.%
    \footnote{This pattern structure can also be characterized as the maximal structure for which $\pi\colon  \calP_X \to \calP$ is a morphism of patterns.}
\end{const}

\begin{rem}\label{rem: functors-on-unstraightening}
    Let $X\colon  \calB \to \calS$ be a functor and let $\pi\colon  \calB_X \to \calB$ denote its unstraightening.
    The objects of $\calB_X$ are pairs $(b,x)$ where $b \in \calB$ and $x \in X(b)$.
    By \cite[Proposition 6.5.7]{cisinski} we have $\pi_!(\ast) \simeq X$ and hence left Kan extension along $\pi$ defines a functor
    $\pi_!\colon \Fun(\calB_X, \calS) \to \Fun(\calB, \calS)_{/X}$.
    Moreover, if $F\colon  \calB_X \to \calS$ is any functor and $b \in \calB$, the fiber of the natural map $(\pi_! F)(b) \to X(b)$ over a point $x \in X(b)$ is canonically equivalent to $F(b,x)$. 
\end{rem}

\begin{lem}[{\cite[Corollary 9.8]{GHN17} and \cite[Proposition 3.2.5]{HK21}}]\label{lem:P_X-Segal-spaces}
    Let $\calP$ be an algebraic pattern, $X\colon  \calP \to \calS$ a Segal space, and $\pi\colon  \calP_X \to \calP$ its unstraightening.
    Then left Kan extension along $\pi$ defines a commutative square:
     \[
     \begin{tikzcd}
        \Seg_{\calP_X}(\calS) \ar[d, hook] \ar[r, "\pi_!", "\simeq"'] &
        \Seg_{\calP}(\calS)_{/X} \ar[d, hook]\\
        \Fun(\calB_X,\calS) \ar[r, "\pi_!", "\simeq"'] &
        \Fun(\calB,\calS)_{/X}
     \end{tikzcd}
     \]
     where the horizontal functors are equivalences.
\end{lem}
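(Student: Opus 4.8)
The plan is to reduce the statement to the non-Segal equivalence and then match the two Segal conditions fiberwise. First I would establish the bottom horizontal equivalence $\pi_!\colon \Fun(\calP_X,\calS) \xrightarrow{\simeq} \Fun(\calP,\calS)_{/X}$. This is exactly \cite[Corollary 9.8]{GHN17}: since $\pi\colon \calP_X \to \calP$ is the left fibration classifying $X$, we have $\pi_!(\ast) \simeq X$ by \cref{rem: functors-on-unstraightening}, so left Kan extension refines canonically to a functor into the slice over $X = \pi_!(\ast)$, and this refinement is an equivalence. I would also record the pointwise formula from \cref{rem: functors-on-unstraightening}: for $F\colon \calP_X \to \calS$, $b \in \calP$, and $x \in X(b)$, the fiber of $(\pi_! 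F)(b) \to X(b)$ over $x$ is canonically $F(b,x)$. The square then commutes by construction, since the top horizontal functor will be defined as the restriction of the bottom one; the only real content is that this restriction lands in, and is essentially surjective onto, the correct full subcategory of Segal objects.

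The heart of the proof is a comparison of Segal conditions, and for this I would first analyze the elementary objects under a given $(b,x) \in \calP_X$. By the pattern structure of \cref{constr:P_X-pattern} a morphism out of $(b,x)$ is inert precisely when its image is, and an object is elementary precisely when its image is; since $\pi$ is a left fibration, every inert $b \to e$ with $e \in \calP^{\el}$ admits an essentially unique inert lift $(b,x) \to (e,y)$, with $y$ the image of $x$. Hence $\pi$ induces an equivalence $(\calP_X)^{\el}_{(b,x)/} \xrightarrow{\simeq} \calP^{\el}_{b/}$ of indexing categories.

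Writing $G := \pi_! F$, I would then rewrite the target of the $\calP_X$-Segal map at $(b,x)$. Using the fiber formula and the equivalence of indexing categories, it becomes $\lim_{e \in \calP^{\el}_{b/}} \mathrm{fib}_{x_e}\!\big(G(e) \to X(e)\big)$, where $x_e$ denotes the image of $x$. Since fibers are limits and limits commute with limits in $\calS$, this identifies with $\mathrm{fib}_{(x_e)}\!\big(\lim_e G(e) \to \lim_e X(e)\big)$; and because $X$ is a $\calP$-Segal space, the comparison $X(b) \xrightarrow{\simeq} \lim_e X(e)$ sends $x$ to the compatible system $(x_e)$, so the target is $\mathrm{fib}_x\!\big(\lim_e G(e) \to X(b)\big)$. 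Thus the $\calP_X$-Segal map at $(b,x)$ is the map on fibers over $x$, taken over $X(b)$, induced by the $\calP$-Segal comparison $G(b) \to \lim_e G(e)$. Since a map in $\calS$ over a base $X(b)$ is an equivalence if and only if it is an equivalence on every fiber, ranging over all $x \in X(b)$ shows that $F$ satisfies the $\calP_X$-Segal condition over $b$ iff $G(b) \to \lim_e G(e)$ is an equivalence, i.e.\ iff $G$ satisfies the $\calP$-Segal condition at $b$. Letting $b$ vary gives that $\pi_!$ restricts to the desired equivalence $\Seg_{\calP_X}(\calS) \simeq \Seg_{\calP}(\calS)_{/X}$ (noting that the latter is the full subcategory of $\Fun(\calP,\calS)_{/X}$ on objects $G \to X$ with $G$ Segal, as $X$ is already Segal).

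The main obstacle I expect is the bookkeeping in this last computation: making precise that the limit defining the comparison target commutes with the formation of fibers, and that the resulting basepoint is indeed the image of $x$ under the Segal equivalence for $X$. Once the indexing equivalence $(\calP_X)^{\el}_{(b,x)/} \simeq \calP^{\el}_{b/}$ and the fiber formula are in place, the remainder is a diagram chase whose only subtlety is keeping careful track of which maps live over $X(b)$, so that the principle ``equivalence iff fiberwise equivalence'' legitimately applies.
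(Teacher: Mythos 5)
The paper does not prove this lemma itself—it cites \cite[Corollary 9.8]{GHN17} for the bottom equivalence and \cite[Proposition 3.2.5]{HK21} for the compatibility with Segal conditions—and your argument is a correct reconstruction of exactly that standard proof: the GHN equivalence plus the fiber formula of \cref{rem: functors-on-unstraightening}, the identification $(\calP_X)^{\el}_{(b,x)/} \simeq \calP^{\el}_{b/}$ coming from $\pi$ being a left fibration that detects inerts and elementaries, and the fiberwise comparison of Segal maps over $X(b)$ using that $X$ is itself Segal. All the steps check out, including the commutation of the limit over $\calP^{\el}_{b/}$ with the formation of fibers, so there is nothing to add beyond the naturality bookkeeping you already flag.
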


\subsubsection{Symmetric monoidal $\infty$-categories over $\Csp$}

In \cref{const:cospans-2-cat} we recalled the construction of the symmetric monoidal double-category of cospans,
which we can think of as a functor
\[
    \mfr{C} \in \Seg_{\Dop}(\SM) \simeq \Seg_{\Dop \times \Fin_*}(\Cat) \subset \Fun(\Dop \times \Fin_*, \Cat).
\]
We will denote the unstraightening of this functor by 
\[
    \hldef{\widehat{q}}\colon \hldef{\widehat{\bmC}} \too \Fin_* \times \Dop.
\]
Composing with the projection to $\Dop$, we obtain a cocartesian fibration $\widehat{\bmC} \to \Dop$ whose fibers $\widehat{\bmC}_n$ are
the cocartesian symmetric monoidal categories of pushout preserving functors $\Tw[n] \to \Fin$:
\[
    \widehat{\bmC}_n = \Fun^{\rm po}(\Tw[n], \Fin)^\sqcup.
\]
Here, for any $\infty$-category $\calC$, $\calC^\sqcup \to \Fin_*$ denotes Lurie's cocartesian $\infty$-operad \cite[\S 4.3.2]{HA}.
If $\calC$ has finite coproduct this is a cocartesian fibration over $\Fin_*$ that encodes the cocartesian monoidal structure on $\calC$.

Let now $\hldef{q}\colon \hldef{\bmC} \to \Dop \times \Fin_*$ be the maximal left fibration in $\widehat{q}$.
By \cref{defn:Csp}, $q$ is the unstraightening of the Segal object $\xN_\bullet \Csp \in \Seg_{\Dop \times \Fin_*}(\calS)$.
We will give a more combinatorial description of $\bmC$ below.
We may therefore use \cite[Corollary 3.3.4]{HK21} (as recalled in \cref{lem:P_X-Segal-spaces}) to conclude that $\bmC$-Segal spaces are (up to completion) symmetric monoidal $\infty$-categories equipped with a functor to $\Csp$.

\begin{cor}\label{cor:Segal_bmC}
    Left Kan extension along $q\colon \bmC \to \Dop \times \Fin_*$ induces an equivalence of $\infty$-categories
    \[
    \Seg_{\bmC}(\calS) 
    \simeq \Seg_{\Dop \times \Fin_*}(\calS)_{/\St_{\Dop \times \Fin_*}(\bmC)}
    \simeq \Seg_{\Dop}(\CMon)_{/\xN_\bullet(\Csp)}.
    \]
\end{cor}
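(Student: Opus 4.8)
The statement to prove is \cref{cor:Segal_bmC}, which asserts that left Kan extension along $q\colon \bmC \to \Dop \times \Fin_*$ induces equivalences
\[
    \Seg_{\bmC}(\calS)
    \simeq \Seg_{\Dop \times \Fin_*}(\calS)_{/\St_{\Dop \times \Fin_*}(\bmC)}
    \simeq \Seg_{\Dop}(\CMon)_{/\xN_\bullet(\Csp)}.
\]

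The plan is to obtain this as a direct application of the general machinery for patterns sliced over a Segal object, namely \cref{lem:P_X-Segal-spaces}, together with the explicit identification of the straightening of $q$ provided by \cref{lem:explicit-sm-structure-of-Csp}. First I would take $\calP = \Dop \times \Fin_*$ with its product algebraic pattern structure, so that $\calP$-Segal spaces are exactly $\Seg_{\Dop}(\CMon)$; this follows by combining the pattern structures on $\Dop$ and on $\Fin_*$ recalled in the two examples preceding this subsection, since a functor $\Dop \times \Fin_* \to \calS$ satisfies the Segal condition for the product pattern precisely when it is levelwise a commutative monoid and satisfies the simplicial Segal condition, i.e.\ lies in $\Seg_{\Dop}(\CMon)$. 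I would then set $X = \xN_\bullet(\Csp)\colon \Dop \times \Fin_* \to \calS$, which is a Segal space by \cref{obs:Csp-complete} (indeed a complete one), so that the hypotheses of \cref{lem:P_X-Segal-spaces} are met.

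Next I would observe that the algebraic pattern $\bmC$ defined at the start of this subsection is, by construction, exactly the pattern $\calP_X = \Un_{\Dop \times \Fin_*}(X)$ of \cref{constr:P_X-pattern}. The underlying \category{} agrees because \cref{lem:explicit-sm-structure-of-Csp} identifies the left fibration $q\colon \bmC \to \Dop \times \Fin_*$ with the unstraightening of $\xN_\bullet(\Csp)$; and the pattern structure on $\bmC$ agrees with the induced one because, unwinding the definitions, a morphism of $\bmC$ is declared inert (resp.\ active) exactly when its restriction to $\Tw[n]$ is inert (resp.\ when the underlying simplicial and pointed-finite-set data are active), which is precisely the condition that its image under $q$ in $\Dop \times \Fin_*$ is inert (resp.\ active); similarly an object of $\bmC$ is elementary if and only if its image is. With this identification in place, \cref{lem:P_X-Segal-spaces} applied to $(\calP, X) = (\Dop \times \Fin_*, \xN_\bullet(\Csp))$ yields that $\pi_! = q_!$ restricts to an equivalence
\[
    \Seg_{\bmC}(\calS) \xrightarrow{\ \simeq\ } \Seg_{\Dop \times \Fin_*}(\calS)_{/\xN_\bullet(\Csp)},
\]
which is the first displayed equivalence (noting $\St_{\Dop \times \Fin_*}(\bmC) \simeq \xN_\bullet(\Csp)$).

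Finally, the second equivalence is just the reindexing $\Seg_{\Dop \times \Fin_*}(\calS) \simeq \Seg_{\Dop}(\CMon)$ identified in the previous paragraph, applied to the slice over $\xN_\bullet(\Csp)$; since this is an equivalence of \categories{} it restricts to an equivalence on slice categories over any fixed object, in particular over $X = \xN_\bullet(\Csp)$. Composing the two gives the claimed chain of equivalences.

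The main obstacle I anticipate is not the abstract application of \cref{lem:P_X-Segal-spaces}, which is essentially formal once the inputs are in place, but rather verifying carefully that the ad hoc pattern structure imposed on $\bmC$ in its definition genuinely coincides with the induced pattern structure $\calP_X$ of \cref{constr:P_X-pattern} under the equivalence of \cref{lem:explicit-sm-structure-of-Csp}. This requires matching three pieces of data — the inert morphisms, the active morphisms, and the elementary objects — against their images in $\Dop \times \Fin_*$, and checking that ``restriction to $\Tw[n]$ is inert'' together with ``lands in $\Fin_*^\act$'' translates correctly through the straightening/unstraightening correspondence to the product factorization system and the product elementary objects $\{[0],[1]\} \times \{\langle 1 \rangle\}$. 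This bookkeeping is routine but is where all the actual content lies, and it is where I would spend the bulk of the verification.
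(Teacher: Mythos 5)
Your proposal is correct and is exactly the paper's argument: the corollary is deduced directly from \cref{lem:P_X-Segal-spaces} applied to $(\calP,X)=(\Dop\times\Fin_*,\xN_\bullet(\Csp))$, using \cref{lem:explicit-sm-structure-of-Csp} to identify $q\colon\bmC\to\Dop\times\Fin_*$ with the unstraightening of $\xN_\bullet(\Csp)$, together with the identification $\Seg_{\Dop\times\Fin_*}(\calS)\simeq\Seg_{\Dop}(\CMon)$. The pattern-structure bookkeeping you flag as the main obstacle is even lighter than you fear, since the pattern structure on $\bmC$ is the one induced from $\Dop\times\Fin_*$ via \cref{constr:P_X-pattern} by fiat.
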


To describe $\bmC$ concretely, we need the following auxiliary lemma about cocartesian structures on functor categories.\footnote{
    Using \cite[Proposition 5.3.2, 5.3.6, \& 5.3.11]{envelopes}, one can interpret this lemma as saying that the functor $(-)^\sqcup \colon \Cat \to \Op$
    that sends an $\infty$-category to its cocartesian $\infty$-operad preserves cotensoring with $\infty$-categories, but we will not use this.
}

\begin{lem}\label{lem:cocart-operad-Fun}
    For any two $\infty$-categories $\calC$ and $J$ there is a canonical cartesian square of $\infty$-categories:
    \[\begin{tikzcd}
        \Fun(J, \calC)^\sqcup \ar[r] \ar[d] \ar[dr, phantom, very near start, "\lrcorner"] &  
        \Fun(J, \calC^\sqcup) \ar[d] \\
        \Fin_* \ar[r, "\Delta"] &
        \Fun(J, \Fin_*).
    \end{tikzcd}\]
\end{lem}
\begin{proof}
    For this proof we work in the model of quasicategories as in \cite{HA}.
    Freely using the definitions from \cite[\S2.4.3]{HA}, we have for every $K \in \mrm{sSet}_{/N(\Fin_*)}$ bijections:
    \begin{align*}
        \mrm{Hom}_{/N(\Fin_*)}(K, \Fun(J, \calC)^\sqcup) 
        & \cong \mrm{Hom}(K \times_{N(\Fin_*)} N(\Gamma^*), \Fun(J, \calC)) \\
        & \cong \mrm{Hom}((K \times J) \times_{N(\Fin_*)} N(\Gamma^*), \calC) \\
        & \cong \mrm{Hom}_{/N(\Fin_*)}(K \times J, \calC^\sqcup) \\
        & \cong \mrm{Hom}_{/N(\Fin_*)^J}(K, (\calC^\sqcup)^J) \\
        & \cong \mrm{Hom}_{/N(\Fin_*)}(K, \Delta^*(\calC^\sqcup)^J) 
    \end{align*}
    Here $\Delta^*$ denotes the restriction along the diagonal functor $\Delta\colon N(\Fin_*) \to N(\Fin_*)^J$.
    Therefore, $\Fun(J, \calC)^\sqcup$ is isomorphic, as a quasicategory, to the pullback $N(\Fin_*) \times_{N(\Fin_*)^J} (\calC^\sqcup)^J$.
\end{proof}

\begin{cor}\label{cor:cocart-Fun-Fin}
    For any $\infty$-category $J$, Lurie's cocartesian $\infty$-operad $\Fun(J, \Fin)^\sqcup$ is equivalent to the full subcategory
    \[
        \Fun^\act(J^\rhd, \Fin_*) \subset \Fun(J^\rhd, \Fin_*)
    \]
    on those functors that send all morphisms to active morphisms.
    In particular, evaluation at the cone point
    \[
        \ev_\infty \colon \Fun^\act(J^\rhd, \Fin_*) \too \Fin_*
    \]
    is a cocartesian fibration 
    and a morphism $\alpha \colon F \to G$ is a cocartesian edge if and only if $\alpha_j$ is inert for all $j \in J$.
\end{cor}
\begin{proof}
    The cocartesian symmetric monoidal structure on $\Fin_\ast$ is given by the cocartesian fibration $\Ar^\act(\Fin_\ast) \to \Fin_\ast$
    and cocartesian edges are inert natural transformations.
    By \cref{lem:cocart-operad-Fun} we can therefore identify $\Fun(J, \Fin_*)^\sqcup$ as the full subcategory of
    \[
        \Fun(J^\rhd, \Fin_*) 
        \simeq \Fun(J \times [1] \sqcup_{J \times \{1\}} \{\infty\}, \Fin_*)
        \simeq \Fun(J, \Ar(\Fin_*)) \times_{\Fun(J, \Fin_*)} \Fin_* 
    \]
    on those functors that send all morphisms to the cone point $j \to \infty$ to active morphisms.
    By cancellation of active morphisms these are the functors that send all morphisms to active morphisms.
    The cocartesian edges for 
    $\Fun(J, \Ar^\act(\Fin_*)) \to \Fun(J, \Fin_*)$
    are those natural transformations that are pointwise cocartesian \cite[Proposition 3.1.2.1]{HTT}, so restricting them along $\Delta$ we obtain the desired description of cocartesian edges.
\end{proof}

Restricting the cocartesian $\infty$-operad $\Fun(\Tw[n], \Fin)^\sqcup$ as described in \cref{cor:cocart-Fun-Fin} to the pushout preserving functors we see that 
$\widehat{\bmC}_n$ is equivalent to the full subcategory of $\Fun(\Tw[n]^\rhd, \Fin_*)$ on those functors that send all morphisms to active morphisms and whose restriction to $\Tw[n]$ preserves pushouts:
\[
    \widehat{\bmC}_n \simeq \Fun^{\rm po, \act}(\Tw[n]^\rhd, \Fin_*).
\]
The cocartesian edges (over $\Fin_*$) are still the pointwise inert natural transformations on $\Tw[n]$.
Unstraightening this over $\Dop$ we obtain a description of $\widehat{\bmC}$.

\begin{cor}\label{cor:boldC-description}
    The $\infty$-category $\widehat{\bmC}$ is equivalent to a $1$-category and admits the following description:
    \begin{itemize}
        \item 
        Objects are pairs $([n],A: \Tw[n]^\rhd \to \Fin_\ast)$ such that $A$ sends all morphism to active morphisms and such that $A|_{\Tw[n]}$ preserves pushouts,
        \item 
        Morphisms $([n],A) \to ([m],B)$ are pairs $(\lambda \colon [m] \leftarrow [n],\alpha\colon  \lambda^* A \to B)$.
    \end{itemize}
    Furthermore, a morphism $(\lambda,\alpha)$ is $\widehat{q}$-cocartesian if and only $\alpha$ is pointwise inert on $\Tw[n]$.
\end{cor}

\subsubsection{Equifiberedness through localizing}
So far we have found an algebraic pattern $\bmC$ such that $\bmC$-Segal spaces are (up to completion) symmetric monoidal $\infty$-categories over $\Csp$.
We would now like to modify this pattern in such a way that only the \emph{equifibered} symmetric monoidal $\infty$-categories over $\Csp$ are Segal spaces over it.
In this subsection we give an abstract criterion for how this can be achieved by passing to a localization of the pattern, assuming such a localization exists.

\begin{lem}\label{lem:abstract-eqf-pattern}
    Let $X\colon  \calB \too \calS$ be a functor and let $\pi\colon  \calB_X \too \calB$ denote its unstraightening. 
    Let $\calW \subseteq \calB$ be a wide subcategory and denote $\calW_X:=\pi^{-1}(\calW) \subseteq \calB_X$.
    Then the fully faithful functor
    \[
        \Fun(\calB_X[\calW_X^{-1}],\calS) 
        \hookrightarrow
        \Fun(\calB_X,\calS)
        \stackrel{(\ref{lem:P_X-Segal-spaces})}{\simeq}
        \Fun(\calB,\calS)_{/X}
    \]
    has as essential image precisely the \hldef{$\calW$-equifibered} morphisms,
    i.e.~those $(Y \to X)$ such that 
    \[
        \begin{tikzcd}
            Y(b) \ar[r, "Y(\omega)"] \ar[d] & Y(b') \ar[d]\\
            X(b) \ar[r, "X(\omega)"] & X(b')
        \end{tikzcd}
    \]
    is cartesian for all $(\omega\colon b \to b') \in \calW$.
\end{lem}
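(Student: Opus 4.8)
The plan is to identify the two full subcategories of $\Fun(\calB_X,\calS)$ that are in play and to compare them pointwise, using that $\pi\colon \calB_X \to \calB$ is a \emph{left} fibration. This structural fact holds precisely because $X$ takes values in $\calS$ rather than in $\Cat$; consequently every morphism of $\calB_X$ is $\pi$-cocartesian, and a morphism lies in $\calW_X=\pi^{-1}(\calW)$ exactly when its image under $\pi$ lies in $\calW$.

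First I would invoke the universal property of the Dwyer--Kan localization: restriction along $\calB_X \to \calB_X[\calW_X^{-1}]$ is fully faithful with essential image the functors $F\colon \calB_X \to \calS$ that send every morphism of $\calW_X$ to an equivalence in $\calS$ (see e.g.\ \cite{cisinski}). Thus the full faithfulness in the statement is automatic, being the composite of a fully faithful functor with the equivalence $\pi_!$, and it remains only to compute the essential image: I must show that, under the equivalence $\pi_!\colon \Fun(\calB_X,\calS)\simeq \Fun(\calB,\calS)_{/X}$ of \cref{lem:P_X-Segal-spaces}, the functors inverting $\calW_X$ correspond exactly to the $\calW$-equifibered maps $Y \to X$.

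Next I would rephrase the cartesianness condition fiberwise. Writing $Y:=\pi_! F$ with structure map $p\colon Y \to X$, \cref{rem: functors-on-unstraightening} supplies, for each $b \in \calB$ and each $x \in X(b)$, a canonical equivalence between the fiber of $p_b\colon Y(b)\to X(b)$ over $x$ and the value $F(b,x)$. Since a commuting square of spaces is cartesian if and only if it induces an equivalence on all fibers of its vertical maps, for a fixed $\omega\colon b \to b'$ in $\calW$ the naturality square in the statement is cartesian if and only if, for every $x\in X(b)$, the comparison map $F(b,x)\to F\bigl(b',X(\omega)(x)\bigr)$ is an equivalence.

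Finally I would match these comparison maps with the morphisms of $\calW_X$. Because $\pi$ is a left fibration, $\omega$ admits, for each $x\in X(b)$, an essentially unique lift $(b,x)\to (b',X(\omega)(x))$ in $\calB_X$; every morphism of $\calB_X$ over $\omega$ is of this form, and $F$ applied to it is exactly the fiber comparison map above. Letting $\omega$ range over $\calW$ and $x$ over $X(b)$ then exhausts all morphisms of $\calW_X$, so $F$ inverts $\calW_X$ if and only if all these comparison maps are equivalences, i.e.\ if and only if $p\colon Y\to X$ is $\calW$-equifibered; combined with the first step this yields the claim. I expect this last identification to be the only delicate point: one must pin down the morphisms of $\calW_X$ lying over a given $\omega$ and verify that evaluating $F$ on them reproduces precisely the fiber comparison maps of the naturality square, since this is exactly what converts the $1$-categorical ``inverts $\calW_X$'' condition into the geometric equifiberedness condition.
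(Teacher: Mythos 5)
Your proposal is correct and follows essentially the same route as the paper's proof: write a general object of $\Fun(\calB,\calS)_{/X}$ as $\pi_!F$, identify the map on fibers over $x \in X(b)$ of the naturality square at $\omega$ with $F$ applied to the unique lift $\omega_x\colon (b,x)\to(b',\omega_!x)$, and conclude that the square is cartesian for all $\omega \in \calW$ exactly when $F$ inverts $\calW_X$. The only difference is that you spell out the localization step and the left-fibration bookkeeping that the paper leaves implicit.
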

\begin{proof}
    A general $(Y \to X) \in \Fun(\calB, \calS)_{/X}$ is of the form $(\pi_!F \to X)$ for $F \in \Fun(\calB_X, \calS)$.
    Let $\omega \colon  b \to b'$ be a morphism in $\calB$ and consider the commutative square
    \[\begin{tikzcd}
	{(\pi_!F)(b)} & {(\pi_!F)(b')} \\
	{X(b)} & {X(b').}
	\arrow["{X(\omega)}", from=2-1, to=2-2]
	\arrow["{(\pi_!F)(\omega)}", from=1-1, to=1-2]
	\arrow[from=1-2, to=2-2]
	\arrow[from=1-1, to=2-1]
    \end{tikzcd}\]
    Using the notation from \cref{rem: functors-on-unstraightening} we may describe 
    the induced map on fibers over $(b,x) \in \calB_X \times_{\calB} \{b\} \simeq X(b)$ as $F(\omega_x)\colon F(b,x) \to F(b',\omega_!x) $,
    where $\omega_x \colon (b,x) \to (b',\omega_!x)$ is the unique lift of $\omega$ with source $(b,x)$.
    Therefore, the naturality square of $\pi_!F \to X$ at some $\omega \in \calW$ is cartesian
    if and only if $F(\omega_x)$ is an equivalence for all $x \in X(b)$. 
    The desired claim follows by quantifying over all $\omega \in \calW$.
\end{proof}

Mixing \cref{lem:abstract-eqf-pattern} with the Segal condition directly leads to the following corollary.
\begin{cor}\label{cor:ff-desc-of-eqseg}
    Let $\calP$ be an algebraic pattern, $\calW \subset \calP$ a subcategory, and $X\colon  \calP \to \calS$ a $\calP$-Segal space. 
    Suppose there exists a functor $\varphi\colon \calP_X \to \calL$ to an algebraic pattern $\calL$ satisfying:
    \begin{enumerate}[(1)]
        \item $\varphi$ exhibits $\calL$ as the localization of $\calP_X$ at $\pi^{-1}(\calW)$.
        \item $\varphi$ preserves and detects Segal objects:
        a functor $F\colon \calL \to \calS$ is an $\calL$-Segal space if and only if $F \circ \varphi\colon  \calP_X \to \calS$ is a $\calP_X$-Segal space.
    \end{enumerate}
    Then restriction along $\varphi$ followed by left Kan extension along $\pi\colon \calP_X \to \calP$ induces a fully faithful functor
    \[
        \Seg_{\calL}(\calS) \xhookrightarrow{\ \varphi^*\ }
        \Seg_{\calP_X}(\calS) 
        \xrightarrow[\simeq]{\ \pi_!\ } 
        \Seg_{\calP}(\calS)_{/X}
    \]
    the image of which consists precisely of the $\calW$-equifibered $\calP$-Segal spaces over $X$.
\end{cor}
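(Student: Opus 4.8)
The plan is to combine the three hypotheses in the obvious way. First I would set up the composite functor as displayed. By \cref{lem:P_X-Segal-spaces} applied to the pattern $\calP$ and the Segal space $X$, left Kan extension $q_!$ gives an equivalence $\Seg_{\calP_X}(\calS) \simeq \Seg_{\calP}(\calS)_{/X}$, so the only thing requiring argument is that $\varphi^*$ is fully faithful with the claimed essential image. Fully faithfulness of $\varphi^*$ is immediate from hypothesis (1): since $\varphi \colon \calP_X \to \calL$ exhibits $\calL$ as the localization of $\calP_X$ at $q^{-1}(\calW)$, restriction along $\varphi$ identifies $\Fun(\calL,\calS)$ with the full subcategory of $\Fun(\calP_X,\calS)$ spanned by those functors inverting $q^{-1}(\calW)$. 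Hypothesis (3) then tells us this restricts to an identification of $\Seg_\calL(\calS)$ with the full subcategory of $\Seg_{\calP_X}(\calS)$ on the Segal spaces inverting $q^{-1}(\calW)$.

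The core of the proof is therefore to identify this subcategory, under the equivalence $q_! \colon \Seg_{\calP_X}(\calS) \simeq \Seg_{\calP}(\calS)_{/X}$, with the $\calW$-equifibered objects. This is exactly what \cref{lem:abstract-eqf-pattern} provides: taking $\calB = \calP$ and the same $X$, that lemma says that $F \in \Fun(\calP_X,\calS)$ inverts every morphism in $q^{-1}(\calW) = \pi^{-1}(\calW)$ if and only if the corresponding object $(q_!F \to X) \in \Fun(\calP,\calS)_{/X}$ is $\calW$-equifibered. Intersecting both descriptions with the Segal condition, the essential image of $\varphi^*$ (composed with $q_!$) consists precisely of the Segal spaces over $X$ that are $\calW$-equifibered. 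I should note that hypothesis (2), that $\varphi$ preserves the inert-active factorization system and elementary objects, is what guarantees $\calL$ is a genuine algebraic pattern compatible with $\calP_X$, so that $\Seg_\calL(\calS)$ is defined and the Segal conditions on the two sides match up as used in hypothesis (3); it plays no further role beyond ensuring the statement is well-posed.

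The main subtlety to handle carefully is the interaction between the localization and the Segal condition. A priori, restriction along a localization functor $\varphi$ is fully faithful on all of $\Fun(\calL,\calS)$, but one must check that it remains fully faithful \emph{after} restricting to Segal spaces and that no Segal spaces are lost or gained. This is precisely the content of hypothesis (3): $\varphi^*$ sends $\calL$-Segal spaces to $\calP_X$-Segal spaces, and conversely a functor on $\calL$ whose restriction is $\calP_X$-Segal must itself be $\calL$-Segal. Granting (3), the argument is purely formal, as sketched above. The one place I would be most careful is to confirm that a functor $G \in \Fun(\calL,\calS)$, viewed via $\varphi^*$, automatically inverts $q^{-1}(\calW)$ — but this holds tautologically because $\varphi$ itself inverts these morphisms by (1), hence so does any functor factoring through $\varphi$. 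Thus every $\varphi^* G$ lies in the localized subcategory, and combined with \cref{lem:abstract-eqf-pattern} this pins down the image as exactly the $\calW$-equifibered Segal spaces over $X$.
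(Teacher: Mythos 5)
Your argument is correct and matches the paper's own (very terse) proof, which simply observes that the corollary follows by ``mixing \cref{lem:abstract-eqf-pattern} with the Segal condition'' exactly as you spell out: full faithfulness of $\varphi^*$ from the localization property, hypothesis (3) to match up the Segal conditions, and \cref{lem:abstract-eqf-pattern} to identify the functors inverting $q^{-1}(\calW)$ with the $\calW$-equifibered objects under $q_!$. One small correction: hypothesis (2) only requires that $\varphi$ preserve the inert--active factorization system, \emph{not} the elementary objects --- the paper explicitly warns (\cref{war:not-a-morphism}) that $\varphi$ need not be a morphism of patterns --- but as you correctly note, this hypothesis plays no role in the formal argument beyond well-posedness.
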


\begin{war}\label{war:not-a-morphism}
    Note that in \cref{cor:ff-desc-of-eqseg} the functor $\varphi \colon \calP_X \to \calL$ is not necessarily a morphism of algebraic patterns as we do not require that it preserves elementary objects or the factorization system. 
    In our intended application the functor $\varphi \colon \bmC \to \L$ preserves the factorization system, but not elementary objects.
    
    It is tempting to think of a map $\varphi \colon \calP_X \to \calL$ satisfying the hypotheses of \cref{cor:ff-desc-of-eqseg} as a ``localization of patterns''. 
    Unfortunately, this intuition can be slightly misleading because
    neither the factorization system nor the elementary objects of $\calL$ are uniquely determined from the pattern structure on $\calP_X$.
\end{war}

\subsection{Constructing the localization}

We will now construct an algebraic pattern $\L$, which we exhibit as the localization of $\bmC$ at the preimage of $\simp^{\op,\simeq} \times \Fin_*^\act$ under the left fibration $q\colon \bmC \to \Dop \times \Fin_*$, and which we also prove to be equivalent to the pattern of ``level graphs'' $\bfL_{\rm CH}^\op$ introduced by Chu--Hackney \cite{CH22}.
Relying on the previous subsection we deduce that
the $\infty$-category of (pre)-properads is equivalent the $\infty$-category of $\L$-Segal spaces.
It was shown in loc.~cit.~that $\L$-Segal spaces are equivalent to the Segal $\infty$-properads of Hackney--Robertson--Yau \cite{HRY15}.
(We elaborate on this at the end of the section.)

\subsubsection{The category $\L$}
We begin by giving a concrete description of the localization of the $\infty$-category $\bmC$ at $q^{-1}\left(\simp^{\op,\simeq} \times \Fin_*^\act\right)$
and compare it to the category of level graphs from \cite{CH22}.

\begin{defn}\label{defn:category-L}
    For $[n] \in \Dop$ define $\hldef{\L_n} \subset \Fun(\Tw[n], \Fin_*)$ as the subcategory where
    \begin{itemize}
        \item Objects of $\L_n$ are pushout preserving functors $A\colon \Tw[n] \to \Fin_*$ that land in the (wide) subcategory spanned by the active morphisms $\Fin_\ast^\act \subseteq \Fin_\ast$.
        \item Morphisms of $\L_n$ are natural transformations $\alpha\colon A \Rightarrow B$ with $\alpha_{ij}\colon A_{ij} \intto B_{ij}$ inert.
    \end{itemize}
    This defines a functor $\L_\bullet\colon  \Dop \to \mathrm{Cat}_1$ by sending $([n] \leftarrow [m]:\! \lambda)$ to the restriction $\lambda^*\colon \L_n \to \L_m$ along $\Tw(\lambda)$.
    We let $\hldef{p\colon \L} \to \Dop$ denote the unstraightening of this functor.
\end{defn} 

This definition of $\L$ is indeed equivalent to the opposite of the category of the same name introduced by Chu--Hackney.
To avoid confusion we let $\bfL_{\rm CH}$ be the category from \cite[Definition 2.1.17]{CH22}. 
\begin{lem}\label{rem:L-compared-to-CH}
    There is an equivalence of categories $\L \simeq \bfL^\op_{\rm CH}$.
\end{lem}
\begin{proof}
    By \cite[Definitions 2.1.16 and 2.1.17]{CH22} the category $\bfL_{\rm CH}$ is defined as (a skeleton of) the total category of the cartesian unstraightening of the functor
    \[
        \widetilde{M}\colon \Dop \too \mrm{Cat}_1
    \]
    where $\widetilde{M}_n \subset \Fun(\Tw[n], \Fin)$ is the subcategory where objects are pushout preserving functors and morphisms are injective natural transformations such that every naturality square is a pullback (i.e.~it is a cartesian transformation).
    We therefore need to show that the two functors $\widetilde{M}_\bullet^\op, \L_\bullet\colon \Dop \to \mrm{Cat}_1$ are naturally equivalent.
    (Recall that when taking the opposite of a cartesian fibration, the resulting cocartesian fibration classifies the composite of the original functor with $(-)^\op\colon \Cat \to \Cat$).
    We can define a natural functor
    \begin{align*}
        \Phi_n\colon \widetilde{M}_n^\op & \too \L_n  \\
        (A\colon \Tw[n] \to\Fin) & \longmapsto (A_+\colon \Tw[n] \to \Fin_*)
    \end{align*}
    on objects by simply adding a disjoint base point.
    On morphisms, we send an injective cartesian transformation $A \leftarrow B\cocolon \alpha$ to the inert transformation $\alpha^\vee\colon A_+ \to B_+$ which on each component is defined by $\alpha^\vee(a) = b$ when $\alpha(b) = a$ and $\alpha^\vee(a) = \infty$ for $a \not\in \alpha(B)$.
    Note that the naturality squares for $\alpha^\vee$ commute if and only if the naturality squares for $\alpha$ commute and are cartesian:
    \[
    \begin{tikzcd}
        (A_{\lambda(i)\lambda(j)})_+ \ar[r, tail, "\alpha_{ij}^\vee"] \ar[d, squiggly] & (B_{ij})_+ \ar[d, squiggly] \\
        (A_{\lambda(i')\lambda(j')})_+ \ar[r, tail, "\alpha_{i'j'}^\vee"] & (B_{i'j'})_+ 
    \end{tikzcd}
    \qquad \Leftrightarrow \qquad
    \begin{tikzcd}
        A_{\lambda(i) \lambda(j)} \ar[d] & B_{ij} \ar[d] \ar[l, hook', "\alpha_{ij}"'] \ar[dl, phantom, near start, "\llcorner"]\\
        A_{\lambda(i') \lambda(j')} & B_{i'j'}. \ar[l, hook', "\alpha_{i'j'}"'] 
    \end{tikzcd}
    \]
    Hence $\Phi_n$ is a well-defined functor.
    We can also describe an inverse functor by removing basepoints and turning inert natural transformations into cartesian injective transformations in the opposite direction.
    Therefore, $\Phi_n$ is an equivalence.
    Moreover, $\Phi_\bullet$ is natural with respect to restriction along $\Tw(d)\colon \Tw[n] \to \Tw[m]$ for all $d\colon [n] \to [m]$ and hence it defines the desired equivalence $\widetilde{M}_\bullet^\op \simeq \L_\bullet$ in $\Fun(\Dop,\Cat)$.
\end{proof}

We now check that $\L$ is indeed a localization of $\bmC$.
The proof strategy is adopted from \cite[Proposition 4.2.3]{HK21}.
\begin{lem}\label{lem:L-is-localization-of-L}
    The functor $\varphi\colon  \bmC \to \L$ defined by 
    forgetting the value of $A\colon \Tw[n]^\rhd \to \Fin_*$ at the cone point
    exhibits $\L$ as the localization of $\bmC$ at the morphisms that lie over $(\Dop)^\simeq \times \Fin_*^\act$.
\end{lem}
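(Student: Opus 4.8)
The plan is to verify directly that $\varphi\colon\bmC\to\bfL^\op$ satisfies the universal property of the localization at the class $S:=q^{-1}\big((\Dop)^\simeq\times\Fin_*^\act\big)$. The functor $\varphi$ forgets the value of $A\colon\Tw[n]^\rhd\to\Fin_*$ at the cone point $\infty$, remembering only the restriction $A|_{\Tw[n]}\colon\Tw[n]\to\Fin_*^\act$ together with the underlying simplex $[n]$. First I would identify $S$ concretely: a morphism $([n],A)\to([m],B)$ in $\bmC$ lying over $(\lambda,f)\in\Dop\times\Fin_*$ is in $S$ exactly when $\lambda$ is an equivalence in $\Dop$ (so $[n]=[m]$) and $f$ is active. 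Since $\lambda$ is an isomorphism, such a morphism is a natural transformation $\alpha\colon A\Rightarrow B$ of functors on $\Tw[n]^\rhd$ whose restriction to $\Tw[n]$ is inert (by definition of $\bmC$) and whose component at $\infty$, namely $\alpha_\infty\colon A_\infty\to B_\infty$, is active. These are precisely the morphisms that $\varphi$ sends to the identity on objects: two objects of $\bmC$ with the same restriction to $\Tw[n]$ but different cone-point values are connected by such morphisms.

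**Second,** I would show $\varphi$ inverts $S$ and is essentially surjective, which is immediate: $\varphi$ forgets exactly the cone-point data, and every pushout-preserving $A\colon\Tw[n]\to\Fin_*^\act$ extends to some object of $\bmC$ (e.g. by choosing any active completion at $\infty$), so $\varphi$ hits every object of $\bfL^\op$. The substantive content is the universal property: for every $\infty$-category $\calD$, precomposition with $\varphi$ should induce an equivalence
\[
    \varphi^*\colon \Fun(\bfL^\op,\calD)\xrightarrow{\ \simeq\ }\Fun_{S}(\bmC,\calD),
\]
where the right-hand side is the full subcategory of functors inverting $S$. The cleanest route is to exhibit $\varphi$ as a \emph{cocartesian fibration} (or at least a localization in the sense of having a fully faithful right or left adjoint section, or by verifying the hypotheses of \cite[Proposition 7.1.6.2 or 5.2.7.12]{HTT}). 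Working fiberwise over $\Dop$, the map $\varphi$ restricts on each fiber to the functor $\bmC_n\to\bfL_n^\op$ that forgets the cone-point value; I would check that this fiberwise functor is a localization at the fiberwise part of $S$, and then assemble the fibers using that both $\bmC\to\Dop$ and $\bfL^\op\to\Dop$ are (co)cartesian fibrations with $\varphi$ a map over $\Dop$.

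**The main obstacle** is controlling the cone-point fibers of $\varphi$ and showing they are weakly contractible after inverting the active maps. Concretely, the fiber of $\varphi$ over a fixed $([n],A|_{\Tw[n]})\in\bfL_n^\op$ is the category of active completions of $A$ at $\infty$ together with the inert-at-$\Tw[n]$, active-at-$\infty$ morphisms between them; I must argue this fiber becomes contractible in the localized category, so that $\varphi$ collapses it to a point. This amounts to showing that the poset of possible cone-point values, with active maps, is filtered or has an initial/terminal object up to the relevant equivalences — I expect it to admit a canonical ``minimal active completion'' (the pushout/colimit $\colim_{\Tw[n]}A$ pointed appropriately), giving the section and the contractibility. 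I would then invoke the quillen-type criterion: a functor inducing contractible fibers over each object and detecting the localized morphisms exhibits the localization. The remaining steps — that $\varphi$ preserves the inert–active factorization and preserves/detects Segal objects, needed to feed \cref{cor:ff-desc-of-eqseg} — I would defer to subsequent lemmas, as the present statement only concerns the localization claim itself.
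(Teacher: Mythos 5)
Your proposal follows essentially the same route as the paper: reduce to the fibers over $\Dop$ (the paper invokes \cite[Proposition 2.1.4]{Hin13} for this, since the inverted morphisms lie in the fibers of a map of cocartesian fibrations), and then note that fiberwise the forgetful functor admits a fully faithful left adjoint section given by the minimal active completion $A_\infty := A_{0n} = \colim_{\Tw[n]}A$, whose counits lie in the inverted class, so that \cref{lem:localization-criterion} (the criterion of \cite[Proposition 5.2.7.12]{HTT}) applies. The paper packages this fiberwise adjoint by exhibiting $\bmC_n$ as a pullback of $s\colon\Ar^\act(\Fin_*)\to\Fin_*$ along $\ev_{0n}$, which is exactly your ``minimal active completion'' made precise; your alternative phrasing via contractibility of fibers is less directly applicable than the adjoint criterion you also mention.
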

\begin{proof}
    Since $\varphi \colon \bmC \to \L$ is a morphism of cocartesian fibrations over $\Dop$ and $\calW \coloneq q^{-1}((\Dop)^\simeq \times \Fin_*^\act)$ lies in the fibers we may use \cite[Proposition 4.2.6]{HK21} (which follows from \cite[Proposition 2.1.4]{Hin13}).
    It therefore suffices to show that 
    \[
        \varphi_n\colon \bmC_n \too \L_n
    \]
    is a localization at $\calW_n = q_n^{-1}(\Fin_*^\act)$,
    which we can do by checking the conditions of \cref{lem:localization-criterion} below.
    
    Inspecting the definitions of $\bmC_n$ and $\L_n$ we see that there is a pullback diagram
    \[
    \begin{tikzcd}
        \bmC_n \ar[rr, "\mrm{ev}_{0n \to \infty}"] \ar[d, "\varphi_n"'] \ar[dr, phantom, very near start, "\lrcorner"] & &
        \Ar^\act(\Fin_*) \ar[d, "s"] \\
        \L_n \ar[r, "\mrm{ev}_{0n}"] & 
        \Fin_*^\xint \ar[r] &
        \Fin_*
    \end{tikzcd}
    \]
    where $\Ar^\act(\Fin_*) \subset \Ar(\Fin_*)$ is the full subcategory on the active arrows
    and the top horizontal functor sends $A\colon \Tw[n]^\rhd \to \Fin_*$ to $(A_{0n} \to A_\infty)$.
    The right vertical functor $s\colon \Ar^\act(\Fin_*) \to \Fin_*$ 
    has a fully faithful left adjoint $\iota\colon \Fin_* \to \Ar^\act(\Fin_*)$ given by sending any finite pointed set $I_+$ to the identity morphism $(\id\colon I_+ \to I_+)$.
    Pulling back $\iota$ yields a fully faithful functor 
    $\iota\colon \L_n \too \bmC_n$, which is left adjoint to $\varphi_n$.
    This checks condition (1) of \cref{lem:localization-criterion}.
    
    The counit morphisms of this adjunction are of the form
    \[
        \begin{tikzcd}
            \{A_{ij}\} \ar[r, squiggly] \ar[d, equal] &
            A_{0n} \ar[r, equal] \ar[d, equal] &
            A_{0n} \ar[d, "\alpha_\infty"] \\
            \{A_{ij}\} \ar[r, squiggly] &
            A_{0n} \ar[r, squiggly] &
            A_\infty
        \end{tikzcd}
    \]
    where we write $\{A_{ij}\}$ to abbreviate the values of $A\colon \Tw[n] \to \Fin$ that are not $A_{0n}$.
    Note that $\alpha_\infty$ is active and hence this morphism lies in
    $\calW_n = q_n^{-1}(\Fin_*^\act)$,
    which is condition (2) of \cref{lem:localization-criterion}.
    
    Finally, we need to check that all morphisms in $\calW_n$ are indeed inverted by $\varphi_n$.
    If $\alpha\colon A \to B$ is such that $\alpha_\infty\colon  A_\infty \to B_\infty$ is active, 
    then by cancellation all $\alpha_{ij}\colon  A_{ij} \to B_{ij}$ are also active.
    But $\alpha_{ij}$ is inert by definition, so it is a bijection and therefore $\varphi(\alpha)$ is an isomorphism.
\end{proof}

It remains to check the fact about localizations that we used in the above proof.
\begin{lem}\label{lem:localization-criterion}
    Consider a functor $F\colon \calA \to \calB$ and a wide subcategory $\calW \subset \calA$ such that
    \begin{enumerate}[(1)]
        \item $F$ admits a fully faithful left adjoint $L\colon \calB \hookrightarrow \calA$,
        \item for all $a \in A$ the counit $L(F(a)) \to a$ is in $\calW$,
        \item for all $w\colon a \to a'$ in $\calW$ their image $F(w)$ is an equivalence.
    \end{enumerate}
    Then $F$ exhibits $\calB$ as the localization of $\calA$ at $\calW$.
\end{lem}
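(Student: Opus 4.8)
The plan is to verify the universal property of localization directly. Recall that $F$ exhibits $\calB$ as $\calA[\calW^{-1}]$ precisely when, for every $\infty$-category $\calC$, the restriction functor $F^*\colon \Fun(\calB,\calC) \to \Fun(\calA,\calC)$ is fully faithful with essential image the full subcategory $\Fun_\calW(\calA,\calC) \subseteq \Fun(\calA,\calC)$ of those functors that send every morphism in $\calW$ to an equivalence. I would fix such a $\calC$ and establish the two halves of this statement separately, feeding in hypotheses (1), (2), (3) one at a time.

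For full faithfulness I would promote the adjunction $L \dashv F$ to an adjunction $F^* \dashv L^*$ on functor categories, using that $\Fun(-,\calC)$ is contravariant on $1$-morphisms. Since $L$ is fully faithful by hypothesis (1), the unit $\eta\colon \mathrm{id}_\calB \to FL$ is an equivalence, and the unit of $F^* \dashv L^*$ at a functor $H$ is the whiskering $H\eta\colon H \to HFL = L^*F^* H$, hence also an equivalence. A left adjoint whose unit is an equivalence is fully faithful, so $F^*$ is fully faithful. (Equivalently, one checks directly that $L^*F^* \simeq \mathrm{id}$ via $\eta$ and that $L^*$ is a right adjoint to $F^*$.)

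For the essential image, one inclusion is immediate: any $G \simeq H \circ F$ inverts $\calW$ because $F$ does so by hypothesis (3), so the image lands in $\Fun_\calW(\calA,\calC)$. For the reverse inclusion I would take $G \in \Fun_\calW(\calA,\calC)$ and whisker the counit $\epsilon\colon LF \to \mathrm{id}_\calA$ of $L \dashv F$ to obtain $G\epsilon\colon GLF \to G$. By hypothesis (2) each component $\epsilon_a\colon LF(a) \to a$ lies in $\calW$, so $G(\epsilon_a)$ is an equivalence; thus $G\epsilon$ is a natural equivalence exhibiting $G \simeq GLF = F^*(G \circ L)$ in the essential image of $F^*$. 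Combining the two halves yields an equivalence $\Fun(\calB,\calC) \simeq \Fun_\calW(\calA,\calC)$ natural in $\calC$, which is the desired assertion.

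I do not expect a genuine obstacle, since the argument is entirely formal; the only point requiring care is the bookkeeping for the induced adjunction $F^* \dashv L^*$ and the identification of its unit and counit with the whiskerings of $\eta$ and $\epsilon$, so that the three hypotheses enter at exactly the right places.
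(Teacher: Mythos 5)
Your proposal is correct, but it takes a different route from the paper. The paper's proof outsources the main content to \cite[Proposition 5.2.7.12]{HTT}: hypothesis (1) alone already implies that $F$ exhibits $\calB$ as the localization of $\calA$ at $F^{-1}(\calB^\simeq)$, so all that remains is to check that $\calW$ and $F^{-1}(\calB^\simeq)$ have the same saturation --- hypothesis (3) gives $\calW \subseteq F^{-1}(\calB^\simeq)$, and conversely any functor inverting $\calW$ inverts $F^{-1}(\calB^\simeq)$, which is shown by contemplating the naturality square of the counit at a morphism $f \in F^{-1}(\calB^\simeq)$ and using hypothesis (2). You instead verify the universal property of the localization directly: $F^*$ is fully faithful because the unit of the induced adjunction $F^* \dashv L^*$ is the whiskering of the invertible unit of $L \dashv F$, and the essential image is identified with the $\calW$-inverting functors by whiskering the counit, whose components lie in $\calW$ by hypothesis (2). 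Your argument is self-contained and in effect re-proves the cited HTT result along the way; the paper's version is shorter given the citation, and its counit-square argument is the residue of your "essential image" step after the reflective-localization part has been factored out. Both are formally correct, and the three hypotheses enter in the same roles in each. (One small point in your favor: the paper's statement of hypothesis (2) writes the counit as $F(L(a)) \to a$ where it should be $L(F(a)) \to a$; your reading of $\epsilon \colon LF \to \mathrm{id}_\calA$ is the intended one.)
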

\begin{proof}
    By \cite[Proposition 5.2.7.12]{HTT} condition (1) implies that $F$ exhibits $\calB$ as the localization of $\calA$ at $F^{-1}(\calB^\simeq)$.
    Condition (3) ensures that $\calW \subset F^{-1}(\calB^\simeq)$.
    It remains to show that if $G\colon  \calB \to \calC$
    is some functor inverting $\calW$, then it also inverts $F^{-1}(\calB^\simeq)$.
    Let $f\colon a_1 \to a_2$ be a morphism in $F^{-1}(\calB^\simeq)$.
    Then we have a commutative square
    \[
        \begin{tikzcd}
            L(F(a_1)) \ar[r] \ar[d, "L(F(f))"', "\simeq"] & a_1 \ar[d, "f"] \\
            L(F(a_2)) \ar[r] & a_2.
        \end{tikzcd}
    \]
    Both horizontal morphisms are counits, so $G$ sends them to equivalences by (2) and hence it also sends $f$ to an equivalence, as claimed.
\end{proof}

\subsubsection{The pattern $\L$}
Now we construct a pattern structure on $\L$ that is compatible with the localization,
and show that this recovers the pattern of Chu--Hackney \cite{CH22}.

\begin{defn}\label{defn:pattern-L}
    We equip $\L$ with the following pattern structure:
    \begin{itemize}
        \item 
        A morphism in $\L$ is \hldef{inert} if its image in $\Dop$ is inert.
        \item
        A morphism $(\lambda, \alpha)$ in $\L$ is \hldef{active} if $\lambda$ is active and $\alpha\colon \lambda^*A \Rightarrow B$ is a natural isomorphism.%
        \footnote{
            Equivalently, the active morphisms are precisely the cocartesian lifts of active morphisms in $\Dop$.
            Note however that for a general cocartesian fibration over $\Dop$ the pair (all lifts of inerts, cocartesian lifts of actives) need not form a factorization system.
        }
        \item
        An object $([n], A)$ in $\L$ is \hldef{elementary} if $n \le 1$ and $A_{0n} \cong 1_+$.
    \end{itemize}
\end{defn}

One can check by hand that the above is indeed a factorization system, but this turns out to be a non-trivial task. 
Luckily this has been done in \cite{CH22}, and we can transport the factorization system through the equivalence in \cref{rem:L-compared-to-CH}.

\begin{lem}\label{lem:L-factorization-system}
    The above defines a factorization system on $\L$.
\end{lem}
\begin{proof}
    By \cite[Lemma 2.1.25]{CH22} there is a factorization system on $\bfL^\op_{\rm CH}$ where a map is inert if its image in $\Dop$ is inert, and $(\lambda,\alpha)$ is active if $\lambda$ is an active morphism in $\Dop$ and each $\alpha_{i,j}$ is a bijection.
    Since $\alpha_{i,j}$ is a bijection if and only if $\alpha_{i,j}^\vee$ is, these definitions correspond exactly to ours under the equivalence from \cref{rem:L-compared-to-CH}, so it follows that our active and inert morphisms form a factorization system on $\L$.
\end{proof}

\begin{lem}\label{lem:varphi-factorization-system}
    The localization functor $\varphi\colon \bmC \to \L$
    preserves the inert-active factorization system.
\end{lem}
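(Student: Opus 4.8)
The plan is to unwind both pattern structures explicitly and then verify that $\varphi$ carries inert morphisms to inert morphisms and active morphisms to active morphisms; since these are the left and right classes of the two factorization systems, this is exactly what it means to preserve the factorization system (uniqueness of factorizations then forces the factorization of $\varphi(f)$ to be the image of that of $f$). Recall that both $\bmC$ and $\bfL^\op$ sit over $\Dop$ and that $\varphi$ is a functor over $\Dop$ which on morphisms simply forgets the value at the cone point. Concretely, a morphism $f$ of $\bmC$ from $([n],A)$ to $([m],B)$ is a pair $(\lambda,\tilde\alpha)$ with $\lambda\colon [m]\to[n]$ in $\simp$ and $\tilde\alpha\colon \lambda^*A \Rightarrow B$ a natural transformation of functors $\Tw[m]^\rhd \to \Fin_*$ whose restriction to $\Tw[m]$ is inert; its image $q(f)$ in $\Dop \times \Fin_*$ is $(\lambda,\tilde\alpha_\infty)$, where $\tilde\alpha_\infty\colon A_\infty \to B_\infty$ is the component at the cone point (using $(\lambda^*A)_\infty = A_\infty$, as the cone point is preserved by $\Tw(\lambda)^\rhd$). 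By \cref{constr:P_X-pattern}, and since active/inert morphisms in the product pattern $\Dop \times \Fin_*$ are componentwise active/inert, $f$ is inert (resp.\ active) in $\bmC$ exactly when $\lambda$ is inert (resp.\ active) and $\tilde\alpha_\infty$ is inert (resp.\ active) in $\Fin_*$. On the other side $\varphi(f) = (\lambda,\tilde\alpha|_{\Tw[m]})$, which by \cref{defn:pattern-L} is inert iff $\lambda$ is inert, and active iff $\lambda$ is active and $\tilde\alpha|_{\Tw[m]}$ is a natural isomorphism.

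The inert case is then immediate: if $f$ is inert then $\lambda$ is inert, and inertness in $\bfL^\op$ is detected purely on $\Dop$, so $\varphi(f)$ is inert. The active case is the heart of the matter. Suppose $f$ is active, so that $\lambda$ is active and $\tilde\alpha_\infty$ is an active map of pointed finite sets. The only nontrivial point is to upgrade the restricted transformation $\tilde\alpha|_{\Tw[m]}$, which is a priori merely inert, to a natural isomorphism. To do so, fix $(i\le j)\in\Tw[m]$ and examine the naturality square of $\tilde\alpha$ along the unique map $(i\le j)\to\infty$ in $\Tw[m]^\rhd$. Its two vertical legs are the images under $A$ and $B$ of maps into the cone point, hence are active because $A$ and $B$ take values in $\Fin_*^\act$; its bottom edge is $\tilde\alpha_\infty$, active by hypothesis; and its top edge is the component $\tilde\alpha_{ij}$, which is inert. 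Commutativity of the square expresses one morphism in two ways: as (active)$\,\circ\,$(active), hence active, and as (active)$\,\circ\,$(inert), which is an active-after-inert factorization. Since the inert–active factorization in $\Fin_*$ is unique and the inert part of an active morphism is necessarily an equivalence, it follows that $\tilde\alpha_{ij}$ is an equivalence. As $(i\le j)$ was arbitrary, $\tilde\alpha|_{\Tw[m]}$ is a natural isomorphism, and therefore $\varphi(f)$ is active. This establishes preservation of both classes.

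I expect the main obstacle to be the last step of the active case: correctly extracting invertibility of the entire restricted transformation from activeness of its single cone-point component. The mechanism is the interaction between the cone point being terminal in $\Tw[m]^\rhd$ (so every object admits a comparison map to it) and the standing assumption that all objects of $\bmC$ land in $\Fin_*^\act$, which makes those comparison maps active and thereby forces the factorization argument through. The remaining work — reading off the pattern structures from \cref{constr:P_X-pattern} and \cref{defn:category-L,defn:pattern-L} and identifying $q(f)$ with $(\lambda,\tilde\alpha_\infty)$ — is bookkeeping about the fibrational descriptions of $\bmC$ and $\bfL^\op$ already set up in the proof of \cref{lem:L-is-localization-of-L}.
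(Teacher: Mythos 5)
Your proof is correct and takes essentially the same route as the paper's: the inert case is handled identically (inertness is detected on $\Dop$ in both patterns), and for the active case the paper likewise deduces that each component $\alpha_{ij}$ is active by cancellation against the active maps into the cone point, so that it is simultaneously inert and active and hence an isomorphism. Your appeal to uniqueness of the inert–active factorization of the composite $A_{\lambda(i)\lambda(j)} \to B_\infty$ is just a spelled-out version of that same cancellation step.
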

\begin{proof}
    The functor $\varphi$ preserves the inert morphisms as they are defined in both cases as those morphisms whose image in $\Dop$ is inert and $\varphi$ is a functor over $\Dop$.
    An active morphism $([n] \leftarrow [m]:\! \lambda, \alpha\colon \lambda^*A \Rightarrow B)$ in $\bmC$ is any morphism whose image in $\Dop \times \Fin_*$ is active.
    We would like to show that in this case $\varphi(\lambda, \alpha) \in \L$ is always a cocartesian lift of $\lambda$,
    i.e.\ that $\alpha_{|\Tw[m]}$ is a natural isomorphism.
    By assumption $\alpha_\infty\colon A_\infty \to B_\infty$ is active, and hence by cancellation so are all $\alpha_{ij}\colon  A_{\lambda(i)\lambda(j)} \to B_{ij}$,
    but these are also all inert by definition, and thus isomorphisms.
\end{proof}

\subsubsection{Comparing to Segal $\infty$-properads}
We now define Segal $\infty$-properads in terms of the pattern $\L$:
\begin{defn}\label{defn:defn-segal-prpd}
    An \hldef{$\L$-Segal space} is a functor $\calP\colon  \L \to \calS$ such that the Segal map
    \[
        \calP([n], A) \too \lim_{([n], A) \intto ([m], B) \in \L^\el_{([n],A)/}} \calP([m],B)
    \]
    is an equivalence for all $([n], A) \in \L$.
    The underlying ($\Dop$-)Segal space of $\calP$ is defined as the simplicial space
    \[
        U_n(\calP) := \calP([n], 1_+) 
    \]
    where $1_+\colon  \Tw[n] \to \Fin_*$ denotes the constant functor at $1_+ = \{1,\infty\}$.
    A $\L$-Segal space is \hldef{complete} if its underlying ($\Dop$-)Segal space is complete.
    Let \hldef{$\CSeg_{\L}(\calS) \subset \Seg_{\L}(\calS)$} $\subset \Fun(\L, \calS)$ denote the full subcategories on the complete $\L$-Segal spaces and $\L$-Segal spaces, respectively.
\end{defn}

\begin{rem}
    The completeness condition described above only concerns the $(1,1)$-ary operations.
    In order to compare this to the completeness for $\bmC$-Segal spaces we will have to use \cref{lem:completeness-of-pprd-on-11ary}.
\end{rem}

Next we would like to show that $\varphi$ preserves and detects Segal objects.
For this we recall a variant of a lemma from \cite{CH22}, for which we give an independent proof.
\begin{lem}[{\cite[Proposition 3.2.9.(1 $\Leftrightarrow$ 3)]{CH22}}]\label{lem:Segal-condition-for-L}
    A functor $F\colon \L \to \calS$ is an $\L$-Segal space if and only if it satisfies the following two conditions:
    \begin{itemize}
        \item(segmentation condition) For all $([n], A) \in \L$ the map
        \[
            F([n], A) \too 
            F([1], A_{|\Tw(0\le 1)}) \times_{F([0], A_{11})}
            \dots
            \times_{F([0], A_{n-1,n-1})} F([1], A_{|\Tw(n-1 \le n)})
        \]
        given by restricting along $\Tw(i-1 \le i) \subset \Tw[n]$ and $\Tw(\{i\}) \subset \Tw[n]$,
        is an equivalence.
        \item(decomposition condition) For all $([n], A) \in \L$ with $n \le 1$ the map
        \[
            F([n], A) \too 
            \prod_{a \in A_{0n} \setminus \{*\}} F([n], A^{(a)})
        \]
        given by restriction to $A_{ij}^{(a)} = A_{ij} \times_{A_{0n}} \{a\}_+$ is an equivalence.
    \end{itemize}
\end{lem}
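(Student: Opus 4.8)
The plan is to prove the equivalence between the abstract Segal condition of \cite[Definition 2.7]{CH19} for the pattern structure on $\bfL^\op$ (\cref{defn:pattern-L}) and the stated segmentation and decomposition conditions by analysing the indexing category $\mathcal{E}_{([n],A)} := (\bfL^\op)^\el_{([n],A)/}$ of elementary objects admitting an inert map from $([n],A)$, and showing that the limit $\lim_{\mathcal{E}_{([n],A)}} F$ computing the Segal comparison map factors into two stages matching the two conditions. The first step is a concrete description of $\mathcal{E}_{([n],A)}$. Unwinding \cref{defn:category-L} and \cref{defn:pattern-L}, its objects are of two kinds: \emph{color} elementaries $([0],\{1\}_+)$, one for each vertex $0 \le i \le n$ and each non-basepoint $a \in A_{ii}$; and \emph{corolla} elementaries $([1], B)$ with $B_{01}\cong\{1\}_+$, one for each spine edge $(i-1\le i)$ and each $c \in A_{i-1,i}$, whose input and output colors are the fibers of $A_{i-1,i-1}\to A_{i-1,i}$ and $A_{ii}\to A_{i-1,i}$ over $c$. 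The morphisms are the evident inert maps sending each corolla to its input and output colors.

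I would treat the decomposition condition first, as it is cleaner. For $([n],A)$ with $n \le 1$, the map of finite sets $A_{ij}\to A_{0n}$ partitions each elementary over $([n],A)$ according to the element of $A_{0n}$ it lies over: a color at a vertex maps to its image in $A_{0n}$, and a corolla over $c\in A_{0n}$ together with all of its colors forms one block. This exhibits $\mathcal{E}_{([n],A)}$ as a coproduct $\coprod_{a\in A_{0n}\setminus\{*\}}\mathcal{E}_{([n],A^{(a)})}$, where $A^{(a)}_{ij}=A_{ij}\times_{A_{0n}}\{a\}_+$. Since a limit over a coproduct of categories is the product of the limits, the abstract Segal condition applied to each $([n],A^{(a)})$ yields $F([n],A)\simeq\prod_{a}F([n],A^{(a)})$, which is exactly the decomposition condition; conversely the same splitting reassembles the product into the limit.

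For the segmentation condition I would cover $\mathcal{E}_{([n],A)}$ by the full subcategories $\mathcal{E}_{e_i}$ of elementaries lying under a single spine edge $([1], A_{|\Tw(i-1\le i)})$ together with the subcategories $\mathcal{E}_{v_i}$ under a single vertex $([0],A_{ii})$. By the description above one has $\mathcal{E}_{e_i}\cap\mathcal{E}_{e_{i+1}}=\mathcal{E}_{v_i}$, and these glue to exhibit $\mathcal{E}_{([n],A)}$ as the iterated pushout $\mathcal{E}_{e_1}\amalg_{\mathcal{E}_{v_1}}\cdots\amalg_{\mathcal{E}_{v_{n-1}}}\mathcal{E}_{e_n}$ of indexing categories. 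Passing to the limit of $F$ turns this colimit of indexing categories into the corresponding iterated fiber product, and identifying $\lim_{\mathcal{E}_{e_i}}F\simeq F([1],A_{|\Tw(i-1\le i)})$ and $\lim_{\mathcal{E}_{v_i}}F\simeq F([0],A_{ii})$ via the abstract Segal condition applied to the edge and vertex objects themselves produces precisely the segmentation map; the converse runs backwards through the same identifications. Composing the two stages—first segmenting along the spine, then decomposing each edge and vertex over its colors—recovers the full limit over $\mathcal{E}_{([n],A)}$, giving the equivalence.

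The main obstacle will be making the index-category (co)limit decompositions above precise: verifying that restriction along the spine inclusions and along the color-fibers really does exhibit $\mathcal{E}_{([n],A)}$ as the claimed coproduct and iterated pushout at the level of $\infty$-categories, and that these colimit presentations of the indexing diagram are compatible with taking limits of $F$ (for instance via a cofinality argument or the descent behaviour of limits over a category presented as a pushout). This combinatorial content is exactly what the general theory of \emph{sound} algebraic patterns in \cite{CH19} is designed to package; an alternative and perhaps more economical route would be to verify directly that $\bfL^\op$ is sound in their sense and invoke the resulting Segal-core decomposition, in which the factorization into segmentation and decomposition appears as the two-step composite coming from the $\Dop$- and $\Fin_*$-directions of the pattern.
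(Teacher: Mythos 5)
Your strategy---describing $(\bfL^\op)^\el_{([n],A)/}$ explicitly as a category of colors and corollas, splitting it as a coproduct over $A_{0n}\setminus\{*\}$ for the decomposition condition, and gluing the subcategories $\mathcal{E}_{e_i}$ of elementaries under each spine edge along the discrete subcategories $\mathcal{E}_{v_i}$ for the segmentation condition---is correct in outline and arrives at the same two-stage factorization as the paper, but by a different technical route. The paper introduces the intermediate full subcategory $\bfL^{\op,\qe} := \Delta^{\op,\el}\times_{\Delta^{\op,\xint}}\bfL^{\op,\xint}$ and shows (a) the segmentation condition is equivalent to $F|_{\bfL^{\op,\xint}}$ being right Kan extended from $\bfL^{\op,\qe}$, and (b) the decomposition condition at $n\le 1$ is equivalent to $F|_{\bfL^{\op,\qe}}$ being right Kan extended from $\bfL^{\op,\el}$; transitivity of right Kan extensions then assembles the lemma. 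For (a) it uses that $\bfL^{\op,\xint}\to\Delta^{\op,\xint}$ is a cocartesian fibration, so cocartesian lifting gives a coinitial functor from $\Delta^{\op,\el}_{[n]/}$ into the relevant slice, and the limit over $\Delta^{\op,\el}_{[n]/}$ is literally the iterated fiber product appearing in the segmentation condition; for (b) it computes the elementary slice categories at $n=0,1$ directly, which is your coproduct splitting. Your version is more hands-on; the paper's buys a cleaner treatment of exactly the point you identify as the main obstacle.

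That obstacle is the one genuine gap: you need $\mathcal{E}_{([n],A)}$ to be the iterated pushout $\mathcal{E}_{e_1}\amalg_{\mathcal{E}_{v_1}}\cdots\amalg_{\mathcal{E}_{v_{n-1}}}\mathcal{E}_{e_n}$ \emph{in} $\Cat$, not merely a union of subcategories, in order to turn the colimit of index categories into a fiber product of limits. This is true here (the gluing is along discrete full subcategories whose objects have no outgoing morphisms, so no new composites arise), but it requires an argument, and appealing to soundness will not supply it---soundness of a pattern does not by itself produce this particular Segal-core presentation; the coinitiality argument via cocartesian lifting is the efficient substitute. A second, smaller point: in the converse direction you identify $\lim_{\mathcal{E}_{e_i}}F\simeq F([1],A_{|\Tw(i-1\le i)})$ ``via the abstract Segal condition applied to the edge objects,'' but at that stage you only have segmentation and decomposition in hand. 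The identification instead follows from the decomposition condition at $([1],A_{|\Tw(i-1\le i)})$ together with the observation that the Segal comparison map is automatically an equivalence at elementary objects (the identity is initial in the elementary slice, so each corolla is initial in its component). With those two points made precise, your argument goes through.
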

\begin{proof}
    Let
    $\L^{\qe} := \simp^{\op,\el} \times_{\simp^{\op,\xint}} \L^{\xint}$
    and note the fully faithful inclusions 
    $\L^{\el} \subseteq \L^{\qe} \subseteq \L^{\xint}$.
    A functor $F\colon \L \to \calS$ is Segal if and only if $F_{|\L^{\xint}}$ is right Kan extended from $\L^{\el}$, which in turn is the case if and only if $F_{|\L^{\xint}}$ is right Kan extended from $\L^{\qe}$ and $F_{|\L^{\qe}}$ is right Kan extended from $\L^{\el}$.
    Therefore, it suffices to prove the following statements:
    \begin{enumerate}[$(a)$]
        \item 
        $F$ satisfies the segmentation condition if and only if 
        $F|_{\L^{\xint}}$
        is right Kan extended from $\L^{\qe}$.
        \item 
        $F$ satisfies the decomposition condition if and only if 
        $F|_{\L^{\qe}}$
        is right Kan extended from $\L^{\el}$.
    \end{enumerate}
    To prove $(a)$ note that the projection 
    $\L^{\xint} \to \simp^{\op,\xint}$ 
    is a cocartesian fibration and thus for all $([n],A) \in \L$ 
    the induced functor 
    $\L^{\qe} \times_{\L^{\xint}} \L^{\xint}_{([n],A)/} \to \simp^{\op,\el}_{[n]/}$
    admits a left adjoint 
    $\simp^{\op,\el}_{[n]/} \to \L^{\qe} \times_{\L^{\xint}} \L^{\xint}_{([n],A)/}$ defined by cocartesian lifting
    \cite[Proposition 2.9]{CartFib}. 
    Left adjoints are always initial.
    Therefore, $F|_{\L^{\xint}}$ is right Kan extended from $\L^{\qe}$ if and only if $F([n], A)$ is equivalent to the limit over the diagram
    $\simp^{\op,\el}_{[n]/} \to \calS$
    defined by $([n] \leftarrowtail [\varepsilon]:\!\alpha) \mapsto F([\varepsilon], \alpha^*A)$.
    This is precisely the segmentation condition.

    To prove $(b)$
    we note that for all objects $([n],A) \in \L$
    with $n= 0$ or $1$ we have
    $\L^{\el} \times_{\L^{\le 1}} \L^{\le 1}_{([n],A)/} \simeq \L^{\el}_{([n],A)/}$,
    and thus it suffices to show that the Segal condition at such objects
    is equivalent to the decomposition condition.
    For $n=0$ this follows from the evident equivalence 
    $\L^{\el}_{([0],A)/} \simeq A_{0,0}$.
    For $n=1$ we can readily compute the relevant slice category
    \[ \L^{\el}_{([1],A)/} \simeq \coprod_{a\in A_{0,1}} \left( A_{0,0}^{(a)}   \amalg   A^{(a)}_{1,1} \right)^{\lhd} 
    \too
    \left( * \sqcup * \right)^\lhd
    \simeq \simp^{\op,\el}_{[1]/}. \]
    The fiber over the cone point is equivalent to the discrete category $A_{0,1}$ and is indeed initial by inspection.
\end{proof}

This allows us to check that $\varphi\colon \bmC \to \L$ satisfies condition $(3)$ from \cref{cor:ff-desc-of-eqseg}:

\begin{lem}\label{lem:varphi-creates-Segal}
    A functor $F\colon \L \to \calS$ is an $\L$-Segal space
    if and only if $F \circ \varphi\colon  \bmC \to \calS$ is a $\bmC$-Segal space.
\end{lem}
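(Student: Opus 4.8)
The plan is to reduce both the $\bfL^\op$-Segal condition on $F$ and the $\bmC$-Segal condition on $F\circ\varphi$ to a pair of conditions — a \emph{segmentation} condition in the $\Dop$-direction and a \emph{decomposition} condition in the $\Fin_*$-direction — and then to match the two pairs. On the $\bfL^\op$-side this reduction is precisely \cref{lem:Segal-condition-for-L}. On the $\bmC$-side I would use that $\bmC$ is the unstraightening of $\xN_\bullet\Csp$ over the \emph{product} pattern $\Dop\times\Fin_*$: by \cref{lem:P_X-Segal-spaces} (see also \cref{cor:Segal_bmC}) a functor $G\colon\bmC\to\calS$ is a $\bmC$-Segal space if and only if its left Kan extension $q_!G$ is a $\Dop\times\Fin_*$-Segal space over $\xN_\bullet\Csp$, and the Segal condition for a product pattern is the conjunction of the $\Dop$-Segal (i.e.\ simplicial Segal, = segmentation) condition and the levelwise $\Fin_*$-Segal (= product, = decomposition) condition. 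Since $q$ records the value $A_\infty$ at the cone point, this $\bmC$-decomposition is the product decomposition over $A_\infty$, whereas the decomposition in \cref{lem:Segal-condition-for-L} is over the top value $A_{0n}$.

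The observation that makes the comparison run is that $F\circ\varphi([n],A)=F([n],A|_{\Tw[n]})$ does not depend on the cone-point value $A_\infty$, because $\varphi$ forgets it. I would first match the segmentation conditions: the $\bmC$-segmentation maps out of $([n],A)$ are the restrictions along the sub-intervals $\Tw(i-1\le i)^\rhd$ and $\Tw(\{i\})^\rhd$, and $\varphi$ carries these exactly to the segmentation maps out of $([n],A|_{\Tw[n]})$ in $\bfL^\op$. As $F\varphi$ ignores the cone point, the $\bmC$-segmentation condition on $F\varphi$ at $([n],A)$ is literally the $\bfL^\op$-segmentation condition on $F$ at $([n],A|_{\Tw[n]})$; since every object of $\bfL^\op$ is of the form $A|_{\Tw[n]}$, the two segmentation conditions are equivalent.

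The main work, which I expect to be the crux, is matching the two decomposition conditions, since they are taken over the source and target of the active (hence surjective) map $A_{0n}\to A_\infty$ that is present for objects of level $n\le 1$. Assuming the $\bfL^\op$-decomposition of $F$ (over $A_{0n}$), I would deduce the $\bmC$-decomposition of $F\varphi$ (over $A_\infty$) by a two-step fibering: decomposing $F\varphi([n],A)$ over the elements $a\in A_\infty$ produces the objects $A^{(a)}$ with $(A^{(a)})_{0n}=A_{0n}\times_{A_\infty}\{a\}$, and applying the $\bfL^\op$-decomposition to each $A^{(a)}$ refines the product over $A_\infty$ into the product over $\coprod_{a\in A_\infty}(A_{0n}\times_{A_\infty}\{a\})=A_{0n}$, which is the $\bfL^\op$-decomposition. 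Conversely, the $\bmC$-decomposition specializes to the $\bfL^\op$-decomposition by restricting to those objects for which the active map $A_{0n}\to A_\infty$ is the identity, which are legitimate objects of $\bmC$. Combining the two matchings yields the chain of equivalences: $F$ is an $\bfL^\op$-Segal space $\iff$ $F$ satisfies segmentation and decomposition (\cref{lem:Segal-condition-for-L}) $\iff$ $F\varphi$ satisfies $\bmC$-segmentation and $\bmC$-decomposition $\iff$ $F\varphi$ is a $\bmC$-Segal space. The only points requiring care beyond this are bookkeeping with basepoints in the $\Fin_*$-direction and checking that the reduced objects $A_\infty=A_{0n}$ indeed realize all of $\bfL^\op$ after applying $\varphi$, both of which are routine.
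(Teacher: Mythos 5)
Your proposal is correct and follows essentially the same route as the paper: both sides are reduced via \cref{lem:Segal-condition-for-L} to a segmentation condition plus a decomposition condition, the segmentations match because $F\circ\varphi$ is independent of the cone point, and the decompositions are matched using the special lifts $A_\infty=\{1\}_+$ and $A_\infty=A_{0n}$ together with the refinement of the product over $A_\infty$ into the product over $A_{0n}$. The one point to make explicit in a write-up is that the $\bmC$-decomposition at general $n$ does not follow from the $\bfL^\op$-decomposition alone (which \cref{lem:Segal-condition-for-L} provides only for $n\le 1$) but from segmentation followed by the $n\le 1$ decompositions applied levelwise, which is exactly how the paper factors the combined $\bmC$-Segal map in the converse direction.
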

\begin{proof}
    As $q\colon \bmC \to \Dop \times \Fin_*$ is a left fibration, it induces an equivalence between the category of elementary objects under a given 
    $(A\colon \Tw[n]^\rhd \to \Fin_*) \in \bmC$ 
    and the category of elementary objects 
    under $([n], A_\infty) \in \Delta^{\op} \times \Fin_*$.
    It follows that $G\colon \bmC \to \calS$ is Segal if and only if for all 
    $(A\colon \Tw[n]^\rhd \to \Fin_*) \in \bmC$ 
    the map
        \begin{equation}\label{eqn:segal-condition-for-calC}
            {G([n], A)} \too
            {\prod\limits_{a \in A_{\infty}\setminus \{*\}}
            G([1], A_{|\Tw(0\le 1)^\rhd}^{(a)}) \underset{G([0], A_{1,1}^{(a)} \to \{a\}_+)}{\times} \dots
            \underset{G([0], A_{n-1,n-1}^{(a)} \to \{a\}_+)}{\times} G([1], A_{|\Tw(n-1 \le n)^\rhd}^{(a)})}
        \end{equation}
    is an equivalence.
    Here we write 
    $A_{|\Tw(i-1\le i)^\rhd}^{(a)} := A_{|\Tw(i-1\le i)^\rhd} \times_{A_\infty} \{a\}_+$.
    When $n=0$ the map is given instead by $G([0],A) \to \prod_{a \in A_\infty \setminus \{*\}} G([0], A_{00}^{(a)} \to \{a\}_+)$.
    
    Suppose $F\colon \L \to \calS$ is such that $G \coloneq F \circ \varphi$ is Segal.
    For $(A\colon \Tw[n] \to \Fin_*^\act) \in \L$ we define the lift 
    $(A^{\rm min}\colon \Tw[n]^\rhd \to \Fin_*^\act)$ to $\bmC$ by setting $A_\infty^{\rm min} := \{1\}_+$ with the unique active map from $A_{0,n}$.
    The Segal condition for $G([n], A^{\rm min})$ is then precisely the segmentation condition from \cref{lem:Segal-condition-for-L} for $F$.
    To check the decomposition condition for $n=0,1$ consider some
    $(A\colon \Tw[n] \to \Fin_*^\act) \in \L$ and define the lift
    $(A^{\rm max}\colon \Tw[n]^\rhd \to \Fin_*^\act)$ to $\bmC$ 
    by setting $A_\infty^{\rm max} := A_{0,n}$ with identity from $A_{0,n}$.
    The Segal condition for $G([n], A^{\rm max})$ is then equivalent to the decomposition condition for $F$ (still assuming $n =0,1$).
    Therefore, $F$ is Segal by \cref{lem:Segal-condition-for-L}.
    
    Conversely, suppose $F$ is Segal.
    We would like to show that $G$ is a $\bmC$-Segal space.
    For $G=F \circ \varphi$ the map in \cref{eqn:segal-condition-for-calC} can be identified with the composite:
    \begin{align*}
	{F([n], A) } 
	\too &\; {F([1], A_{|\Tw(0\le 1)}) \times_{F([0], A_{11})}             \dots \times_{F([0], A_{n-1,n-1})} F([1], A_{|\Tw(n-1 \le n)})} \\
	\too & {\prod\limits_{a \in A_{\infty} \setminus \{*\}}             F([1], A^{(a)}_{|\Tw(0\le 1)}) \times_{F([0], A^{(a)}_{11})}             \dots             \times_{F([0], A^{(a)}_{n-1,n-1})} F([1], A^{(a)}_{|\Tw(n-1 \le n)})}
    \end{align*}
    The first map is an equivalence by the segmentation condition,
    and the second map is an equivalence by the decomposition condition for $n=0,1$.
\end{proof}

\subsubsection{The comparison theorem}
Using the functors
\[
    \L \xleftarrow{\varphi} \bmC \xrightarrow{q} \Dop \times \Fin_*
\]
we now give the promised definition of the envelope functor for Segal $\infty$-properads modelled on the pattern $\L$.

\begin{defn}\label{defn:env-Lop}
    The envelope functor for $\L$-Segal spaces is defined as
    \[
        \hldef{\Env_{\L}}\colon
        \Seg_{\L}(\calS) \xhookrightarrow{\ \varphi^*\ }
        \Seg_{\bmC}(\calS) 
        \xrightarrow{\ q_!\ } 
        \Seg_{\Dop \times \Fin_*}(\calS)
        \simeq \Seg_{\Dop}(\CMon).
    \]
\end{defn}

\begin{thm}\label{thm:segal-envelope}
    The envelope for $\L$-Segal spaces lands in pre-properads and gives an equivalence
    \[
        \Env_{\L}\colon
        \Seg_{\L}(\calS) \xrightarrow{\ \simeq\ }
        \pPrpd \subset \Seg_{\Dop}(\CMon).
    \]
    Moreover, this restricts to an equivalence
    \[
        \Env_{\L}\colon
        \CSeg_{\L}(\calS) \xrightarrow{\ \simeq\ } \Prpd,
    \]
    between the $\infty$-category of complete $\L$-Segal spaces and the $\infty$-category of $\infty$-properads.
\end{thm}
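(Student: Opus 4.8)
The plan is to deduce both equivalences from the fully faithfulness criterion of \cref{cor:ff-desc-of-eqseg}, applied to the algebraic pattern $\calP = \Dop \times \Fin_*$, the $\calP$-Segal space $X = \xN_\bullet\Csp$ (whose unstraightening is $q\colon \bmC \to \Dop \times \Fin_*$ by \cref{lem:explicit-sm-structure-of-Csp}), the wide subcategory $\calW = (\Dop)^\simeq \times \Fin_*^\act$, and the functor $\varphi\colon \bmC \to \bfL^\op$. First I would verify the three hypotheses of \cref{cor:ff-desc-of-eqseg}: that $\varphi$ exhibits $\bfL^\op$ as the localization of $\bmC$ at $q^{-1}(\calW)$ (this is \cref{lem:L-is-localization-of-L}), that $\varphi$ preserves the inert--active factorization system (\cref{lem:varphi-factorization-system}), and that $\varphi$ preserves and detects Segal objects (\cref{lem:varphi-creates-Segal}). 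This immediately yields a fully faithful functor $\Seg_{\bfL^\op}(\calS) \to \Seg_{\Dop}(\CMon)_{/\xN_\bullet\Csp}$ whose essential image is spanned by the $\calW$-equifibered objects; composing with the forgetful functor to $\Seg_{\Dop}(\CMon)$ recovers $\Env_{\bfL^\op}$.

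The next step is to identify this image with $\pPrpd$. By \cref{lem:abstract-eqf-pattern} a morphism $Y_\bullet \to \xN_\bullet\Csp$ is $\calW$-equifibered precisely when, for every $n$ and every active $g\colon T_+ \to S_+$, the square relating $Y_n(T_+) \to Y_n(S_+)$ to $\xN_n\Csp(T_+) \to \xN_n\Csp(S_+)$ is cartesian. Since $Y_\bullet$ and $\xN_\bullet\Csp$ are level-wise commutative monoids, the Segal condition turns these values into powers and the active maps into iterated additions; by \cref{prop:TFAE-to-equifibered} this condition holds for all active $g$ if and only if each $Y_n \to \xN_n\Csp$ is equifibered in $\CMon$. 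Thus the image consists of the level-wise equifibered maps to $\xN_\bullet\Csp$. I would then check such a $Y_\bullet$ is a pre-properad in the sense of \cref{defn:pre-properad}: $Y_1$ is free because it is equifibered over the free monoid $\xN_1\Csp$ (\cref{cor: equifibered-over-free-equals-spaces} and \cref{lem:Csp-level-wise-free}), and $d_1\colon Y_2 \to Y_1$ is equifibered by right cancellation in the factorization system (\cref{lem:formal-ctf-eqf}), using that $d_1$ on $\xN_\bullet\Csp$ is equifibered (\cref{lem:Csp-level-wise-free}). Conversely, every pre-properad $Q_\bullet$ carries a unique equifibered map to $\xN_\bullet\Csp$ since the latter is terminal in $\pPrpd$ (\cref{thm:Csp-is-final}), exhibiting it as a level-wise equifibered object. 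Finally, for two such objects any map over $\xN_\bullet\Csp$ is automatically equifibered (again by right cancellation against the terminal map), so the forgetful functor from the $\calW$-equifibered objects to $\pPrpd$ is an equivalence; composing gives $\Env_{\bfL^\op}\colon \Seg_{\bfL^\op}(\calS) \simeq \pPrpd$.

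For the completeness statement, I would restrict the equivalence to complete objects on both sides. By \cref{cor: properads-as-preproperads} the \properads{} are exactly the complete pre-properads (those whose underlying Segal space is a complete Segal space), so it suffices to show that a $\bfL^\op$-Segal space $\calP$ is complete if and only if $Q_\bullet := \Env_{\bfL^\op}(\calP)$ is complete as a pre-properad. Here \cref{lem:completeness-of-pprd-on-11ary} reduces completeness of $Q_\bullet$ to completeness of its $(1,1)$-ary part $Q^{(1,1)}_\bullet$, the fiber of $Q_\bullet \to \xN_\bullet\Csp$ over the constant-singleton simplices. Unwinding $\Env_{\bfL^\op} = q_! \circ \varphi^*$ via \cref{rem: functors-on-unstraightening}, this fiber at level $n$ is $\varphi^*\calP$ evaluated at the all-singleton diagram over $([n],\langle 1\rangle)$, which is exactly $\calP([n],\{1\}_+) = U_n(\calP)$; naturally in $[n]$ this gives $Q^{(1,1)}_\bullet \simeq U_\bullet(\calP)$. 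Since completeness of $\calP$ is by definition completeness of $U_\bullet(\calP)$, the two completeness conditions agree and the equivalence restricts to $\CSeg_{\bfL^\op}(\calS) \simeq \Prpd$.

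The main obstacle I anticipate is the final identification $Q^{(1,1)}_\bullet \simeq U_\bullet(\calP)$: it requires tracing the left Kan extension $q_!$ and its fibers through the explicit descriptions of $\bmC$, $q$, and $\varphi$ to see that restricting to the constant-singleton simplex recovers evaluation of $\calP$ on the diagrams $([n],\{1\}_+)$, together with checking naturality in $[n]$. The verification that the abstract $\calW$-equifibered condition coincides with level-wise equifiberedness of commutative-monoid maps is a second, more routine, point that nonetheless needs the Segal condition for commutative monoids to be spelled out carefully.
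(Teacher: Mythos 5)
Your proposal is correct and follows essentially the same route as the paper: it verifies the hypotheses of \cref{cor:ff-desc-of-eqseg} via \cref{lem:L-is-localization-of-L}, \cref{lem:varphi-factorization-system} and \cref{lem:varphi-creates-Segal}, identifies the $\calW$-equifibered objects over $\xN_\bullet\Csp$ with $\pPrpd$ using terminality of $\xN_\bullet\Csp$ (\cref{thm:Csp-is-final}), and handles completeness by matching $U_\bullet(\calP)$ with the $(1,1)$-ary part via \cref{lem:completeness-of-pprd-on-11ary}. The only difference is that you spell out a few steps (the reduction of $\calW$-equifiberedness to level-wise equifiberedness in $\CMon$, and the cancellation arguments) that the paper leaves implicit.
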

\begin{proof}
    By \cref{lem:L-is-localization-of-L} and \cref{lem:varphi-creates-Segal},
    the functor $\varphi\colon \bmC \to \L$ satisfies the conditions of \cref{cor:ff-desc-of-eqseg}.
    Combining this with \cref{cor:Segal_bmC} we obtain a fully faithful functor
    \[
        \Env^{/\Csp}_{\L}: 
        \Seg_{\L}(\calS) \xhookrightarrow{\ \varphi^*\ }
        \Seg_{\bmC}(\calS) 
        \xrightarrow[\ q_!\ ]{\simeq}
        \Seg_{\Dop \times \Fin_*}(\calS)_{/\St_{\Dop\times\Fin_*}(q)}
        \simeq \Seg_{\Dop}(\CMon)_{/\xN_\bullet\Csp}
    \]
    whose essential image consists of those $P_\bullet \to \xN_\bullet\Csp$ which are equifibered.
    These are automatically pre-properads because they are equifibered over a pre-properad and since $\xN_\bullet\Csp$ is the terminal pre-properad by \cref{thm:Csp-is-final} the essential image is equivalent to $\pPrpd$
    via the functor that forgets the map to $\xN_\bullet\Csp$.
    This proves the first part of the theorem. 
     
    For the second part, we need to show that a $\L$-Segal space $X\colon \L \to \calS$ is complete if and only if the pre-properad $\Env_\L(X) \in \Seg_\Dop(\CMon)$ is complete.
    The underlying Segal space $U_\bullet(X)$ of $X$ can be written as the composite
    \[
        \Dop \to \bmC \xrightarrow{\varphi} \L \xrightarrow{X} \calS
    \]
    where the first functor sends $[n]$ to $([n], \{1\}_+\colon  \Tw[n]^\rhd \to \Fin_*)$.
    Under the left Kan extension along $q\colon \bmC \to \Dop \times \Fin_*$ the space $U_n(X)$ corresponds to the fiber of 
    $(q_!\varphi^*X)([n],1_+) \to \xN_n\Csp$
    over the connected component of the terminal functor $\left\{\left(\ast \colon \Tw[n] \to \Fin \right)\right\} \subset \xN_n\Csp$.
    We thus have an equivalence
    \[
        U_\bullet(X) \simeq * \times_{\xN_\bullet\Csp} \Env_\L(X).
    \]
    The right side is the simplicial space $\Env_\L(X)^{(1,1)}$ from \cref{lem:completeness-of-pprd-on-11ary},
    so the claim follows from \cref{lem:completeness-of-pprd-on-11ary}
    where we showed that a pre-properad is complete if and only if its $(1,1)$-ary operations are complete.
\end{proof}

\subsubsection{Simplified formula for nerve of the envelope}
We now give a simpler formula for computing the $\simp^{\op,\act}$-part of the nerve of the envelope $\Env_\L(X) \in \Prpd$ for any $\L$-Segal space $X$, which will be useful in \cref{sect-3.4}.

\begin{cor}\label{cor:act-Env-formula}
    For $X \in \CSeg_\L(\calS)$ there is a natural equivalence
    \[
        \xN_\bullet \Env_\L(X)|_{\simp^{\op,\act}} \simeq p^\act_! (X|_{\L^\act})
    \]
    between the nerve of the $\infty$-category $\Env(X)$ restricted to $\simp^{\op,\act}$
    and the left Kan extension of $X|_{\L^\act}$ along $p^\act\colon \L^\act \to \simp^{\op,\act}$.
    In particular, for each $[n] \in \Dop$ we have an equivalence of spaces
    \[
        \xN_n \Env_\L(X) \simeq \colim_{A\colon \Tw[n] \to \Fin} X([n],A_+)
    \]
    where the colimit runs over the $1$-groupoid of pushout preserving functors $A\colon \Tw[n] \to \Fin$.
\end{cor}
\begin{proof}
    Since we assumed that $X$ is complete, so is $q_!\varphi^*X$ (by \cref{thm:segal-envelope}) and thus $\xN_\bullet\Env_\L(X) = q_!\varphi^*X$.
    (There is an abuse of notation here: we write $\Env_\L$ for both the envelope valued in $\Seg_{\Dop}(\CMon)$ and the envelope valued in $\Prpd \subset \SM$. These are related by the nerve, but since we assumed completeness this difference does not matter.)
    
    We define a functor $\tilde{j}\colon \L^\act \to \bmC$ that sends $([n], A\colon \Tw[n] \to \Fin_*^\act)$ to $([n], A')$ where $A'\colon \Tw[n]^\rhd \to \Fin_*^\act$ is defined by extending $A$ by $A'_\infty = 1_+$. (Note that this is well-defined on active maps $(\lambda, \alpha)$ in $\L$ as they are cocartesian and thus $\alpha\colon A\circ \Tw(\lambda) \to B$ is a natural isomorphism.)
    By unwinding definitions we see that the square in the diagram 
    \[\begin{tikzcd}
	{\L^{\act}} && \bmC & {\L} \\
	{\Delta^{\op,\act} } && {\Dop \times \Fin_\ast}
	\arrow["\tilde{j}", from=1-1, to=1-3]
	\arrow["{p^\act}"', from=1-1, to=2-1]
	\arrow["\varphi", from=1-3, to=1-4]
	\arrow["q", from=1-3, to=2-3]
	\arrow[""{name=0, anchor=center, inner sep=0}, "{j \coloneq (-,1_+)}"', from=2-1, to=2-3]
	\arrow["\lrcorner"{anchor=center, pos=0.125}, draw=none, from=1-1, to=0]
    \end{tikzcd}\]
    commutes and is cartesian and that the composite $\varphi \circ \tilde{j}$ is the inclusion $\L^\act \subset \L$.
    Since $q$ is a left fibration we, by \cite[Propositions 4.4.11 and 6.4.3]{cisinski}, get a Beck-Chevalley isomorphism $j^\ast q_! \simeq p^\act_! \tilde{j}^\ast$ and thus
    \[
    \xN_\bullet\Env_\L(-)|_{\Delta^{\op,\act}} \simeq j^\ast q_!\varphi^\ast \simeq p^\act_! \tilde{j}^\ast \varphi^\ast \simeq p^\act_! (\varphi \circ \tilde{j})^\ast \simeq p^\act_!(-|_{\L^{\act}}). 
    \]
    This shows the first claimed equivalence, and the second one follows as the left Kan extension along the left fibration $p^\act$ can be computed by taking colimits over the fibers, which are the groupoids
    $(p^\act)^{-1}([n]) = \Fun^{\rm po}(\Tw[n], \Fin_*^\act)^\simeq$.
    (Alternatively, base-change $p^\act$ along the inclusion $\{[n]\} \subset \simp^{\op,\act}$ and use the resulting Beck-Chevalley isomorphism.)
\end{proof}

\subsubsection{Graphical interpretations}
The original definition of Segal $\infty$-properads \cite{HRY15} was in terms of the ``properadic graphical category'' $\Gamma^\op$.
Chu--Hackney introduce a new category $\bfG^\op$ \cite[\S 2]{CH22} (based on work of Kock \cite{Koc16}),
and show that it is equivalent to $\Gamma^\op$ \cite[Appendix A]{CH22}.
Objects in $\bfG^\op$ are connected acyclic directed graphs and morphisms include both edge-collapses as well as restrictions to subgraphs.
We will not repeat the definition here as it is quite involved, 
but we will recall how the combinatorial data of $\L$ is related to graphs (see \cite[\S 2.3]{CH22}).

To an object $\left([n],A\right) \in \L$ we can associate a graph $\Gamma_A$ (see \cref{fig:level-graphs}) whose edges and vertices are given by
\[E(\Gamma_A) \coloneq \coprod_{0\le j \le n} A_{j,j}
\qquad \text{ and } \qquad  
V(\Gamma_A)\coloneq \coprod_{0\le j <n} A_{j,j+1}.\]
A vertex $v \in A_{j,j+1} \subseteq V(\Gamma_A)$ has incoming and outgoing edges given by
\[\mathrm{in}(v) \coloneq A_{j,j}\times_{A_{j,j+1}} \{v\}, 
\qquad \text{ and } \qquad  
 \mathrm{out}(v) \coloneq A_{j+1,j+1} \times_{A_{j,j+1}} \{v\}.\]
The evident map $E(\Gamma_A) \to \{0,\dots,n\}$ defines a ``levelling'' on $\Gamma_A$. 
Inert maps in $\L$ correspond to passing to level subgraphs,
whereas active maps correspond to level edge collapses.
(If one edge is collapsed, then all edges of the same level have to be collapsed as well.)
\begin{figure}[ht]
\centering
\usetikzlibrary{decorations.markings}
\begin{tikzpicture}[%
    edg/.style={circle,thick, draw=black!50, fill=black!20,
                 inner sep=1pt,minimum size=3mm},
    ver/.style={rectangle,draw=blue!50,fill=blue!20,thick,
                      inner sep=2pt,minimum size=3mm},
    arr/.style={->, shorten >= 2pt, shorten <= 2pt},
    directed/.style={thick, postaction = decorate},
    decoration={markings,
    mark=at position 0.5 with {\arrow{stealth}}}]
    \def\xspace{1.5}
    \def\yspace{.6}
    
    \foreach \i in {1,...,3}
    {        \node[edg] (A00-\i) at (0,{(\i+.5)*\yspace})  {};    }
    \foreach \i in {1,...,2}
    {        \node[ver] (A01-\i) at (\xspace,{(\i+1)*\yspace})  {};    }
    \foreach \i in {1,...,4}
    {        \node[edg] (A11-\i) at (2*\xspace,{\i*\yspace})  {};    }
    \foreach \i in {1,...,3}
    {        \node[ver] (A12-\i) at (3*\xspace,{(\i+.5)*\yspace})  {};    }
    \foreach \i in {1,...,4}
    {        \node[edg] (A22-\i) at (4*\xspace,{\i*\yspace})  {};    }

    \draw [arr] (A00-1) to (A01-1);
    \draw [arr] (A00-2) to (A01-1);
    \draw [arr] (A00-3) to (A01-1);
    
    \draw [arr] (A11-1) to (A01-1);
    \draw [arr] (A11-2) to (A01-1);
    \draw [arr] (A11-3) to (A01-2);
    \draw [arr] (A11-4) to (A01-2);
    
    \draw [arr] (A11-1) to (A12-1);
    \draw [arr] (A11-2) to (A12-3);
    \draw [arr] (A11-3) to (A12-1);
    \draw [arr] (A11-4) to (A12-3);
    
    \draw [arr] (A22-1) to (A12-1);
    \draw [arr] (A22-2) to (A12-2);
    \draw [arr] (A22-3) to (A12-3);
    \draw [arr] (A22-4) to (A12-3);
    
    \def\labeloffset{.5}
    \node (A00) at (0, -\labeloffset) {$A_{00}$};
    \node (A01) at (\xspace, -\labeloffset) {$A_{01}$};
    \node (A11) at (2*\xspace, -\labeloffset) {$A_{11}$};
    \node (A12) at (3*\xspace, -\labeloffset) {$A_{12}$};
    \node (A22) at (4*\xspace, -\labeloffset) {$A_{22}$};
    
    \draw [->] (A00) to [out = 0, in = 180] (A01);
    \draw [->] (A11) to [out = 180, in = 0] (A01);
    \draw [->] (A11) to [out = 0, in = 180] (A12);
    \draw [->] (A22) to [out = 180, in = 0] (A12);

    \def\hoffset{5*\xspace}    
    \foreach \i in {1,...,3}
    {        \node[] (gA00-\i) at (\hoffset + 0.25*\xspace,{(\i+.5)*\yspace})  {};    }
    \foreach \i in {1,...,2}
    {        \node[ver] (gA01-\i) at (\hoffset + \xspace,{(\i+1)*\yspace})  {};    }
    \foreach \i in {1,...,3}
    {        \node[ver] (gA12-\i) at (\hoffset + 2.25*\xspace,{(\i+.5)*\yspace})  {};    }
    \foreach \i in {1,...,4}
    {        \node[] (gA22-\i) at (\hoffset + 3*\xspace,{\i*\yspace})  {};    }

    \draw [directed] (gA00-1) to [out = 0, in = 180] (gA01-1);
    \draw [directed] (gA00-2) to [out = 0, in = 180] (gA01-1);
    \draw [directed] (gA00-3) to [out = 0, in = 180] (gA01-1);
    
    \draw [directed] (gA01-1) to [out = 0, in = 180] (gA12-1);
    \draw [directed] (gA01-1) to [out = 0, in = 180] (gA12-3);
    \draw [directed] (gA01-2) to [out = 0, in = 180] (gA12-1);
    \draw [directed] (gA01-2) to [out = 0, in = 180] (gA12-3);
    
    \draw [directed] (gA12-1) to [out = 0, in = 180] (gA22-1);
    \draw [directed] (gA12-2) to [out = 0, in = 180] (gA22-2);
    \draw [directed] (gA12-3) to [out = 0, in = 180] (gA22-3);
    \draw [directed] (gA12-3) to [out = 0, in = 180] (gA22-4);
    
    \node (Gamma) at (\hoffset + 1.625*\xspace, -\labeloffset) {$\Gamma_A$};
\end{tikzpicture}
\caption{An object $(A\colon \Tw[n] \to \Fin) \in \L_n$ (for $n=2$) interpreted as a level graph.}
\label{fig:level-graphs}
\end{figure}

Since objects in $\bfG^\op$ are connected graphs, to relate it to $\L$ we will first have to restrict our attention to the ``connected'' objects.
\begin{defn}\label{defn:Lc}
    We say that $([n],A) \in \L$ is \hldef{connected} if $A_{n,n} \cong 1_+$. 
    Note that this is equivalent to requiring that $\Gamma_A$ be a connected graph.
    We write $\hldef{\Lc} \subseteq \L$ for the full subcategory spanned by connected objects.
\end{defn}

The pattern structure on $\L$ canonically restricts to a pattern structure on $\Lc$.\footnote{In fact, $\Lc \subseteq \L$ is an example of an \textit{algebraic subpattern} in the sense of \cite[Definition 2.30]{Shaul-arity}.}
In the language of \cite{CH22} the objects of $\Lc$ are connected level graphs and forgetting the levelling defines a morphism of algebraic patterns 
$\tau \colon \Lc \to \bfG^\op$.

\begin{thm}[{\cite{CH22}}]\label{thm:L=G}
    The span of algebraic patterns 
    $\L \overset{j}{\hookleftarrow} \Lc \xrightarrow{\tau} \bfG^\op$ 
    gives rise to equivalences
    \[\begin{tikzcd}
	{\Seg_{\L}(\calS)} & {\Seg_{\Lc}(\calS)} & {\Seg_{\bfG^\op}(\calS).}
	\arrow["\simeq"', draw=none, from=1-1, to=1-2]
	\arrow["\simeq", draw=none, from=1-3, to=1-2]
	\arrow["{\,\tau^\ast}"', from=1-3, to=1-2]
	\arrow["{j^\ast}", from=1-1, to=1-2]
    \end{tikzcd}\]
    Moreover, these equivalences respect the notion of completeness.
\end{thm}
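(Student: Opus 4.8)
The plan is to establish the two equivalences separately and then verify that both are compatible with the notion of completeness. The starting observation is that every elementary object of $\bfL^\op$ is connected: an elementary object is either $([0],A)$ with $A_{00}\cong\{1\}_+$ (a single edge) or $([1],A)$ with $A_{01}\cong\{1\}_+$ (a single corolla), and in both cases the associated graph $\Gamma_A$ is connected. Hence $\bfL_c^\op\subseteq\bfL^\op$ is a full subpattern containing all the elementary objects, and the functor $\Dop\to\bfL^\op$ computing the underlying Segal space (sending $[n]$ to $([n],\{1\}_+)$) factors through $\bfL_c^\op$.

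For the first equivalence $j^*\colon\Seg_{\bfL^\op}(\calS)\to\Seg_{\bfL_c^\op}(\calS)$, I would argue that it is essentially formal. The decomposition condition of \cref{lem:Segal-condition-for-L} expresses the value of any $\bfL^\op$-Segal space $F$ at an object $([n],A)$ with $n\le 1$ as the product $\prod_{a\in A_{0n}\setminus\{*\}}F([n],A^{(a)})$ of its values on the connected pieces $A^{(a)}$; combined with the segmentation condition this shows that $F$ is entirely determined by its restriction to connected objects. Concretely, one constructs the inverse as a right Kan extension $j_*$ along the full inclusion $j$ and checks, using that $(\bfL^\op)^\el_{([n],A)/}\subseteq\bfL_c^\op$ for every object, that $j_*$ carries $\bfL_c^\op$-Segal spaces to $\bfL^\op$-Segal spaces and is inverse to $j^*$. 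This is the subpattern phenomenon alluded to after \cref{defn:pattern-L}, and it can be deduced from the general machinery of \cite{CH19}.

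The second equivalence $\tau^*\colon\Seg_{\bfG^\op}(\calS)\to\Seg_{\bfL_c^\op}(\calS)$ is the content of \cite{CH22} and is where the genuine graph-theoretic work lies. Here $\tau$ forgets the leveling of a connected acyclic directed graph. The plan is to verify that $\tau$ is a morphism of algebraic patterns inducing an equivalence on elementary objects (edges map to edges and leveled corollas to corollas), and then to apply the criterion of \cite{CH19} under which a pattern morphism induces an equivalence on Segal spaces. The hypotheses of that criterion reduce to a statement about the fibers of $\tau$: for a fixed connected acyclic directed graph, the space of its levelings together with the comparison of active slices must be suitably cofinal and contractible. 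Establishing that every such graph admits a leveling and that the poset of levelings is weakly contractible is \textbf{the main obstacle}, and it is precisely the combinatorial heart carried out by Chu--Hackney. Given our identification of $\bfL^\op$ with their category in \cref{rem:L-compared-to-CH}, we may simply transport their result rather than redo it.

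Finally, for completeness, recall from \cref{defn:defn-segal-prpd} that an $\bfL^\op$-Segal space is complete precisely when its underlying $\Dop$-Segal space $U_n(\calP)=\calP([n],\{1\}_+)$ is complete. Since each object $([n],\{1\}_+)$ is connected, the functor $j^*$ leaves $U_\bullet$ unchanged, so it preserves and reflects completeness. Under $\tau$ the objects $([n],\{1\}_+)$ correspond to the linear graphs (strings of $(1,1)$-corollas), so the underlying $\Dop$-Segal space of a $\bfG^\op$-Segal space agrees with that of its image under $\tau^*$; hence $\tau^*$ also preserves and reflects completeness. Thus both equivalences restrict to equivalences $\CSeg_{\bfL^\op}(\calS)\simeq\CSeg_{\bfL_c^\op}(\calS)\simeq\CSeg_{\bfG^\op}(\calS)$, which is the final assertion.
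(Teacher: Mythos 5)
Your proposal is correct and matches the paper's treatment: the paper states this result purely as a citation to Chu--Hackney \cite{CH22} and offers no independent proof, and your outline correctly isolates the formal part (restriction along $j$ to the connected subpattern, which contains all elementary objects, inverted by right Kan extension) from the genuine combinatorial content (existence and contractibility of the spaces of levelings of a connected acyclic graph), deferring the latter to \cite{CH22} exactly as the paper does via the identification of \cref{rem:L-compared-to-CH}. Your completeness argument via the underlying simplicial object $[n]\mapsto\calP([n],\{1\}_+)$ and its identification with linear graphs under $\tau$ is also the intended one.
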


Restricting the envelope functor $\Env_{\L}$ (\cref{defn:env-Lop}) along this equivalence gives an envelope for $\bfG^\op$-Segal spaces.
For clarity, we spell out the resulting functor.

\begin{defn}
    The envelope functor for $\bfG^\op$-Segal spaces is defined as
    \[\hldef{\Env_{\bfG^\op}} \colon \Seg_{\bfG^\op}(\calS) \xrightarrow[\ \tau^\ast \ ]{\simeq}\Seg_{\Lc}(\calS) \xrightarrow[\ j_\ast \ ]{\simeq} \Seg_{\L}(\calS) \xhookrightarrow{\ \varphi^*\ }
    \Seg_{\bmC}(\calS) 
    \xrightarrow{\ q_!\ }
    \Seg_{\Dop \times \Fin_\ast}(\calS) \simeq \Seg_{\Dop}(\CMon).\]
\end{defn}

Combining \cref{thm:segal-envelope} and \cref{thm:L=G} we get a well-behaved envelope functor for Segal $\infty$-properads modelled on the pattern $\bfG^\op$.

\begin{cor}\label{cor:segal-G-envelope}
    The envelope for $\bfG^\op$-Segal spaces lands in pre-properads and gives an equivalence
    \[
        \Env_{\bfG^\op}\colon
        \Seg_{\bfG^\op}(\calS) \xrightarrow{\ \simeq\ }
        \pPrpd \subset \Seg_{\Dop}(\CMon).
    \]
    Moreover, this restricts to an equivalence
    \[
        \Env_{\bfG^\op}\colon
        \CSeg_{\bfG^\op}(\calS) \xrightarrow{\ \simeq\ } \Prpd,
    \]
    between the $\infty$-category of complete $\bfG^\op$ -Segal spaces and the $\infty$-category of $\infty$-properads.
\end{cor}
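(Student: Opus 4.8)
The plan is to recognise that \cref{cor:segal-G-envelope} is a formal consequence of the two preceding results, \cref{thm:segal-envelope} and \cref{thm:L=G}, once one unwinds the definition of $\Env_{\bfG^\op}$. By construction $\Env_{\bfG^\op}$ factors as the composite of the transport equivalence coming from the span $\bfL^\op \hookleftarrow \bfL^\op_c \xrightarrow{\tau} \bfG^\op$ with the envelope functor $\Env_{\bfL^\op} = q_! \circ \varphi^\ast$ of \cref{defn:env-Lop}. Thus every stage is already known to be an equivalence onto the appropriate subcategory, and the proof amounts to assembling these.

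First I would record that the composite $j_\ast \circ \tau^\ast \colon \Seg_{\bfG^\op}(\calS) \to \Seg_{\bfL^\op}(\calS)$ is an equivalence. By \cref{thm:L=G} the restriction functors $\tau^\ast \colon \Seg_{\bfG^\op}(\calS) \to \Seg_{\bfL^\op_c}(\calS)$ and $j^\ast \colon \Seg_{\bfL^\op}(\calS) \to \Seg_{\bfL^\op_c}(\calS)$ are both equivalences, so composing $\tau^\ast$ with the inverse $j_\ast$ of $j^\ast$ yields the desired equivalence. Post-composing with the equivalence $\Env_{\bfL^\op} \colon \Seg_{\bfL^\op}(\calS) \xrightarrow{\simeq} \pPrpd$ from the first half of \cref{thm:segal-envelope} then gives
\[
    \Env_{\bfG^\op} \colon \Seg_{\bfG^\op}(\calS) \xrightarrow{\ \simeq\ } \pPrpd \subseteq \Seg_{\Dop}(\CMon),
\]
and in particular shows that the $\bfG^\op$-envelope lands in pre-properads.

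For the completeness statement I would invoke the last clause of \cref{thm:L=G}, which asserts that the equivalences above respect completeness; hence $j_\ast \circ \tau^\ast$ restricts to an equivalence $\CSeg_{\bfG^\op}(\calS) \simeq \CSeg_{\bfL^\op}(\calS)$. Combining this with the second half of \cref{thm:segal-envelope}, namely that $\Env_{\bfL^\op}$ restricts to $\CSeg_{\bfL^\op}(\calS) \simeq \Prpd$, produces the claimed restricted equivalence $\Env_{\bfG^\op} \colon \CSeg_{\bfG^\op}(\calS) \xrightarrow{\simeq} \Prpd$.

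The step I expect to be the only genuinely delicate one is purely a matter of matching definitions: one must confirm that the completeness transported from $\bfG^\op$-Segal spaces through \cref{thm:L=G} is the same notion appearing in \cref{thm:segal-envelope}. Both are phrased through the underlying $\Dop$-Segal space, and this is exactly what the ``respects completeness'' clause guarantees, so no further argument is needed. All the substantive work sits upstream, in the obstruction-theoretic proof that $\Csp$ is the terminal pre-properad (\cref{thm:Csp-is-final}) feeding \cref{thm:segal-envelope}, and in the graphical comparison of Chu--Hackney underlying \cref{thm:L=G}.
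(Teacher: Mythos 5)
Your proposal is correct and matches the paper's argument exactly: the corollary is obtained by composing the equivalences $\tau^\ast$ and $j_\ast$ from \cref{thm:L=G} (which respect completeness) with the two halves of \cref{thm:segal-envelope}. No further comment is needed.
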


\section{\texorpdfstring{$n$}{n}-properads and projective \texorpdfstring{$\infty$-properads}{infinity-properads}}\label{sec:5}

The primary goal of this section is to study $n$-properads, which we define as those $\infty$-properads where all spaces of operations are $(n-1)$-truncated.
We prove that the resulting $(2,1)$-category of $1$-properads $\mrm{Prpd}_1$ is equivalent to the $(2,1)$-category of ``labelled cospan categories'' of \cite{Jan-tropical}, as conjectured there.
An analogous result was recently proven by Beardsley--Hackney \cite{BH22}, who use a more classical definition of $1$-properads.
Combining these results we will see that our $1$-properads are equivalent to the properads as defined in \cite{JohnsonYau}, based on \cite{Val07}.

In proving the above comparison the operations of arity $(0,0)$ will play a special role, as they prevent the underlying symmetric monoidal $\infty$-category of a $1$-properad from being a $1$-category.
Because of this, the first two subsections of this section will explore in detail how to delete and reintroduce $(0,0)$-ary operations in an $\infty$-properad.
This question is of independent interest and leads to notions of projective, reduced, and extended $\infty$-properads.
We will for example in \cref{cor:Prpd-as-pullback-factorization} show that $\Prpd$ can be written as a pullback that encodes an $\infty$-properad $\calP$ by remembering the projective $\infty$-properad $\smash{\Red{\calP}} \coloneq \calP/B\End_\calP(\unit)$, the space of $(0,0)$-ary operations $\calP(\emptyset; \emptyset)$, and a map that glues them together.
We will provide an effective criterion for deciding when an $\infty$-properad $\calP$ is extended, i.e.~when its $(0,0)$-ary operations are freely generated from positive arity operations, and we observe that this is the case for the bordism $\infty$-properad $\Bord_d$, which has implications for topological field theories.

\subsection{Projective \texorpdfstring{$\infty$-properads}{infinity-properads}}\label{sec:reduced-projective}

To motivate the study of $(0,0)$-ary operations, recall that the axioms of a bialgebra require $\varepsilon \circ \nu = \id_\unit$ for $\nu\colon \unit \to A$ the unit and $\varepsilon\colon A \to \unit$ the counit.
As discussed in \cref{rem:Span-not-prpd}, one cannot impose such a relation in an $\infty$-properad since $\varepsilon \circ \nu$ is a $(0,0)$-ary operation, but $\id_\unit$ is not.
We introduce a notion of ``projective $\infty$-properads'' where all $(0,0)$-ary operations are identified with $\id_\unit$.
Concretely, we will see that every projective $\infty$-properad can be obtained as $\Red{\calP} = \calP/\calP_0$ where $\calP_0 = B\End_\calP(\unit) \subset \calP$ is the full subcategory of an $\infty$-properad $\calP$ on the monoidal unit.

We show that the $\infty$-category of projective $\infty$-properads $\projPrpd$ is equivalent to the full subcategory $\Prpd^\rd \subset \Prpd$ of those $\infty$-properads that are ``reduced'' in the sense that they have a unique arity $(0,0)$ operation, and we show that there is a triple adjunction:
    \[
    \begin{tikzcd}
	\Prpd && {\Prpd^\rd} \ar[r, "\simeq"] & \projPrpd.
	\arrow["{(-)^\rd}"{description}, from=1-1, to=1-3]
	\arrow["{(-)^{\rm ext}}"', shift right = 3, hook', from=1-3, to=1-1]
	\arrow["{\text{include}}", shift left = 3, hook, from=1-3, to=1-1]
    \end{tikzcd}
    \]
We will further see that this exhibits $\Prpd$ as part of a semi-recollement and that $\Prpd$ can thus be written as a certain pullback $\Prpd^\rd \times_\calS \Ar(\calS)$.

\subsubsection{Sub-terminal $\infty$-properads}
Recall that an object $x$ in an $\infty$-category $\calC$ is called \hldef{subterminal} if for every other object $y \in \calC$ the mapping space $\Map_\calC(y,x)$ is either empty or contractible.
From this definition it follows that if there is a terminal object $t \in \calC$, then $x$ is subterminal if and only if $x \to t$ is a monomorphism.
By \cref{cor:Csp-is-final} $\Csp \in \Prpd$ is terminal, so sub-terminal objects of $\Prpd$ are precisely the subproperads 
$\calP \subseteq \Csp$.
Using \cref{cor:sub-properad}, it is straightforward to classify all such $\infty$-properads. 
This is similar to how we classified full subproperads in \cref{cor:subproperad-classification}.

\begin{lem}\label{cor:subterminal-prpd}
    Call a subset $S \subseteq \bbN \times \bbN$ admissible if it satisfies:
    \begin{enumerate}[(1)]
        \item $(1,1) \in S \text{ or } S = \emptyset \text{ or } S = \{(0,0)\}$.
        \item $(a,c),(b,d) \in S \text{ and } 1 \le k \le \min(b,c) \implies (a+b-k,c+d-k) \in S$.
    \end{enumerate}
    Then there is a canonical order preserving bijection
    \[\{\text{sub-terminal $\infty$-properads}\} \xtoo{\cong}  \{\text{admissible subsets } S \subseteq \bbN \times \bbN\}\]
    defined by sending $\calP \subset \Csp$ to $\pi_0(\Ar(\calP)^\simeq)^\el \subset \pi_0(\Ar(\Csp)^\simeq)^\el \simeq \bbN \times \bbN$.
\end{lem}
\begin{proof}
    By \cref{cor:sub-properad} giving a subproperad $\calP \subset \Csp$
    is equivalent to giving submonoids 
    $\calP^\simeq \subset \xN_0 \Csp$ and $\Ar(\calP)^\simeq \subset \xN_1 \Csp$
    such that they yield a well-defined subcategory (so they must be closed under passing to source/target, taking identity morphisms, and composition)
    and such that the submonoid inclusions are equifibered.
    Via \cref{lem:submonoid-eqf} we saw that the latter condition means that $f \otimes g \in \Ar(\calP)^\simeq$ if and only if $f,g \in \Ar(\calP)^\simeq$.
    
    The only equifibered submonoids of $\xN_0 \Csp = \xF(*)$ are $\calP^\simeq = 0$ or $\calP^\simeq = \xN_0 \Csp$,
    whereas choosing $\Ar(\calP)^\simeq \subset \xN_1\Csp \simeq \xF(\xF(*) \times \xF(*))$ is equivalent to choosing a full subspace of $\xF(*) \times \xF(*)$,
    or equivalently a subset $S \subset \pi_0(\xF(*) \times \xF(*)) = \bbN^2$.
    
    If $\calP^\simeq = 0$, then the only options for $S$ are $S = \emptyset$ or $S = \{(0,0)\}$.
    If $\calP^\simeq = \Csp^\simeq$ we know that $(1,1) \in S$ as $\calP$ has to contain the identity morphism on $* \in \Csp$.
    It remains to ensure that $\Ar(\calP)^\simeq$ is closed under composition.
    Since any two morphisms in an $\infty$-properad may be composed by iteratively composing operations along one or multiple colours, this exactly amounts to the condition that 
    for any two operations $(a,c),(b,d) \in S$ and any number of colours $k\ge 1$ with $k\le c$ and $k \le b$ the composite $(a+b-k,c+d-k)$ is still in $S$.
\end{proof}

\begin{defn}
    Given an admissible subset $S \subseteq \bbN \times \bbN$ we write $\hldef{\Prpd^S} \subseteq \Prpd$ for the full subcategory of those $\infty$-properads that only have operations of arities in $S$.
\end{defn}

\begin{example}
    The admissible subset 
    $\bbN \times \{1\} \subseteq \bbN \times \bbN$ 
    corresponds to the full subcategory of monic $\infty$-properads 
    $\Prpd^{\bbN \times \{1\}} = \Prpd^{\monic} \subseteq \Prpd$ from \cref{defn:monic},
    and similarly we have $\Prpd^{\{1\}\times\bbN} = \Prpd^{\comonic} \subseteq \Prpd$. 
\end{example}

\begin{example}
    The admissible subset $\{(1,1)\} \subseteq \bbN \times \bbN$ corresponds to the full subcategory $\Prpd^{(1,1)} \subseteq \Prpd$ spanned by $\infty$-properads $\calP$ for which $\calP(k,l) =\emptyset$ unless $k=l=1$. 
    By definition $\calP \in \Prpd^{(1,1)}$ if and only if the terminal map $\calP \to \Csp$ factors through $\Fin^\simeq \subseteq \Csp$.
    We thus have by \cref{cor:free-functor-on-Cat} an equivalence
    \[\xF \colon \Cat \iso \SMeq{\Fin^\simeq} \simeq \Prpd^{(1,1)}.  \]
\end{example}

\begin{example}\label{ex:nullary-properads}
    The admissible subset $\{(0,0)\} \subseteq \bbN \times \bbN$ corresponds to the full subcategory $\hldef{\Prpd^{(0,0)}} \subseteq \Prpd$ of \hldef{nullary $\infty$-properads}. 
    These are the $\infty$-properads $\calP$ such that $\calP(k,l) = \emptyset$ unless $k=l=0$,
    or equivalently, they are the symmetric monodial $\infty$-categories that admit an equifibered functor to the full subcategory of $\Csp$ on the unit.
    This full subcategory is exactly $\frB(\xF(*))$, and it then follows from \cref{lem:eqf-over-frB} that we have an equivalence
    \[
        \frB \circ \xF\colon \calS \xtoo{\simeq} \SMeq{\frB(\xF(*))} \simeq \Prpd^{(0,0)}
    \]
    whose inverse sends an $\infty$-properad $\calP$ to its space of $(0,0)$-ary operations $\calP(\emptyset; \emptyset)$.
\end{example}

\begin{lem}\label{lem:adjoint-PrpdS}
    For every admissible $S \subset \bbN \times \bbN$ the full inclusion $\Prpd^S \hookrightarrow \Prpd$ admits a right adjoint 
    \[\mrm{inc}\colon \Prpd^S \adj \Prpd \cocolon \hldef{(-)_S} \]
    Moreover, the counit $\calP_S \to \calP$ is the inclusion of the subproperad that contains exactly those operations whose arity lies in $S$.
\end{lem}
\begin{proof}
    We know that $\Csp_S \subset \Csp$ is a subterminal object in $\Prpd$ and an $\infty$-properad $\calP$ maps to $\Csp_S$ if and only if $\calP \in \Prpd^S$.
    Therefore, we have $\Prpd^S \simeq \SMeq{\Csp_S}$.
    Pullback along the inclusion $\Csp_S \to \Csp$ gives the desired right adjoint.
\end{proof}

\subsubsection{Reduced and projective properads}

Let $\calP$ be an $\infty$-properad.
Then the unique equifibered functor
$\calP \to \Csp$ induces an equifibered morphism of commutative monoids
$\Map_\calP(\unit,\unit) \too \Map_\Csp(\emptyset,\emptyset) \simeq \xF(\ast)$.
By \cref{cor: equifibered-over-free-equals-spaces}, this map is free. 
In fact, it can be identified with $\xF(-)$ applied to the terminal map
$\calP(\emptyset;\emptyset):=\Map_{\Prpd}(\mfr{c}_{\emptyset,\emptyset},\calP) \to \ast$.

\begin{defn}
    Let $\calP$ be an $\infty$-properad.
    We say that $\calP$ is \hldef{reduced} if the space $\calP(\emptyset;\emptyset)$ of nullary operations is contractible,
    or equivalently if $\Map_\calP(\unit, \unit) \simeq \Map_\Csp(\emptyset, \emptyset) \left(\simeq \xF(\ast)\right)$.
    We write $\hldef{\Prpd^\rd} \subseteq \Prpd$
    for the full subcategory of reduced $\infty$-properads. 
\end{defn}

\begin{prop}\label{prop:adjoints-to-rd-and-null}
    The inclusion 
    $\Prpd^\rd \hookrightarrow \Prpd$ admits a left adjoint preserving compact objects
    \[\hldef{(-)^\rd} \colon \Prpd \too \Prpd^\rd.\]
\end{prop}
\begin{proof}
    An $\infty$-properad $\calP$ is reduced if and only if
    $\Map_{\Prpd}(\mfr{c}_{\emptyset,\emptyset},\calP) \simeq \ast$. 
    Therefore, $\Prpd^\rd \hookrightarrow \Prpd$ is the pullback of the right adjoint $\Map_{\Prpd}(\mfr{c}_{\emptyset, \emptyset}, -)\colon \Prpd \to \calS$
    along the right adjoint $* \to \calS$.
    Since $\PrR \subset \Cat$ is closed under pullbacks \cite[Theorem 5.5.3.18]{HTT} we get that $\Prpd^\rd$ is presentable and its inclusion into $\Prpd$ admits a left adjoint.
    Moreover, because $\mfr{c}_{\emptyset,\emptyset}$ is compact by \cref{cor:Prpd-compact-generation}, the inclusion also preserves filtered colimits. 
    The left adjoint preserves compact objects since the right adjoint preserves filtered colimits \cite[Proposition 5.5.7.2.(1)]{HTT}.
\end{proof}

We will see later (\cref{lem:extra-adjoint-reduction}) that the left adjoint
$(-)^\rd \colon \Prpd \to \Prpd^\rd$ 
admits a further left adjoint $(-)^\ext \colon \Prpd^\rd \to \Prpd$ 
which, informally speaking, 
takes a reduced $\infty$-properad and equips it with the universal space of nullary operations generated from non-nullary operations.
We will show that any $\infty$-properad can be recovered by gluing its reduced and nullary pieces, see \cref{lem:cofracture-square}.
In order to establish these facts it will be useful to consider projective $\infty$-properads, obtained as a quotient of $\infty$-properads by their nullary operations.
\begin{defn}
    For $\calP \in \Prpd$ we write $\hldef{\calP_0} \subseteq \calP$ for the full subcategory spanned by $\unit \in \calP$.
    (This is the $S = \{(0,0)\}$ case of $\calP_S$ from \cref{lem:adjoint-PrpdS}.)
    We define the \hldef{projectivization} $\hldef{\Red{\calP}}$ to be the pushout
    \[\begin{tikzcd}
	{\calP_0} & \ast \\
	\calP & {\hldef{\Red{\calP}}}
	\arrow[from=1-1, to=1-2]
	\arrow[from=1-2, to=2-2]
	\arrow[hook, from=1-1, to=2-1]
	\arrow["\lrcorner"{anchor=center, pos=0.125, rotate=180}, draw=none, from=2-2, to=1-1]
	\arrow[from=2-1, to=2-2]
    \end{tikzcd}\]  
    in $\SM$.
\end{defn}

\begin{obs}\label{obs:proj-fiber-sequence}
    Let $\calP$ be an $\infty$-properad such that $\pi_0|\calP|$ is a group under $\otimes$.
    Then by applying $|-|$ to the definition of $\Red{\calP}$ we obtain a cofiber sequence of (group-like) symmetric monoidal $\infty$-groupoids, and thus a fiber sequence of infinite loop spaces
    \[  \Omega^\infty \Sigma^{\infty+1}\calP(\emptyset;\emptyset)_+
    \simeq |\calP_0| \too |\calP| \too |\overline{\calP}|.\]
    The identification of $|\calP_0|$ as a free infinite loop space follows from the adjunction $\frB \dashv \End_{-}(\unit)$ in \cref{lem:con-SM=CMon}.
    We write $\calP_0 = \frB(\xF(\calP(\emptyset;\emptyset)))$ using \cref{lem:eqf-over-frB} and then compute for every infinite loop space $X$, interpreted as a group-like symmetric monoidal $\infty$-category $X \in \SM$, that
    \begin{align*}
        \Map_{\SM}(|\calP_0|, X) 
        &= \Map_{\SM}(\frB(\xF(\calP(\emptyset;\emptyset))), X)
        = \Map_{\CMon}(\xF(\calP(\emptyset;\emptyset)), \Omega X)\\
        & = \Map_{\calS}(\calP(\emptyset;\emptyset), \Omega X)
        = \Map_{\calS_*}(\Sigma \calP(\emptyset;\emptyset)_+, X).
    \end{align*}
    Together with the identification $\Red{\calP} \simeq (h\calP)^\proj$ for $1$-properads $\calP$ (see the paragraph preceding \cref{prop:LCC-as-pullback}) this recovers and thus generalizes \cite[Proposition 3.4]{Jan-tropical}.
\end{obs}

\begin{example}\label{example:projective-cospans}
    We define the \hldef{projective cospan category} $\Csp^{\rm proj}$ as the symmetric monoidal $1$-category 
    with objects finite sets and 
    morphisms isomorphism classes of cospans $[A \to X \leftarrow B]$ such that $A \amalg B \to X$ is surjective.
    Equivalently, morphisms from $A$ to $B$ are equivalence relations on $A \amalg B$.
    The composition of two such cospans $[A \to X \leftarrow B]$ and $[B \to Y \leftarrow C]$ is defined as $[A \to Q \leftarrow C]$ where $Q \subset X \amalg_B Y$ is the image of $A \amalg C$.
    The symmetric monoidal structure is given by disjoint union.
    This category was called the reduced cospan category and denoted ``$\Csp^\rd$'' in \cite[Definition 2.9]{Jan-realisation-fibrations} and \cite{Jan-tropical},
    though this notation would be misleading here as $\Csp$ is already a reduced $\infty$-properad and hence $\Csp^\rd=\Csp$.
    This fits into the commutative square of symmetric monoidal $\infty$-categories
    \[\begin{tikzcd}
	{\Csp_0} & \ast \\
	\Csp & {\Csp^{\rm proj}}.
	\arrow[from=1-1, to=1-2]
	\arrow[from=1-2, to=2-2]
	\arrow[hook, from=1-1, to=2-1]
	\arrow[from=2-1, to=2-2]
    \end{tikzcd}\]  
    We will show in \cref{lem:nerve-splitting} that this square is a pushout in $\SM$ and hence that $\Csp^{\rm proj} \simeq \Red{\Csp}$.
\end{example}

The following lemma will be crucial as it (implicitly) describes the nerve of $\Red{\calP}$.
For this, let $\xN_\bullet^\xint(\calC)\colon \simp^{\op,\xint} \to \CMon$ denote the restriction of the nerve of $\calC \in \SM$ to the inert morphisms.
\begin{lem}\label{lem:nerve-splitting}\label{cor:Csp-proj}
    For $\calP \in \Prpd$ there is a natural equivalence 
    in $\Fun(\simp^{\op,\xint}, \CMon)_{\xN_\bullet^\xint(\calP_0)/}$
    \[ \xN^\xint_\bullet(\calP) \simeq \xN^\xint_\bullet(\calP_0) \oplus \xN^\xint_\bullet(\Red{\calP}).\]
    Moreover, $\Red{\Csp} \simeq \Csp^\proj$ via the map induced by the square in \cref{example:projective-cospans}.
\end{lem}
\begin{proof}
    Let $K_n(\Csp) \subseteq \xN^\xint_n(\Csp)$ 
    be the submonoid consisting of diagrams 
    $A \colon \Tw[n] \to \Fin$ 
    such that $A_{i,i} \amalg A_{i+1,i+1} \to A_{i,i+1}$ is surjective for all $0 \le i < n$.
    This condition is preserved by inert maps so we get a functor 
    $K_\bullet(\Csp) \colon \simp^{\op,\xint} \to \CMon$. 
    
    We claim that $K_\bullet(\Csp)$ satisfies the Segal condition.
    Indeed, $K_\bullet(\Csp)$ is a subfunctor of $\xN^\xint_\bullet(\Csp)$,
    for which the Segal map is an equivalence, and thus
    \[K_n(\Csp) \too K_1(\Csp) \times_{K_0(\Csp)} \cdots \times_{K_0(\Csp)} K_1(\Csp)\]
    is a monomorphism.
    To show surjectivity on $\pi_0$ it suffices to observe that if
    $A \colon \Tw[n]^\el \to \Fin$ 
    is such that
    $A_{i,i} \amalg A_{i+1,i+1} \to A_{i,i+1}$ is surjective for all $0 \le i < n$
    then its left Kan extension
    $\iota_!A \colon \Tw[n] \to \Fin$ lies in $K_n(\Csp)$ by definition.
    Note that the composite $K_n(\Csp) \to \xN_n(\Csp) \to \xN_n(\Csp^\proj)$ is an equivalence for $n=1$ and thus for all $n$.
    
    For an $\infty$-properad $\calP$ we write $K_\bullet(\calP) \colon \simp^{\op,\xint} \to \CMon$
    for the pullback
    \[\begin{tikzcd}
	{K_\bullet(\calP)} \ar[dr, phantom, very near start, "\lrcorner"] & {\xN^\xint_\bullet(\calP)} \\
	{K_\bullet(\Csp)} & {\xN_\bullet^\xint(\Csp)}.
	\arrow[hook, from=2-1, to=2-2]
	\arrow[from=1-1, to=2-1]
	\arrow[from=1-2, to=2-2]
	\arrow[hook, from=1-1, to=1-2]
    \end{tikzcd}\]
    The inclusion $K_\bullet(\calP) \hookrightarrow \xN_\bullet^\xint(\calP)$ uniquely extends to the map
    \[ \xN_\bullet^\xint(\calP_0) \oplus K_\bullet(\calP) \to \xN^\xint_\bullet(\calP)\]
    in $\Fun(\simp^{\op,\xint}, \CMon)_{\xN_\bullet^\xint(\calP_0)/}$.
    We claim that this map is an equivalence. 
    Because both sides satisfy the Segal condition, it will suffice to check $\bullet = 1$.
    The map in question is the base-change of 
    \( \xN_1^\xint(\Csp_0) \oplus K_1(\Csp) \to \xN^\xint_1(\Csp)\)
    along $\xN^\xint_1\calP \to \xN^\xint_1\Csp$.
    The former is an equivalence because it corresponds to the disjoint decomposition of the space of elementary cospans $(A_{00} \to \ast \leftarrow A_{11})$ 
    into those where $A_{00} \amalg A_{11}$ is empty or non-empty, respectively.
    
    To complete the proof we show that the composite map $K_\bullet(\calP) \to \xN^\xint_\bullet \calP \to \xN^\xint_\bullet(\Red{\calP})$ is an equivalence.
    We will show that the pushout square defining $\Red{\calP}$ is a level-wise colimit in the sense of \cref{obs:level-wise-colimits-in-SM}.
    To do so, we will show that $M_\bullet \in \Fun(\Delta^\op,\CMon)$ defined as the pushout
    \[\begin{tikzcd}
	{\xN_\bullet(\calP_0)} & \ast \\
	{\xN_\bullet(\calP)} & {M_\bullet}
	\arrow[from=1-1, to=1-2]
	\arrow[hook, from=1-2, to=2-2]
	\arrow[hook, from=1-1, to=2-1]
	\arrow["\lrcorner"{anchor=center, pos=0.125, rotate=180}, draw=none, from=2-2, to=1-1]
	\arrow[from=2-1, to=2-2]
    \end{tikzcd}\]
    is a complete Segal space.
    Restricting to $\simp^{\op,\xint}$ we have that $\xN_\bullet^\xint(\calP) \simeq \xN_\bullet^\xint(\calP_0) \oplus K_\bullet(\calP)$ as shown above.
    In a semi-additive category the cofiber of a summand inclusion is equivalent to the complementary summand, hence $(M_\bullet)_{|\simp^{\op,\xint}} \simeq K_\bullet(\calP)$.
    In particular $M_\bullet$ is Segal.
    Suppose, for a moment, that $M_\bullet$ is also complete.
    Then \cref{obs:level-wise-colimits-in-SM} implies that $M_\bullet \too N_\bullet\Red{\calP}$ is an equivalence, and therefore $K_n(\calP) \simeq M_n \to \xN_n\Red{\calP}$ is an equivalence for all $n$, as claimed.

    To prove completeness, first consider the special case of $\calP = \Csp$.
    The map $M_\bullet \to \xN_\bullet(\Csp^\proj)$ is an equivalence because its restriction to $\simp^{\op,\xint}$ is $K_\bullet(\Csp) \to \xN(\Csp^\proj)$, which we already observed to be an equivalence.
    Hence, $M_\bullet$ must be complete, as it is equivalent to the Rezk nerve of a category.
    It follows that $M_\bullet \simeq \xN_\bullet\Red{\Csp}$ and $\Red{\Csp} \simeq \Csp^\proj$.

    To see that $M_\bullet$ is complete in general,
    we apply \cref{lem:checking-completeness} to the map 
    \[ 
        M_\bullet \to \xN_\bullet(\Red{\calP}) \to \xN_\bullet(\Red{\Csp}) = \xN_\bullet(\Csp^\proj).
    \]
    To do so, we will need to show that $\xN_\bullet(\Csp^\simeq) \times_{\xN_\bullet(\Csp^\proj)} M_\bullet$ is complete. 
    (Here we used $(\Csp^\proj)^\simeq = \Csp^\simeq$.)
    Let $X_\bullet \coloneq \xN_\bullet(\Csp^\simeq \times_{\Csp} \calP) \simeq \xN_\bullet(\Csp^\simeq) \times_{\xN_\bullet(\Csp)} \xN_\bullet(\calP)$,
    which is a complete Segal space as it is the nerve of an $\infty$-category.
    Pullback pasting applied to the diagram
    \[\begin{tikzcd}
	{X^\xint_\bullet} & {K_\bullet(\calP)} & {\xN^\xint_\bullet(\calP)} & {M^\xint_\bullet} \\
	{\xN^\xint_\bullet(\Csp^\simeq)} & {K_\bullet(\Csp)} & {\xN^\xint_\bullet(\Csp)} & {\xN^\xint_\bullet(\Csp^{\rm proj})}
	\arrow[from=1-1, to=2-1]
	\arrow[from=1-1, to=1-2]
	\arrow[from=1-2, to=1-3]
	\arrow[from=1-3, to=1-4]
	\arrow[from=1-4, to=2-4]
	\arrow[from=2-3, to=2-4]
	\arrow[from=1-3, to=2-3]
	\arrow[from=1-2, to=2-2]
	\arrow[from=2-1, to=2-2]
	\arrow[from=2-2, to=2-3]
	\arrow["\lrcorner"{anchor=center, pos=0.125}, draw=none, from=1-2, to=2-3]
	\arrow["\lrcorner"{anchor=center, pos=0.125}, draw=none, from=1-1, to=2-2]
	\arrow["\simeq"{description}, curve={height=-12pt}, from=1-2, to=1-4]
	\arrow["\simeq"{description}, curve={height=12pt}, from=2-2, to=2-4]
\end{tikzcd}\]
    in $\Fun(\simp^{\op, \xint}, \calS)$ implies that the canonical map $X_\bullet \to \xN_\bullet(\Csp^\simeq) \times_{\xN_\bullet(\Csp^{\rm proj})} M_\bullet$ in $\Fun(\Dop,\calS)$ is an equivalence and shows the desired completeness.
\end{proof}

\begin{cor}\label{cor:mapping-space-in-Red}
    For any $\infty$-properad $\calP$ and all $x, y \in \calP$ we have a canonical equivalence
    \[
        \Map_\calP(x, y) \simeq \xF(\calP(\emptyset; \emptyset)) \times \Map_{\Red{\calP}}(x,y).
    \]
\end{cor}
\begin{proof}
    \cref{lem:nerve-splitting} gives $\xN_1\calP \simeq \xN_1(\calP_0) \oplus \xN_1(\Red{\calP})$.
    Taking fibers of the map to $\xN_0\calP \times \xN_0\calP$ yields the desired equivalence.
\end{proof}

\begin{cor}\label{cor:contra-eqf-reduced}
    For every $\infty$-properad $\calP$, in the following square in $\SM$ 
    \[\begin{tikzcd}
	{\calP} & \Red{\calP} \\
	{\Csp} & \Red{\Csp}
	\arrow[from=1-1, to=1-2]
	\arrow[from=1-2, to=2-2]
	\arrow[from=1-1, to=2-1]
	\arrow[from=2-1, to=2-2, "\rho"]
    \end{tikzcd}\]
    the horizontal functors are contrafibered and the vertical functors are equifibered.
\end{cor}
\begin{proof}
    The functor $\calP \to \Red{\calP}$ is contrafibered as it is the cobase change of the contrafibered functor $\calP_0 \to \ast$.
    Setting $\calP = \Csp$ we also get that the bottom arrow is contrafibered.
    The functor $\Red{\calP} \to \Red{\Csp}$ is equifibered since 
    $\xN^\xint_\bullet(\Red{\calP}) \to \xN^\xint_\bullet(\Red{\Csp})$ 
    can be identified with 
    $K_\bullet(\calP) \to K_\bullet(\Csp)$
    which is the base change of the equifibered map $\xN^\xint_\bullet(\calP) \to \xN^\xint_\bullet(\Csp)$.
\end{proof}

\begin{defn}\label{defn:projective-prpd}
    A \hldef{projective $\infty$-properad} is a symmetric monoidal $\infty$-category $\calQ$ equipped with an equifibered symmetric monoidal functor $\calQ \to \Red{\Csp}$.
    We let $\hldef{\projPrpd} \subset \SMeq{\Red{\Csp}}$ denote the full subcategory of projective $\infty$-properads.
\end{defn}

\begin{example}\label{ex:Span-proj}
    It was pointed out in \cref{rem:Span-not-prpd} that the $(2,1)$-category $\Span$ of spans of finite sets with its symmetric monoidal structure given by disjoint union is not an $\infty$-properad.
    However, it is a projective $\infty$-properad, as we shall argue now.
    Consider the functor $\pi\colon \Span \to \Csp^{\rm proj} = \Red{\Csp}$ defined on objects and morphisms by
    \[
        A \mapsto A
        \qquad \text{and} \qquad
        (A \leftarrow X \to B) \mapsto (A \to A \coproductlimits_X B \leftarrow B).
    \]
    Since $\Csp^{\rm proj}$ is a $1$-category we do not need to provide higher coherence, but only need to check the functoriality.
    In \cite[\S5.2]{Grandis-Pare} the authors argue this defines a lax $2$-functor from $\Span$ to a bicategory of cospans of sets.
    The lax functoriality is given by the natural comparison map
    \begin{align*}
        &\pi(A \leftarrow X \times_B Y \to C) = 
        (A \to A \coproductlimits_{X \times_B Y} C \leftarrow C ) \cong (A \to A\coproductlimits_X (X \coproductlimits_{X \times_B Y} Y) \coproductlimits_Y C \leftarrow C) \\
        &\too \pi(A \leftarrow X \to B) \coproductlimits_B \pi(B \leftarrow Y \to C)
        = (A \to (A\coproductlimits_X B) \coproductlimits_B (B \coproductlimits_Y C) \leftarrow C)
        \cong (A \to A\coproductlimits_X B \coproductlimits_Y C \leftarrow C).
    \end{align*}
    At the apex this comparison is given by the map
    \[
        A\coproductlimits_X (X \coproductlimits_{X \times_B Y} Y) \coproductlimits_Y C 
        \too
        A \coproductlimits_X B \coproductlimits_Y C,
    \]
    which is an injection whose image is precisely the image of $A \amalg C$.%
    \footnote{
        This map is not always a bijection, so $\pi$ does not lift to a functor $\Span \to \Csp$.
    }
    (Indeed, the map $X \amalg_{X \times_B Y} Y \to B$ is an injection whose image agrees with that of $X \amalg Y \to B$.)
    Recalling \cref{example:projective-cospans} we see that this image is exactly the definition of the composite $\pi(X) \circ \pi(Y)$ in $\Csp^{\rm proj}$ as a sub-cospan of the composite in $\Csp$.
    Therefore, we have indeed defined a functor $\pi\colon \Span \to \Csp^{\rm proj}$ and this functor is canonically symmetric monoidal with respect to disjoint union.
    To check that this functor is equifibered it suffices to consider $\xN_1(\pi)$.
    Writing $\Lambda_0^2$ for the diagram $(\bullet \leftarrow \bullet \to \bullet)$ we have
    \[
        \Fun(\Lambda_0^2, \Fin)^\simeq 
        \xrightarrow{\xN_1(\pi)} 
        \Fun((\Lambda_0^2)^\op, \Fin)^\simeq 
        \xrightarrow{\ev_1} 
        \Fin^\simeq.
    \]
    By \cref{lem:coproduct-disjunctive-colim} both the composite $\ev_1 \circ \xN_1(\pi)$ and $\ev_1$ are equifibered (as they are $\colim_{\Lambda_0^2}$ and $\colim_{(\Lambda_0^2)^\op}$) and thus $\xN_1(\pi)$ is equifibered by cancellation.
\end{example}

Note that projective $\infty$-properads are not $\infty$-properads.
However, the following proposition shows that the $\infty$-category of projective $\infty$-properads is equivalent to the $\infty$-category of reduced $\infty$-properads.

\begin{prop}\label{prop:projectivization}
    Let $\rho\colon \Csp \to \Red{\Csp}$ denote the quotient map.
    Then we have an adjunction
    \[
        \overline{(-)} \colon  \Prpd \simeq\SMeq{\Csp} \adj \SMeq{\Red{\Csp}} = \projPrpd \colon \rho^\ast.
    \]
    The right adjoint $\rho^\ast$ is fully faithful, and its essential image consists of reduced $\infty$-properads. In particular, we obtain an equivalence of $\infty$-categories  $\Prpd^{\rd} \simeq \projPrpd$.
\end{prop}
\begin{proof}
    The left adjoint of the composite functor
    \[   
    (\SM)_{/\Csp} \xleftarrow{\rho^\ast} 
    (\SM)_{/\Red{\Csp}} \hookleftarrow
    \SMeq{\Red{\Csp}} 
    \]
    is given by
    \[ (\SM)_{/\Csp} \xrightarrow{\rho_!} (\SM)_{/\Red{\Csp}} \xrightarrow{\bbL^\eqf} \SMeq{\Red{\Csp}}. \]
    Since $\rho^\ast$ preserves equifibered functors this restricts to an adjunction
    \[\bbL^\eqf \rho_! \colon \SMeq{\Csp} \adj \SMeq{\Red{\Csp}} \colon \rho^\ast.\]
    By \cref{cor:contra-eqf-reduced} we have 
    $\bbL^\eqf \rho_! (\calP) \simeq \Red{\calP}$
    for $\infty$-properads $\calP$, establishing the desired adjunction.
    
    To prove that $\rho^\ast$ is fully faithful it suffices to show that for all $\calQ \in \SMeq{\Red{\Csp}}$ the top horizontal functor in the pullback square
    \[\begin{tikzcd}
	{\rho^\ast \calQ} & \calQ \\
	\Csp & {\overline{\Csp}}
	\arrow[from=2-1, to=2-2]
	\arrow[from=1-1, to=2-1]
	\arrow[from=1-2, to=2-2]
	\arrow[from=1-1, to=1-2]
    \end{tikzcd}\]
    is contrafibered, as then $\calQ$ is the contrafibered-equifibered factorization of $\rho^*\calQ \to \Csp \to \Red{\Csp}$
    and hence $\bbL^\eqf \rho_! \rho^* \calQ \simeq \calQ$.
    By \cref{cor:detect-ctf}, it suffices to show that $\xN_n(\rho^\ast \calQ) \to \xN_n(\calQ)$ is contrafibered for all $n$, so fix some $n$.
    The bottom map in the square $\xN_n \Csp \to \xN_n \Red{\Csp}$ is the projection to a summand in
    $\xN_n \Csp \simeq \xN_n \Csp_0 \oplus \xN_n \Red{\Csp}$.
    Therefore, the top map, being the pullback, is
    $\xN_n \rho^*\calQ \simeq \xN_n \Csp_0 \oplus \xN_n \overline{\calQ} \to \xN_n \overline{\calQ}$.
    This is the projection to a factor, or in other words the coproduct of an equivalence and a map to $0$, and hence contrafibered.
    This shows that $\rho^*\calQ \to \calQ$ is contrafibered.
    
    Finally, to characterize the image of $\rho^*$ consider those $\infty$-properads $\calP$ for which the square 
    \[\begin{tikzcd}
	{\calP} & \overline{\calP} \\
	\Csp & {\overline{\Csp}}
	\arrow[from=2-1, to=2-2]
	\arrow[from=1-1, to=2-1]
	\arrow[from=1-2, to=2-2]
	\arrow[from=1-1, to=1-2]
    \end{tikzcd}\]
    is cartesian. 
    Using that $\xN_n \calP \simeq \xN_n \calP_0 \oplus \xN_n \Red{\calP}$ holds level-wise we see that this is exactly the case if and only if $\calP_0 \to \Csp_0$ is an equivalence,
    i.e.~if and only if $\calP$ is reduced.
\end{proof}

This description of the projectivization $\Red{\calP}$ also gives us information about the reduced $\calP^\rd$:
\begin{cor}\label{cor:reduction-doesnt-change-arities}\label{cor:reduction-0arity-jointly-conservative}
    The unit map $\calP \to \calP^\rd$ of the adjunction $(-)^\rd \dashv \mrm{include}$,
    induces an equivalence on colours and on operations of all arities except $(0,0)$.
    Moreover, the following functor is conservative:
    \[
    (-)^\rd \times (-)_0 \colon \Prpd \to \Prpd^\rd \times \Prpd^{(0,0)} \simeq \Prpd^\rd \times \calS.
    \]
\end{cor}
\begin{proof}
    By \cref{prop:projectivization} we can equivalently consider the unit $\calP \to  \rho^* \Red{\calP} \simeq \calP^\rd$.
    The claim now follows from \cref{lem:nerve-splitting} which gives, for any fixed $n$, compatible splittings
    \[
    \xN_n \calP \simeq \xN_n \calP_0 \oplus \xN_n \Red{\calP}, \qquad \xN_n \calP^\rd \simeq \xN_n \Csp_0 \oplus \xN_n \Red{\calP}.
    \]
    From this we can see that $\calP \to \calP^\rd$ is an equivalence in all arities but $(0,0)$.
    The corollas $\frc_{k,l}$ generate $\Prpd$ (\cref{cor:Prpd-compact-generation}), so it follows that $(-)^\rd$ and $(-)_0$ are indeed jointly conservative.
\end{proof}

\begin{lem}\label{lem:extra-adjoint-reduction}
    There is a triple-adjunction with two fully faithful functors:
    \[
    \begin{tikzcd}
	\Prpd && {\Prpd^\rd}
	\arrow["{(-)^\rd}"{description}, from=1-1, to=1-3]
	\arrow["{(-)^{\rm ext}}"', shift right = 3, hook', from=1-3, to=1-1]
	\arrow["{\inc}", shift left = 3, hook, from=1-3, to=1-1]
    \end{tikzcd}
    \]
    We refer to the left most adjoint \hldef{$(-)^{\rm ext}$} as the \hldef{extension} functor.
\end{lem}
\begin{proof}
    The bottom adjunction comes from \cref{prop:adjoints-to-rd-and-null} where we also saw that $\Prpd^\rd$ is presentable (as is $\Prpd$ by \cref{rem:Prpd-presentable}).
    To construct the additional left adjoint $(-)^\ext$,
    it suffices by the adjoint functor theorem \cite[Corollary 5.5.2.9]{HTT} to show that the left adjoint $(-)^\rd$ preserves limits.
    Recall from \cref{obs:N1el-conservative} and \cref{prop:free-properad} that $\xN_1^\el \colon \Prpd \to \calS_{/\xF(*\amalg *)}$ is a conservative right adjoint.
    \cref{cor:reduction-doesnt-change-arities}
    gives a commutative square
    \[\begin{tikzcd}
    	\Prpd & {\calS_{/\xF(* \amalg *)}} \\
    	{\Prpd^\rd} & {\calS_{/\xF(* \amalg *)_{>0}}}
    	\arrow["{\xN_1^\el}", from=2-1, to=2-2]
    	\arrow["{\xN_1^\el}", from=1-1, to=1-2]
    	\arrow["{(-)^\rd}"', from=1-1, to=2-1]
    	\arrow["{\mathrm{restr}}", from=1-2, to=2-2]
    \end{tikzcd}\]
    where all functors, except possibly $(-)^\rd$, preserve limits.
    Since the bottom horizontal functor is conservative it follows that $(-)^\rd$ preserves limits.

    Finally, to see that $(-)^\ext$ is fully faithful, recall that for any chain of adjunctions $F_1 \dashv F_2 \dashv F_3$ the functor $F_1$ is fully faithful if and only if $F_3$ is.
    To see this, recall that since $F_i$ participates in an adjunction, it is fully faithful if and only if the relevant (co)unit is an equivalence if and only if $h(F_i)$ is fully faithful.
    Thus, this claim can be checked on homotopy categories, where it can be found in \cite[Lemma 1.3]{Dyckhoff1987}.
\end{proof}

\subsubsection{\texorpdfstring{$\Prpd$}{Infinity-properads} as a semi-recollement}
We can use the above adjunctions to write the $\infty$-category $\Prpd$ in terms of $\Prpd^\rd \simeq \projPrpd$ and $\Prpd^{(0,0)}$.
This almost fits in the general setup of (unstable) recollements discussed in \cite[Appendix A.8]{HA}, but we need to make a mild generalization because the colocalization $(-)_0\colon \Prpd \to \Prpd^{(0,0)}$ is not right-exact, but only semi-right-exact.

\begin{defn}
    Let $\calC$ be an $\infty$-category that has finite limits.
    A \hldef{semi-recollement} consists of two full subcategories $\calC_0, \calC_1 \subset \calC$ such that
    \begin{enumerate}[(1)]
        \item the full inclusions $\calC_i \hookrightarrow \calC$ admit a left adjoints $L_i\colon \calC \to \calC_i$,
        \item the functor $L_1$ is left-exact, i.e.~it preserves finite limits,
        \item the localization $L_0$ is semi-left-exact,%
        \footnote{
            This terminology is taken from \cite[Theorem 4.3.(i)]{CHK85}.
        }
        i.e.~for every $x \in \calC$, $y \in \calC_0$ and $f\colon y \to L_0 x$ the functor $L_0$ preserves the pullback square
        \[\begin{tikzcd}
            y \times_{L_0 x} x \ar[r] \ar[d] & x \ar[d] \\
            y \ar[r, "f"] & L_0 x,
        \end{tikzcd}\]
        (or, equivalently, for all $x, y, f$ as above $L_0(y \times_{L_0 x} x) \to L_0(y)$ is an equivalence,)
        \item the functor $L_1$ sends every object of $\calC_0$ to the terminal object of $\calC$,
        \item if $f$ is a morphism in $\calC$ such that $L_0(f)$ and $L_1(f)$ are equivalences, then $f$ is an equivalence.
    \end{enumerate}
\end{defn}

\begin{thm}[Lurie]\label{thm:Lurie-recollement}
    For any semi-recollement $\calC_0, \calC_1 \subset \calC$ the natural transformation $\alpha\colon L_0 \to L_0 L_1$ gives rise to a cartesian square
    \[\begin{tikzcd}
        \calC \ar[r, "\alpha"] \ar[d, "L_1"'] \ar[dr, very near start, phantom, "\lrcorner"] & \Ar(\calC_0) \ar[d, "\ev_1"] \\
        \calC_1 \ar[r, "L_0"] & \calC_0.
    \end{tikzcd}\]
\end{thm}
\begin{proof}
    The proof given in \cite[Proposition A.8.11]{HA} applies verbatim.
    While Lurie assumes that $L_0$ is left-exact, i.e.~preserves all finite limits, this is only used twice in his proof and in both cases it is applied to cospans $C_0 \to C_{01} \leftarrow C_1$ that satisfy the conditions denoted (i) and (ii) in the proof.
    These conditions ensure that $C_0 \in \calC_0$, $C_1 \in \calC_1$ and that $C_1 \to C_{01}$ exhibits $C_{01}$ as the $\calC_0$-localization of $C_1$, i.e.~this morphism is $C_1 \to L_0C_1$.
    Our condition (3) ensures that $L_0$ still preserves such pullbacks squares, so the proof still works.
    The conclusion of \cite[Proposition A.8.11]{HA} is an equivalence
    \[
        \calC \simeq \Map_{\Delta^1}(\Delta^1, \calM)
    \]
    where $\pi\colon \calM \to \Delta^1$ is the unstraightening of the functor $\Delta^1 \to \Cat$ given by $(L_0\colon \calC_1 \to \calC_0)$.
    Using the factorization system $(\pi\text{-cocart}, \pi^{-1}((\Delta^1)^\simeq))$ (\cite[Proposition 2.1.2.5]{HA}) on $\calM$ we can rewrite this section space (using \cite[Proposition 5.2.8.17]{HTT}) as a pullback 
    \[
        \Map_{\Delta^1}(\Delta^1, \calM)
        \simeq \Map_{\Delta^1}^{\rm cocart}(\Delta^1, \calM) \times_{\calM_1} \Ar(\calM_1)
        \simeq \calM_0 \times_{\calM_1} \Ar(\calM_1).
    \]
    Using the identifications $\calM_i \simeq \calC_i$ we obtain the desired pullback square.
\end{proof}

In the case at hand we would now like to check that the two full inclusions
\[
    \Prpd^{(0,0)} \xhookrightarrow{\quad} \Prpd \xhookleftarrow{(-)^\ext} \Prpd^\rd
\]
form the opposite of a semi-recollement in the above sense.
We have two triple adjunctions
\[\begin{tikzcd}
	\Prpd^{(0,0)} && \Prpd && {\Prpd^\rd}
	\arrow[dashed, shift right = 3, from=1-3, to=1-1]
	\arrow["\inc"{description}, hook, from=1-1, to=1-3]
	\arrow["(-)_0", shift left = 3, from=1-3, to=1-1]
	\arrow["{(-)^\rd}"{description}, from=1-3, to=1-5]
	\arrow["{(-)^{\rm ext}}"', shift right = 3, hook', from=1-5, to=1-3]
	\arrow["{\text{include}}", dashed, shift left = 3, hook, from=1-5, to=1-3]
\end{tikzcd}\]
where the dashed functor on the left exists by the adjoint functor theorem, but we will not be needing it.
This shows that axiom (1) and (2) are satisfied, indeed $L_1=(-)^\rd$ preserves all colimits.
Ignoring the two dashed functors, we have two colocalizations.
Axiom (4) and (5) follow from \cref{cor:reduction-0arity-jointly-conservative}.
It remains to check axiom (3):

\begin{lem}\label{lem:0-semi-exact}
    For every $\calP, \calQ \in \Prpd$ and $f\colon \calP_0 \to \calQ$ the pushout square in $\Prpd$
    \[\begin{tikzcd}
        \calP_0 \ar[r] \ar[d] \ar[dr, very near end, phantom, "\ulcorner"] & \calP \ar[d] \\ 
        \calQ \ar[r] & \calQ' 
    \end{tikzcd}\]
    is a levelwise pushout in the sense of \cref{obs:level-wise-colimits-in-SM} and $\calQ_0 \to \calQ'_0$ is an equivalence.
\end{lem}
\begin{proof}
    We need to show that the levelwise pushout
    $M_\bullet \coloneq \xN_\bullet\calQ \cup_{\xN_\bullet\calP_0} \xN_\bullet\calP$
    is a complete Segal space.
    \cref{lem:nerve-splitting} gives us a decomposition
    $\xN_\bullet^\xint\calP = \xN_\bullet^\xint\calP_0 \oplus \xN_\bullet^\xint\Red{\calP}$
    as functors $\simp^{\op,\xint} \to \CMon$,
    so restricted to $\simp^{\op,\xint}$ the levelwise pushout is
    \[
        (M_\bullet)_{|\simp^{\op,\xint}}
        \simeq \xN_\bullet^\xint\calQ \cup_{\xN_\bullet^\xint\calP_0} \left( \xN_\bullet^\xint\calP_0 \oplus \xN_\bullet^\xint\Red{\calP}\right)
        \simeq \xN_\bullet^\xint\calQ \oplus \xN_\bullet^\xint\Red{\calP}
    \]
    which indeed satisfies the Segal condition.
    We will now check completeness of $M_\bullet$.
    Consider the map of simplicial commutative monoids
    \[
        g\colon M_\bullet \too Y_\bullet \coloneq \xN_\bullet(\Red{\calQ} \times \Red{\calP}) 
    \]
    that comes from the functor $\calQ' \too \Red{\calQ} \times \Red{\calP}$ that we can construct using the universal property of the pushout.
    On the $n$th simplicial level this map is the projection 
    \[
        M_n \simeq \xN_n(\calQ_0) \oplus \xN_n\Red{\calQ} \oplus \xN_n\Red{\calP} \simeq \xF(\xN_n^\el(\calQ_0)) \oplus Y_n
        \too Y_n
    \]
    A point in $M_1$ can only be an equivalence if its $\xN_1(\calQ_0)$-component is trivial, i.e.~if it lies in the disjoint summand $\{0\} \times Y_n$ of $M_n \simeq \xF(\xN_n^\el \calQ_0) \times Y_n$.
    (This is because the map $M_\bullet \to \xN_\bullet \calQ' \to \xN_\bullet \Csp$ has to send it to an equivalence.)
    This implies that $M_1^\eq \to Y_1$ is a monomorphism and thus so is $M_1^\eq \to Y_1^\eq$.
    On the other hand we have a commutative square
    \[
        \begin{tikzcd}
            {M_0} \dar["{s_0}"] \rar["{g_0}", "\simeq"'] & {Y_0} \dar["s_0", "\simeq"'] \\
            {M_1^\eq} \rar["{g_1}", hook] & {Y_1^\eq},
        \end{tikzcd}
    \]
    where the right vertical map is an equivalence since $Y_\bullet$ is complete.
    This implies that the left vertical map is an equivalence and thus $M_\bullet$ is complete.
%
\end{proof}

We may now apply the opposite of \cref{thm:Lurie-recollement} to obtain a description of $\Prpd$ as a pullback.
\begin{prop}\label{cor:Prpd-as-pullback}
    There is a cartesian square of $\infty$-categories
    \[\begin{tikzcd}
	\Prpd \ar[dr, phantom, very near start, "\lrcorner"] & {\Ar(\Prpd^\nul)} \\
	{\Prpd^\rd} & \Prpd^\nul.
	\arrow["{(-)^\ext_0}"',from=2-1, to=2-2]
	\arrow[from=1-1, to=1-2]
	\arrow["{(-)^\rd}"', from=1-1, to=2-1]
	\arrow["{\mathrm{ev}_0}", from=1-2, to=2-2]
    \end{tikzcd}\]
\end{prop}

We also record the following intermediate step in the proof of \cref{thm:Lurie-recollement}, which is tells us how to reconstruct an $\infty$-properad from $\calP^\rd$ and $\calP_0$ together with a gluing map $((\calP^\rd)^\ext)_0 \to \calP_0$.
\begin{cor}\label{lem:cofracture-square}
    Let $\calP$ be an $\infty$-properad. Then the following natural square is a pushout in $\Prpd$
    \[\begin{tikzcd}
	{((\calP^\rd)^{\ext})_0} \ar[dr, phantom, very near end, "\ulcorner"] & {\calP_0} \\
	{(\calP^\rd)^\ext} & \calP.
	\arrow[from=1-1, to=1-2]
	\arrow[hook, from=1-2, to=2-2]
	\arrow[from=2-1, to=2-2]
	\arrow[hook, from=1-1, to=2-1]
    \end{tikzcd}\] 
\end{cor}
\begin{proof}
    Applying $(-)_0$ or $(-)^\rd$ to this square yields a pushout square because either the vertical or the horizontal maps become equivalence.
    These two functors are jointly conservative and preserve pushouts of this shape by \cref{lem:0-semi-exact}, so the square must have already been a pushout.
\end{proof}

\subsection{Detecting extended \texorpdfstring{$\infty$-properads}{infinity-properads}}
\label{sect-3.4}

In the previous subsection we used the adjoint functor theorem to show that there is a fully faithful left adjoint $(-)^\ext\colon \Prpd^\rd \hookrightarrow \Prpd$ and this functor played an important role in the description of $\Prpd$ as a pullback in \cref{cor:Prpd-as-pullback}.
However, the description of $(-)^\ext$ that we have so far is not very explicit; for example it seems difficult to check whether a given $\infty$-properad $\calP$ is in the essential image of $(-)^\ext$.
We will now use \cref{thm:segal-envelope} and a more in-depth study of the level-graph category $\L$ and its variants to give a description of the triple adjunction from \cref{lem:extra-adjoint-reduction} in terms of algebraic patterns, see \cref{prop:ext-rd-pattern}.
This allows us to give a formula for the $(0,0)$-ary operations of $\calP^\ext$ and thus a criterion for when an $\infty$-properad $\calQ \in \SMeq{\Csp}$ is extended in \cref{prop:detect-extended}.
This will make use of the factorization categories $\calF(\calP)$ that already played a crucial role in \cite[Theorem F]{Jan-tropical}.
The new description allows us to check that the bordism $\infty$-properad $\Bord_d^\theta$ is extended, and we will also use it in the next section to compare our $1$-properads to more classical definitions.

\subsubsection{Extended \texorpdfstring{$\infty$}{infinity}-properads via factorization categories}
We begin by defining the factorization category that can be used to compute $\calP^\ext$.

\begin{defn}\label{fact-category-defn}
    For an $\infty$-properad $\calP$ we define the \hldef{factorization category} of $\calP$ to be the full subcategory
    \[
        \hldef{\calF(\calP)} \subset \calP_{\unit//\unit} = \calP_{\unit/} \times_{\calP} \calP_{/\unit}
    \]
    on those factorization $\unit \xto{f} x \xto{g} \unit$ where $x\not\cong \unit$ and $g \circ f$ is an indecomposable element of $\xN_1\calP$.
    Composing the two morphisms defines a map
    \[
    \hldef{\mathrm{comp}_\calP} \colon |\calF(\calP)|  
    \longrightarrow \calP(\emptyset;\emptyset)  = \Map_{\calP}(\unit, \unit)^\el .
    \]
\end{defn}

\begin{obs}\label{obs:nerve-calF}
    In order to describe the nerve of the factorization category, recall that if $\calC$ is an $\infty$-category and $x \in \calC$, then the nerve of the slice $\calC_{/x}$ is
    \[
        \xN_\bullet(\calC_{/x}) = (\xN_{\bullet+1}\calC) \times_{\xN_0 \calC} \{x\}
    \]
    were $\xN_{\bullet+1}\calC$ denotes the décalage of the simplicial space $\xN_\bullet \calC$ and the map $\xN_{\bullet+1}\calC \to \xN_0 \calC$ is the canonical augmentation of the décalage, which records the last vertex.
    Hence, for an $\infty$-properad $\calP$ we have
    \[
        \xN_\bullet(\calP_{\unit//\unit}) 
        \simeq \{\unit\} \times_{\xN_0\calP} \xN_{1+\bullet+1}\calP \times_{\xN_0\calP} \{\unit\}
        \hookrightarrow \xN_{1+\bullet+1}\calP
    \]
    and this map to the double-décalage is a levelwise monomorphism because
    $\{\unit\} \hookrightarrow \xN_0\calP$ is a monomorphism.
    (Indeed, we know that $\xN_0\calP$ is a free commutative monoid and $\{0\} \hookrightarrow \xF(X)$ is always a monomorphism.)
    Therefore, the nerve of the factorization category can be described as the subspace of the double-décalage of $\xN_\bullet\calP$: 
    \[
        \xN_n\calF(\calP) = \big\{ \big(x_{-\infty} \xto{f_{-\infty}} x_0 \xto{f_1} \dots \xto{f_n} x_n \xto{f_\infty} x_{\infty}\big) \in (\xN_{1+n+1}\calP)^\el \;\big|\; x_{-\infty} \cong \unit \cong x_{\infty} \text{ and } \forall 0\le i \le n: x_i \not\cong \unit\big\}.
    \]
    The assumption that the $(n+2)$-simplex lies in $(\xN_{n+2}\calP)^\el$ ensures that the total composite $f_\infty \circ \dots \circ f_{-\infty}$ is a $(0,0)$-ary operation in $\calP$.
    Under this identification the composition map $\mrm{comp}_\calP$ is induced by $d_1\colon \xN_2\calP \to \xN_1\calP$ and thus provides an augmentation $\xN_\bullet \calF(\calP) \to \xN_{-1} \calF(\calP) \coloneq \calP(\emptyset,\emptyset) \subset \xN_1 \calP$. 
\end{obs}

\begin{obs}\label{obs:NcalF-pullback}
    Because the conditions defining the factorization category and its nerve can be formulated in terms of connectedness we have pullback squares 
    \[
    \begin{tikzcd}
        \calF(\calP) \rar[hook] \dar \ar[dr, very near start, phantom, "\lrcorner"] & \calP_{\unit//\unit} \dar
        &&
        \xN_\bullet \calF(\calP) \rar[hook] \dar \ar[dr, very near start, phantom, "\lrcorner"] & \xN_{1+\bullet+1} \calP \dar
        \\
        \calF(\Csp) \rar[hook] & \Csp_{\emptyset//\emptyset} 
        &&
        \xN_\bullet \calF(\Csp) \rar[hook] & \xN_{1+\bullet+1} \Csp 
    \end{tikzcd}
    \]
    describing them in terms of the relevant notion for the terminal properad $\Csp$.
\end{obs}

\begin{cor}\label{cor:Fbar=F}
    The construction of the factorization category uniquely descends to a functor
    \[
        \Red{\calF}(-)\colon \projPrpd \too \Cat
    \]
    such that $\Red{\calF}(\Red{\calP}) \simeq \calF(\calP)$. 
\end{cor}
\begin{proof}
    As $\projPrpd$ is a Bousfield localization of $\Prpd$ (\cref{prop:projectivization}) it suffices (by \cite[Proposition 5.2.7.12]{HTT}) to check that if 
    $f\colon \calP_1 \to \calP_2$ in $\Prpd$ induces an equivalence on reduced $\infty$-properads $\calP_1^\rd \simeq \calP_2^\rd$, it also induces an equivalence on factorization categories $\calF(\calP_1) \simeq \calF(\calP_2)$.
    This is indeed the case because $\calF(\calP)$ is independent of the $(0,0)$-ary operations.
    More precisely, in \cref{obs:nerve-calF} we saw an embedding $\xN_\bullet \calF(\calP) \subset \xN_{\bullet+2} \calP$ and this lands in the subspace 
    $K_{n+2} \calP \subset \xN_{n+2} \calP$ discussed in \cref{lem:nerve-splitting}, which was shown there to be equivalent to $\xN_{n+2}\Red{\calP}$ and thus independent of $(0,0)$-ary operations.
\end{proof}

The main result of this section is the following characterization of extended $\infty$-properads in terms of the factorization category.
We will prove this at the end of this subsection after having established a description of $(-)^\ext$ in terms of algebraic patterns in \cref{prop:ext-rd-pattern}.
\begin{prop}\label{prop:detect-extended}
    An $\infty$-properad $\calP$ is extended, i.e.~the counit map $(\calP^\rd)^\ext \to \calP$ is an equivalence, if and only if the composition map induces an equivalence
    \[
        \mathrm{comp}_{\calP} \colon |\calF(\calP)| \xtoo{\simeq} \calP(\emptyset; \emptyset).
    \]
\end{prop}

The above characterization of extended properads is quite concrete:
through \cref{obs:nerve-calF} we can identify the augmented simplicial space $\xN_\bullet \calF(\calP) \to \calP(\emptyset; \emptyset)$ as a subspace of $\xN_{1+\bullet+1}\calP$
and $\calP$ is extended if and only if the map from the realization to the augmentation is an equivalence.
We can for example apply this to show that the $d$-dimensional $\theta$-structured bordism category $\Bord_d^\theta$, considered in \cref{ex:bord} and \cref{ex:tangential}, is an extended $\infty$-properad.

\begin{cor}\label{cor:Bord-extended}
    The $\infty$-properad $\Bord_d^\theta$ is extended for all $d\ge 1$ and all tangential structures $\theta$.
\end{cor}
\begin{proof}
    Using \cref{obs:nerve-calF} can identify augmented simplicial space $\xN_\bullet\calF(\Bord_d^\theta)$ with a subspace (in fact a levelwise a union of components) of the double-décalage $\xN_{1+\bullet+1}\Bord_d^\theta$.
    As discussed in \cref{ex:bord}, models for the bordism category are typically \textit{non-complete} Segal spaces $\mrm{PBord}_\bullet^{d,\theta}$, which need to be completed to obtain the nerve $\xN_\bullet\Bord_d^\theta$.
    If we define a subspace $X_\bullet \subset \mrm{PBord}_{1+\bullet+1}^{d,\theta}$ analogously to $\xN_\bullet\calF(\Bord_d^\theta)$, then its realization will still be equivalent to $|\calF(\Bord_d^\theta)|$ as the comparison map is a Dwyer--Kan equivalence of Segal spaces.
    In \cite{jan-Bord1} the Segal space $\mrm{PBord}_\bullet^{d,\theta}$ is given as the nerve of a topological poset $P\pcal{C}_{d,\theta}$.
    (See \cite[Remark 7.11]{jan-Bord1} and the reference therein for why the nerve of the topological poset $N_\bullet(P\pcal{C}_{d,\theta})$ considered there is equivalent to more standard constructions of the bordism category.)
    In this setting the augmented simplicial space $X_\bullet$ is almost exactly the nerve of the augmented topological poset $F_{d,\theta}$ defined in \cite[Definition 7.7]{jan-Bord1}, except that there the composite was also allowed to be empty.
    Let $F_{d,\theta}^{\neq \emptyset} \subset F_{d,\theta}$ denote the sub-poset of non-empty factorizations.
    (See \cite[Proof of Lemma 7.18]{jan-Bord1} for more details.)
    By \cite[Proposition 7.8]{jan-Bord1} the augmentation of this simplicial space induces an equivalence
    \[
        \|\xN_\bullet F_{d,\theta}^{\neq\emptyset}\| \simeq \Map_{\Bord_d^\theta}(\emptyset, \emptyset)^{\rm con}
    \]
    and therefore $\Bord_d^\theta$ is an extended $\infty$-properad by \cref{prop:detect-extended}.
    (Note that the statement of \cite[Proposition 7.8]{jan-Bord1} is missing the hypothesis that $d\ge 1$, but this is used in the proof because the vector space of smooth functions on a non-empty $d$-manifold is assumed to be infinite-dimensional.
    The assumption is also necessary because $\Red{\Bord_0} = *$.)
\end{proof}

\begin{example}
    For a symmetric monoidal $\infty$-category $\calC \in \SM$, one can define \textit{$\calC$-valued $d$-dimensional $\theta$-structured topological field theories} as symmetric monoidal functors $\Bord_d^\theta \to \calC$.
    Using \cref{cor:Bord-extended} together with the adjunctions from \cref{lem:underlying-properad} and \cref{lem:extra-adjoint-reduction} we get
    \[
        \Fun^\otimes(\Bord_d^\theta, \calC)^\simeq
        \simeq \Map_{\Prpd}(\Bord_d^\theta, \calU(\calC))
        \simeq \Map_{\Prpd^\rd}((\Bord_d^\theta)^\rd, \calU(\calC)^\rd),
    \]
    which can be understood as saying that the value of a TFT on closed manifolds is (coherently) uniquely determined by its value on manifolds with boundary.
    For example, let $\calZ \colon \Bord_d \to \Span(\calS)$ be a $d$-dimensional TFT valued in the $\infty$-category of spans of spaces with its monoidal structure $\times$.
    Then for every closed $d$-manifold $W \colon \emptyset \to \emptyset$, $\calZ$ defines a map
    \[
        \BDiff(W) \subset \End_{\Bord_d}(\emptyset) \xtoo{\calZ} \End_{\Span(\calS)}(*) \simeq \calS^\simeq,
    \]
    i.e.~$\calZ(W)$ is a space with a $\Diff(W)$-action. 
    \cref{cor:Bord-extended} tells us that this $\Diff(W)$-action is coherently uniquely determined by what $\calZ$ assigns to bordisms with non-empty boundary.
\end{example}

As another consequence of \cref{prop:detect-extended} we can also make more concrete the description of $\Prpd$ as a pullback, which we obtained using semi-recollements in \cref{cor:Prpd-as-pullback}.

\begin{cor}\label{cor:Prpd-as-pullback-factorization}
    The natural transformation $\mrm{comp}_\calP\colon |\calF(\calP)| \to \calP(\emptyset;\emptyset)$ induces a cartesian square
    \[\begin{tikzcd}
    	\Prpd && {\Ar(\calS)} \\
    	{ \SMeq{\Red{\Csp}} } && \calS
    	\arrow["{|\Red{\calF}(-)|}"', from=2-1, to=2-3]
    	\arrow["{\Red{(-)}}"', from=1-1, to=2-1]
    	\arrow["{\ev_0}", from=1-3, to=2-3]
    	\arrow["\mathrm{comp}_{(-)}", from=1-1, to=1-3]
    	\arrow[phantom, very near start, "\lrcorner", from=1-1, to=2-3]
    \end{tikzcd}\]
    in $\Cat$.
    In particular, as in \cref{lem:cofracture-square}, any $\infty$-properad $\calP \in \Prpd$ can be recovered from the triple
    \[\Red{\calP} \in \SM,\qquad \calP(\emptyset;\emptyset) \in \calS, \qquad |\Red{\calF}(\Red{\calP})| \to \calP(\emptyset;\emptyset) \in \Ar(\calS).\]
\end{cor}
\begin{proof}
    This follows by rewriting the cartesian square from \cref{cor:Prpd-as-pullback} using
    the equivalence $\Prpd^\nul \simeq \calS$ from \cref{ex:nullary-properads} and 
    the equivalence $\Prpd^\rd \simeq \projPrpd$ from \cref{prop:projectivization}.
    To see that the square is the same, it suffices to check that the natural transformation $\alpha\colon (\calP^\rd)^\ext(\emptyset;\emptyset) \too \calP(\emptyset;\emptyset)$ that comes from the counit of $(-)^\ext \dashv (-)^\rd$ agrees with $\mrm{comp}_\calP$.
    Since $\mrm{comp}_{(-)}$ and $\alpha$ both are natural we have a naturality square
    \[\begin{tikzcd}[column sep=large]
        {|\calF((\calP^\rd)^\ext)|} \ar[r, "\simeq", "\mrm{comp}_{(\calP^\rd)^\ext}"'] \ar[d, "\simeq"] &
        (\calP^\rd)^\ext(\emptyset; \emptyset) \ar[d, "\alpha"] \\
        {|\calF(\calP)|} \ar[r, "\mrm{comp}_\calP"] & 
        \calP(\emptyset; \emptyset)
    \end{tikzcd}\]
    where the top map is an equivalence by \cref{prop:detect-extended} and the left map is an equivalence by \cref{cor:Fbar=F}.
    This defines the desired equivalence between $\mrm{comp}_{(-)}$ and $\alpha$.
\end{proof}

\subsubsection{A pattern description reduced and extended \texorpdfstring{$\infty$}{infinity}-properads}
To prove \cref{prop:detect-extended} we will need to study the algebraic pattern $\L$ of level graphs from \cref{sec:segal} in a bit more detail.
This will allow us to describe the double adjunction involving $(-)^\ext$ in terms of left and right Kan extension along a morphism of patterns $\Lrd \hookrightarrow \Lc$, see \cref{prop:adjoints-to-rd-and-null}.
We begin by introducing the algebraic pattern whose complete Segal spaces will be the reduced $\infty$-properads.
\begin{defn}
    Let $\hldef{\Lrd} \subset \L$ denote the full subcategory on those diagrams $A\colon \Tw[n] \to \Fin_*$ where $A_{0,n} = 1_+$ and $\coprod_{i=0}^n A_{i,i} \neq 0_+$.
    (In particular, $\Lrd \subset \Lc$ is a full subcategory of the connected diagrams from \cref{defn:Lc}.)
\end{defn}

\begin{obs}\label{rem:Lzero}
    $\Lrd$ is the full subcategory of $\Lc$ whose objects are precisely those \textit{not} of the form:
    \[
        T(a,b)\colon \Tw[a+b+1] \to \Fin_*, \qquad T(a,b)_{i,j} = \begin{cases}
            1_+ & \text{ if } i \le a < j,\\
            0_+ & \text{ otherwise},
        \end{cases}
    \]
    for some $a, b \ge 0$.
    For example, $T(1,0)\colon \Tw[2] \to \Fin_*$ is given by the diagram
\[\begin{tikzcd}[row sep = 0]
	&& {A_{02}=1_+} \\
	& {A_{01}=0_+} && {A_{12}=1_+} \\
	{A_{00}=0_+} && {A_{11}=0_+} && {A_{22}=0_+}
	\arrow[from=2-2, to=1-3]
	\arrow[from=2-4, to=1-3]
	\arrow[from=3-1, to=2-2]
	\arrow[from=3-3, to=2-2]
	\arrow[from=3-3, to=2-4]
	\arrow[from=3-5, to=2-4]
\end{tikzcd}\]
    Using this description of $\Lrd$, one can check: if there is a map $([n], A) \to ([m], B)$ in $\Lc$, then $([n], A) \in \Lrd$ implies $([m], B) \in \Lrd$.
    In particular, the factorization system restricts to $\Lrd$ making it an algebraic pattern.
\end{obs}

We will also need the following category of closed level graphs, which is closely related to $\calF(\Csp)$, see \cref{rem:Lcl-full}.
\begin{defn}
    The category \hldef{closed level graphs} is defined as the left fibration $\hldef{\Lcl} \to \Dop$ that straightens to the functor
    \[
        [n] \longmapsto \Fun'(\Tw([n]^{\lhd\rhd}), \Fin)^\simeq
    \]
    where $[n]^{\lhd\rhd} = \{-\infty, 0,1, \dots, n, \infty\}$ and 
    $\Fun'$ denotes the full subcategory spanned by functors $A \colon \Tw([n]^{\lhd\rhd}) \to \Fin$ such that
    \begin{enumerate}[a)]
        \item $A$ preserves pushouts,
        \item $A_{-\infty,-\infty} = \emptyset = A_{\infty,\infty}$, and
        \item $A_{i,i} \neq \emptyset$ for $0 \le i \le n$ and $A_{-\infty,\infty} = *$.
    \end{enumerate}
\end{defn}

\begin{rem}\label{rem:Lcl-full}
    The left fibration $\Lcl \to \Dop$ is exactly the unstraightening of the simplicial space $\xN_\bullet\calF(\Csp)$.
    This follows from the description of $\xN_\bullet\calF(\calP)$ in \cref{obs:nerve-calF} and the description of the unstraightening of $\Csp$ in \cref{cor:boldC-description}.

    To relate this to $\L$, recall from the proof of \cref{cor:act-Env-formula} that the left fibration $p^\act\colon \L^\act \to \simp^{\op,\act}$ unstraightens to the active part of the simplicial space $\xN_\bullet\Csp$.
    Now consider its pullback along the functor $\Dop \to \simp^{\op,\act}$ that adds an initial and terminal element to each object.
    Then $\Lcl$ embeds fully faithfully into this pullback 
    \[
    \begin{tikzcd}[column sep = large]
        \Lcl \rar[hook]
        & {\Dop \times_{\simp^{\op,\act}} \L^\act} \rar \dar["q"'] \ar[dr,very near start, phantom, "\lrcorner"] & \L^\act \dar \\
        & \Dop \rar["{[0] \star [\bullet] \star [0]}"] & \simp^{\op, \act}
    \end{tikzcd}
    \]
    where $\star\colon \Dop \times \Dop \to \Dop$ denotes the join.
    Indeed, $q$ straightens to the functor that sends $[n]$ to the space of pushout-preserving functors $\Tw([n]^{\lhd\rhd}) \to \Fin$,
    and the straightening of $\Lcl \to \Dop$ is defined as a subfunctor of this.
\end{rem}

In order to compute the left Kan extension along $\Lrd \hookrightarrow \Lc$ below, 
we will need to understand the slice category
        $\Lrd \times_{\Lc} (\Lc)_{/T(a,b)}$.
An object of this category consists of $([l], B) \in \Lrd$, a map $d\colon [a+b+1] = [a] \star [b] \to [l]$ and an isomorphism between $B\circ \Tw(d)$ and the diagram $T(a,b)\colon \Tw[a+b+1] \to \Fin_*$.
This isomorphism is unique if it exists (since $T(a,b)$ has no non-trivial automorphisms) and it exists if and only if $B_{d(i),d(j)}$ is $1_+$ for $i \le a < j$ and $0_+$ for any other $i\le j$.
We will denote the objects of this slice as $(B, d\colon [a]\star [b] \to [l])$.

\begin{lem}\label{lem:Lrd-slice}
    For all $a, b \ge 0$ there is an equivalence of categories
    \begin{align*}
        (\Dop)_{/[a]} \times \Lcl \times (\Dop)_{/[b]}
        &\xtoo{\simeq}
        \Lrd \times_{\Lc} (\Lc)_{/T(a,b)} \\
        ([a] \xto{d^l} [n^l], 
        \Tw([n^c]^{\lhd\rhd}) \xto{A} \Fin, 
        [b] \xto{d^r} [n^r]) 
        &  
        \longmapsto
        (A_+ \circ \Tw(\lambda),
        [a] \star [b] \xto{d^l \star \emptyset \star d^r} [n^l] \star [n^c] \star [n^r])
    \end{align*}
    where $\lambda \coloneq (0 \star \id_{[n^c]} \star 0) \colon [n^l] \star [n^c] \star [n^r] \to [0] \star [n^c] \star [0] = [n^c]^{\lhd\rhd}$,
    and we write $A_+$ for the functor obtained by post-composing $A$ with $(-)_+\colon \Fin \to \Fin_*$.
\end{lem}
\begin{proof}
    For an object $(B, d\colon [a]\star[b] \to [l])$ in $\Lrd \times_{\Lc} (\Lc)_{/T(a,b)}$
    we have
    \[
        B_{d(a),d(a)} = 0_+ = B_{d(a+1),d(a+1)}
            \qquad\text{and}\qquad
        B_{d(a), d(a+1)} = 1_+.
    \]
    Because $([l], B)$ is in particular in $\Lc$ we know that this implies that $B_{i,j} = 0_+$ if either $j\le d(a)$ or $i \ge d(a+1)$
    and that $B_{x,y} \cong 1_+$ whenever $x \le d(a)<d(a+1) \le y$.
    
    A morphism $([l], B, d) \to ([k], C, e)$ consists of a morphism $f\colon [k] \to [l]$ such that $f \circ e = d$ and a natural isomorphism $\alpha\colon C \circ \Tw(f) \cong B$. (In general $\alpha$ would only be an inert natural transformation, but since $B_{f(0),f(k)} \cong 1_+ \cong C_{0,k}$ the inert morphism $\alpha_{0,k}\colon B_{f(0),f(k)} \to C_{0,k}$ must be an isomorphism and hence all values of $\alpha$ are active and therefore isomorphisms.)
    
    Using this description one can check the assignment given in the lemma indeed defines a functor, whose value on a morphism 
    \[
        \mu = (\mu^l, (\mu^c, \alpha\colon A \cong \Tw(\mu^{c,\lhd\rhd})^*B)), \mu^r) \colon ([a] \to [n^l], A, [b] \too [n^r]) \to ([a] \to [m^l], B, [b] \to [m^r])
    \]
     is described by the diagram
\[\begin{tikzcd}
	&& {[n^l]\star[n^c] \star [n^r]} && {[n^c]^{\triangleleft \triangleright}} & {\Tw([n^c]^{\triangleleft \triangleright})} \\
	{[a] \star [b]} &&&&&&& \Fin. \\
	&& {[m^l]\star[m^c] \star [m^r]} && {[m^c]^{\triangleleft \triangleright}} & {\Tw([m^c]^{\triangleleft \triangleright})}
	\arrow["{0 \star \id \star 0}", from=1-3, to=1-5]
	\arrow["{\mu^l \star \mu^c \star \mu^r}"', from=1-3, to=3-3]
	\arrow["{\mu^{c,\triangleleft \triangleright}}", from=1-5, to=3-5]
	\arrow["A", from=1-6, to=2-8]
	\arrow[""{name=0, anchor=center, inner sep=0}, "{\Tw(\mu^{c,\triangleleft \triangleright})}"', from=1-6, to=3-6]
	\arrow["{d^l \star \emptyset \star d^r}", from=2-1, to=1-3]
	\arrow["{d^l \star \emptyset \star d^r}"', from=2-1, to=3-3]
	\arrow["{0 \star \id \star 0}"', from=3-3, to=3-5]
	\arrow["B"', from=3-6, to=2-8]
	\arrow["\substack{\alpha\\\cong}"{marking, allow upside down}, draw=none, from=0, to=2-8]
\end{tikzcd}\]
    By this we mean that $\mu$ is sent to the morphism $(\mu^l \star \mu^c \star \mu^r, \beta)$ in $\Lrd \times_{\Lc} (\Lc)_{/T(a,b)}$ where 
    \[
        \beta\colon \Tw(0 \star \id_{[n^c]} \star 0)^*A_+ \cong \Tw(\mu^l \star \mu^c \star \mu^r)^* \Tw(0 \star \id_{[m^c]} \star 0)^*B_+
    \]
    is obtained from $\alpha \colon A \cong \Tw(\mu^{c,\lhd\rhd})^*B$ by adding basepoints and using the commutative square in the above diagram.
    
    We can define an inverse functor by sending a triple $([l], B, d)$ to the object defined by restricting $d$ and $B$ as follows:
    \[
        \left(d^{-1}(\{0,\dots,x\}) \xto{d} \{0,\dots,x\},\,
        \Tw([y-x-2]^{\lhd\rhd}) \cong \Tw(\{x,\dots,y\}) \xto{B^\circ} \Fin,\,
        d^{-1}(\{y,\dots,l\}) \xto{d} \{y,\dots,l\}\right)
    \]
    where $0\le x < y \le l$ are such that $B_{x,x} = \emptyset$, $B_{x+1,x+1} \neq \emptyset \neq B_{y-1,y-1}$ and $B_{y,y} = \emptyset$, 
    and $B^\circ$ is obtained from $B$ by removing basepoints.
    (Note that description of objects in $\Lrd \times_{\Lc} (\Lc)_{/T(a,b)}$ given above implies that $x$ and $y$ are unique and satisfy $x+2\le y$.)
    \end{proof}

\begin{prop}\label{prop:ext-rd-pattern}
    Restriction, left Kan extension, and right Kan extension along the full inclusion
    $j\colon \Lrd \hookrightarrow \Lc$
    all preserve Segal objects and hence yield a triple-adjunction
    \[\begin{tikzcd}
	{\Seg_{\Lc}(\calS)} && {\Seg_{\Lrd}(\calS).}
	\arrow["{j^\ast}"{description}, from=1-1, to=1-3]
	\arrow["{j_!}"', shift right = 2, hook', from=1-3, to=1-1]
	\arrow["{j_\ast}", shift left = 2, hook, from=1-3, to=1-1]
    \end{tikzcd}\]
    All three functors preserve the completeness condition, and restricting them to the full subcategory of complete Segal objects yields a triple adjunction that is equivalent to the one from \cref{lem:extra-adjoint-reduction}:
    \[\left(\begin{tikzcd}
	\CSeg_{\Lc}(\calS) && {\CSeg_{\Lrd}(\calS)}
	\arrow["{j^\ast}"{description}, from=1-1, to=1-3]
	\arrow["{j_!}"', shift right = 2, hook', from=1-3, to=1-1]
	\arrow["{j_\ast}", shift left = 2, hook, from=1-3, to=1-1]
    \end{tikzcd}
    \right) \simeq \left(
    \begin{tikzcd}
	\Prpd && {\Prpd^\rd}
	\arrow["{(-)^\rd}"{description}, from=1-1, to=1-3]
	\arrow["{(-)^{\rm ext}}"', shift right = 2, hook', from=1-3, to=1-1]
	\arrow["{\text{include}}", shift left = 2, hook, from=1-3, to=1-1]
    \end{tikzcd}\right).\]
\end{prop}
\begin{proof}
    Recall from \cref{rem:Lzero} that there are no maps from $T(a,b)$ to objects in $\Lrd$.
    This implies that for any $([n],A) \in \Lrd$ we have $(\Lrd)_{([n],A)/}^\xint = (\Lc)_{([n],A)/}^\xint$, and similarly for elementary slices.
    For any $T(a,b) \in \Lc$ there is a unique inert map to an elementary, namely $T(a,b) \intto T(0,0)$.
    In particular, $X \colon \Lc \to \calS$ satisfies the Segal condition if and only if $j^\ast X$ is Segal and this unique inert map induces an equivalence $X(T(a,b)) \simeq X(T(0,0))$.
    This shows that $j^\ast$ preserves Segal objects.
    Additionally, for any $X \colon \Lrd \to \calS$ we have $j^\ast j_\ast X \simeq X$ and $(j_\ast X)(T(a,b)) \simeq *$ (as there are no maps to $\Lrd$) and thus $j_\ast$ also preserves Segal objects.

    To show that $j_!$ preserves Segal objects, we first compute its value on $T(a,b)$ as
    \[
        (j_!X)(T(a,b)) \simeq \colim_{([n], A) \in \Lrd \times_{\Lc} (\Lc)_{/T(a,b)}} X([n],A)
        \simeq \colim_{([m], B) \in \Lcl} X([1+m+1], B)
    \]
    since by \cref{lem:Lrd-slice} $\Lcl$ is final in the relevant slice.
    In particular, the value is independent of $a$ and $b$ and one can check that the inert map $\alpha\colon T(a,b) \to T(0,0)$ indeed induces an equivalence when we evaluate $j_!X$ on it.
    Therefore, $j_!X$ is Segal if $X$ is.
    
    Recall that a $\L$-Segal space is called complete if its restriction along a certain functor $\Dop \to \L$ gives a complete $\Dop$-Segal space in the sense of Rezk.
    This functor from $\Dop$ factors through $\Lrd \subset \Lc \subset \L$, so we get compatible notions of completeness and $j^\ast$, $j_!$, and $j^\ast$ all preserve completeness since none of them changes the value of our Segal object on $\Dop$.
    We have therefore shown that $j^\ast$, $j_!$, and $j_\ast$ all preserve complete Segal objects and thus restrict to a triple adjunction 
    \[\begin{tikzcd}
	\CSeg_{\Lc}(\calS) && {\CSeg_{\Lrd}(\calS).}
	\arrow["{j^\ast}"{description}, from=1-1, to=1-3]
	\arrow["{j_!}"', shift right = 2, hook', from=1-3, to=1-1]
	\arrow["{j_\ast}", shift left = 2, hook, from=1-3, to=1-1]
    \end{tikzcd}\]

    For the final claim, we need to identify this with the other triple adjunction.
    From \cref{thm:segal-envelope} we get an equivalence between $\CSeg_{\Lc}(\calS)$ and $\Prpd$.
    It will suffice to show that under this equivalence the essential image of $j_\ast$ exactly corresponds to the reduced $\infty$-properads, as then the other adjoints must agree by the uniqueness of (left) adjoints.
    By the above discussion, a Segal object $X\colon \Lc \to \calS$ is in the essential image of $j_\ast$ if and only if $X(T(0,0)) = \ast$. 
    Tracing through the envelope equivalence from \cref{thm:segal-envelope} and noting that $\{T(0,0)\} \hookrightarrow \xN_1 \Csp$ is a monomorphism we have
    \[
        \Env(X)(\emptyset;\emptyset) \simeq \xN_1(\Env(X)) \times_{\xN_1(\Csp)} \{T(0,0)\} \simeq X(T(0,0)),
    \]
    so the condition $X(T(0,0)) \simeq \ast$ precisely says that $\Env(X)$ is reduced.
\end{proof}

In order to prove \cref{prop:detect-extended}, we now need to identify the formula for $(j_!X)(T(0,0))$ with the homotopy type of $\calF(\Env(X))$.

\begin{proof}[Proof of \cref{prop:detect-extended}]
    Given $X \in \CSeg_{\Lc}(\calS)$ we know that value of $j_!j^*X$ on $T(0,0)$ is given by the colimit
    \[
        j_!j^*X(T(0,0)) = \colim_{([n], A) \in \Lcl} X([1+n+1], A).
    \]
    We need to show the right-hand side is equivalent to the homotopy type $|\calF(\Env(X))|$ of the factorization category of the envelope.
    Expanding on \cref{cor:act-Env-formula} we have pullback squares
\[\begin{tikzcd}
	\Lcl & {\Dop\times_{\simp^{\op,\act}} \L^\act} && {\L^\act} && \bmC & \L \\
	& \Dop && {\simp^{\op,\act} } && {\Dop \times \Fin_\ast}
	\arrow["l", hook, from=1-1, to=1-2]
	\arrow["s"', from=1-1, to=2-2]
	\arrow["k", from=1-2, to=1-4]
	\arrow["r", from=1-2, to=2-2]
	\arrow["\lrcorner"{anchor=center, pos=0.125}, draw=none, from=1-2, to=2-4]
	\arrow["i", from=1-4, to=1-6]
	\arrow["{p^\act}"', from=1-4, to=2-4]
	\arrow["\varphi", from=1-6, to=1-7]
	\arrow["q", from=1-6, to=2-6]
	\arrow["{[0]\star-\star[0]}"', from=2-2, to=2-4]
	\arrow[""{name=0, anchor=center, inner sep=0}, "{(-, 1_+)}"', from=2-4, to=2-6]
	\arrow["\lrcorner"{anchor=center, pos=0.125}, draw=none, from=1-4, to=0]
\end{tikzcd}\]
    where the vertical functors are left fibrations.
    As in \cref{cor:act-Env-formula}, this implies that there are canonical Beck-Chevalley equivalences, eventually giving a natural equivalence of simplicial spaces
    \[
        r_!k^*i^*\varphi^* X \simeq (q_!\varphi^*X)([0] \star [\bullet] \star [0], 1_+) = \xN_{1+\bullet+1} \Env(X).
    \]
    Recall from \cref{rem:Lcl-full} that $\Lcl$ is a full subcategory of $\Dop \times_{\simp^{\op,\act}} \L^\act$ and therefore, if we apply $s_! l^*$ instead of $r_!$, then this corresponds to passing to a sub-simplicial space of $\xN_{1+\bullet+1}\Env(X)$.
    Indeed, the subspace we need to pass to is exactly the pullback
    \[
        s_!l^*k^*i^* \varphi^* X 
        \simeq s_!(\pt) \times_{r_!(\pt)} r_!k^*i^*\varphi^* X
        \simeq \St(s\colon\Lcl\to \Dop) \times_{\xN_{1+\bullet+1}\Env(\pt)} \xN_{1+\bullet+1}\Env(X).
    \]
    We know from \cref{rem:Lcl-full} that the straightening $\St(s\colon \Lcl \to \Dop)$ is the nerve of $\calF(\Csp)$, so the above expression, by \cref{obs:NcalF-pullback}, is exactly the nerve of the factorization category $\calF(\Env(X))$.
    Therefore, we get natural equivalences
    \[
        j_!j^*X(T(0,0)) \simeq \colim_{\Lcl} l^*k^*i^*\varphi^* X
        \simeq \colim_{\Dop} s_!k^*i^*\varphi^* X \simeq \colim_{[n] \in \Dop} \xN_n \calF(\Env(X)) = |\calF(\Env(X))|.
    \]
    Since these are compatible with the canonical map to $X(T(0,0))$ by \cref{obs:nerve-calF},
    it follows that $X$ is reduced if and only if the composition map
    \(
        |\calF(\Env(X))| \to \Env(X)(\emptyset;\emptyset)
    \)
    is an equivalence.
\end{proof}

\subsection{\texorpdfstring{$n$}{n}-properads and labelled cospan categories}\label{subsec:n-properads}
We now study $n$-properads and in particular $1$-properads, both of which we characterize as full subcategories of $\Prpd$.
Then we show that $1$-properads are equivalent to the labelled cospan categories of \cite[\S2]{Jan-tropical} and use an analogous result of Beardsley--Hackney \cite{BH22} to conclude that $1$-properads in our sense are equivalent to the more classical coloured properads of \cite{JohnsonYau}.

\subsubsection{$n$-properads}
In this section we study $n$-properads, which are to $\infty$-properads what $(n,1)$-categories are to $\infty$-categories.

\begin{defn}\label{defn:n-properad}
    For $n\ge 0$, we say that an $\infty$-properad $\calP$ is an \hldef{$n$-properad} if the spaces of operations $\calP(x_1,\dots, x_k; y_1, \dots, y_l)$ are $(n-1)$-truncated for all tuples of colours $x_i, y_j \in \xN_0^\el(\calP)$.
    We let $\hldef{\mrm{Prpd}_n} \subset \Prpd$ denote the full subcategory of $n$-properads.
\end{defn}

\begin{example}
    The morphism properad $\calU(\calC)$ of a symmetric monoidal $\infty$-category $\calC$
    is an $n$-properad if and only if $\calC$ an $(n,1)$-category.
\end{example}

\begin{lem}\label{lem:n-properads}
    For an $\infty$-properad $\calP$ and $n \ge 1$ the following are equivalent:
    \begin{enumerate}
        \item\label{it:n-prpd} $\calP$ is an $n$-properad,
        \item\label{it:Red} $\Red{\calP}$ is an $(n,1)$-category and $\calP(\emptyset; \emptyset)$ is $(n-1)$-truncated,
    \end{enumerate}
    If $n \ge 2$ we further have the following equivalent condition:
    \begin{enumerate}[resume]
        \item\label{it:ncat} $\calP$ is an $(n,1)$-category.
    \end{enumerate}
\end{lem}
\begin{proof}
    $(1 \Leftrightarrow 2)$
    Combining \cref{lem:mapping-space-in-properad} and \cref{cor:mapping-space-in-Red} we get 
    \[
        \Map_{\Red{\calP}}(x_1 \otimes \dots \otimes x_m, y_1 \otimes \dots \otimes y_l)
        \simeq  
        \coprod_{I \amalg J \twoheadrightarrow K} \prod_{k \in K} 
        \calP\left(\{x_i\}_{i \in I_k}; \{y_j\}_{j \in J_k} \right),
    \]
    so the mapping spaces in $\Red{\calP}$ are $(n-1)$-truncated (i.e.~$\Red{\calP}$ is an $(n,1)$-category) if and only if all non-$(0,0)$-ary operation spaces in $\calP$ are $(n-1)$-truncated (i.e.~$\calP^\rd$ is an $n$-properad).

    ($1 \Leftrightarrow 3$)
    Assume $n\ge 2$.
    Then a space $X$ is $(n-1)$-truncated if and only if $\xF(X)$ is $(n-1)$-truncated.
    (This can be seen using that $\xF(X) \to \xF(*)$ is a map with fibers $X^m$ and a $1$-truncated base.)
    Combining the description above with the equivalence
    \(
        \Map_{\calP}(x,y) \simeq \xF(\calP(\emptyset;\emptyset)) \times \Map_{\Red{\calP}}(x,y)
    \)
    from \cref{cor:mapping-space-in-Red} concludes the proof. 
\end{proof}

\begin{rem}\label{rem:UFCs}
    We believe that the hereditary unique factorization categories (hereditary UFCs) of Kaufmann--Monaco \cite{KM22} are precisely $\infty$-properads that have the property of being $1$-categories.
    Indeed, a symmetric monoidal $1$-category is a UFC if and only if $\xN_n\calC$ is free for $n = 0,1$ (and by \cref{cor:finite-limits} thus for all $n$),
    and it is hereditary if and only if $d_1:\xN_2\calC \to \xN_1\calC$ is a free map.

    If an $\infty$-properad $\calP$ is a (symmetric monoidal) $1$-category, then it in particular is a $1$-properad.
    However, for a $1$-properad to be a $1$-category the space $\Map_\calP(\unit, \unit) = \xF(\calP(\emptyset;\emptyset))$ has to be $0$-truncated.
    This is only possible if $\calP(\emptyset;\emptyset)$ is empty.
    Therefore, hereditary UFCs should be exactly those $1$-properads that do not have operations of arity $(0,0)$.
    (For example $\Csp$ is not a $1$-category and hence not a hereditary UFC.)
\end{rem}

Recall that for any $\infty$-category $\calC$, an object $x \in \calC$ is called $n$-truncated if and only if $\Map_\calC(y,x)$ is $n$-truncated for all $y \in \calC$, see \cite[\S 5.5.6]{HTT}.
While $n$-properads are not precisely the $n$-truncated objects in $\Prpd$, we do have the following implications.
\begin{lem}\label{lem:n-properad-n-truncated}
    Let $\calP$ be an $\infty$-properad and $n \ge 0$.
    \begin{enumerate}[(1)]
        \item If $\calP$ is an $(n-1)$-truncated in $\Prpd$, then it is an $n$-properad.
        \item If $\calP$ is an $n$-properad, then it is an $n$-truncated object in $\Prpd$.
    \end{enumerate}
    In particular, $\mrm{Prpd}_n$ is an $(n+1,1)$-category because all its objects are $n$-truncated.
\end{lem}
\begin{proof}
    For (1) suppose that $\calP$ is $(n-1)$-truncated.
    Then $\xN_0^\el\calP = \Map_{\Prpd}(\xF(*), \calP)$ and $\Map(\frc_{k,l},\calP)$ are $(n-1)$-truncated.
    By \cref{lem:free-corolla-pushout} we can compute the space of operations $\calP(x_1,\dots,x_k;y_1,\dots,y_l)$ as the fibers of the maps
    \[
        \Map_{\Prpd}(\frc_{k,l}, \calP) \too \Map_{\Prpd}(\xF(*), \calP)^{\times k+l}.
    \]
    A map between $(n-1)$-truncated spaces has $(n-1)$-truncated fibers, so $\calP$ is an $n$-properad.

    Conversely, suppose that $\calP$ is an $n$-properad.
    Then $\calP^\simeq$ is an $n$-groupoid and so $\xN_0^\el\calP \subset \calP^\simeq$ is $n$-truncated.
    We also know that the fibers of the above maps are $(n-1)$-truncated (and its base is $n$-truncated), so it follows that $\Map_{\Prpd}(\frc_{k,l}, \calP)$ is $n$-truncated for all $k$ and $l$.
    But since the corollas generate $\Prpd$ under colimits (by \cref{cor:Prpd-compact-generation}) this implies that $\Map_{\Prpd}(\calQ, \calP)$ is $n$-truncated for all $\calQ$.
\end{proof}

\begin{rem}
    This is analogous to the situation for $\infty$-categories.
    An $(n,1)$-category is an $\infty$-category with $(n-1)$-truncated mapping spaces.
    Every such $\infty$-category is $n$-truncated when considered as an object of $\Cat$.
    Conversely, every $n$-truncated object of $\Cat$ is an $(n+1,1)$-category.
    Both of these implications are strict, for further details see \cite[below example 2.3]{SY22}.
\end{rem}

As a special case of \cref{cor:Prpd-as-pullback-factorization} we can now write $\Prpdone$ as a pullback, which will be useful when comparing to ``labelled cospan categories'' in the next subsection.
\begin{cor}\label{cor:Prpd1-as-pullback}
    The $(2,1)$-category of $1$-properads fits into a pullback square
    \[\begin{tikzcd}[column sep = large]
	\Prpdone & {\Ar(\Sets)} \\
	{(\SMone)_{/\Red{\Csp}}^\eqf} & \Sets.
	\arrow["\pi_0 |\Red{\calF}(-)|"', from=2-1, to=2-2]
	\arrow["\pi_0 \mrm{comp}_{(-)}", from=1-1, to=1-2]
	\arrow["{\Red{(-)}}"', from=1-1, to=2-1]
	\arrow["{\mathrm{ev}_0}", from=1-2, to=2-2]
    \arrow["\lrcorner", phantom, very near start, from=1-1, to=2-2]
    \end{tikzcd}\]
\end{cor}
\begin{proof} 
    By \cref{lem:n-properads} the pullback square from \cref{cor:Prpd-as-pullback-factorization} restricts to the left cartesian square below 
    \[\begin{tikzcd}
    	{\Prpdone } && {\Ar(\calS)\times_{\calS}\Sets} & {\Ar(\Sets)} \\
    	{(\SMone)_{/\Red{\Csp}}^\eqf} && \calS & \Sets
    	\arrow["{\Red{(-)}}"', from=1-1, to=2-1]
    	\arrow["\mrm{comp}_{(-)}", from=1-1, to=1-3]
    	\arrow["{\ev_0}"', from=1-3, to=2-3]
    	\arrow[""{name=0, anchor=center, inner sep=0}, "{|\calF(-)|}"', from=2-1, to=2-3]
    	\arrow["{\pi_0}", from=1-3, to=1-4]
    	\arrow["{\pi_0}"', from=2-3, to=2-4]
    	\arrow["{\ev_0}", from=1-4, to=2-4]
        \arrow["\lrcorner", very near start, draw=none, from=1-3, to=2-4]
    	\arrow["\lrcorner", very near start, draw=none, from=1-1, to=0]
    \end{tikzcd}\]
    where the pullback $\Ar(\calS) \times_\calS \Sets$ is with respect to $\ev_1 \colon \Ar(\calS) \to \calS$. 
    The right square is a pullback because $\pi_0$ is left adjoint to the inclusion $\Sets \hookrightarrow \calS$.
    Pullback pasting implies that the outer rectangle is cartesian, as claimed.
\end{proof}

\begin{obs}[$1$-properads vs.~``classical'' properads]
\label{obs:1prpd-vs-classical}
    Let $\mathrm{PRPD}_1$ denote the $1$-category of (coloured) properads considered in \cite{HRY15}.
    It is shown there that this is equivalent to the category of ``strict $\infty$-properads'', which are the same as $\bfG^\op$-Segal sets.
    Combining this with work of \cite{CH22} we get that $\mathrm{PRPD}_1$ is equivalent the $1$-category $\Seg_{\L}(\Sets)$ where $\L$ is the category of levelled graphs that also appeared in \cref{sec:segal}.
    As pointed out in \cite[Proposition 1.15]{BH22} this implies that there is a fully faithful functor
    \[
        \mathbf{PRPD}_1 \simeq \Seg_{\bfG^\op}(\Sets) \simeq \Seg_{\L}(\Sets) \hookrightarrow \Seg_{\L}(\calS)
    \]
    whose essential image are precisely the $0$-truncated objects of $\Seg_{\L}(\calS)$.
    We prove in \cref{thm:segal-envelope} that there is an equivalence
    \[
        \Seg_{\L}(\calS) \simeq \pPrpd \subset \Seg_{\Dop}(\CMon)
    \]
    between these $\L$-Segal spaces and the $\infty$-category of ``pre-properads'' from \cref{defn:pre-properad}.
    Compiling all results one gets that the $1$-category of classical properads $\mrm{PRPD}_1$ is equivalent to the full subcategory $\tau_{\le 0}\pPrpd \subset \pPrpd$
    on the $0$-truncated pre-properads.
    This is analogous to how the $1$-category of $1$-categories is not a full subcategory of the $\infty$-category of $\infty$-categories, but rather a full subcategory of the $\infty$-category of (non-complete) Segal spaces, namely the Segal sets.
    To describe the full subcategory $\Prpdone \subset \Prpd$ in classical terms, we need $2$-morphisms, as $\Prpdone$ is a $(2,1)$-category. 
    See \cref{cor:prpd-classical}.
\end{obs}

\subsubsection{Labelled cospan categories}
In this section we compare $1$-properads to the ``labelled cospan categories'', which the second author defined in \cite{Jan-tropical} and which were in part the motivation for the current work.
An analogous comparison was proven in work of Beardsley--Hackney \cite{BH22} proving the first part of \cite[Conjecture 2.31]{Jan-tropical}.

A difficulty in comparing the two concepts is that they are set up in different models.
To remedy this, we will mostly be working with $2$-categories in this section and our goal will be to show that the $2$-category $\LCC$ of labelled cospan categories fits into a (homotopy) pullback square analogous to \cref{cor:Prpd1-as-pullback}.
(In order to talk about homotopy pullbacks, we use Lack's model structure on $2$-categories \cite{Lack2002}.)
Subject to \cref{hyp:SMC} this also identifies our $(2,1)$-category $\Prpdone$ with the ``classical'' $(2,1)$-category of properads that appears in \cite{HRY15}.

We let $\hldef{\mrm{SMC}}$ denote the $2$-category with objects symmetric monoidal ($1$-)categories, morphisms symmetric monoidal functors and $2$-morphisms symmetric monoidal natural transformations.
Based on this, we can assemble LCCs into a $2$-category following \cite[\S1.1]{BH22}.
For an $\infty$-category $\calC$ we let $h\calC$ denote its homotopy category,
and in particular we let $h\Csp$ be the homotopy category of $\Csp$ where morphisms are now isomorphism classes of cospans of finite sets.
\begin{defn}
    The $2$-category $\hldef{\mrm{LCC}}$ is defined as a certain full subcategory of the (non-strict) slice $\mrm{SMC}_{/h\Csp}$.
    Its objects are symmetric monoidal functors $\pi\colon \calC \to h\Csp$ that are LCCs in the sense of \cite[Definition 2.4]{Jan-tropical}.
    Morphisms $(\calC, \pi) \to (\calC',\pi')$ are symmetric monoidal functors $F\colon \calC \to \calC'$ such that there exists%
        \footnote{
            If it exists, this natural isomorphism is unique by \cite[Remarks 1.3 and 1.4]{BH22} so we do not need to remember it.
        }
    a symmetric monoidal natural isomorphism $\pi' \circ F \cong \pi$.
    $2$-morphisms $\gamma\colon F \Rightarrow G$ are symmetric monoidal natural transformations such that $\id_{\pi'} \circ \gamma\colon \pi' \circ F \Rightarrow \pi' \circ G$ is an isomorphism.
\end{defn}

Recall from \cite[\S2.1]{Jan-tropical} that for any LCC $\calC$ we can construct a projectivization $\calC^{\proj}$ (denoted $\calC^\mrm{red}$ there) by taking the quotient of each of the mapping sets $\Map_\calC(x,y)$ by the action of the commutative monoid $\Map_\calC(\unit,\unit)$. 
This quotient is in canonical bijection with the subset $\Map_\calC^{\rm red}(x, y) \subset \Map_\calC(x,y)$ of reduced morphisms of \cite[Definition 2.3]{Jan-tropical}, i.e.~those morphisms $f\colon x \to y$ for which the legs in the cospan $\pi(x) \to \pi(f) \leftarrow \pi(y)$ are jointly surjective.
This defines a $2$-functor
\[
    (-)^{\proj}\colon \LCC \too \mrm{SMC}_{/\Csp^\proj}.
\]
In the case of $h(\Csp)$ this recovers the symmetric monoidal $1$-category $\Csp^{\proj}$ from \cref{example:projective-cospans}, which is equivalent to $\Red{\Csp}$ by \cref{cor:Csp-proj}.
(More generally, one gets $h\Red{\calP} \simeq (h\calP)^\proj$ for all $\infty$-properads $\calP$.)
\begin{rem}\label{rem:proj-description}
    An alternative, more $\infty$-categorical, description of $(-)^\proj$ is as the composite 
    \[
        \LCC^{2\simeq} \hookrightarrow \SMover{h\Csp} 
        \too \SMover{\Red{\Csp}} 
        \too \SMeq{\Red{\Csp}} 
    \]
    where the first functor comes from \cref{hyp:SMC} below, the second functor composes with $h\Csp \to h\Red{\Csp} \simeq \Red{\Csp}$, and the third functor is the left adjoint to the inclusion of the equifibered slice.
    To see this, it suffices to argue that $\calC \to \calC^\proj \to \Csp^\proj = \Red{\Csp}$ is a contrafibered-equifibered factorization for every labelled cospan category $\calC$.
    The first functor is contrafibered via \cref{cor:detect-ctf} as on the $n$th level of the nerve it is a projection $\xF(X) \oplus \End_\calC(\unit)^{\times n}\to \xF(X)$ for some $1$-type $X$ and this is contrafibered because it is the sum of an equivalence and a map to $0$.
    The second functor is equifibered essentially by the definition of labelled cospan categories.
\end{rem}
We can now describe $\LCC$ as an analogous pullback to \cref{cor:Prpd1-as-pullback}.

\begin{prop}\label{prop:LCC-as-pullback}
    The $2$-category of labelled cospan categories fits into a homotopy pullback square of $2$-categories
    \[\begin{tikzcd}[column sep = large]
	\LCC & {\Ar(\Sets)} \\
	{\mrm{SMC}_{/\Csp^\proj}^\eqf} & \Sets
	\arrow["\pi_0 |\Red{\calF}(-)|"', from=2-1, to=2-2]
	\arrow["\Psi", from=1-1, to=1-2]
	\arrow["{(-)^\proj}"', from=1-1, to=2-1]
	\arrow["{\mathrm{ev}_0}", from=1-2, to=2-2]
    \arrow["\lrcorner", phantom, very near start, from=1-1, to=2-2]
    \end{tikzcd}\]
    where $\Psi$ sends $\calC \in \LCC$ to the natural map
    \[
        \mrm{comp}_\calC \colon \pi_0|\Red{\calF}(\calC^\proj)| \too \End_\calC^\el(\unit),
        \qquad [f,g] \mapsto \widetilde{g}\circ \widetilde{f}
    \]
    where $\End_\calC^\el(\unit)$ is the generating set of the endomorphisms of the unit and $\widetilde{f},\widetilde{g}$ are lifts of $f,g$ to $\calC$ that are reduced morphisms in the sense discussed above.
\end{prop}

In the proof of \cref{prop:LCC-as-pullback} we will use the following notion.
For $M$ a \emph{discrete} commutative monoid, an $M$-valued cocycle $\alpha$ on symmetric monoidal $1$-category $\calC$ is a map $\alpha\colon \pi_0(\xN_2\calC) \to M$ that takes as input isomorphism classes%
\footnote{
    The assumption that $\alpha$ is well-defined on isomorphism classes of tuples is a simplification that will suffice for our intended application.
    In principle, one might want to drop this assumption such that $\alpha$ can restrict to non-trivial group cocycles on $\Aut_\calC(x)$ for $x \in \calC$.
}
of tuples of composable morphisms and satisfies
\begin{enumerate}[(1)]
    \item $\alpha(f,g) + \alpha(g\circ f, h) = \alpha(f, h \circ g) + \alpha(g,h)$ for all $(f,g,h) \in \xN_3\calC$, and
    \item $\alpha(f_1 \otimes f_2, g_1 \otimes g_2) = \alpha(f_1, g_1) + \alpha(f_2, g_2)$ for all $(f_1,g_1), (f_2,g_2) \in \xN_2\calC$.
\end{enumerate}
There is a discrete $\bbN$-valued cocycle $\gamma$ on $\Csp^\proj$ defined by
\[
    \gamma( A \to X \leftarrow B, B \to Y \leftarrow C) \coloneq | (X \cup_B Y) \setminus \mrm{Image}(A \sqcup C \to X \cup_B Y)| \in \bbN
\]
which measures the difference between composing in $h\Csp$ and in $\Csp^\proj$.

\begin{proof}[Proof of \cref{prop:LCC-as-pullback}]
    Let $\mcal{P}$ denote the $2$-category obtained as the  strict pullback.
    This strict pullback is equivalent to the homotopy pullback in Lack's model structure on $2$-categories \cite{Lack2002} since $\ev_0\colon \Ar(\Sets) \to \Sets$, being a cartesian (and in fact cocartesian) fibration of $1$-categories, has lifts for equivalences and therefore is a fibration in the model structure.

    An object in $\mcal{P}$ may be described as a triple $(\pi\colon \calC \to \Csp^\proj, E, a\colon \pi_0|\Red{\calF}(\calC)| \to E)$
    where $\calC$ is an equifibered symmetric monoidal $1$-category over $\Csp^\proj$, $E$ is a set, and $a$ a map.
    The morphisms in $\mcal{P}$ are pairs of symmetric monoidal functors $F\colon \calC \to \calC'$ such that $\pi'\circ F \cong \pi$ 
    (unique if it exists, as before)
    and maps of sets $f\colon E \to E'$ satisfying $f\circ a = a' \circ \Red{\calF}(F)$.
    The $2$-morphisms $(F,f) \to (G,g)$ only exist if $f=g$, and then they are those symmetric monoidal natural transformations $\rho\colon F \Rightarrow G$ such that $\id_\pi\circ \rho\colon \pi\circ F \Rightarrow \pi\circ G$ is an isomorphism.

    The square yields a functor $\LCC \to \mcal{P}$, and we would like to construct an inverse $\Phi\colon \mcal{P} \to \LCC$.
    For each object $(\calC, E, a) \in \mcal{P}$ we now define $\calL = \Phi(\calC, E, a) \in \LCC$ as follows.
    One can show that every map $a \colon \pi_0|\Red{\calF}(\calC)| \to E$ uniquely extends to an $\bbN\langle E \rangle$-valued cocycle $\alpha$ that makes the squares
    \[\begin{tikzcd}
        \Red{\calF}(\calC)^\simeq \ar[d]  \ar[r, "a"] & 
        E  \ar[d, hook] \\
        \xN_2 \calC \ar[r, "\alpha"] & 
        \bbN\langle E \rangle
    \end{tikzcd}
    \qquad \text{ and } \qquad
    \begin{tikzcd}
        \xN_2 \calC \ar[d, "\pi"']  \ar[r, "\alpha"] & 
        \bbN\langle E \rangle \ar[d, "\nabla"] \\
        \xN_2 \Csp^\proj \ar[r, "\gamma"] & 
        \bbN
    \end{tikzcd}\]
    commute.
    Indeed, we can uniquely decompose any tuple $(f\colon x \to y, g\colon y \to z)$ under $\otimes$ into indecomposable pieces, using that $\xN_2 \Csp^\proj$ is a free commutative monoid.
    For indecomposable tuples either the cospan $(\pi x \to \pi f \cup_{\pi y} \pi g \leftarrow \pi z)$ is such that the maps are jointly surjective and the right square forces $\alpha(f,g) = 0$ (because $\gamma(\pi f, \pi g) = 0$), or we have $x = \unit = z$ so that $\alpha$ is determined by $a$ through the left square.

    The objects of $\calL = \Phi(\calC, E, a)$ are those of $\calC$ and the mapping sets are
    \[
        \Map_\calL(x, y) \coloneq \Map_\calC(x, y) \times \bbN\langle E \rangle.
    \]
    The composite of two morphisms $(f,e) \colon x \to y$ and $(g,e') \colon y \to z$ is defined as
    \[
        (g,e') \circ (f,e) \coloneq (g\circ f, e'+e+\alpha(f,g)).
    \]
    This is a well-defined symmetric monoidal $1$-category because $\alpha$ is a cocycle.
    It has a map to $\Phi(\Csp^\proj, *, \gamma) \cong h(\Csp)$ because we required that $\nabla \circ \alpha = \gamma \circ \pi$ and this map exhibits it as an object of $\LCC$.
    The construction of $\Phi$ defines a $1$-category enriched functor $\mcal{P} \to \LCC$:
    a symmetric monoidal functor $F\colon \calC \to \calC'$ and compatible map $f\colon E \to E'$ induce a symmetric monoidal functor $\Phi(\calC, E, a) \to \Phi(\calC', E', a')$, which on mapping sets is given by $F(-) \times \bbN\langle f \rangle$,
    and similarly symmetric monoidal natural transformations  $\psi \colon F \Rightarrow G$ (over $\Csp^\proj$) induce symmetric monoidal natural transformations $\Phi(\psi)\colon \Phi(F,f) \to \Phi(G,g)$ with components $\Phi(\psi)_c \coloneq (\psi_c, 0)$.
    Now the claim follows by observing that $\Phi$ is indeed inverse, up to natural isomorphism, to the functor $\LCC \to \mcal{P}$.
\end{proof}

In their paper \cite{BH22} Beardsley and Hackney define a $2$-category $\mrm{PRPD}$ (denoted $\mathbf{Ppd}$ in their paper) whose objects and morphisms are ``classical'' properads and properad maps, and where $2$-morphisms are defined via a suitable notion of natural transformation introduced in \cite[Definition 6.1]{BH22}.
Their main theorem relates this to the $2$-category $\LCC$.
\begin{thm}[\cite{BH22}]
    There is a biequivalence $\mrm{PRPD} \simeq \LCC$.
\end{thm}

If we ignore non-invertible $2$-morphisms, the combination of \cref{cor:Prpd1-as-pullback} and \cref{prop:LCC-as-pullback} yields a theorem similar this one, except with $\mrm{PRPD}$ replaced by $\mrm{Prpd}_1$, which was defined as the full subcategory of $\Prpd$ on the $1$-properads.
For any $2$-category $C$ let $C^{2\simeq} \subset C$ denote the sub-$2$-category that contains all objects and morphisms, but only the invertible $2$-morphisms.
Moreover, we can interpret each $(2,1)$-category as an $\infty$-category whose mapping spaces are $1$-types, for example via the Duskin nerve \cite[009P]{Kerodon}. 
We will work under the following hypothesis. This seems to be generally accepted, but we do not know of a proof in the literature, and we will not attempt one here. 
(See e.g.~\cite[p.484-485]{TV15} for the statement on the level of homotopy categories.)
\begin{hyp}\label{hyp:SMC}
    The construction in \cite[Example 2.1.1.5]{HA} extends to a fully faithful functor
    \[
        \mrm{SMC}^{2\simeq} \hookrightarrow \SM
    \]
    of $\infty$-categories whose essential image are those symmetric monoidal $\infty$-categories whose underlying $\infty$-categories are (equivalent to) $1$-categories.
\end{hyp}

Assuming this, we can slice the functor over $h\Csp$ to get a fully faithful functor
\[
    \LCC^{2\simeq} \hookrightarrow \SMover{h\Csp}
\]
whose essential image consists of those $\calC \to h\Csp$ for which $\calC$ is a $1$-category and satisfies \cite[Definition 2.4]{Jan-tropical}.
\cref{hyp:SMC} also implies that applying $(-)^{\simeq 2}$ to the square in \cref{prop:LCC-as-pullback} yields the same square as \cref{cor:Prpd1-as-pullback} and thus $\LCC^{\simeq 2} \simeq \Prpdone$.
Combining this with the main theorem of \cite{BH22} we get:
\begin{cor}\label{cor:prpd-classical}
    There are equivalences of $(2,1)$-categories
    \[
        \Prpdone \simeq \LCC^{2\simeq} \simeq  \mrm{PRPD}^{2\simeq}.
    \]
\end{cor}
This shows that our notion of $1$-properads as a full subcategory of $\Prpd$ agrees with the classical definition of coloured properads.


\printbibliography[heading=bibintoc]

\end{document}